\newcommand{\rev}{\mathrm{rev}}
\newcommand{\CA}{\mathcal{A}}
\newcommand{\vpi}{\varpi}
\newcommand{\ug}[1]{\underset{#1}{>}}
\newcommand{\ugc}[1]{\underset{#1}{\gtrdot}}
\newcommand{\ul}[1]{\underset{#1}{<}}
\newcommand{\ulc}[1]{\underset{#1}{\lessdot}}
\theoremstyle{plain}
\newtheorem{lem}{Lemma}[section]
\newtheorem{prop}[lem]{Proposition}
\newtheorem{conj}[lem]{Conjecture}
\newtheorem{cor}[lem]{Corollary}
\newtheorem{thm}[lem]{Theorem}
\newtheorem*{claim*}{Claim}
\theoremstyle{definition}
\newtheorem{definition}[lem]{Definition}
\theoremstyle{remark}
\newtheorem{remark}[lem]{Remark}
\newtheorem{remarks}[lem]{Remarks}
\newtheorem{ex}[lem]{Example}
\numberwithin{equation}{section}
\newcommand{\af}{\mathrm{af}}
\newcommand{\Iaf}{I_\af}
\newcommand{\Waf}{W_\af}
\newcommand{\Qaf}{Q_\af}
\newcommand{\X}{X}
\newcommand{\Xaf}{X_\af}
\newcommand{\Xafv}{X_\af^\vee}
\newcommand{\Xafz}{X_\af^0}
\newcommand{\Phiaf}{\Phi^\af}
\newcommand{\Phiafp}{\Phi^{\af+}}
\newcommand{\Phiafm}{\Phi^{\af-}}
\newcommand{\casetwo}[4]{\left\{ \begin{array}{ll} #1 &\mbox{if $#2$} \\[2mm] #3 &\mbox{if $#4$}\,. \end{array} \right.}
\newcommand{\casetwoc}[4]{\left\{ \begin{array}{ll} #1 &\mbox{if $#2$} \\[2mm] #3 &\mbox{if $#4$}\,, \end{array} \right.}
\newcommand{\edge}[1]{\xrightarrow{#1}}
\newcommand{\barrl}[1]{\xRightarrow{#1}} 
\newcommand{\blarrl}[1]{\xLeftarrow{#1}}
\newcommand{\cl}{\mathop{\rm cl}\nolimits}
\newcommand{\bcllt}[1]{\QLS(#1)}
\newcommand{\Ht}{\mathrm{height}}
\newcommand{\wt}{\mathop{\rm wt}\nolimits}
\newcommand{\la}{\lambda}
\newcommand{\lev}{\mathrm{lev}}
\newcommand{\Fg}{\mathfrak{g}}
\newcommand{\Fh}{\mathfrak{h}}
\newcommand{\BZ}{\mathbb{Z}}
\newcommand{\BQ}{\mathbb{Q}}
\newcommand{\BR}{\mathbb{R}}
\newcommand{\BB}{\mathbb{B}}
\newcommand{\BC}{\mathbb{C}}
\newcommand{\Hom}{\mathop{\rm Hom}\nolimits}
\newcommand{\QLS}{\mathop{\rm QLS}\nolimits}
\newcommand{\QB}{\mathop{\rm QB}\nolimits}
\newcommand{\Deg}{\mathop{\rm Deg}\nolimits}
\newcommand{\ext}{\mathop{\rm ext}\nolimits}
\newcommand{\mcr}[1]{\lfloor #1 \rfloor}
\newcommand{\bQB}[1]{\QB_{#1\lambda}(W^{J})}
\newcommand{\wts}[2]{\wt_{\lambda}(#1 \Rightarrow #2)}
\newcommand{\bzero}{{\bf 0}}
\newcommand{\bp}{{\bf p}}
\newcommand{\bq}{{\bf q}}
\newcommand{\bi}{{\bf i}}
\newcommand{\bdeta}{{\boldsymbol \eta}}
\newcommand{\pair}[2]{\langle #1,\,#2 \rangle}
\newcommand{\Bpair}[2]{\bigl\langle #1,\,#2 \bigr\rangle}
\newcommand{\ti}[1]{\widetilde{#1}}
\newcommand{\ha}[1]{\widehat{#1}}
\newcommand{\ud}[1]{\underline{#1}}
\newcommand{\ve}{\varepsilon}
\newcommand{\vp}{\varphi}
\renewcommand{\l}[1] {l^{A}_{#1}}
\newcommand{\inner}[2]{\langle #1,\,#2 \rangle}
\def \sgn{\mathrm{sgn}}
\def \Bzero{\mathbf{0}}
\def \A{\mathcal{A}}
\begin{document}

\title[A uniform model for KR crystals II. Path models and $P=X$]{A uniform model for Kirillov-Reshetikhin crystals II. Alcove model, path model, and $P=X$}

\author[C.~Lenart]{Cristian Lenart}
\address[Cristian Lenart]{Department of Mathematics and Statistics, State University of New York at Albany, 
1400 Washington Avenue, Albany, NY 12222, U.S.A.}
\email{clenart@albany.edu}
\urladdr{http://www.albany.edu/\~{}lenart/}

\author[S.~Naito]{Satoshi Naito}
\address[Satoshi Naito]{Department of Mathematics, Tokyo Institute of Technology,
2-12-1 Oh-Okayama, Meguro-ku, Tokyo 152-8551, Japan}
\email{naito@math.titech.ac.jp}

\author[D.~Sagaki]{Daisuke Sagaki}
\address[Daisuke Sagaki]{Institute of Mathematics, University of Tsukuba, 
Tsukuba, Ibaraki 305-8571, Japan}
\email{sagaki@math.tsukuba.ac.jp}

\author[A.~Schilling]{Anne Schilling}
\address[Anne Schilling]{Department of Mathematics, University of California, One Shields
Avenue, Davis, CA 95616-8633, U.S.A.}
\email{anne@math.ucdavis.edu}
\urladdr{http://www.math.ucdavis.edu/\~{}anne}

\author[M.~Shimozono]{Mark Shimozono}
\address[Mark Shimozono]{
Department of Mathematics, MC 0151,
460 McBryde Hall, Virginia Tech,
225 Stanger St., 
Blacksburg, VA 24061 USA}
\email{mshimo@vt.edu}

\keywords{Parabolic quantum Bruhat graph, Lakshmibai-Seshadri paths, Littelmann path model,
crystal bases, Macdonald polynomials}

\subjclass[2000]{Primary 05E05. Secondary 33D52, 20G42.}

\begin{abstract}
We establish the equality of the specialization $P_\lambda(x;q,0)$ of the Macdonald polynomial at $t=0$ with the 
graded character $X_\la(x;q)$ of a tensor product of ``single-column'' Kirillov-Reshetikhin (KR) modules for untwisted 
affine Lie algebras. This is achieved by constructing two uniform combinatorial models for the crystals associated 
with the mentioned tensor products: the quantum alcove model (which is naturally associated to Macdonald polynomials), 
and the quantum Lakshmibai-Seshadri path model. We provide an explicit affine crystal isomorphism between the two 
models, and realize the energy function in both models.  In particular, this gives the first proof of the positivity of the $t = 0$ 
limit of the symmetric Macdonald polynomial in the untwisted and non-simply-laced cases, when it is expressed as a linear 
combination of the irreducible characters for a finite-dimensional simple Lie subalgebra, as well as a 
representation-theoretic meaning of the coefficients in this expression in terms of degree functions.
\end{abstract}

\maketitle

\section{Introduction}
We prove the equality of the specialization $P_\lambda(x;q,0)$ of the Macdonald polynomial 
$P_\lambda(x;q,t)$ (see \cite{machecke} for the definition of Macdonald polynomials) at $t=0$ with the graded 
character $X_\la(x;q)$ of a tensor product of ``single-column'' Kirillov-Reshetikhin (KR) modules \cite{karrym} for all
untwisted affine Lie algebras. This result follows from another important result in this paper, namely the
construction of the explicit isomorphism between two uniform combinatorial models for the affine crystals associated with the mentioned tensor products. 

The first model has its origins in the work of Naito and Sagaki~\cite{NS03,NS05,NS06, NS-London,NS08}, who realized the tensor products 
of single-column KR crystals in terms of so-called projected level-zero affine Lakshmibai-Seshadri (LS) paths. We provide an explicit description 
of these paths (as quantum LS paths), in terms of the parabolic quantum Bruhat graph~\cite{BFP,Po,LS}, which originated from (small) quantum 
cohomology of partial flag manifolds. This part of our work is based on previous results of ours in \cite{LNSSS}, where we study various properties 
of the parabolic quantum Bruhat graph, including two lifts of it: to the Bruhat order on the affine Weyl group, and to Littelmann's level-zero weight 
poset~\cite{Li}; we also provided a precise characterization of the latter. 

In the second part of the paper, we construct an explicit affine crystal isomorphism between the quantum LS path model and the quantum alcove 
model in \cite{LL}. The latter is intimately related to $P_\lambda(x;q,0)$ by the Ram-Yip formula for Macdonald polynomials \cite{RY}. This leads 
us to our $P_\la=X_\la$ result. 

The context of this project has its origins in Ion's observation~\cite{Ion} that, when the affine simple root $\alpha_0$ is short 
(which includes the duals of untwisted affine root systems), $P_\lambda(x;q,0)$ is an affine Demazure character 
(see~\cite{Sa} for type $A$). On the other hand, Fourier and Littelmann~\cite{FL1} showed that, for simply-laced untwisted 
affine Lie algebras, these Demazure characters are graded characters of tensor products of KR modules, and hence of 
local Weyl modules for current algebras, by using results in~\cite{NS05}. 
Combining~\cite{Ion} and~\cite{FL1}, one deduces the equality $P_\la=X_\la$ in the simply-laced untwisted cases.
Our results go beyond this. Let us note that, in the non-simply-laced untwisted affine types, $X_\la$ is, in general, a 
positive (but not explicitly described)
sum of affine Demazure characters, as proved in \cite[Theorem A]{Na}; this means that the underlying tensor product of KR modules is, in fact, larger than a corresponding Demazure module. Because of this, we cannot directly apply the known
results and methods in the simply-laced untwisted cases to prove the equality $P_\la = X_\la$ for all untwisted cases; 
this is why we need the affine crystal isomorphism between the quantum LS path model and the quantum alcove model.
Here we should mention that the ``folding'' procedure (according to an affine Dynkin diagram
automorphism) does not work for the equality $P_{\lambda}=X_{\lambda}$ in the non-simply-laced 
untwisted affine types, since this procedure only gives us Macdonald polynomials associated to 
twisted affine root systems.

Our work reveals other interesting connections, complementing some recent results in the literature. 
Braverman and Finkelberg~\cite{BF2} have shown that, 
for simply-laced untwisted affine root systems, the characters of 
the duals of certain current algebra modules, called global Weyl modules, 
coincide with the characters $\Psi_\la(x;q)$ of the spaces of global sections of 
line bundles on quasi-maps spaces, which arise in the study of quantum cohomology and quantum $K$-theory of the flag 
manifold; in this case, it is also shown that the function $\Psi_\la(x;q)$ is
equal to $P_\la(x;q,0)$ times an explicit product of geometric series whose ratios are powers of $q$,
and these functions are called $q$-Whittaker functions due to their appearance in the quantum group version of 
the Kostant-Whittaker reduction of Etingof~\cite{E} and Sevostyanov~\cite{Se} 
for the $q$-Toda integrable system. More precisely, the functions
$\Psi_\la(x;q)$ are eigenfunctions of the $q$-Toda difference operators, and their generating function 
yields the $K$-theoretic $J$-function of Givental and Lee~\cite{BF1}. 
Note, however, that in the non-simply-laced untwisted cases, the situation differs considerably: 
indeed, the proof in \cite{BF2} of the equality between $\Psi_\la(x;q)$ and 
$P_{\lambda}(x;q,0)$ times the explicit product, does not carry over; 
this is mainly because $X_\la(x;q)$ is not a single affine Demazure character.
Finally, the quantum alcove model arises in Lenart and Postnikov's conjectural description of the quantum product 
by a divisor in quantum $K$-theory~\cite{LP}. We summarize these connections in Figure~\ref{figure.outline}.

\begin{figure}
\begin{center}
\includegraphics[scale=0.8]{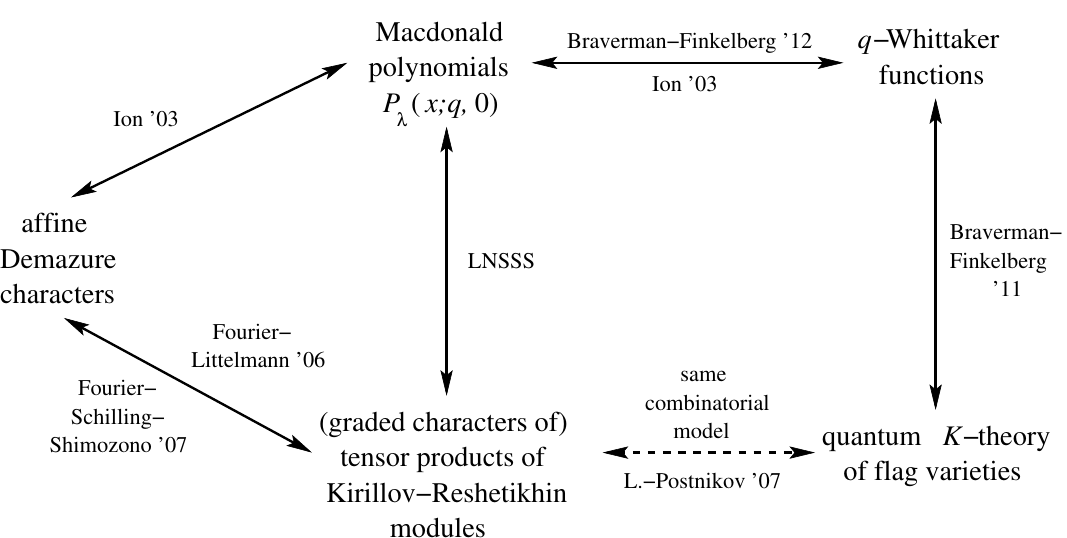}
\end{center}
\caption{Outline \label{figure.outline}}
\end{figure}

Combinatorial models for all nonexceptional KR crystals (not just of column-shape) were given in~\cite{FOS-combinatorics}. 
The quantum LS path model and the quantum alcove model uniformly describe tensor products of column-shape KR crystals, 
for all untwisted affine types. More precisely, these models realize the root operators on the aforementioned tensor products, 
and also give efficient formulas for the corresponding energy function; 
the latter can be viewed as an affine grading on a tensor product of KR crystals~\cite{NS08,ST}, and is used to express one-dimensional 
configuration sums in statistical mechanics~\cite{HKOTT,HKOTY}. Given that, in large rank, certain configuration sums for classical Lie
types were shown to coincide with certain parabolic Lusztig $q$-analogues of weight multiplicity \cite{LOS}, we can also use our energy 
formula to compute the latter. Moreover, by results in~\cite{LOS}, we can calculate the energy on certain tensor products of ``single-row'' KR 
crystals. Another application of the quantum alcove model, which was given in~\cite{LL2}, is a uniform realization of the  {combinatorial 
$R$-matrix} (i.e., the unique affine crystal isomorphism commuting factors in a tensor product of KR crystals).

The quantum LS path model and the quantum alcove model were implemented in the computer algebra system {\sc Sage}~\cite{sage,sagecombinat}. 
Using this implementation, we verified some conjectures related to KR crystals in the exceptional types (except for two Dynkin nodes for type $E_8^{(1)}$); 
these conjectures, which had been previously proved only in the classical types in~\cite{FOS-perfect}, are concerned with the perfectness property of KR 
crystals~\cite{HKOTT}, and with their graded classical decompositions~\cite{HKOTY}. 

There have been several developments related to the work in this paper. Based on our results, an interpretation of the specialization $P_\la(x;q,0)$
above is given in~\cite{ns1,ns2}, in terms of the crystal bases of level-zero extremal weight modules
over quantum affine algebras. On another hand, our work was used in~\cite{CSSW} to provide the 
character of a stable level-one Demazure module associated to type $B_{n}^{(1)}$ as an explicit combination of suitably 
specialized Macdonald polynomials. In addition, our results were used in a crucial way by Chari and Ion in~\cite[Theorem 4.2]{CI} to show 
that Macdonald polynomials at $t=0$ are characters of local Weyl modules for current algebras. Based on this, they prove a 
Bernstein-Gelfand-Gelfand (BGG) reciprocity theorem for the category of representations of a current algebra. In related work, Khoroshkin~\cite{Kho} 
exhibits a categorification of Macdonald polynomials, by realizing them as the Euler characteristic of bigraded characters for certain complexes of 
modules over a current algebra.  This realization simplifies considerably if BGG reciprocity holds (the mentioned complexes become actual 
modules concentrated in homological degree zero). 

The paper is organized as follows. In Sections~\ref{sec:LS} and~\ref{sec:qLS}, we review the affine Lakshmibai-Seshadri (LS) and 
the quantum Lakshmibai-Seshadri path models, respectively. Theorem~\ref{thm:LS=QLS} shows that the set of projected level-zero 
affine LS paths $\BB(\lambda)_{\cl}$ is the same as the set of quantum LS paths $\QLS(\lambda)$, where $\lambda$ is a (level-zero)
dominant integral weight. This fact is also proven in~\cite{LNSSS2} in a somewhat roundabout way, by providing an explicit description of 
the image of a quantum LS path under root operators and showing that the set of quantum LS paths is stable under the action of 
the root operators. (Quantum) LS paths carry a grading by a degree function (which is closely related to the (``right'') energy function
on KR crystals). We provide an explicit formula for the degree function of quantum LS paths in Theorem~\ref{thm:deg-QLS} in terms 
of the parabolic quantum Bruhat graph. For KR crystals, there exist the right and the left energy functions. In Section~\ref{sec:degree}, we 
also relate the left energy function with the degree function using the Lusztig involution. In Section~\ref{section.qalc} the quantum alcove model and 
its crystal structure are defined. In Section~\ref{section.bijection}, we show that there is a bijection 
between the quantum alcove model and the quantum LS path model by exhibiting a forgetful map and its inverse.
We show that up to Kashiwara operators $f_0$ at the end of their strings, there is an affine crystal isomorphism 
between the quantum alcove model and a tensor product of KR crystals. Section~\ref{S:energy} contains 
the main application of this work: by showing that the energy/degree function under the affine crystal isomorphism maps 
to a height function in the quantum alcove model, we show that the graded character of 
a tensor product of single-column KR crystals is equal to a Macdonald polynomial evaluated at $t=0$
(see Corollary~\ref{peqx}). We conclude in Section~\ref{sec:proofs of lemmas} 
with the proof of Lemmas from various sections. In Appendix~\ref{section.perfectness}, we verify the conjectures in \cite{HKOTT,HKOTY} 
mentioned above, in the exceptional types.

We follow the same conventions and notation as in~\cite{LNSSS}.

\subsection*{Acknowledgments}

We would like to thank the Mathematisches Forschungsinstitut Oberwolfach for their support during the Research in Pairs program
and ICERM for hosting the semester program ``Automorphic Forms, Combinatorial Representation Theory and Multiple Dirichlet Series"
in the spring 2013, where some of the ideas were conceived.
We thank Dan Orr for valuable discussions.
We used {\sc Sage}~\cite{sage} and {\sc Sage-combinat}~\cite{sagecombinat} in our work.

C.L. was partially supported by the NSF grants DMS--1101264 and DMS--1362627. 
The hospitality and support of the Max-Planck-Institut f\"ur Mathematik in Bonn, where part of this work was carried out, is also gratefully acknowledged.
S.N. was supported by Grant-in-Aid for Scientific Research (C), No. 24540010, Japan.
D.S. was supported by Grant-in-Aid for Young Scientists (B) No. 23740003, Japan. 
A.S. was partially supported by NSF grants DMS--1001256, OCI--1147247, DMS--1500050, and a grant from the Simons Foundation 
(\#226108 to Anne Schilling).
M.S. was partially supported by the NSF grant DMS--1200804.

%
\section{Lakshmibai-Seshadri paths}
\label{sec:LS}

In this section we review Lakshmibai-Seshadri paths and the corresponding affine crystal
model. As summarized in Theorem~\ref{thm:LScl} and Remark~\ref{rem:LScl}, 
the crystal of level-zero projected LS paths is
isomorphic to tensor products of KR crystals.

%
\subsection{Basic notation}
\label{subsec:notation}
Let $\Fg_{\af}$ be an untwisted affine Lie algebra 
over $\BC$ with Cartan matrix $A=(a_{ij})_{i,\,j \in I_{\af}}$.
The index set $I_{\af}$ of the Dynkin diagram of $\Fg_{\af}$ is  
numbered as in \cite[Section 4.8, Table Aff~1]{Kac}. 
Take the distinguished vertex $0 \in I_{\af}$ as in \cite{Kac}, 
and set $I:=I_{\af} \setminus \{0\}$.
Let 
$\Fh_\af=\bigl(\bigoplus_{j \in I_{\af}} \BC \alpha_{j}^{\vee}\bigr) \oplus \BC d$
denote the Cartan subalgebra of $\Fg_{\af}$, where 
$\bigl\{\alpha_{j}^{\vee}\bigr\}_{j \in I_{\af}} \subset \Fh_\af$ is 
the set of simple coroots, and 
$d \in \Fh_\af$ is the scaling element (or degree operator). 
Also, we denote by 
$\bigl\{\alpha_{j}\bigr\}_{j \in I_{\af}} \subset 
\Fh_{\af}^{\ast}:=\Hom_{\BC}(\Fh_\af,\BC)$ 
the set of simple roots, and by 
$\Lambda_{j} \in \Fh_\af^{\ast}$, $j \in I_{\af}$, 
the fundamental weights; 
note that $\pair{d}{\alpha_{j}}=\delta_{j,0}$ and 
$\pair{d}{\Lambda_{j}}=0$ for $j \in I_{\af}$.
Let $\delta=\sum_{j \in I_{\af}} a_{j}\alpha_{j} \in \Fh_\af^{\ast}$ and 
$c=\sum_{j \in I_{\af}} a^{\vee}_{j} \alpha_{j}^{\vee} \in \Fh_\af$ denote 
the null root and the canonical central element of 
$\Fg_{\af}$, respectively. 
The dual weight lattice $\Xafv$ and the weight lattice $\Xaf$ are defined
as follows:
%
%
\begin{equation} \label{eq:lattices}
\Xafv=
\left(\bigoplus_{j \in I_{\af}} \BZ \alpha_{j}^{\vee}\right) \oplus \BZ d \, 
\subset \Fh_\af
\quad \text{and} \quad 
\Xaf= 
\left(\bigoplus_{j \in I_{\af}} \BZ \Lambda_{j}\right) \oplus 
   \BZ \delta \subset \Fh_\af^{\ast}.
\end{equation}
It is clear that $\Xaf$ contains 
$\Qaf:=\bigoplus_{j \in I_{\af}} \BZ \alpha_{j}$, and that 
$\Xaf \cong \Hom_{\BZ}(\Xafv,\BZ)$. 
We set $\Qaf^{+}:=\sum_{j \in I_{\af}} \BZ_{\ge 0} \alpha_{j}$.
Let $\Fg$ be the classical subalgebra of $\Fg_{\af}$ and denote the finite weight lattice by 
$\X = \bigoplus_{i\in I} \BZ \vpi_i$, where the $\vpi_i$ are the fundamental weights associated to $\Fg$.
We set
\begin{equation*}
Q:=\bigoplus_{j \in I} \BZ \alpha_{j}, \quad 
Q^{+}:=\sum_{j \in I} \BZ_{\ge 0} \alpha_{j}, \quad 
Q^{\vee}:=\bigoplus_{j \in I} \BZ \alpha_{j}^{\vee}, \quad 
Q^{\vee+}:=\sum_{j \in I} \BZ_{\ge 0} \alpha_{j}^{\vee}. 
\end{equation*}

Let $\Waf$ (resp. $W$) be the affine 
(resp. finite) Weyl group with simple reflections $r_i$ for $i\in \Iaf$
(resp. $i\in I$). $\Waf$ acts on $\Xaf$ and $\Xafv$ by
\begin{align*}
  r_i \la &= \la - \pair{\alpha_i^\vee}{\la} \alpha_i \\
  r_i h &= h - \pair{h}{\alpha_i} \alpha^\vee_{i}
\end{align*}
for $i\in \Iaf$, $\la\in \Xaf$, and $h \in \Xafv$.
We denote by $\ell$ the length function on $\Waf$ (resp. $W$), 
and denote by $e$ the identity element of $\Waf$. 
Note that $W_\af \cong W \ltimes Q^\vee$; 
denote by $t_{\xi}$ the image of $\xi \in Q^\vee$ in $W_\af$.

The set of affine real roots (resp. roots)
of $\Fg_\af$ (resp. $\Fg$) are defined by $\Phiaf = \Waf \,\{\alpha_i\mid i\in \Iaf\}$
(resp. $\Phi=W\, \{\alpha_i\mid i\in I\}$).
The set of positive affine real (resp. positive) roots are the
set $\Phiafp = \Phiaf \cap Q_{\af}^{+}$
(resp. $\Phi^+=\Phi\cap Q^{+}$).
We have $\Phiaf=\Phiafp \sqcup \Phiafm$, where $\Phiafm=-\Phiafp$, 
and $\Phi=\Phi^+\sqcup\Phi^-$, where $\Phi^-=-\Phi^+$.
We have $\delta = \alpha_0 + \theta$, where $\theta$ is 
the highest root for $\Fg$, and
\begin{equation*}
\Phiafp = \Phi^+ \sqcup (\Phi+\BZ_{>0}\,\delta).
\end{equation*}
For $\beta \in \Phi^{\af+}$, let $\beta^{\vee}$ denote the coroot of $\beta$, and 
let $r_{\beta} \in W_{\af}$ denote the associated reflection.
Also, we set $\rho = \frac{1}{2}\sum_{\alpha\in\Phi^+} \alpha$. 

The level of a weight $\la\in \Xaf$ is defined by
$\lev(\la) = \pair{c}{\la}$. 
Since the action of $\Waf$ on $\Xaf$ is level-preserving,
the sublattice $\Xafz\subset\Xaf$ of level-zero elements is $\Waf$-stable.
The natural projection $\cl:X_{\af}^{0} \twoheadrightarrow X$ 
has kernel $\BZ\delta$ and sends $\Lambda_{i} - a_{i}^{\vee} \Lambda_{0}$ to 
$\vpi_{i}$ for $i\in I$; also, there is a section $\X\to \Xafz$ given by 
$\vpi_i \mapsto \Lambda_i - a_{i}^{\vee} \Lambda_0$ for $i\in I$.

Let $J$ be a subset of $I$. Denote by $W_J$ 
the parabolic subgroup of $W$ generated by $r_i$ for $i \in J$. 
For $w \in W$, we denote by $\mcr{w} = \mcr{w}^{J}$ 
the minimum-length coset representative in the coset $v W_J$, 
and set $W^{J}:=\bigl\{\mcr{w} \mid w \in W\bigr\}$. 
We set $Q^\vee_J := \bigoplus_{j \in J} \BZ\alpha^\vee_{j}$, 
$Q_J:=\bigoplus_{j \in J} \BZ\alpha_{j}$, and 
$\Phi_J := \Phi \cap Q_{J}$, 
$\Phi_J^{\pm} :=\Phi^{\pm} \cap Q_{J}$; 
note that $\Phi_{J}=\Phi_J^+ \sqcup \Phi_J^-$. 
Also, we set $\rho_J := \frac{1}{2}\sum_{\alpha \in \Phi_J^+} \alpha$. 

Finally, we briefly review the level-zero weight poset. 
Fix a dominant weight $\lambda$ in the finite weight lattice $X$.
We view $X$ as a sublattice of $\Xafz$. 
Let $\Xafz(\lambda)$ be the orbit of $\lambda$ 
under the action of the affine Weyl group $\Waf$.
\begin{definition}[{\cite[Section~4]{Li}}]
\label{definition.level-0 weight poset}
A poset structure is defined on $\Xafz(\lambda)$ 
as the transitive closure of the relation
\begin{equation}\label{lev0anti}
 \mu < r_\beta\mu \quad \iff \quad
 \pair{\beta^\vee}{\mu} >0, 
\end{equation}
where $\beta \in \Phiafp$. This poset is called 
the {\em level-zero weight poset for} $\lambda$.
\end{definition}

%
\subsection{Definition of Lakshmibai-Seshadri paths}
\label{subsec:LS}

In this subsection, 
we fix a dominant integral weight $\lambda \in X$. 
We recall the definition of 
Lakshmibai-Seshadri (LS) paths of shape $\lambda$ from \cite[Section 4]{Li}. 
Let $X_{\af}^{0}(\lambda)$ be the level-zero weight poset for $\lambda$. 
%
%
\begin{definition} \label{dfn:achain}
For $\mu,\,\nu \in X_{\af}^{0}(\lambda)$ with $\nu > \mu$ and 
$b \in \BQ$, a $b$-chain for $(\nu,\mu)$ is, 
by definition, a sequence $\nu=\mu_{0} \gtrdot \mu_{1} \gtrdot
\dots \gtrdot \mu_{m}=\mu$ of covers in $X_{\af}^{0}(\lambda)$ 
such that $b\pair{\beta_{k}^{\vee}}{\mu_{k}} \in \BZ$ 
for all $k=1,\,2,\,\dots,\,m$, 
where $\beta_{k} \in \Phi^{\af+}$ is the corresponding 
positive real root for $\mu_{k-1} \gtrdot \mu_{k}$. 
Here, for $\mu,\,\mu' \in X_{\af}^{0}(\lambda)$, 
the cover $\mu \gtrdot \mu'$ in $X_{\af}^{0}(\lambda)$ 
means that $\mu > \mu'$ in $X_{\af}^{0}(\lambda)$ and that 
there exists no $\nu \in X_{\af}^{0}(\lambda)$ such that $\mu > \nu > \mu'$ 
in $X_{\af}^{0}(\lambda)$.
\end{definition}
%
%
\begin{definition} \label{dfn:LS}
An LS path of shape $\lambda$ is, by definition, 
a pair $\pi=(\ud{\nu}\,;\,\ud{b})$ of a sequence 
$\ud{\nu}:\nu_{1} > \nu_{2} > \cdots > \nu_{s}$ of 
elements in $X_{\af}^{0}(\lambda)$ and a sequence 
$\ud{b}:0=b_{0} < b_{1} < \cdots < b_{s}=1$ of 
rational numbers satisfying the condition that 
there exists a $b_{k}$-chain for $(\nu_{k},\,\nu_{k+1})$ 
for each $k=1,\,2,\,\dots,\,s-1$. 
\end{definition}

Denote by $\BB(\lambda)$ the set of all LS paths of shape $\lambda$.
We identify an element 
\[
\pi=(\nu_{1},\,\nu_{2},\,\dots,\,\nu_{s}\,;\,
b_{0},\,b_{1},\,\dots,\,b_{s}) \in \BB(\lambda)
\]
with the following piecewise-linear, continuous map 
$\pi:[0,1] \rightarrow \BR \otimes_{\BZ} X_{\af}^{0}$:
%
%
\begin{equation} \label{eq:pi}
\pi(t)=\sum_{k=1}^{l-1}
(b_{k}-b_{k-1})\nu_{k}+
(t-b_{l-1})\nu_{l} \quad \text{for \ }
b_{l-1} \le t \le b_{l}, \  1 \le l \le s.
\end{equation}
%
%
\begin{remark} \label{rem:straight}
It follows from the definition of an LS path of shape $\lambda$ 
that $\pi_{\nu}:=(\nu\,;\,0,\,1) \in \BB(\lambda)$ for every 
$\nu \in X_{\af}^{0}(\lambda)$, which corresponds to 
the straight line $\pi_{\nu}(t)=t\nu$, $t \in [0,1]$.
\end{remark}

Recall that $X_{\af}^{0}/\BZ\delta \cong X$. 
Denote by 
\begin{equation*}
\cl:\BR \otimes_{\BZ} X_{\af}^{0} 
\twoheadrightarrow \BR \otimes_{\BZ} X_{\af}^{0}/\BR\delta 
\cong \BR \otimes_{\BZ} X
\end{equation*}
the canonical projection; remark that
$\cl(X_{\af}^{0}(\lambda))=W\lambda \cong W^{J}$ 
(see \cite[Lemma 3.1]{LNSSS}). 
For $\pi \in \BB(\lambda)$, we define $\cl(\pi)$ by: 
$(\cl(\pi))(t)=\cl(\pi(t))$ for $t \in [0,1]$; 
note that $\cl(\pi)$ is a piecewise linear, continuous map 
from $[0,1]$ to $\BR \otimes_{\BZ} X$. Then we set 
\begin{equation*}
\BB(\lambda)_{\cl}:=\bigl\{\cl(\pi) \mid \pi \in \BB(\lambda) \bigr\};
\end{equation*}
an element of this set is called a projected level-zero LS path.

%
\subsection{Crystal structures on 
  \texorpdfstring{$\BB(\lambda)$}{B(lambda)} and 
  \texorpdfstring{$\BB(\lambda)_{\cl}$}{B(lambda)cl}
}
\label{subsec:crystal}
As in the previous subsection, let $\lambda \in X$ be a dominant integral weight. 
We use the following notation:
\begin{equation}\label{tialpha}
\ti{\alpha}_{i}:=
\begin{cases}
\alpha_{i} & \text{if $i \ne 0$}, \\[1.5mm]
-\theta & \text{if $i = 0$},
\end{cases}
\qquad
s_{i}:=
\begin{cases}
r_{i} & \text{if $i \ne 0$}, \\[1.5mm]
r_{\theta} & \text{if $i = 0$},
\end{cases}
\end{equation}
where $\theta$ is the highest root for $\Fg$.

Following \cite{Li}, we give $\BB(\lambda)$ and $\BB(\lambda)_{\cl}$ 
crystal structures with the weight lattices $X_{\af}^{0}$ and $\cl(X_{\af}^{0}) \cong X$,
respectively. Here we focus on the crystal structure 
on $\BB(\lambda)_{\cl}$; for the crystal structure on $\BB(\lambda)$, 
in the argument below, replace $\eta \in \BB(\lambda)_{\cl}$ 
with $\pi \in \BB(\lambda)$, and then replace $\ti{\alpha}_{j} \in \Phi$ and 
$s_{j} \in W$ with $\alpha_{j} \in \Phi^{\af}$ and $r_{j} \in W_{\af}$. 

Let $\eta \in \BB(\lambda)_{\cl}$. 
We see from \cite[Lemma 4.5\,a)]{Li} that 
$\eta(1) \in \cl(X_{\af}^{0}) \cong X$. So we set
\begin{equation*}
\wt (\eta):=\eta(1) \in X. 
\end{equation*}
Next we define root operators $e_{j}$ and $f_{j}$ 
for $j \in I_{\af}=I \sqcup \{0\}$ as follows 
(see \cite[Section 1]{Li}). We set
\begin{equation}
\label{eq:H m}
\begin{array}{l}
H(t)=H^{\eta}_{j}(t):=\pair{\ti{\alpha}_{j}^{\vee}}{\eta(t)}
  \quad \text{for \,} t \in [0,1], \\[3mm]
m=m^{\eta}_{j}
  :=\min\bigl\{H^{\eta}_{j}(t) \mid t \in [0,1]\bigr\}\;.
\end{array}
\end{equation}
It follows from \cite[Lemma 4.5\,d)]{Li} that 
all local minima of $H(t)$ are integers; 
in particular, $m \in \BZ_{\le 0}$. 
If $m = 0$, then $e_{j}\eta:=\bzero$, 
where $\bzero$ is an extra element not 
contained in $\BB(\lambda)_{\cl}$. 
If $m \le -1$, then set
\begin{equation*}
\begin{array}{l}
t_{1}:=\min\bigl\{t \in [0,1] \mid 
       H(t)=m \bigr\}, \\[3mm]
t_{0}:=\max\bigl\{t \in [0,t_{1}] \mid
       H(t) = m+1 \bigr\}.
\end{array}
\end{equation*}
%
%
\begin{remark} \label{rem:ro} \mbox{}
\begin{enumerate}
\item
Recall that all local minima of $H(t)$ are integers 
by \cite[Lemma 4.5\,d)]{Li}. Hence we deduce that 
$H(t)$ is strictly decreasing on $[t_{0},\,t_{1}]$. 
\item
Because $H(t)$ attains the minimum $m$ at $t=t_{1}$, 
it follows immediately that $H(t_{1}+\varepsilon) \ge H(t_{1})$ 
for sufficiently small $\varepsilon > 0$. 
\item
We deduce that $H(t_{0}-\varepsilon) \ge H(t_{0})$ 
for sufficiently small $\varepsilon > 0$. 
Indeed, suppose that $H(t_{0}-\varepsilon) < H(t_{0})$. 
Then the minimum $m'$ of $H(t)$ on $[0,\,t_{0}]$ is 
less than $H(t_{0})=m+1$. Since all local minima of 
$H(t)$ are integers, we obtain $m'=m$. However, this contradicts 
the definition of $t_{1}$; recall that $t_{0} < t_{1}$. 
\end{enumerate}
\end{remark}

Define $e_{j}\eta$ for $j\in I_{\af}$ by:
\begin{equation}\label{defeqls}
(e_{j}\eta)(t)=
\begin{cases}
\eta(t) & \text{if } 0 \le t \le t_{0},  \\[2mm]
\eta(t_{0})+s_{j}(\eta(t)-\eta(t_{0}))
       & \text{if } t_{0} \le t \le t_{1}, \\[2mm]
\eta(t)+\ti{\alpha}_{j} & \text{if } t_{1} \le t \le 1,
\end{cases}
\end{equation}
where $s_{j} \in W$ is 
the reflection with respect to $\ti{\alpha}_{j} \in \Phi$. 
We see from \cite[Corollary 2\,a)]{Li} that 
$e_{j}\eta \in \BB(\lambda)_{\cl}$. 
The definition of $f_{j}\eta \in 
\BB(\lambda)_{\cl} \cup \{\bzero\}$ is similar 
(see also \cite[Section 2.2]{NS08}). 
In addition, for $\eta \in \BB(\lambda)_{\cl}$ and $j \in I_{\af}$, 
we set
\begin{equation}
\label{eq:phi eps}
\ve_{j}(\eta):=
\max\bigl\{n \ge 0 \mid 
 e_{j}^{n}\eta \ne \bzero \bigr\}, \qquad
\vp_{j}(\eta):=
\max\bigl\{n \ge 0 \mid 
 f_{j}^{n}\eta \ne \bzero \bigr\}.
\end{equation}
We see from \cite[Section 2]{Li} that 
the set $\BB(\lambda)_{\cl}$ together with 
the map $\wt:\BB(\lambda)_{\cl} \rightarrow X$, 
the root operators $e_{j}$, $f_{j}$, $j \in I_{\af}$, and 
the maps $\ve_{j},\,\vp_{j}$, $j \in I_{\af}$, becomes 
a crystal with $\cl(X_{\af}^{0}(\lambda)) \cong X$ 
the weight lattice. 

\begin{remark}
It is easily verified that 
\begin{align*}
& \wt(\cl(\pi))=\cl(\wt(\pi))
  \quad \text{for $\pi \in \BB(\lambda)$}, \\
& \cl(e_{j}\pi)=e_{j}\cl(\pi) \quad \text{and} \quad \cl(f_{j}\pi)=f_{j}\cl(\pi)
  \quad \text{for $\pi \in \BB(\lambda)$ and $j \in I_{\af}$}, \\
& \ve_{j}(\cl(\pi))=\ve_{j}(\pi) \quad \text{and} \quad 
  \vp_{j}(\cl(\pi))=\vp_{j}(\pi) \quad 
  \text{for $\pi \in \BB(\lambda)$ and $j \in I_{\af}$}.
\end{align*}
\end{remark}

We know the following theorem from~\cite{NS03, NS05, NS06}.
%
%
\begin{thm} \label{thm:LScl}
\mbox{}
\begin{enumerate}
\item
For each $i \in I$, the crystal $\BB(\vpi_{i})_{\cl}$ is 
isomorphic to the crystal basis of $W(\vpi_{i})$, 
the level-zero fundamental representation of the quantum affine algebra 
$U_{q}'(\Fg_{\af})$ (without the degree operator), 
introduced by Kashiwara \cite{Kas-OnL}.
\item
The crystal graph of $\BB(\lambda)_{\cl}$ is connected. 
\item
Let $\bi=(i_{1},\,i_{2},\,\dots,\,i_{p})$ be 
an arbitrary sequence of elements of $I$ 
(with repetitions allowed), and set 
$\lambda_{\bi}:=\vpi_{i_{1}}+\vpi_{i_{2}}+ \cdots + \vpi_{i_{p}}$. 
Then, there exists an isomorphism 
$\Psi_{\bi}:\BB(\lambda_{\bi})_{\cl} \stackrel{\sim}{\rightarrow} 
\BB(\vpi_{i_{1}})_{\cl} \otimes \BB(\vpi_{i_{2}})_{\cl} \otimes 
\cdots \otimes \BB(\vpi_{i_{p}})_{\cl}$ of crystals. 
\end{enumerate}
\end{thm}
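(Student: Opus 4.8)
The plan is to treat Theorem~\ref{thm:LScl} not as something to prove from scratch, but as a recollection of the main theorems of Naito and Sagaki \cite{NS03,NS05,NS06}, and to indicate which result supplies each of the three parts. For part~(1), I would invoke \cite{NS03}, where it is shown that the crystal $\BB(\vpi_i)$ of level-zero LS paths of shape $\vpi_i$ projects, under $\cl$, onto a model realizing Kashiwara's level-zero fundamental crystal $W(\vpi_i)$; the key point is that $\cl$ identifies paths differing by a $\BZ\delta$-translate, and the induced structure on $\BB(\vpi_i)_{\cl}$ matches the root-operator action on $W(\vpi_i)$ as established there. So part~(1) is a direct citation, modulo matching our normalization conventions (the section $\X\to\Xafz$, $\vpi_i\mapsto\Lambda_i-\lev(\Lambda_i)\Lambda_0$) with those of \cite{NS03}; this is routine bookkeeping.

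For part~(2), the connectedness of the crystal graph of $\BB(\lambda)_{\cl}$, I would appeal to \cite{NS05,NS06}, where it is proved that every projected level-zero LS path is obtained from the straight-line path $\pi_\lambda = (\lambda;0,1)$ (or its image $\cl(\pi_\lambda)$) by applying a sequence of root operators $e_j, f_j$. The underlying mechanism is that the action of the root operators on LS paths realizes, at the ``top'' straight-line paths $\pi_\nu$, the passage $\nu \rightsquigarrow s_j\nu$ in the $\Waf$-orbit, so that connectivity of $\BB(\lambda)$ reduces to transitivity of the $\Waf$-action on $\Xafz(\lambda)$, which holds by definition of the orbit; connectivity of $\BB(\lambda)_{\cl}$ then follows since $\cl$ is a surjective crystal morphism intertwining the root operators (by the Remark following \eqref{eq:phi eps}).

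For part~(3), the tensor-product decomposition, the strategy is the standard one for LS-path models: the concatenation of paths gives a map $\BB(\vpi_{i_1})\otimes\cdots\otimes\BB(\vpi_{i_p})\to\BB(\lambda_{\bi})$ (paths of shape $\lambda_{\bi}$ being, by the Littelmann-style independence/concatenation theorem of \cite{Li}, exactly such concatenations), and this map is an isomorphism of crystals; applying $\cl$ and using that $\cl$ commutes with concatenation and with root operators yields the desired $\Psi_{\bi}$ on the projected level. I would first recall the concatenation/decomposition theorem for $\BB(\lambda)$ from \cite{Li}, then check that it descends through $\cl$: surjectivity and compatibility with $\wt, e_j, f_j$ are immediate, and injectivity follows because a projected path determines its level-zero lift up to the overall $\BZ\delta$-ambiguity, which is harmless after tensoring since $\lev$ is additive.

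The main obstacle I anticipate is not any single step but the reconciliation of conventions: \cite{NS03,NS05,NS06} work with a specific choice of the scaling element, of which $\Lambda_j$ are ``level-zero'' after subtracting multiples of $\Lambda_0$, and of the identification $\Xafz/\BZ\delta\cong\X$; one must verify that the crystal $\BB(\vpi_i)_{\cl}$ defined here via Definitions~\ref{dfn:achain}--\ref{dfn:LS} and the root operators \eqref{defeqls} coincides on the nose with the object called by the same name in those papers. Once that dictionary is fixed, parts~(1)--(3) are essentially quotations, and the only genuine content to re-derive is the compatibility of $\cl$ with concatenation and with the root operators, which is the content of the Remark after \eqref{eq:phi eps} together with \cite[Corollary 2]{Li}.
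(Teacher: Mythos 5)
Your top-level plan is the right one: in the paper this theorem is not proved at all — it is stated with the single sentence ``We know the following theorem from \cite{NS03, NS05, NS06}'', so treating all three parts as quotations of Naito--Sagaki is exactly what the paper does.

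However, two of the explanatory sketches you give for the cited results are wrong, and would mislead a reader who tried to flesh them out. For part (2), you argue that $\BB(\lambda)$ is connected (via transitivity of the $\Waf$-action on $\Xafz(\lambda)$) and then deduce connectedness of $\BB(\lambda)_{\cl}$ by pushing forward along $\cl$. But for a level-zero dominant $\lambda$, the unprojected crystal $\BB(\lambda)$ is \emph{not} connected; the paper itself tacitly acknowledges this by singling out ``the connected component $\BB_0(\lambda)$ of $\BB(\lambda)$ containing $\pi_\lambda$'' in Section~\ref{subsec:dfn-deg}. Connectedness is a genuine feature of the projected crystal $\BB(\lambda)_{\cl}$ only, and it must be proved there directly; it does not descend from $\BB(\lambda)$. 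For part (3), invoking Littelmann's concatenation/independence theorem from \cite{Li} is also not correct in this setting: for level-nonzero dominant weights that theorem identifies $\BB(\lambda+\mu)$ with a single \emph{direct summand} of $\BB(\lambda)\otimes\BB(\mu)$, not with the whole tensor product. The equality $\BB(\lambda_\bi)_{\cl}\cong\BB(\vpi_{i_1})_{\cl}\otimes\cdots\otimes\BB(\vpi_{i_p})_{\cl}$ on the nose is a special level-zero phenomenon proved in \cite{NS05,NS06,NS-London} via the representation theory of extremal weight modules, not by concatenation of paths. So: cite, as you intend, but do not present these sketches as the mechanism behind the citations.
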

%
%
\begin{remark} \label{rem:LScl}
It is known that 
the fundamental representation $W(\vpi_{i})$ of level-zero
is isomorphic to the Kirillov-Reshetikhin (KR) module 
$W_{1}^{(i)}$ in the sense of \cite[Section 2.3]{HKOTT}
(for the Drinfeld polynomials of $W(\vpi_{i})$, 
see \cite[Remark~3.3]{N}). 
Also we can prove that the crystal basis of 
$W(\vpi_{i}) \cong W_{1}^{(i)}$ is unique, 
up to a nonzero constant multiple 
(see also \cite[Lemma 1.5.3]{NS-CMP}); 
we call this crystal basis a (one-column) KR crystal, 
and denote by $B^{i,1}$. 
By the theorem above, the crystal $\BB(\lambda)_{\cl}$ 
of projected level-zero LS paths of shape $\lambda$ is a model 
for the corresponding tensor product of KR crystals.
\end{remark}

In this paper we use the Kashiwara convention for the tensor product. More
precisely, for two (normal) crystals $B_1$ and $B_2$, the tensor product 
$B_1\otimes B_2$ as a set is the Cartesian product of the two sets. 
For $b=b_1 \otimes b_2\in B_1 \otimes B_2$, the weight function is simply
$\wt(b) = \wt(b_1) + \wt(b_2)$. In the Kashiwara convention the crystal 
operators are given by
\begin{equation*}
	f_i (b_1 \otimes b_2)=
	\begin{cases}
	b_1 \otimes f_i(b_2) & \text{if $\varepsilon _i(b_2) \geq \varphi_i(b_1)$,}\\
	f_i(b_1) \otimes b_2  & \text{otherwise,}
	\end{cases}
\end{equation*}
and similarly for $e_i(b)$, where $\varepsilon_j$ and $\varphi_j$ 
are defined as in \eqref{eq:phi eps}.

%
\section{Quantum Lakshmibai-Seshadri paths}
\label{sec:qLS}

In this section, we introduce quantum Lakshmibai-Seshadri paths, 
which are defined in terms of the parabolic quantum Bruhat graph.
The main result of this section is Theorem~\ref{thm:LS=QLS}, 
which shows that projected level-zero LS paths are QLS paths.
Although this result is proved in \cite{LNSSS2}, 
the proof given there is somewhat roundabout, and heavily 
depends on the connectedness of the affine crystal $\BB(\lambda)_{\cl}$, 
which itself is a deep result; in contrast, the proof given here is much more direct, 
and we need not use root operators.

%
\subsection{The parabolic quantum Bruhat graph}
\label{subsec:PQBG}
The quantum Bruhat graph was first introduced in a paper by Brenti, Fomin and Postnikov~\cite{BFP}
motivated by work of Fomin, Gelfand and Postnikov~\cite{FGP} in type $A$.
It later appeared in connection with the quantum cohomology of flag varieties in a paper by Fulton 
and Woodward~\cite{FW}. 

Let $J$ be a subset of $I$. 
We denote by $\QB(W^J)$ the \emph{parabolic quantum Bruhat graph}. 
Its vertex set is $W^J$. There are two kinds of directed edges.
Both are labeled by some $\alpha\in \Phi^+\setminus \Phi_J^+$.
For $w\in W^J$, there is a directed edge 
$w \overset{\alpha}{\longrightarrow} \mcr{wr_\alpha}$ 
(recall that $\mcr{wr_\alpha}$ denotes 
the minimum-length coset representative in the coset $wr_\alpha W_J$)
if $\alpha \in \Phi^+ \setminus \Phi_J^+$ and 
one of the following holds:
\begin{enumerate}
\item (Bruhat edge) $w\lessdot wr_\alpha$ is a covering relation in Bruhat order, that is, 
$\ell(wr_\alpha)=\ell(w)+1$. (One may deduce that $wr_\alpha \in W^J$.)
\item (Quantum edge) 
\begin{align}\label{E:projqarrow}
\ell(\lfloor wr_\alpha \rfloor)=\ell(w)+1-\pair{\alpha^\vee}{2\rho-2\rho_J}.
\end{align}
\end{enumerate}
We define the \emph{weight} of an edge $w \overset{\alpha}\longrightarrow \mcr{wr_\alpha}$ in the 
parabolic quantum Bruhat graph to be
either $\alpha^\vee$ or $0$, depending on whether it is a quantum edge or not, respectively. 
Then the weight of a directed path $\bp$, denoted by $\wt(\bp) \in Q^{\vee+}$, is 
defined as the sum of the weights of its edges. 
%
%
\subsection{Definition of quantum Lakshmibai-Seshadri paths}
\label{subsec:qLS}

In this subsection, 
we fix a dominant integral weight $\lambda \in X$. Set 
%
%
\begin{equation} \label{eq:J}
J=\bigl\{i \in I \mid \pair{\alpha_{i}^{\vee}}{\lambda}=0\bigr\},
\end{equation}
so that $W_J$ is the stabilizer of $\lambda$.
Given a rational number $b$, we define $\bQB{b}$ to be the subgraph of 
the parabolic quantum Bruhat graph $\QB(W^{J})$ with 
the same vertex set but having only the edges:
\begin{equation}\label{bbord}
x \stackrel{\alpha}{\rightarrow} y \quad \text{with} \quad
\pair{\alpha^{\vee}}{b\lambda} = 
b \pair{\alpha^{\vee}}{\lambda}  \in \BZ;
\end{equation}
note that $\bQB{b} = \QB(W^{J})$ if $b \in \BZ$.
%
%
\begin{definition} \label{dfn:QLS}
A quantum Lakshmibai-Seshadri (QLS) path of shape $\lambda$ is 
a pair $\eta=(\ud{x}\,;\,\ud{b})$ of 
a sequence $\ud{x}\,:\,x_{1},\,x_{2},\,\dots,\,x_{s}$ of 
elements in $W^{J}$ with $x_{k} \ne x_{k+1}$ 
for $1 \le k \le s-1$ and a sequence 
$\ud{b}\,:\,
 0=b_{0} < b_{1} < \cdots < b_{s}=1$ 
of rational numbers satisfying the condition that 
there exists a directed path 
from $x_{k+1}$ to $x_{k}$ in $\bQB{b_{k}}$ 
for each $1 \le k \le s-1$.
\end{definition}

Denote by $\QLS(\lambda)$ the set of QLS paths of shape $\lambda$. 
We use the notation $x \barrl{b\lambda} y$ to indicate that there exists 
a directed path from $x$ to $y$ in $\QB_{b\lambda}(W^J)$, 
where $J$ is as in \eqref{eq:J}; 
so we can write an element
\[
	\eta=(x_{1},\,x_{2},\,\dots,\,x_{s}\,;\,
	b_{0},\,b_{1},\,\dots,\,b_{s}) 
\]
in $\QLS(\lambda)$ as follows: 
\begin{equation} \label{eq:eta2}
 x_1 \blarrl{b_1\lambda} 
 x_2 \blarrl{b_2\lambda} \cdots \blarrl{b_{s-2}\lambda} 
 x_{s-1} \blarrl{b_{s-1}\lambda} x_s\,.
\end{equation}
Since $W^J$ can be identified with $W\lambda$ under 
the canonical bijection $w\mapsto w\lambda$, we will 
sometimes think of the elements $x_i$ as weights. Moreover, we identify $\eta$  
with the following piecewise-linear, continuous map 
$\eta:[0,1] \rightarrow \BR \otimes_{\BZ} X$:
%
%
\begin{equation} \label{eq:eta}
\eta(t)=\sum_{k=1}^{l-1}
(b_{k}-b_{k-1})x_{k}\lambda + 
(t-b_{l-1})x_{l}\lambda \quad \text{for \ }
b_{l-1} \le t \le b_{l}, \  1 \le l \le s.
\end{equation}
%
%
\begin{remark} \label{rem:straight2}
It follows from the definition of a QLS path of shape $\lambda$ 
that $\eta_{x}:=(x\,;\,0,\,1) \in \QLS(\lambda)$ for every $x \in W^{J}$, 
with $J$ as in \eqref{eq:J},
which corresponds to the straight line $\eta_{x\lambda}(t)=tx\lambda$, $t \in [0,1]$.
We can easily see that $\cl(\pi_{\nu})=\eta_{\cl(\nu)}$ 
for $\nu \in X_{\af}^{0}(\lambda)$; 
recall that $\cl(X_{\af}^{0}(\lambda))=W\lambda \cong W^{J}$. 
\end{remark}

%
\subsection{Relation between LS paths and QLS paths}
\label{subsec:LSQLS}
%
%
We now establish the correspondence between projected level-zero LS paths and QLS paths.
As before, $\lambda\in X$ is a fixed dominant integral weight, 
and $J=\bigl\{i \in I \mid \pair{\alpha_{i}^{\vee}}{\lambda}=0\bigr\}$.
\begin{thm} \label{thm:LS=QLS}
$\BB(\lambda)_{\cl}=\QLS(\lambda)$ as sets of 
piecewise-linear, continuous maps from $[0,1]$ to $\BR \otimes_{\BZ} X$ 
{\rm(}see {\rm Section~\ref{subsec:LS}} and \eqref{eq:eta}{\rm)}.
\end{thm}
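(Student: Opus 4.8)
The plan is to show set-theoretic equality $\BB(\lambda)_{\cl} = \QLS(\lambda)$ by comparing the two defining notions of ``admissible chain'': a $b$-chain in the level-zero weight poset $X_\af^0(\lambda)$ (Definition~\ref{dfn:achain}) versus a directed path in $\QB_{b\lambda}(W^J)$ (Definition~\ref{dfn:QLS}). The key observation is that an LS path $\pi = (\nu_1 > \cdots > \nu_s; \underline b)$ projects under $\cl$ to a pair $(\cl(\nu_1), \dots, \cl(\nu_s); \underline b)$, and since $\cl(X_\af^0(\lambda)) = W\lambda \cong W^J$ (cited from \cite[Lemma 3.1]{LNSSS}), the $\cl(\nu_k)$ become elements $x_k \in W^J$; up to collapsing consecutive equal terms, matching the two descriptions amounts to showing that $\cl$ carries the covering relation ``$\mu \gtrdot \mu'$ with label $\beta \in \Phiafp$ and $b\pair{\beta^\vee}{\mu}\in\BZ$'' into an edge of the subgraph $\QB_{b\lambda}(W^J)$, and conversely every such edge lifts. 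So the heart of the matter is a dictionary between the combinatorics of the \emph{affine} level-zero weight poset on $X_\af^0(\lambda)$ and the \emph{finite} parabolic quantum Bruhat graph $\QB(W^J)$.

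First I would set up this dictionary using the results on the two lifts of $\QB(W^J)$ established in \cite{LNSSS}: the lift to Bruhat order on $\Waf$ and the lift to Littelmann's level-zero weight poset. Concretely, given $\nu = w\Lambda$-type element and $\beta = \alpha + k\delta \in \Phiafp$ with $\pair{\beta^\vee}{\nu} > 0$, one analyzes the reflection $r_\beta \nu$ and its image $\cl(r_\beta\nu)$; the integrality condition $b\pair{\beta^\vee}{\nu}\in\BZ$ translates (since $\pair{\delta}{\cdot}$-contributions are controlled and $\pair{\alpha^\vee}{\lambda}$ is what survives under $\cl$ up to a scalar) exactly into the edge-selection condition $\pair{\alpha^\vee}{b\lambda} = b\pair{\alpha^\vee}{\lambda} \in \BZ$ defining $\QB_{b\lambda}(W^J)$. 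The distinction between Bruhat edges ($k=0$ essentially, length goes up by one) and quantum edges ($k>0$, with the length drop governed by $\pair{\alpha^\vee}{2\rho - 2\rho_J}$ as in \eqref{E:projqarrow}) should fall out of comparing affine lengths with finite lengths of minimal coset representatives — this is precisely the content of the characterization of the level-zero weight poset proved in \cite{LNSSS}. I would invoke that characterization to identify covers in $X_\af^0(\lambda)$, under $\cl$, with edges of $\QB(W^J)$, and then intersect with the integrality/$b$-constraints on both sides.

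Next, given the edge dictionary, the argument becomes bookkeeping: an LS path's defining data (a $b_k$-chain $\nu_k = \mu_0 \gtrdot \cdots \gtrdot \mu_m = \nu_{k+1}$) projects to a directed path $x_{k+1} \barrl{b_k\lambda} x_k$ in $\QB_{b_k\lambda}(W^J)$, after deleting repetitions (consecutive equal $\cl(\mu_i)$ — which can occur when the reflecting root lies over $\Phi_J$); the monotonicity $\nu_1 > \cdots > \nu_s$ in the affine poset forces $x_k \ne x_{k+1}$ in $W^J$ as required by Definition~\ref{dfn:QLS}, using again the $\cl$-compatibility of the orders. Conversely, a QLS path $(x_1, \dots, x_s; \underline b)$ lifts: one chooses lifts $\nu_k \in X_\af^0(\lambda)$ of the $x_k$ realizing the strict decrease (here I would use Remark~\ref{rem:straight}-type normalization together with the lifting of directed paths in $\QB$ to saturated chains in the level-zero weight poset from \cite{LNSSS}) and reconstructs the chains, checking the integrality condition \eqref{bbord} matches Definition~\ref{dfn:achain}. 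Finally I would check the two piecewise-linear maps \eqref{eq:pi} (after applying $\cl$) and \eqref{eq:eta} agree — this is immediate since both are $\sum (b_k - b_{k-1}) x_k\lambda + \cdots$ with $x_k\lambda = \cl(\nu_k)$.

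\textbf{Main obstacle.} The hard part will be the edge dictionary itself — precisely matching, under $\cl$, a cover $\mu \gtrdot r_\beta\mu$ in the affine level-zero weight poset (with its integrality label) to a Bruhat-or-quantum edge of $\QB(W^J)$ with the correct edge type, and handling the degenerate case where $\cl(\mu) = \cl(r_\beta\mu)$ (reflection by a root of $\Phi_J^\af$, which must be suppressed). This requires careful use of the two lifts of $\QB(W^J)$ and the explicit characterization of the level-zero weight poset from \cite{LNSSS}; getting the quantum-edge length formula \eqref{E:projqarrow} to emerge correctly from the affine-length computation, uniformly in all untwisted types, is where the real work lies. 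Once that correspondence is nailed down, both inclusions $\BB(\lambda)_{\cl} \subseteq \QLS(\lambda)$ and $\QLS(\lambda) \subseteq \BB(\lambda)_{\cl}$ follow by the chain-by-chain translation sketched above.
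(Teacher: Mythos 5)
Your overall approach is the same as the paper's: both reduce the theorem to an edge dictionary between covers in the level-zero weight poset $X_\af^0(\lambda)$ (with the $b$-integrality constraint) and edges in $\QB_{b\lambda}(W^J)$, invoking the characterization from \cite[Theorem 6.5]{LNSSS}. This is exactly the paper's Lemma~\ref{lem:sigma}, and the rest is indeed bookkeeping (the paper organizes it as an induction on the number $s$ of segments, stripping the first one; your chain-by-chain formulation is essentially equivalent once the lemma is in hand).

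However, there is a genuine error in your bookkeeping. You claim that ``the monotonicity $\nu_1 > \cdots > \nu_s$ in the affine poset forces $x_k \ne x_{k+1}$ in $W^J$.'' This is false. The quantum Bruhat graph has directed cycles (a Bruhat edge followed by a quantum edge, say, can return to the start), so a $b_k$-chain from $\nu_{k+1}$ up to $\nu_k$ in $X_\af^0(\lambda)$ can project to a cycle in $\QB_{b_k\lambda}(W^J)$, giving $\cl(\nu_k)=\cl(\nu_{k+1})$ even though $\nu_k>\nu_{k+1}$ strictly (they differ by a positive multiple of $\delta$). You do acknowledge this possibility in your first paragraph (``up to collapsing consecutive equal terms''), but then contradict it in the third, so the plan as written does not close. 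You also misdiagnose where collapsing happens: you suggest it can occur \emph{within} a $b$-chain via a cover labeled by a root over $\Phi_J$, but that never happens — if $w^{-1}\beta\in\Phi_J$ then $\pair{\beta^\vee}{\mu}=\pair{w^{-1}\beta^\vee}{\lambda}=0$, contradicting the cover condition $\pair{\beta^\vee}{\mu}>0$ in \eqref{lev0anti}. The paper's induction on $s$ handles the real collapsing case cleanly (if $\cl(\nu_1)=\cl(\nu_2)$ one simply has $\cl(\pi)=\cl(\pi')$ and the induction hypothesis applies directly); you would need to add that explicit case analysis to make your version go through.
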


In order to prove this theorem, 
we need the following lemma. 
%
%
\begin{lem} \label{lem:sigma}
Let $b \in \BQ$. 

{\rm (1)} Let $\mu,\,\nu \in X_{\af}^{0}(\lambda)$. 
If there exists a $b$-chain for $(\nu,\,\mu)$, 
then there exists a directed path from $\cl(\mu)$ to $\cl(\nu)$ 
in $\bQB{b}$. 

{\rm (2)} Let $w,\,w' \in W^{J}$. 
If there exists a directed path from $w$ to $w'$ in $\bQB{b}$, 
then for each $\mu \in X_{\af}^{0}(\lambda)$ with $\cl(\mu)=w$, 
there exists a $b$-chain for $(\nu,\,\mu)$ 
for some $\nu \in X_{\af}^{0}(\lambda)$ with $\cl(\nu)=w'$.  
\end{lem}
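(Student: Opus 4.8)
\textbf{Proof proposal for Lemma~\ref{lem:sigma}.}

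The plan is to analyze a single cover $\nu \gtrdot \mu$ in the level-zero weight poset $X_\af^0(\lambda)$ and translate it, under the projection $\cl$, into a single edge of the (sub)graph $\bQB{b}$, and conversely; the statements about $b$-chains and directed paths then follow by induction on the length of the chain/path. So the core of the argument is entirely about one step. Recall that a cover $\nu \gtrdot \mu$ in $X_\af^0(\lambda)$ is recorded by a positive affine real root $\beta \in \Phi^{\af+}$ with $\mu = r_\beta \nu$ and $\pair{\beta^\vee}{\nu} > 0$ (by \eqref{lev0anti}); write $\beta = \gamma + n\delta$ with $\gamma \in \Phi$ and $n \ge 0$ (with $n \ge 1$ forced if $\gamma \in \Phi^-$). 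Since $\cl(\delta) = 0$, applying $\cl$ and using that $\cl(\nu) = w\lambda$ for some $w \in W^J$ sends $r_\beta$ to the finite reflection $r_\gamma$ acting on $X$, so $\cl(\mu) = r_\gamma(w\lambda)$.

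First I would recall, from our earlier paper \cite{LNSSS} (whose conventions and results we are using), the precise combinatorial description of when $w\lambda \gtrdot r_\gamma(w\lambda)$ projects to a Bruhat edge versus a quantum edge in $\QB(W^J)$: this is governed by the ``lifting'' results relating the level-zero weight poset to the parabolic quantum Bruhat graph. Concretely, for $w \in W^J$ and $\gamma \in \Phi^+ \setminus \Phi_J^+$ one gets an edge $w \overset{\gamma}{\to} \mcr{wr_\gamma}$, and the affine root $\beta = \gamma + n\delta$ realizing the corresponding cover in $X_\af^0(\lambda)$ has $n$ determined by whether the edge is Bruhat ($n = 0$, roughly) or quantum (with $n$ related to $\pair{\gamma^\vee}{2\rho - 2\rho_J}$ via \eqref{E:projqarrow}). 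The key numerical identity to extract is the value of $\pair{\beta^\vee}{\mu_k}$ in terms of $\pair{\gamma^\vee}{\lambda}$: one checks that $\pair{\beta^\vee}{\mu}$ (for $\mu$ on the appropriate side of the reflection) equals $\pm \pair{\gamma^\vee}{\lambda}$ up to a shift by an integer coming from $n$ and the pairing of $\gamma^\vee$ with the relevant weight. Hence the $b$-chain condition $b\pair{\beta_k^\vee}{\mu_k} \in \BZ$ becomes exactly the edge condition $b\pair{\gamma_k^\vee}{\lambda} \in \BZ$ defining $\bQB{b}$ in \eqref{bbord} — this is the heart of the equivalence.

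For part (1), given a $b$-chain $\nu = \mu_0 \gtrdot \mu_1 \gtrdot \cdots \gtrdot \mu_m = \mu$, I apply $\cl$ termwise; by the above, each cover maps either to an edge of $\QB(W^J)$ or to a trivial step $\cl(\mu_{k-1}) = \cl(\mu_k)$ (this degeneracy can occur when $\gamma_k \in \Phi_J$, i.e.\ the reflection fixes $\lambda$), and the integrality hypothesis on the chain guarantees the nondegenerate steps actually lie in $\bQB{b}$. Deleting the trivial steps yields a directed path from $\cl(\mu)$ to $\cl(\nu)$ in $\bQB{b}$. For part (2), I run the converse: given $w \barrl{b\lambda} w'$, I build the $b$-chain ``downward'' starting from the prescribed $\mu$ with $\cl(\mu) = w$. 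For each edge $w_{k-1} \overset{\gamma_k}{\to} w_k$ of the path I must exhibit the affine root $\beta_k \in \Phi^{\af+}$ lifting it to a genuine cover $\mu_{k-1} \gtrdot \mu_k$ in $X_\af^0(\lambda)$ with the correct $\cl$-image and with $b\pair{\beta_k^\vee}{\mu_k} \in \BZ$; here I would again invoke the lifting construction from \cite{LNSSS} that produces, for each quantum/Bruhat edge, a canonical affine root $\gamma_k + n_k\delta$, and check that starting from $\mu$ this assembly is consistent (the weight $\mu_k$ at step $k$ is determined recursively as $r_{\beta_k}\mu_{k-1}$, and one verifies it stays in $X_\af^0(\lambda)$ with $\cl(\mu_k) = w_k$). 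Setting $\nu := \mu_0$ gives the desired $\nu$.

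The main obstacle I anticipate is part (2): controlling the $\delta$-component $n_k$ of each lifting root so that the resulting sequence $\mu = \mu_m \lessdot \mu_{m-1} \lessdot \cdots \lessdot \mu_0 = \nu$ consists of honest \emph{covers} (not just poset relations) in the level-zero weight poset, \emph{and} simultaneously keeping the integrality condition $b\pair{\beta_k^\vee}{\mu_k}\in\BZ$ — because the pairing $\pair{\beta_k^\vee}{\mu_k}$ involves not just $\lambda$ but the accumulated reflections applied to it, so one needs the fact that $b\pair{\gamma^\vee}{w\lambda} - b\pair{\gamma^\vee}{\lambda} \in \BZ$ automatically (which holds since $w\lambda - \lambda \in Q$), reducing the condition to $b\pair{\gamma_k^\vee}{\lambda}\in\BZ$ as desired. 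Verifying the covering property is where the detailed rank-one-type analysis of $\Phi^{\af+}$-reflections and the characterization of the level-zero weight poset from \cite{LNSSS} will be needed; I expect this to be the technical bulk, while part (1) is comparatively routine. (If the calculation becomes unwieldy, I would defer it to Section~\ref{sec:proofs of lemmas}, consistent with the paper's structure.)
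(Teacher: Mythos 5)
Your overall strategy is the right one and matches the paper: reduce to a single cover $\mu\lessdot\nu$ (resp.\ a single edge of $\bQB{b}$), invoke the cover/edge lifting result \cite[Theorem~6.5]{LNSSS}, check the integrality condition, and conclude by induction on chain/path length. But there is a genuine error in the numerical step you identify as ``the heart of the equivalence.''

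You claim that $b\pair{\gamma^\vee}{w\lambda}-b\pair{\gamma^\vee}{\lambda}\in\BZ$ ``automatically'' because $w\lambda-\lambda\in Q$. This is false: $\pair{\gamma^\vee}{w\lambda-\lambda}$ is indeed an integer, but $b$ is only assumed rational, so multiplying that integer by $b$ does not in general land in $\BZ$. As written, your argument would only give $b\pair{\beta_k^\vee}{\mu_k}\equiv b\pair{\alpha_k^\vee}{\lambda}$ modulo $b\BZ$, which is useless. What saves the day — and what the paper uses — is that the relevant pairings are \emph{exactly equal}, not merely congruent: with $w:=\cl(\mu)$ viewed in $W^J$ and $\gamma:=w^{-1}\beta$ (if $\beta\in\Phi^+$) or $\gamma:=w^{-1}(\beta-\delta)$ (if $\beta\in\delta-\Phi^+$), one has $\pair{\beta^\vee}{\mu}=\pair{\gamma^\vee}{\lambda}$ on the nose. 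The point is that $\mu$ and $w\lambda$ differ by a multiple of $\delta$, and $\pair{\beta^\vee}{\delta}=0$ at level zero, so $\pair{\beta^\vee}{\mu}=\pair{\beta^\vee}{w\lambda}=\pair{w^{-1}\beta^\vee}{\lambda}$; the edge label in $\QB(W^J)$ is precisely this conjugate $w^{-1}\beta$, not $\beta|_X$. Your version, which uses the classical part $\gamma=\bar\beta$ as the label and then tries to shift from $w\lambda$ to $\lambda$, introduces the spurious discrepancy.

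Two smaller remarks. First, your worry about ``trivial steps'' in part (1) (i.e.\ $\cl(\mu_{k-1})=\cl(\mu_k)$) is unfounded for covers: by \cite[Lemma~6.4(1) and Theorem~6.5]{LNSSS}, every cover in $X_\af^0(\lambda)$ has $\beta\in\Phi^+$ or $\beta\in\delta-\Phi^+$, and the conjugated label $w^{-1}\beta$ (resp.\ $w^{-1}(\beta-\delta)$) lies in $\Phi^+\setminus\Phi_J^+$, so the projected step is always a genuine edge. Second, once the exact equality above is in hand, part (2) poses no additional difficulty over part (1); the covering property you worry about is supplied directly by \cite[Theorem~6.5]{LNSSS} (with $\beta:=w\gamma$ for a Bruhat edge and $\beta:=\delta+w\gamma$ for a quantum edge), and the induction is on the length of the path as you outline.
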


\begin{proof}
(1) It suffices to show the assertion in the case that 
$\nu$ is a cover of $\mu$, i.e., $\mu \lessdot \nu$ 
in $X_{\af}^{0}(\lambda)$. Let $\beta \in \Phi^{\af+}$ be 
such that $r_{\beta}\mu=\nu$ and $b\pair{\beta^{\vee}}{\mu} 
\in \BZ$. Then, $\beta \in \Phi^{+}$ or $\beta \in \delta-\Phi^{+}$ 
(see \cite[Lemma 6.4\,(1)]{LNSSS}). 
Set $w:=\cl(\mu) \in W\lambda \cong W^{J}$. 
If $\beta \in \Phi^{+}$, then it follows from \cite[Theorem 6.5]{LNSSS} 
that $\gamma:=w^{-1}\beta \in 
\Phi^{+} \setminus \Phi^{+}_{J}$ and 
$\cl(\mu)=w \stackrel{\gamma}{\rightarrow} \cl(\nu)$ in $\QB(W^{J})$. 
In addition, we see that 
$b \pair{\gamma^{\vee}}{\lambda} = 
b \pair{\beta^{\vee}}{\mu} \in \BZ$,
which implies that 
$\cl(\mu)=w \stackrel{\gamma}{\rightarrow} \cl(\nu)$
in $\bQB{b}$. 
Similarly, 
if $\beta \in \delta-\Phi^{+}$, then 
it follows from \cite[Theorem 6.5]{LNSSS} 
that $\gamma:=w^{-1}(\beta-\delta) \in 
\Phi^{+} \setminus \Phi^{+}_{J}$ and 
$\cl(\mu)=w \stackrel{\gamma}{\rightarrow} \cl(\nu)$ in $\QB(W^{J})$. 
We see that 
$b \pair{\gamma^{\vee}}{\lambda}=b \pair{\beta^{\vee}-c}{\mu} = 
b \pair{\beta^{\vee}}{\mu} \in \BZ$,
which implies that 
$\cl(\mu)=w \stackrel{\gamma}{\rightarrow} \cl(\nu)$
in $\bQB{b}$. Thus we have proved part (1). 

(2) Fix $\mu \in X_{\af}^{0}(\lambda)$ such that $\cl(\mu)=w$. 
Assume that 
\begin{equation*}
w=x_{0} \stackrel{\gamma_{1}}{\rightarrow} 
  x_{1} \stackrel{\gamma_{2}}{\rightarrow} \cdots 
  \stackrel{\gamma_{m}}{\rightarrow}
x_{m}=w'
\end{equation*}
is a directed path from $w$ to $w'$ in $\bQB{b}$. 
We show the assertion by induction on the length $m$ of 
the directed path above. Assume first that $m=1$; for simplicity of 
notation, we set $\gamma:=\gamma_{1}$. Set
\begin{equation*}
\beta:=
 \begin{cases}
 w\gamma & 
  \text{if $w \stackrel{\gamma}{\rightarrow} w'$ is a Bruhat edge}, \\[1.5mm]
 \delta+w\gamma & 
  \text{if $w \stackrel{\gamma}{\rightarrow} w'$ is a quantum edge}.
 \end{cases}
\end{equation*}
It follows from \cite[Theorem 6.5]{LNSSS} that 
$\beta \in \Phi^{\af+}$ and $\mu \lessdot r_{\beta}\mu=:\nu$. 
Also, we see that $\cl(\nu)=w'$. In addition, 
$b\pair{\beta^{\vee}}{\mu}=b\pair{\gamma^{\vee}}{\lambda} \in \BZ$. 
Thus, $\mu \lessdot \nu$ is a $b$-chain 
for $(\nu,\,\mu)$. Assume that $m \ge 2$. By our induction hypothesis,
there exists a $b$-chain for $(\mu',\,\mu)$ 
for some $\mu' \in X_{\af}^{0}(\lambda)$ with $\cl(\mu')=x_{m-1}$. 
Also, by our induction hypothesis,
there exists a $b$-chain for $(\nu,\,\mu')$ 
for some $\nu \in X_{\af}^{0}(\lambda)$ with $\cl(\nu)=x_{m}=w'$. 
Concatenating these $b$-chains, we obtain a $b$-chain for 
$(\nu,\,\mu)$. Thus we have proved the lemma. 
\end{proof}

\begin{proof}[Proof of Theorem~{\rm \ref{thm:LS=QLS}}]
First, let us show that 
$\BB(\lambda)_{\cl} \subset \QLS(\lambda)$. Let 
%
%
\begin{equation} \label{eq:pi0}
\pi=(\nu_{1},\,\nu_{2},\,\dots,\,\nu_{s-1},\,\nu_{s}\,;\,
b_{0},\,b_{1},\,b_{2},\,\dots,\,
b_{s-1},\,b_{s}) \in \BB(\lambda).
\end{equation}
We show $\cl(\pi) \in \QLS(\lambda)$ by induction on $s$. 
If $s=1$, then the assertion is obvious by Remark~\ref{rem:straight2}. 
Assume that $s > 1$. 
Set 
%
%
\begin{equation} \label{eq:pip}
\pi':=(\nu_{2},\,\dots,\,\nu_{s-1},\,\nu_{s}\,;\,
b_{0},\,b_{2},\,\dots,\,b_{s-1},\,b_{s}).
\end{equation}
Then we see that $\pi' \in \BB(\lambda)$, and hence 
$\cl(\pi') \in \QLS(\lambda)$ by our induction hypothesis. 
Write $\cl(\pi')$ as: 
%
%
\begin{equation} \label{eq:clpip}
\cl(\pi'):=(y_{1},\,y_{2},\,\dots,\,y_{u}\,;\,
c_{0},\,c_{1},\,\dots,\,c_{u-1},\,c_{u})
\end{equation}
for some $y_{1},\,y_{2},\,\dots,\,y_{u} \in W^{J}$ and 
$0=c_{0} < c_{1} < \cdots < c_{u-1} < c_{u}=1$. 
Here we claim that $0 < b_{1} < b_{2} \le c_{1}$ and $y_{1}=\cl(\nu_{2})$; notice that 
the inequalities $0 < b_1 < b_2$ are obvious by the definition of LS paths. 
We show that $b_{2} \le c_{1}$ and $y_{1}=\cl(\nu_{2})$. 
By \eqref{eq:pi} and \eqref{eq:pip}, we have 
\begin{equation*}
\pi'(t) = 
 \begin{cases} 
 t\nu_2 & \text{for $t \in [0,b_2]$}, \\
 b_2 \nu_2 + (t-b_2) \nu_3 & \text{for $t \in [b_2,b_3]$}.
 \end{cases}
\end{equation*}
Hence we have
\begin{equation} \label{eq:clpi1}
\cl(\pi')(t) = 
 \begin{cases} 
 t \cl(\nu_2) & \text{for $t \in [0,b_2]$}, \\
 b_2 \cl(\nu_2) + (t-b_2) \cl(\nu_3) & \text{for $t \in [b_2,b_3]$};
 \end{cases}
\end{equation}
note that $\cl(\nu_{2})$  and $\cl(\nu_3)$ are direction vectors of 
$\cl(\pi')$ for the intervals $[0,\,b_{2}]$ and $[b_{2},\,b_{3}]$, 
respectively. Therefore, 
\begin{enumerate}
\item[(a)] 
 if $\cl(\nu_2) \ne \cl(\nu_3)$, then the first turning point 
 of $\cl(\pi')$ is equal to $b_2$,
\item[(b)] 
 if $\cl(\nu_2) = \cl(\nu_3)$, then the first turning point 
 of $\cl(\pi')$ is greater than $b_2$; 
\end{enumerate}
in particular, the first turning point of $\cl(\pi')$ 
is greater than or equal to $b_{2}$. Next, by \eqref{eq:clpip}, 
we have
\begin{equation} \label{eq:clpi2}
(\cl(\pi'))(t) = 
 \begin{cases}
 t (y_1\lambda) & \text{for $t \in [0,c_1]$}, \\
 c_1 (y_1\lambda) + (t-c_1) (y_2\lambda) & \text{for $t \in [c_1,c_2]$}. 
 \end{cases}
\end{equation}
Since $y_1 \in W^{J}$ is not equal to $y_2 \in W^{J}$ in \eqref{eq:clpip}, 
we have $y_{1}\lambda \ne y_{2}\lambda$, 
which implies that $c_{1}$ is nothing but the first turning point of $\cl(\pi')$. 
Because the first turning point of $\cl(\pi')$ 
is greater than or equal to $b_{2}$ as seen above, we obtain $b_{2} \le c_{2}$. 
Moreover, by comparing the first direction vector of $\cl(\pi')$ in \eqref{eq:clpi1} 
and that in \eqref{eq:clpi2}, we obtain $y_{1}=\cl(\nu_{2})$, as desired. 

If $\cl(\nu_{1})=\cl(\nu_{2})$, then it follows immediately that $\cl(\pi)=\cl(\pi')$, 
and hence $\cl(\pi) \in \QLS(\lambda)$. Assume that 
$\cl(\nu_{1}) \ne \cl(\nu_{2})=y_{1}$; 
set $x_{1}:=\cl(\nu_{1}) \in W\lambda \cong W^{J}$.
Because there exists a $b_{1}$-chain for 
$(\nu_{1},\,\nu_{2})$ by the definition of an LS path, 
we deduce from Lemma~\ref{lem:sigma}\,(1) 
that there exists a directed path from 
$y_{1}=\cl(\nu_{2})$ to $x_{1}=\cl(\nu_{1})$ 
in $\bQB{b_{1}}$.
Therefore, we see that 
\begin{equation*}
(x_{1},\,y_{1},\,y_{2},\,\dots,\,y_{u}\,;\,
c_{0},\,b_{1},\,c_{1},\,\dots,\,c_{u-1},\,c_{u}) 
\end{equation*}
is a QLS path of shape $\lambda$, which is identical to $\cl(\pi)$. 
Thus we obtain $\cl(\pi) \in \QLS(\lambda)$, 
as desired. 

Next, let us show the opposite inclusion, i.e., 
$\BB(\lambda)_{\cl} \supset \QLS(\lambda)$. Let 
\begin{equation*}
\eta=
(x_{1},\,x_{2},\,\dots,\,x_{s-1},\,x_{s}\,;\,
b_{0},\,b_{1},\,b_{2},\,\dots,\,
b_{s-1},\,b_{s}) \in \QLS(\lambda).
\end{equation*}
We show by induction on $s$ 
that there exists $\pi \in \BB(\lambda)$ such that 
$\cl(\pi)=\eta$. If $s=1$, then the assertion is obvious 
by Remark~\ref{rem:straight2}. Assume that $s > 1$. 
We see that 
\begin{equation*}
\eta':=
(x_{2},\,\dots,\,x_{s-1},\,x_{s}\,;\,
b_{0},\,b_{2},\,\dots,\,
b_{s-1},\,b_{s})
\end{equation*}
is contained in $\QLS(\lambda)$. Hence, 
by our induction hypothesis, there exists $\pi' \in \BB(\lambda)$ 
such that $\cl(\pi')=\eta'$. Write $\pi'$ as: 
\begin{equation*}
\pi'=(\mu_{1},\,\mu_{2},\,\dots,\,\mu_{u}\,;\,
c_{0},\,c_{1},\,\dots,\,c_{u-1},\,c_{u}) 
\end{equation*}
for some $\mu_{1},\,\mu_{2},\,\dots,\,\mu_{u} \in X_{\af}^{0}(\lambda)$ and 
$0=c_{0} < c_{1} < \cdots < c_{u-1} < c_{u}=1$;
we remark that $0 < b_{1} < b_{2} \le c_{1}$ and 
$\cl(\mu_{1})=x_{2}$. 
Because there exists a directed path from $x_{2}=\cl(\mu_{1})$ to 
$x_{1}$ in $\bQB{b_{1}}$, 
it follows from Lemma~\ref{lem:sigma}\,(2)
that there exists a $b_{1}$-chain for $(\nu_{1},\mu_{1})$ 
for some $\nu_{1} \in X_{\af}^{0}(\lambda)$ with $\cl(\nu_{1})=x_{1}$. 
Therefore, we have
\begin{equation*}
\pi:=(\nu_{1},\,\mu_{1},\,\mu_{2},\,\dots,\,\mu_{u}\,;\,
c_{0},\,b_{1},\,c_{1},\,\dots,\,c_{u-1},\,c_{u}) 
\in \BB(\lambda).
\end{equation*}
Also, it is easily seen that $\cl(\pi)=\eta$. Thus we have proved 
the opposite inclusion, thereby completing the proof of the theorem. 
\end{proof}

%
\section{Formula for the degree function}
\label{sec:degree}
Throughout this section, 
we fix a dominant integral weight $\lambda \in X$, and set
$J:=\bigl\{i \in I \mid \pair{\alpha_{i}^{\vee}}{\lambda}=0\bigr\}$. 
We define the degree function on projected level-zero LS paths 
in Section~\ref{subsec:dfn-deg} and recall the relation with the energy
function on KR crystals in Theorem~\ref{thm:ns-deg}. Theorem~\ref{thm:deg-QLS}
is the main result of this section and provides an explicit expression for the degree function
as sums of weights of shortest paths in the parabolic quantum Bruhat graph.

%
\subsection{Weights of directed paths}
\label{subsec:wtdp}
We know the following proposition from \cite[Proposition~8.1]{LNSSS}.
%
%
\begin{prop} \label{prop:weight}
Let $x,\,y \in W^{J}$, where 
$J=\bigl\{ i \in I \mid \pair{\alpha_{i}^{\vee}}{\lambda}=0 \bigr\}$.
Let $\bp$ and $\bq$ be a shortest 
and an arbitrary directed path from $x$ to $y$ in $\QB(W^{J})$, 
respectively. Then there exists $h \in Q^{\vee+}$ such that 
\begin{equation*}
\wt (\bq) - \wt (\bp) \equiv h \mod Q_{J}^{\vee}.
\end{equation*}
In addition, if $\bq$ is also shortest, 
then $\wt (\bq) \equiv \wt (\bp) \mod Q_{J}^{\vee}$. 
\end{prop}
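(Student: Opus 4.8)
The plan is to prove Proposition~\ref{prop:weight} by reducing everything to the ``diamond'' or ``local'' structure of the parabolic quantum Bruhat graph, using known lifts of $\QB(W^J)$ to the affine Weyl group. First I would recall from \cite{LNSSS} the key structural facts: (i) any two directed paths between fixed endpoints $x,y \in W^J$ can be connected by a sequence of elementary ``moves'' localized on length-$2$ intervals (this is the parabolic analogue of the diamond lemma for the quantum Bruhat graph of Brenti--Fomin--Postnikov, and is exactly what makes the weight well-defined modulo $Q_J^\vee$); and (ii) the relationship between edge weights and the $2\rho$-grading, encoded in the definitions of Bruhat and quantum edges via $\ell(\mcr{wr_\alpha}) = \ell(w)+1$ or $\ell(\mcr{wr_\alpha}) = \ell(w)+1-\pair{\alpha^\vee}{2\rho-2\rho_J}$.

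The main step is the following monotonicity-type statement: for a shortest path $\bp$ from $x$ to $y$ and an arbitrary path $\bq$ from $x$ to $y$, one has $\wt(\bq) - \wt(\bp) \in Q^{\vee+} + Q_J^\vee$, with equality mod $Q_J^\vee$ to some $h \in Q^{\vee+}$. I would establish this by induction on $\ell(\bq) - \ell(\bp) \ge 0$ (both are even, or at least have the same parity, since all edges change $\ell \bmod 2$ consistently once the quantum correction is accounted for). In the base case $\ell(\bq)=\ell(\bp)$ both paths are shortest, and one invokes the connectivity-by-local-moves: each local move replaces a length-$2$ subpath through one intermediate vertex by another length-$2$ subpath through a different intermediate vertex, and a direct case analysis on the four types (Bruhat-Bruhat, Bruhat-quantum, quantum-Bruhat, quantum-quantum) shows the two subpaths have weights differing by an element of $Q_J^\vee$ — this is where one uses that $\alpha + \alpha' \equiv$ (the root of the other side) modulo $Q_J$ in the relevant rank-$2$ subsystem. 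For the inductive step when $\ell(\bq) > \ell(\bp)$, one finds a ``shortcut'': a length-$2$ detour in $\bq$ that can be replaced by a single edge or by a $0$-length move, strictly decreasing the length while changing the weight by a subtraction of something in $Q^{\vee+}$ modulo $Q_J^\vee$; applying the inductive hypothesis then finishes it.

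The hardest part will be the case analysis in the base case — verifying that two shortest length-$2$ subpaths $x \to z \to y$ and $x \to z' \to y$ with $z \ne z'$ always have weights congruent mod $Q_J^\vee$. This requires carefully tracking, in each of the rank-$2$ root subsystems that arise, how the quantum edges contribute their coroots; the subtlety is precisely that ``quantum'' is not intrinsic to the root but depends on whether the length drop occurs, so one must use equation~\eqref{E:projqarrow} together with the identity $\pair{\alpha^\vee}{2\rho - 2\rho_J} = $ (a combinatorial count) to pin down which edges are quantum. I expect this to reduce, after passing to the parabolic quotient, to the corresponding non-parabolic statement for the quantum Bruhat graph $\QB(W)$ applied inside $W_J \backslash$-cosets, plus bookkeeping of the $Q_J^\vee$-ambiguity; the lift to $W_\af$ via \cite[Theorem 6.5]{LNSSS} and the level-zero weight poset (already used in Lemma~\ref{lem:sigma}) provides a clean way to organize this, since in $W_\af$ the paths become genuine chains in Bruhat order and the weight is read off from the $\delta$-components.
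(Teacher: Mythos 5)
The paper does not prove Proposition~\ref{prop:weight}; it is quoted verbatim from \cite[Proposition~8.1]{LNSSS}. So there is no in-paper proof to compare against. That said, your sketch has a genuine gap and also buries what is almost certainly the correct approach in a parenthetical at the end.

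The gap is in the inductive step. You assert that whenever $\ell(\bq)>\ell(\bp)$ one can ``find a shortcut'' --- a length-$2$ detour in $\bq$ that can be collapsed, strictly decreasing path length and subtracting an element of $Q^{\vee+}$ from the weight modulo $Q_J^\vee$. This is not a routine consequence of the diamond lemma: a non-shortest path in a quantum Bruhat graph need not contain an immediately collapsible detour, and showing that some finite sequence of diamond moves exposes one requires the shellability machinery (the analogue of Theorem~\ref{thm:shell} in the parabolic setting), together with a careful tracking of how each move affects the weight modulo $Q_J^\vee$. As stated, this step is a hopeful claim rather than an argument. Your parity remark ($\ell(\bq)-\ell(\bp)$ even) is fine, since $\pair{\alpha^\vee}{2\rho-2\rho_J}$ is always even, but parity alone does not produce the shortcut.

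The route you mention only at the end --- lifting to the affine Weyl group --- is the one that actually works, and it is exactly the method this paper uses for its closely related statements. Lemma~\ref{bqbpaths} (proved in Section~\ref{sbqbpaths}) lifts two paths in $\QB(W)$ to downward saturated chains in $\Waf^-$ starting from a common $M$-superantidominant point $vt_\xi$, reads off the weights as translation parts $h,h'$, and invokes \cite[Lemma~1]{Po} to conclude $h'-h\in Q^{\vee+}$ when the first path is shortest; Lemma~\ref{relwt} (Section~\ref{comppaths}) then transfers such comparisons between $\QB(W)$ and $\QB(W^J)$ using Lemma~\ref{q2ls} and Lemma~\ref{L:mainb}. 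Proposition~\ref{prop:weight} is the parabolic version of \cite[Lemma~1(3)]{Po} obtained by combining precisely these two transfer steps, and that is how \cite[Proposition~8.1]{LNSSS} is established. If you want to make your sketch into a proof, scrap the induction-by-shortcuts and instead (a) push both $\bp,\bq$ to paths in $\QB(W)$ with $\mod Q_J^\vee$-congruent weights, (b) lift those to $\Waf^-$, and (c) apply Postnikov's lemma there; the diamond lemma is then only needed locally, not to engineer global shortcuts.
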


Let $x,\,y \in W^{J}$, where 
$J=\bigl\{ i \in I \mid \pair{\alpha_{i}^{\vee}}{\lambda}=0 \bigr\}$.
By Proposition~\ref{prop:weight}, 
the value $\pair{\wt(\bp)}{\lambda}$ for
a shortest directed path $\bp$ from $x$ to $y$ in $\QB(W^{J})$
does not depend on the choice of 
such a shortest directed path;
we denote this value by $\wts{x}{y}$. 

The following is a corollary to \cite[Lemma 7.7]{LNSSS}; 
for $\ti{\alpha}_{j}$ and $s_{j}$, see \eqref{tialpha}.
%
%
\begin{cor} \label{cor:weight}
Let $w_{1},\,w_{2} \in W^{J}$, and $j \in I_{\af}$. 

{\rm (1)} 
If $\pair{\ti{\alpha}_{j}^{\vee}}{w_{1}\lambda} > 0$ and 
$\pair{\ti{\alpha}_{j}^{\vee}}{w_{2}\lambda} \le 0$, then 
\begin{equation*}
\wts{\mcr{s_{j}w_{1}}}{w_{2}} =
\wts{w_{1}}{w_{2}}-
\delta_{j,\,0}\pair{\ti{\alpha}_{j}^{\vee}}{w_{1}\lambda}. 
\end{equation*}

{\rm (2)} 
If $\pair{\ti{\alpha}_{j}^{\vee}}{w_{1}\lambda} < 0$ and 
$\pair{\ti{\alpha}_{j}^{\vee}}{w_{2}\lambda} < 0$, then 
\begin{equation*}
\wts{ \mcr{s_{j}w_{1}} }{ \mcr{s_{j}w_{2}} } = 
\wts{w_{1}}{w_{2}}-
\delta_{j,\,0}\pair{\ti{\alpha}_{j}^{\vee}}{w_{1}\lambda}+ 
\delta_{j,\,0}\pair{\ti{\alpha}_{j}^{\vee}}{w_{2}\lambda}. 
\end{equation*}

{\rm (3)} 
If $\pair{\ti{\alpha}_{j}^{\vee}}{w_{1}\lambda} \ge 0$ and 
$\pair{\ti{\alpha}_{j}^{\vee}}{w_{2}\lambda} < 0$, then 
\begin{equation*}
\wts{w_{1}}{\mcr{s_{j}w_{2}}} =
\wts{w_{1}}{w_{2}}+
\delta_{j,\,0}\pair{\ti{\alpha}_{j}^{\vee}}{w_{2}\lambda}. 
\end{equation*}

\end{cor}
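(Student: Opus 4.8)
The plan is to obtain all three identities as specializations of \cite[Lemma~7.7]{LNSSS}, which computes how the quantity $\wts{\cdot}{\cdot}$ --- the pairing $\pair{\wt(\bp)}{\lambda}$ of $\lambda$ with the weight of a shortest directed path $\bp$ in $\QB(W^{J})$ --- changes when one of its arguments, or each of them, is replaced by its image under a reflection; the role of the corollary is then essentially just to transcribe that lemma into the notation $s_{j}$, $\ti{\alpha}_{j}$ and to read off the correction terms.

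First I would pass from the group $W^{J}$ to the orbit $W\lambda$ via the bijection $w \mapsto w\lambda$, using that $W_{J}$ is the stabilizer of $\lambda$; this gives $\mcr{s_{j} w}\lambda = s_{j}(w\lambda)$ for every $j \in \Iaf$. By Proposition~\ref{prop:weight} together with the definition of $\wts{\cdot}{\cdot}$, the number $\wts{x}{y}$ equals $\pair{\wt(\bp)}{\lambda}$ for \emph{any} shortest directed path $\bp$ from $x$ to $y$ in $\QB(W^{J})$; in particular it is independent of the choice of $\bp$. Hence each of (1)--(3) becomes a comparison of shortest-path weights among the vertices $w_{1}\lambda$, $w_{2}\lambda$ and their images $\mcr{s_{j} w_{1}}\lambda = s_{j}(w_{1}\lambda)$, $\mcr{s_{j} w_{2}}\lambda = s_{j}(w_{2}\lambda)$, which is precisely what \cite[Lemma~7.7]{LNSSS} addresses.

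Next I would split according to whether $j = 0$. For $j \ne 0$ we have $\ti{\alpha}_{j} = \alpha_{j}$, $s_{j} = r_{j}$, and every Kronecker delta vanishes; moreover, when $\pair{\alpha_{j}^{\vee}}{w\lambda} \ne 0$ one checks directly that $r_{j} w \in W^{J}$, so $\mcr{r_{j} w} = r_{j} w$, and that $w$ and $r_{j} w$ are joined in $\QB(W^{J})$ by a single \emph{Bruhat} edge, which carries weight $0$. Feeding this into \cite[Lemma~7.7]{LNSSS} under each of the three sign hypotheses shows that modifying a source endpoint $w_{1}$, a target endpoint $w_{2}$, or both by $r_{j}$ leaves the weight of a shortest path unchanged; this gives (1), (3), and (2) with no correction term. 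For $j = 0$ one substitutes $s_{0} = r_{\theta}$ and $\ti{\alpha}_{0} = -\theta$: the modifying step is now controlled by the highest root, a \emph{quantum} step may enter, and --- unlike in the simple-root case --- its weight need not vanish. Tracking it through \cite[Lemma~7.7]{LNSSS} produces an extra contribution of $-\pair{\ti{\alpha}_{0}^{\vee}}{w_{1}\lambda} = \pair{\theta^{\vee}}{w_{1}\lambda}$ whenever the source $w_{1}$ is modified, and of $+\pair{\ti{\alpha}_{0}^{\vee}}{w_{2}\lambda} = -\pair{\theta^{\vee}}{w_{2}\lambda}$ whenever the target $w_{2}$ is modified; these are exactly the terms $-\delta_{j,0}\pair{\ti{\alpha}_{j}^{\vee}}{w_{1}\lambda}$ and $+\delta_{j,0}\pair{\ti{\alpha}_{j}^{\vee}}{w_{2}\lambda}$ in (1)--(3). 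Adding the contributions in each case completes the argument.

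The step I expect to be the real obstacle --- and which is the genuine content of \cite[Lemma~7.7]{LNSSS} rather than of the corollary --- is the minimality bookkeeping for shortest paths: one must check, using the length formula~\eqref{E:projqarrow}, that after changing an endpoint the modified path is still of minimal length, and, in case (2), where both endpoints move at once, that the two modifications together do not open up a strictly shorter route through $\QB(W^{J})$. (This is also why (2) cannot simply be deduced by composing (1) and (3): the sign hypotheses there do not chain.) Once Lemma~7.7 is available in the appropriate form, the corollary reduces to recording the weight of the single edge that enters or leaves in each of the three cases and specializing the underlying root to $\alpha_{j}$, with no shift when $j \ne 0$, or to $\theta$, with the $\delta_{j,0}$-shift when $j = 0$.
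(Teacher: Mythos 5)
Your argument is essentially the paper's: both reduce the three identities to \cite[Lemma~7.7]{LNSSS} (the paper cites parts (3) and (5) directly for case (1)) and then read off the weight difference by pairing with $\lambda$. The only cosmetic difference is that you unfold the $\delta_{j,0}$ factor into separate cases $j\neq 0$ and $j=0$, whereas the paper carries it along uniformly; you also correctly flag that the real content (preservation of shortest-path length under endpoint modification) lives in Lemma~7.7 and not in the corollary.
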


\begin{proof}
We give a proof only for part (1); 
the proofs for parts (2) and (3) are similar. 
Let $\bp$ be a shortest directed path from $w_{1}$ to $w_{2}$. 
Then it follows from \cite[Lemma 7.7\,(3) and (5)]{LNSSS} that
there exists a shortest directed path $\bp'$ 
from $\mcr{s_{j}x}$ to $w_2$ such that
\begin{equation*}
\wt(\bp')=\wt(\bp)-\delta_{j,\,0}w_{1}^{-1}\ti{\alpha}_{j}^{\vee}.
\end{equation*}
Hence, 
\begin{align*}
\wts{\mcr{s_{j}w_{1}}}{w_{2}} & = 
\pair{\wt(\bp')}{\lambda}=
\pair{\wt(\bp)}{\lambda}-\delta_{j,\,0}
\pair{w_{1}^{-1}\ti{\alpha}_{j}^{\vee}}{\lambda} \\
& = \wts{w_{1}}{w_{2}}-
\delta_{j,\,0}\pair{\ti{\alpha}_{j}^{\vee}}{w_{1}\lambda}.
\end{align*}
Thus we have proved the corollary. 
\end{proof}

%
\subsection{Definition of the degree function}
\label{subsec:dfn-deg}
Let us recall from \cite[Section 3.1]{NS08} the definition of 
the degree function
\begin{equation*}
\Deg=\Deg_{\lambda} : 
   \BB(\lambda)_{\cl} \rightarrow \BZ_{\le 0}.
\end{equation*}
Denote by $\BB_{0}(\lambda)$ the connected component of 
$\BB(\lambda)$ containing the straight line $\pi_{\lambda}=(\lambda\,;\,0,\,1)$.
Also, for $\pi=(\nu_{1},\,\dots,\,\nu_{s}\,;\,b_{0},\,\dots,\,b_{s}) 
\in \BB(\lambda)$, we set $\iota(\pi):=\nu_{1}$, and call it 
the initial direction of $\pi$; 
note that $\iota(\pi)=\pi(\varepsilon)/\varepsilon$ for 
sufficiently small $\varepsilon > 0$. 
We know from \cite[Proposition 3.1.3]{NS08} that 
for each $\eta \in \BB(\lambda)_{\cl}$, 
there exists a unique $\pi_{\eta} \in \BB_{0}(\lambda)$ 
satisfying the conditions that $\cl(\pi_{\eta})=\eta$ and 
$\iota(\pi_{\eta}) \in \lambda-Q^{+}$; recall that 
$Q^{+}=\sum_{j \in I} \BZ_{\ge 0}\alpha_{j}$.
Then it follows from \cite[Lemma 3.1.1]{NS08} that 
$\pi_{\eta}(1) \in X_{\af}^{0}$ is of the form: 
\begin{equation*}
\pi_{\eta}(1)=\lambda-\beta+K \delta
\end{equation*}
for some $\beta \in Q^{+}$ and $K \in \BZ_{\ge 0}$. 
We define the degree $\Deg(\eta) \in \BZ_{\le 0}$ of 
$\eta \in \BB(\lambda)_{\cl}$ by:
\begin{equation}
\Deg(\eta)=-K \in \BZ_{\le 0}.
\end{equation}

\begin{ex} \label{ex:deg}
Assume that $\Fg$ is of type $A_{2}^{(1)}$, and let $\lambda=\vpi_{1}+\vpi_{2}$. 
Note that $J=\bigl\{i \in I \mid \pair{\alpha_{i}^{\vee}}{\lambda}=0\bigr\}$ is 
the empty set, and hence $W^{J} = W$; for the quantum Bruhat graph 
$\QB(W)$,  see \cite[Fig.\,1]{LNSSS}. 
\begin{enumerate}
\item 
Set $\eta_{1}:=(e,\,w_{\circ} \,;\, 0,\,1/2,\,1)$. 
Since $w_{\circ}=r_{\theta} \edge{\theta} e$ is 
a quantum edge in $\QB(W)$, 
which is also an edge in $\QB_{(1/2)\lambda}(W)$, 
we see that $\eta_{1} \in \QLS(\lambda)=\BB(\lambda)_{\cl}$. 
We claim that $\pi_{\eta_{1}}=(\lambda,\,r_{0}\lambda\,;\,0,\,1/2,\,1)$. 
First we see that $e_{0}\pi_{\lambda}= (\lambda,\,r_{0}\lambda\,;\,0,\,1/2,\,1)$, 
which implies that $(\lambda,\,r_{0}\lambda\,;\,0,\,1/2,\,1)$ 
is contained in $\BB_{0}(\lambda)$. 
Since $\cl(\lambda)=e$ and $\cl(r_{0}\lambda)=w_{\circ}$, 
the image of $(\lambda,\,r_{0}\lambda\,;\,0,\,1/2,\,1)$ 
under the map $\cl$ is identical to $\eta$. 
Also, the initial direction of this element is equal to 
$\lambda \in \lambda-Q^{+}$. Therefore, we deduce that 
$\pi_{\eta_{1}}=(\lambda,\,r_{0}\lambda\,;\,0,\,1/2,\,1)$, as desired. 
Since $\pi_{\eta_{1}}(1)=\lambda+\alpha_{0} = \lambda-\theta+\delta$, 
we have $\Deg(\eta_{1})=-1$. 

\item Set $\eta_{2}:=(r_{1}r_{2},\,r_{2}\,;\,0,\,1/2,\,1)$. 
Since $r_{2} \edge{\theta} r_{1}r_{2}$ is a Bruhat edge in $\QB(W)$, 
which is also an edge in $\QB_{(1/2)\lambda}(W)$, we see that 
$\eta_{2} \in \QLS(\lambda)=\BB(\lambda)_{\cl}$. As above, 
we deduce that $\pi_{\eta_{2}}=(r_{1}r_{2},\,r_{2}\,;\,0,\,1/2,\,1)$. 
Since $\pi_{\eta_{2}}(1)=\lambda-(\alpha_{1}+\alpha_{2})$, 
we have $\Deg(\eta_{2})=0$. 
\end{enumerate}
\end{ex}

%
\begin{remark} \label{rem:degree}
It is known (see, e.g., \cite[Proposition 4.3.1]{NS08}) 
that for each $\eta \in \BB(\lambda)_{\cl}$, 
there exist $j_{1},\,j_{2},\,\dots,\,j_{k} \in I_{\af}$ 
such that $e_{j_{k}}e_{j_{2}} \cdots e_{j_{1}}\eta=\eta_{\lambda}$; 
recall from Remark~\ref{rem:straight2} that $\eta_{e}=\eta_{\lambda}=\cl(\pi_{\lambda})$.
Therefore, we deduce from \cite[Lemma 3.2.1]{NS08} that
$\Deg=\Deg_{\lambda} : \BB(\lambda)_{\cl} \rightarrow \BZ_{\le 0}$
is a unique function satisfying the following conditions: 

(i) $\Deg(\eta_{\lambda})=0$; 

(ii) for $\eta \in \BB(\lambda)_{\cl}$ and $j \in I_{\af}$
with $e_{j}\eta \ne \bzero$, 
%
%
\begin{equation} \label{eq:deg-ro}
\Deg(e_{j}\eta)=
 \begin{cases}
 \Deg(\eta)-1 
  & \text{\rm if $j=0$ and 
    $\iota(e_{0}\eta)=\iota(\eta)$}, \\[1.5mm]
 \Deg(\eta)-\pair{\ti{\alpha}_{0}^{\vee}}{\iota(\eta)}-1 
  & \text{\rm if $j=0$ and 
    $\iota(e_{0}\eta)=s_{0}(\iota(\eta))$}, \\[1.5mm]
 \Deg(\eta) & \text{\rm if $j \ne 0$},
 \end{cases}
\end{equation}
where $\iota(\eta):=\eta(\varepsilon)/\varepsilon$ for 
sufficiently small $\varepsilon > 0$. 
\end{remark}

%
\subsection{Relation between the degree function and the energy function}
\label{subsec:deg-ene}

Write $\lambda$ as $\lambda=
\vpi_{i_{1}} + \vpi_{i_{2}} + \cdots + \vpi_{i_{p}}$, 
with $i_{1},\,i_{2},\,\dots,\,i_{p} \in I$.
By Theorem~\ref{thm:LScl}\,(3), 
there exists an isomorphism 
\begin{equation}\label{defpsi}
\Psi:\BB(\lambda)_{\cl} \stackrel{\sim}{\rightarrow} 
 \BB(\vpi_{i_{1}})_{\cl} \otimes 
 \BB(\vpi_{i_{2}})_{\cl} \otimes \cdots \otimes 
 \BB(\vpi_{i_{p}})_{\cl}=:\BB
\end{equation}
of crystals. Here we should recall from Remark~\ref{rem:LScl} 
that $\BB(\vpi_{i})_{\cl}$ is isomorphic to 
the one-column KR crystal $B^{i,1}$. Also, recall from
Section~\ref{subsec:crystal} that we are using the Kashiwara convention for
tensor products in this paper.
So, following \cite[Section 3]{HKOTY} and \cite[Section 3.3]{HKOTT} 
(see also~\cite{SS} and~\cite[Section 4.1]{NS08}), 
we define the energy function $D=D_{\BB}:\BB \rightarrow \BZ_{\le 0}$ 
on $\BB$ as follows. First, for each $1 \le k,\,l \le p$, 
there exists a unique isomorphism 
(called a combinatorial $R$-matrix)
\begin{equation*}
R_{k,l}: 
 \BB(\vpi_{i_{k}})_{\cl} \otimes \BB(\vpi_{i_{l}})_{\cl}
 \stackrel{\sim}{\rightarrow}
 \BB(\vpi_{i_{l}})_{\cl} \otimes \BB(\vpi_{i_{k}})_{\cl}
\end{equation*}
of crystals. Also, there exists a unique $\BZ$-valued function 
(called a local energy function) 
$H_{k,l}:
\BB(\vpi_{i_{k}})_{\cl} \otimes \BB(\vpi_{i_{l}})_{\cl} \rightarrow \BZ$ 
satisfying the following conditions (H1) and (H2): 

(H1) 
For $\eta_{k} \otimes \eta_{l} \in 
\BB(\vpi_{i_{k}})_{\cl} \otimes \BB(\vpi_{i_{l}})_{\cl}$ and 
$j \in I_{\af}$ such that $e_{j}(\eta_{k} \otimes \eta_{l}) \ne \bzero$, 
\begin{align*}
& H_{k,l}(e_{j}(\eta_{k} \otimes \eta_{l})) = 
  \nonumber \\[3mm]
& \hspace*{10mm}
\begin{cases}
H_{k,l}(\eta_{k} \otimes \eta_{l})+1 & \\[1mm]
\hspace*{5mm} \text{\rm if $j=0$, and if
   $e_{0}(\eta_{k} \otimes \eta_{l})=
    e_{0}\eta_{k} \otimes \eta_{l}$,
   $e_{0}(\ti{\eta}_{l} \otimes \ti{\eta}_{k})=
    e_{0}\ti{\eta}_{l} \otimes \ti{\eta}_{k}$,} & \\[3mm]
H_{k,l}(\eta_{k} \otimes \eta_{l})-1 & \\[1mm]
\hspace*{5mm} \text{\rm if $j=0$, and if
   $e_{0}(\eta_{k} \otimes \eta_{l})=
    \eta_{k} \otimes e_{0}\eta_{l}$, 
   $e_{0}(\ti{\eta}_{l} \otimes \ti{\eta}_{k})=
    \ti{\eta}_{l} \otimes e_{0}\ti{\eta}_{k}$,} & \\[3mm]
H_{k,l}(\eta_{k} \otimes \eta_{l}) 
\hspace{10mm} \text{\rm otherwise}, & 
\end{cases}
\end{align*}
where we set 
$\ti{\eta}_{l} \otimes \ti{\eta}_{k}:=
 R_{k,l}(\eta_{k} \otimes \eta_{l}) \in 
 \BB(\vpi_{i_{l}})_{\cl} \otimes \BB(\vpi_{i_{k}})_{\cl}$. 

(H2) $H_{k,l}(\eta_{\vpi_{i_k}} \otimes \eta_{\vpi_{i_l}})=0$.

\noindent 
Now, for each $1 \le k < l \le p$, there exists a unique isomorphism
\begin{align*}
&
\BB(\vpi_{i_{k}})_{\cl} \otimes 
\BB(\vpi_{i_{k+1}})_{\cl} \otimes \cdots \otimes 
\BB(\vpi_{i_{l-1}})_{\cl} \otimes \BB(\vpi_{i_{l}})_{\cl} \\
& \hspace*{20mm} \stackrel{\sim}{\rightarrow} 
\BB(\vpi_{i_{l}})_{\cl} \otimes 
\BB(\vpi_{i_{k}})_{\cl} \otimes \cdots \otimes 
\BB(\vpi_{i_{l-2}})_{\cl} \otimes \BB(\vpi_{i_{l-1}})_{\cl}
\end{align*}
of crystals, which is given by composition of 
combinatorial $R$-matrices. 
Given $\eta_{k} \otimes \eta_{k+1} \otimes 
\cdots \otimes \eta_{l} \in 
\BB(\vpi_{i_{k}})_{\cl} \otimes 
\BB(\vpi_{i_{k+1}})_{\cl} \otimes \cdots \otimes 
\BB(\vpi_{i_{l}})_{\cl}$, 
we define $\eta_{l}^{(k)} \in \BB(\vpi_{i_{l}})_{\cl}$ 
to be the first factor of the image of 
$\eta_{k} \otimes \eta_{k+1} \otimes 
\cdots \otimes \eta_{l}$ under the above isomorphism of crystals.
For convenience, we set $\eta_{l}^{(l)}:=\eta_{l}$ 
for $\eta_{l} \in \BB(\vpi_{i_{l}})_{\cl}$, $1 \le l \le p$. 
In addition, for each $1 \le k \le p$, 
take (and fix) an arbitrary element 
$\eta_{k}^{\flat} \in \BB(\vpi_{i_{k}})_{\cl}$ 
such that $f_{j}\eta_{k}^{\flat}=\bzero$ 
for all $j \in I$.
Then we define the energy function $D=D_{\BB}: 
 \BB=
 \BB(\vpi_{i_{1}})_{\cl} \otimes 
 \BB(\vpi_{i_{2}})_{\cl} \otimes \cdots \otimes 
 \BB(\vpi_{i_{p}})_{\cl} \rightarrow \BZ$ by: 
%
%
\begin{align}
& 
D(\eta_{1} \otimes 
  \eta_{2} \otimes \cdots \otimes 
  \eta_{p}) = \nonumber \\
& \hspace*{25mm}
\sum_{1 \le k < l \le p} 
 H_{k,l}
 (\eta_{k} \otimes \eta_{l}^{(k+1)})
 + \sum_{k=1}^{p} 
 H_{k,k}
  (\eta^{\flat}_{k} \otimes \eta_{k}^{(1)}). \label{eq:Di}
\end{align}

We know the following theorem from \cite[Theorem 4.1.1]{NS08}. 
%
%
\begin{thm} \label{thm:ns-deg}
Using the same notation as above, we have 
%
%
\begin{equation} \label{eq:main}
\Deg(\eta)=D(\Psi(\eta))-D^{\ext} \qquad
\text{\rm for every $\eta \in \BB(\lambda)_{\cl}$}, 
\end{equation}
where $D^{\ext}=D_{\BB}^{\ext} \in \BZ$ is a constant defined by
\begin{equation*}
D^{\ext}=D_{\BB}^{\ext} : = 
\sum_{k=1}^{p} 
 H_{k,k}
  (\eta^{\flat}_{k} \otimes \eta_{\vpi_{i_{k}}}); 
\end{equation*}
here, $\Psi:\BB(\lambda)_{\cl} \stackrel{\sim}{\rightarrow} \BB$ is 
the isomorphism of crystals given in \eqref{defpsi}.
\end{thm}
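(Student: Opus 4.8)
\textbf{Proof proposal for Theorem~\ref{thm:ns-deg}.}
The plan is to verify the two characterizing properties of the degree function $\Deg$ spelled out in Remark~\ref{rem:degree}---namely the normalization $\Deg(\eta_\lambda)=0$ together with the recursion \eqref{eq:deg-ro} under $e_j$---but applied to the function $\eta\mapsto D(\Psi(\eta))-D^{\ext}$. Since Remark~\ref{rem:degree} asserts that $\Deg_\lambda$ is the \emph{unique} function on $\BB(\lambda)_{\cl}$ satisfying (i) and (ii), and since every $\eta$ can be brought to $\eta_\lambda$ by a sequence of raising operators $e_{j_1},\dots,e_{j_k}$, it suffices to check that $D\circ\Psi-D^{\ext}$ satisfies the same two properties. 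This reduces the theorem to a statement purely about the energy function $D$ on the tensor product $\BB$ and its interaction with the Kashiwara operators $e_j$, $j\in I_{\af}$.

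First I would handle the normalization: one must show $D(\Psi(\eta_\lambda))=D^{\ext}$. Since $\eta_\lambda=\cl(\pi_\lambda)$ is the highest weight element, $\Psi(\eta_\lambda)$ is the tensor product $\eta_{\vpi_{i_1}}\otimes\cdots\otimes\eta_{\vpi_{i_p}}$ of the respective highest weight elements (this uses that $\Psi$ is a crystal isomorphism and connectedness from Theorem~\ref{thm:LScl}(2)). Then in the defining formula \eqref{eq:Di} the cross terms $H_{k,l}(\eta_{\vpi_{i_k}}\otimes\eta_{\vpi_{i_l}}^{(k+1)})$ vanish by (H2) (one checks $\eta_{\vpi_{i_l}}^{(k+1)}=\eta_{\vpi_{i_l}}$ since the combinatorial $R$-matrix fixes tensor products of highest weight elements), leaving exactly $\sum_k H_{k,k}(\eta_k^\flat\otimes\eta_{\vpi_{i_k}}^{(1)})=\sum_k H_{k,k}(\eta_k^\flat\otimes\eta_{\vpi_{i_k}})=D^{\ext}$.

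The substance is property (ii): the claim that $D(\Psi(e_0\eta))-D(\Psi(\eta))$ equals $-1$ or $-\pair{\ti\alpha_0^\vee}{\iota(\eta)}-1$ according to whether $\iota(e_0\eta)=\iota(\eta)$ or $\iota(e_0\eta)=s_0(\iota(\eta))$, and that $D(\Psi(e_j\eta))=D(\Psi(\eta))$ for $j\neq 0$. Passing through the isomorphism $\Psi$ this becomes a statement about how $D$ changes under $e_j$ on $\BB$. For $j\neq 0$ this is the standard fact that the energy function is constant on classical components, which follows from (H1) (the "otherwise" case, since for $j\neq0$ every local energy function and every $R$-matrix is a map of classical crystals) combined with the fact that $\eta_{i_l}^{(k+1)}$ transforms compatibly under $e_j$. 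For $j=0$ one feeds the $j=0$ cases of (H1) into \eqref{eq:Di}, tracking how the bracketing of the $R$-matrices and the factors $\eta_l^{(k+1)}$ behave, and one must match the resulting jump in $D$ against the path-model quantity $\pair{\ti\alpha_0^\vee}{\iota(\eta)}$. The cleanest route is to invoke the already-established translation of Kashiwara operators on $\BB(\lambda)_{\cl}$ into the tensor product via $\Psi$ and to use that $e_0$ on a path acts locally near $t=t_1$ (hence on the first factor, which controls $\iota$, only through the reflection $s_0$), so the two alternatives in \eqref{eq:deg-ro} correspond precisely to the two cases "$e_0$ hits the first tensor factor nontrivially" versus "it doesn't." The main obstacle I anticipate is the bookkeeping in the $j=0$ case: reconciling the sign conventions (Kashiwara's tensor rule, the direction of the $R$-matrices in \eqref{eq:Di}, and the definition $\Deg=-K$) so that the $+1$'s and $-1$'s from (H1) assemble correctly, and correctly identifying $\iota(\eta)$ with the first factor's contribution to the weight---this is essentially the content of \cite[Lemma 3.2.1]{NS08}, and the proof should largely consist of invoking that lemma after setting up the reduction above.
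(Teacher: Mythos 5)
The paper does not prove this theorem---it is cited verbatim as \cite[Theorem~4.1.1]{NS08} in the sentence immediately preceding the statement---so there is no in-paper argument to compare yours against. Your overall reduction is sound and is the natural route: use the uniqueness characterization of $\Deg$ from Remark~\ref{rem:degree}, and verify that $\eta \mapsto D(\Psi(\eta)) - D^{\ext}$ satisfies the same two properties. Your normalization argument is correct: $\Psi(\eta_\lambda) = \eta_{\vpi_{i_1}} \otimes \cdots \otimes \eta_{\vpi_{i_p}}$, the combinatorial $R$-matrices fix this highest-weight tensor, so the cross terms in \eqref{eq:Di} vanish by (H2) and what remains is exactly $D^{\ext}$.

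The gap is in the $j=0$ branch of property (ii), which is where the entire content lies and which you do not actually establish. One must compute $D(\Psi(e_0\eta)) - D(\Psi(\eta))$ directly from the definition \eqref{eq:Di}, which means tracking how each term $H_{k,l}(\eta_k \otimes \eta_l^{(k+1)})$ changes when $e_0$ is applied to the whole tensor product; this is nontrivial because $e_0$ on $\BB$ is distributed among the factors by the Kashiwara tensor rule and the $R$-matrices, and does \emph{not} simply restrict to $e_0$ on each pair $\eta_k \otimes \eta_l^{(k+1)}$. You defer this to ``invoking \cite[Lemma~3.2.1]{NS08},'' but that lemma is the statement that $\Deg$ satisfies \eqref{eq:deg-ro}---it is what justifies Remark~\ref{rem:degree}, not a fact about the energy function $D$. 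Invoking it here would be circular, since identifying the a priori unrelated functions $\Deg$ and $D\circ\Psi - D^{\ext}$ is the whole point of the theorem. There is also a smaller error in the heuristic you offer: the dichotomy $\iota(e_0\eta)=\iota(\eta)$ versus $\iota(e_0\eta)=s_0(\iota(\eta))$ is governed by whether $t_0=0$ (since $e_0\eta$ agrees with $\eta$ on $[0,t_0]$ by \eqref{defeqls}), not by behavior ``near $t_1$''; and whether the initial direction of the path corresponds to the first or the last tensor factor under $\Psi$ is a convention-sensitive point (cf.\ Remark~\ref{rem:LR}) that you would need to pin down rather than assume.
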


%
\subsection{Formula for the degree function}
\label{subsec:degree}
Let $\eta \in \BB(\lambda)_{\cl}$. 
Because $\BB(\lambda)_{\cl}=\QLS(\lambda)$
by Theorem~\ref{thm:LS=QLS}, 
we can write $\eta$ as:
%
%
\begin{equation} \label{eq:eta-QLS}
\eta=
(x_{1},\,x_{2},\,\dots,\,x_{s}\,;\,
b_{0},\,b_{1},\,\dots,\,b_{s}) \in \QLS(\lambda)
\end{equation}
for some $x_{1},\,x_{2},\,\dots,\,x_{s} \in W^{J}$ and 
$0=b_{0} < b_{1} < \dots < b_{s}=1$; 
note that $\iota(\eta)=x_{1}\lambda$. 
%
%
\begin{thm} \label{thm:deg-QLS}
With the same notation as above, 
we have
%
%
\begin{equation} \label{eq:deg}
\Deg(\eta)=-\sum_{k=1}^{s-1} 
 (1-b_{k})\wts{x_{k+1}}{x_{k}}. 
\end{equation}
\end{thm}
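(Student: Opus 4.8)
The plan is to use the characterization of $\Deg$ given in Remark~\ref{rem:degree}, namely that $\Deg=\Deg_\lambda$ is the \emph{unique} function on $\BB(\lambda)_{\cl}=\QLS(\lambda)$ satisfying $\Deg(\eta_\lambda)=0$ together with the recursion \eqref{eq:deg-ro} under the action of the root operators $e_j$, $j\in I_{\af}$. Accordingly, I would define a candidate function $\widetilde{\Deg}(\eta):=-\sum_{k=1}^{s-1}(1-b_k)\wts{x_{k+1}}{x_k}$ on $\QLS(\lambda)$ and verify that it satisfies these two defining properties; uniqueness then forces $\widetilde{\Deg}=\Deg$.

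\textbf{Key steps.} First, the initial value: for $\eta_\lambda=(e;0,1)$ there is no index $k$ in the sum, so $\widetilde{\Deg}(\eta_\lambda)=0$. Second, and this is the bulk of the work, I would check the behavior of $\widetilde{\Deg}$ under $e_j$. This requires an explicit combinatorial description of $e_j\eta$ for $\eta\in\QLS(\lambda)$ written as in \eqref{eq:eta-QLS}: one computes the piecewise-linear function $H^\eta_j(t)=\pair{\ti\alpha_j^\vee}{\eta(t)}$, locates the points $t_0<t_1$ as in the definition of the root operators, and reads off how the sequence $\ud x$ and the sequence $\ud b$ change. The effect is local around $[t_0,t_1]$: some consecutive $x_k$'s (those on $[t_0,t_1]$) get replaced by $\mcr{s_jx_k}$, possibly a new breakpoint is inserted at $t_0$ and/or at $t_1$, and adjacent equal terms are merged. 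For $j\ne 0$, using Corollary~\ref{cor:weight}(1)--(3) with $\delta_{j,0}=0$, each affected term $\wts{x_{k+1}}{x_k}$ is unchanged in value (the pairing $\wts{\cdot}{\cdot}$ is invariant under the simultaneous/one-sided $\mcr{s_j\cdot}$ operations when $j\ne 0$), and the newly created or merged segments contribute nothing extra, so $\widetilde{\Deg}(e_j\eta)=\widetilde{\Deg}(\eta)$. For $j=0$, I would split into the two cases of \eqref{eq:deg-ro} according to whether $\iota(e_0\eta)=\iota(\eta)$ or $\iota(e_0\eta)=s_0(\iota(\eta))$ — equivalently whether $t_0>0$ or $t_0=0$. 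In the first case only an interior portion of $\eta$ is modified; the $\delta_{j,0}$-corrections from Corollary~\ref{cor:weight} telescope, and a careful bookkeeping of the inserted breakpoints at $t_0,t_1$ (whose $b$-values, multiplied by the relevant pairings, account for exactly the corrections) yields the net change $-1$, coming from the fact that at $t_1$ the weight jumps by $\ti\alpha_0$ and $\pair{\ti\alpha_0^\vee}{\ti\alpha_0}\cdot(1-t_1)$ combines with the chain corrections. In the second case $t_0=0$, the whole of $\eta$ up to $t_1$ is reflected by $s_0$, the initial direction changes, and one picks up the additional $-\pair{\ti\alpha_0^\vee}{\iota(\eta)}$ term; this is matched by the change in $\wts{x_2}{\mcr{s_0x_1}}$ versus $\wts{x_2}{x_1}$ from Corollary~\ref{cor:weight} plus the boundary term at $b_1$.

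\textbf{Main obstacle.} The hard part will be the $j=0$ computation: one must pin down precisely how the list $(x_1,\dots,x_s;b_0,\dots,b_s)$ transforms under $e_0$ — including the subtle cases where $t_0$ or $t_1$ coincides with an existing breakpoint $b_k$ (so no new breakpoint is inserted) or where reflecting causes two neighboring directions to coincide and merge — and then show that the telescoping of the $\delta_{0,0}$-corrections from Corollary~\ref{cor:weight}, together with the contributions $(b_{k}-b_{k-1})\pair{\ti\alpha_0^\vee}{x_k\lambda}$ of the segments strictly inside $[t_0,t_1]$ and the boundary contributions at $t_0$ and $t_1$, collapses exactly to the predicted $-1$ (resp.\ $-\pair{\ti\alpha_0^\vee}{\iota(\eta)}-1$). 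This is where the identity $H^\eta_0(t_1)=m$, $H^\eta_0(t_0)=m+1$ and the strict monotonicity on $[t_0,t_1]$ from Remark~\ref{rem:ro} must be used to control signs and to guarantee that all intermediate pairings appearing in the $b_k$-chain conditions are integral, so that $\wts{\cdot}{\cdot}$ is the relevant invariant. Once the $e_0$-recursion is confirmed, invoking the uniqueness statement of Remark~\ref{rem:degree} completes the proof.
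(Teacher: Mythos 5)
Your proposal is correct and takes essentially the same route as the paper: define the right-hand side as a candidate function $F$, check the initial condition, and verify the recursion of Remark~\ref{rem:degree} under $e_j$ by explicitly tracking how the sequence $(\ud{x};\ud{b})$ transforms, using Corollary~\ref{cor:weight} and the telescoping identity $H^\eta_j(t_1)-H^\eta_j(t_0)=-1$ to handle the $j=0$ corrections, then invoke uniqueness. The paper's proof is precisely the detailed bookkeeping you outline as the "main obstacle" (its terms $U_1,U_2,U_3,R,V$), so the only gap is that you haven't carried out those computations, but the strategy, the key lemmas, and the crucial reduction to $m-(m+1)=-1$ are all identified correctly.
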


\begin{proof}
For $\eta \in \QLS(\lambda) = \BB(\lambda)_{\cl}$, 
we define $F(\eta)$ to be the right-hand side of \eqref{eq:deg}. 
It suffices to show that 
$F$ satisfies conditions 
(i) and (ii) in Remark~\ref{rem:degree}, i.e., 

(i) $F(\eta_{\lambda})=0$; 

(ii) for $\eta \in \BB(\lambda)_{\cl}$ and $j \in I_{\af}$ 
with $e_{j}\eta \ne \bzero$, 
%
%
\begin{equation} \label{eq:deg-ro2}
F(e_{j}\eta)=
 \begin{cases}
 F(\eta)-1 
  & \text{\rm if $j=0$ and 
    $\iota(e_{0}\eta)=\iota(\eta)$}, \\[1.5mm]
 F(\eta)-\pair{\ti{\alpha}_{0}^{\vee}}{\iota(\eta)}-1 
  & \text{\rm if $j=0$ and 
    $\iota(e_{0}\eta)=s_{0}(\iota(\eta))$}, \\[1.5mm]
 F(\eta) & \text{\rm if $j \ne 0$}.
 \end{cases}
\end{equation}
It is obvious that $F$ satisfies condition (i). 
Let us show that $F$ satisfies condition (ii). 
Let $\eta \in \BB(\lambda)_{\cl}$ and $j \in I_{\af}$
be such that $e_{j}\eta \ne \bzero$. 
We see that the point 
$t_{1}=\min \bigl\{t \in [0,1] \mid 
       H^{\eta}_{j}(t)=m^{\eta}_{j}\bigr\}$
is equal to $b_{p}$ for some $0 < p \le s$. 
Let $0 < q \le p$ be such that 
$b_{q-1} \le t_{0} < b_{q}$; recall that 
$t_{0}=\max\bigl\{t \in [0,t_1] \mid
       H^{\eta}_{j}(t)=m^{\eta}_{j}+1\bigr\}$. 
It follows from the definition of the root operator $e_{j}$ 
that $e_{j}\eta \in \QLS(\lambda)$ can be written as follows: 
\begin{equation*}
\begin{split}
& x_1 \blarrl{b_1\lambda} \cdots \blarrl{b_{q-2}\lambda} x_{q-1} \blarrl{b_{q-1}\lambda}
\underbrace{ {x_{q}} \blarrl{t_0\lambda} }_{\text{(a)}} 
 {\mcr{s_jx_{q}}} \blarrl{b_q\lambda} {\mcr{s_jx_{q+1}}} \blarrl{b_{q+1}\lambda} \ldots \\
& \hspace*{50mm} \cdots \blarrl{b_{p-1}\lambda}
 \underbrace{{\mcr{s_jx_{p}}} \blarrl{b_p\lambda=t_1\lambda} {x_{p+1}}}_{\text{(b)}} 
 \blarrl{b_{p+1}\lambda} \cdots \blarrl{b_{s-1}\lambda} {x_{s}} \,.
\end{split}
\end{equation*}
Here, if $b_{q-1}=t_{0}$, then we drop (a) from the above; 
note that in this case, $x_{q-1} \ne \mcr{s_{j}x_{q}}$ 
since $\pair{\ti{\alpha}_{j}^{\vee}}{x_{q-1}\lambda} \le 0$ and 
$\pair{\ti{\alpha}_{j}^{\vee}}{s_{j}x_{q}\lambda}=
-\pair{\ti{\alpha}_{j}^{\vee}}{x_{q}\lambda} > 0$ by 
Remark~\ref{rem:ro}\,(1), (3). 
Also, if $\mcr{s_{j}x_{p}} = x_{p+1}$, then 
we replace (b) by  ${ x_{p+1} }$
(or ${ \mcr{s_{j}x_{p}} }$) in the above path. 
We see from the definition \eqref{defeqls} of the root operator $e_{j}$ 
that $\iota(e_{j}\eta)=s_{j}\iota(\eta)$ if and only if $t_{0}=b_{0}=0$; 
in this case, $m_j^\eta=-1$ (see \eqref{eq:H m}) 
since $H^{\eta}_{j}(t_{0})=H^{\eta}_{j}(0)=0$.

Now, by the definition of $F$, we have 
%
%
\begin{align} \label{eq:Fe}
\begin{split}
F(e_{j}\eta) & =-\Biggl\{
\underbrace{%
\sum_{k=1}^{q-2} 
 (1-b_{k})\wts{x_{k+1}}{x_{k}} + R}_{=:U_1} + 
\underbrace{%
 \sum_{k=q}^{p-1} 
 (1-b_{k})\wts{ \mcr{s_{j}x_{k+1}} }{ \mcr{s_{j}x_{k}} }
}_{=:U_2} \\[1.5mm]
& \qquad +
\underbrace{%
(1-b_{p})\wts{ x_{p+1} }{ \mcr{s_{j}x_{p}} }
}_{=:U_3} +
\sum_{k=p+1}^{s-1}
 (1-b_{k}) \wts{x_{k+1}}{x_{k}}
\Biggr\}, 
\end{split}
\end{align}
where
%
%
\begin{equation} \label{eq:R}
R:=
\begin{cases}
(1-b_{q-1})\wts{x_{q}}{x_{q-1}} & \\[1mm]
\hspace*{15mm}
 + (1-t_0)\wts{\mcr{s_{j}x_{q}}}{x_{q}} & 
 \text{if $t_{0} \ne b_{q-1}$}, \\[3mm]
 (1-b_{q-1})\wts{\mcr{s_{j}x_{q}}}{x_{q-1}} & 
 \text{if $q > 1$ and $t_{0} = b_{q-1}$}, \\[3mm]
0 &  \text{if $q=1$ and $t_{0} = b_{0} = 0$};
\end{cases}
\end{equation}
if $p=s$ (resp., $q=1$), then
$\wts{ x_{p+1} }{ \mcr{s_{j}x_{p}} }$ in $U_{3}$ 
(resp., $\wts{x_{q}}{x_{q-1}}$ in $R$) 
is understood to be $0$; 
notice that the equality \eqref{eq:Fe} is valid 
even when $\mcr{s_{j}x_{p}} = x_{p+1}$. 
Also, observe that in \eqref{eq:R}, 
%
%
\begin{equation} \label{eq:Xt0}
\wts{\mcr{s_{j}x_{q}}}{x_{q}}=
-\delta_{j,\,0}
\pair{\ti{\alpha}_{j}^{\vee}}{x_{q}\lambda}.
\end{equation}

Note that if $q > 1$ and $t_{0} = b_{q-1}$ 
(in the second case of \eqref{eq:R}), then
%
%
\begin{equation} \label{eq:X3}
R=(1-b_{q-1})\wts{x_{q}}{x_{q-1}}
 -(1-t_0)\delta_{j,\,0}
 \pair{\ti{\alpha}_{j}^{\vee}}{x_{q}\lambda}.
\end{equation}
Indeed, note that 
$\pair{\ti{\alpha}_{j}^{\vee}}{x_{q-1}\lambda} \le 0$ and 
$\pair{\ti{\alpha}_{j}^{\vee}}{s_{j}x_{q}\lambda}=
-\pair{\ti{\alpha}_{j}^{\vee}}{x_{q}\lambda} > 0$ 
by Remark~\ref{rem:ro}\,(1), (3). 
Therefore, applying Corollary~\ref{cor:weight}\,(1) to 
$w_{1}=\mcr{s_{j}x_{q}}$ and $w_{2}=x_{q-1}$, we see that
\begin{align*}
R & = (1-b_{q-1})\wts{\mcr{s_{j}x_{q}}}{x_{q-1}} \\
 & = (1-b_{q-1})\wts{x_{q}}{x_{q-1}}
 - (1-b_{q-1})\delta_{j,\,0}
 \pair{\ti{\alpha}_{j}^{\vee}}{x_{q}\lambda} \\
 & = (1-b_{q-1})\wts{x_{q}}{x_{q-1}}
 - (1-t_0)\delta_{j,\,0}
 \pair{\ti{\alpha}_{j}^{\vee}}{x_{q}\lambda},
\end{align*}
as desired. By combining \eqref{eq:Fe} and \eqref{eq:X3}, 
we obtain
%
%
\begin{equation} \label{eq:A1}
U_{1}=
\begin{cases}
{ \displaystyle \sum_{k=1}^{q-1} } 
 (1-b_{k})\wts{x_{k+1}}{x_{k}} -
(1-t_{0})\delta_{j,\,0}
 \pair{\ti{\alpha}_{j}^{\vee}}{x_{q}\lambda}
 & \text{if $t_{0} \ne 0$}, \\[5mm]
0 & \text{if $t_{0} = 0$}.
\end{cases}
\end{equation}

Recall that the function $H^{\eta}_{j}(t)$ 
is strictly decreasing on $[t_{0},\,t_{1}]$ (see Remark~\ref{rem:ro}\,(1)), 
which implies that $\pair{\ti{\alpha}_{j}^{\vee}}{x_{k}\lambda} < 0$ 
for all $q \le k \le p$. Hence, by Corollary~\ref{cor:weight}\,(2), 
we have
\begin{equation*}
\wts{ \mcr{s_{j}x_{k+1}} }{ \mcr{s_{j}x_{k}} } = 
\wts{x_{k+1}}{x_{k}}-
\delta_{j,\,0}\pair{\ti{\alpha}_{j}^{\vee}}{x_{k+1}\lambda}+ 
\delta_{j,\,0}\pair{\ti{\alpha}_{j}^{\vee}}{x_{k}\lambda}
\end{equation*}
for each $q \le k \le p-1$. From this, we see that 
%
%
\begin{align}
U_2 & = 
\sum_{k=q}^{p-1} (1-b_{k}) \wts{x_{k+1}}{x_{k}} - 
\delta_{j,\,0}
\underbrace{%
\sum_{k=q}^{p-1} 
 (1-b_{k}) \pair{\ti{\alpha}_{j}^{\vee}}{x_{k+1}\lambda}%
}_{%
= \sum_{k=q+1}^{p} 
 (1-b_{k-1}) \pair{\ti{\alpha}_{j}^{\vee}}{x_{k}\lambda}%
} + 
\delta_{j,0} 
\sum_{k=q}^{p-1} 
 (1-b_{k}) \pair{\ti{\alpha}_{j}^{\vee}}{x_{k}\lambda}
\nonumber \\[3mm]
& =
\sum_{k=q}^{p-1} (1-b_{k}) \wts{x_{k+1}}{x_{k}} 
+\delta_{j,\,0} 
 (1-b_{q})\pair{\ti{\alpha}_{j}^{\vee}}{x_{q}\lambda}
\nonumber \\
& \hspace*{20mm} -
\delta_{j,\,0}\sum_{k=q+1}^{p-1} 
 (b_{k}-b_{k-1}) 
 \pair{\ti{\alpha}_{j}^{\vee}}{x_{k}\lambda} - 
\delta_{j,\,0}(1-b_{p-1}) \pair{\ti{\alpha}_{j}^{\vee}}{x_{p}\lambda}. 
\label{eq:U2}
\end{align}

Finally, let us show that 
%
%
\begin{equation} \label{eq:U3}
U_{3}=
(1-b_{p})\wts{x_{p+1}}{x_{p}} +
\delta_{j,\,0} 
 (1-b_{p})\pair{\ti{\alpha}_{j}^{\vee}}{x_{p}\lambda},
\end{equation}
where if $p=s$, then $\wts{x_{p+1}}{x_{p}}$ is 
understood to be $0$. If $p=s$, then 
the equality obviously holds. Assume that $p < s$. 
Then, since $\pair{\ti{\alpha}_{j}^{\vee}}{x_{p}\lambda} < 0$ 
and $\pair{\ti{\alpha}_{j}^{\vee}}{x_{p+1}\lambda} \ge 0$ 
by Remark~\ref{rem:ro}\,(2), the equality \eqref{eq:U3} 
follows immediately from Corollary~\ref{cor:weight}\,(3)
(applied to $w_{1}=x_{p+1}$ and $w_{2}=x_{p}$).

Substituting \eqref{eq:A1}, \eqref{eq:U2}, \eqref{eq:U3} 
into \eqref{eq:Fe}, we conclude that
\begin{align*}
F(e_{j}\eta) & = 
\underbrace{%
-\sum_{k=1}^{ s-1 }
 (1-b_{k}) \wts{x_{k+1}}{x_{k}}}_{=F(\eta)} - T \\
& \quad +\delta_{j,\,0} \Biggl\{
\underbrace{%
(b_{q}-t_{0}) \pair{\ti{\alpha}_{j}^{\vee}}{x_{q}\lambda}+ 
\sum_{k=q+1}^{p} 
 (b_{k}-b_{k-1}) \pair{\ti{\alpha}_{j}^{\vee}}{x_{k}\lambda}%
}_{=:V}\Biggr\}, 
\end{align*}
where
\begin{equation*}
T : = 
\begin{cases}
0 & \text{if $t_{0} \ne 0$}, \\[1.5mm]
\delta_{j,\,0} \pair{\ti{\alpha}_{j}^{\vee}}{x_{1}\lambda}=
\delta_{j,\,0} \pair{\ti{\alpha}_{j}^{\vee}}{\iota(\eta)}
  & \text{if $t_{0} = 0$}.
\end{cases}
\end{equation*}
Here, observe that 
\begin{equation*}
V = H^{\eta}_{j}(b_{p})-H^{\eta}_{j}(t_{0}) = 
H^{\eta}_{j}(t_1)-H^{\eta}_{j}(t_{0})=
m^{\eta}_{j}-(m^{\eta}_{j}+1)=-1.
\end{equation*}
Thus we have shown that $F$ satisfies \eqref{eq:deg-ro2}, 
thereby completing the proof of Theorem~\ref{thm:deg-QLS}. 
\end{proof}
%
%
\subsection{Lusztig involution \texorpdfstring{$S$}{S} on 
  \texorpdfstring{$\BB(\lambda)_{\cl}=\QLS(\lambda)$}
    {B(lambda)cl=QLS(lambda)}}
\label{subsec:Lusztig}

The results in this subsection will be used in Section~\ref{section.bijection} to 
define the bijection between the quantum alcove model $\CA(\lambda)$ 
and the set $\QLS(\lambda)$ (not $\QLS(-w_{\circ}\lambda)$). 
Let $w_{\circ} \in W$ be the longest element in $W$, and 
let $\omega:I \rightarrow I$ be the Dynkin diagram automorphism for $\Fg$ 
induced by $w_{\circ}$, i.e., $w_{\circ}\alpha_{j}=-\alpha_{\omega(j)}$ 
for $j \in I$. Note that $\omega$ acts as $-w_{\circ}$ 
on the integral weight lattice $X$ and also on the Cartan subalgebra $\Fh$ of $\Fg$. 
Then, $\omega(\theta)=\theta$, and $\omega r_{j} = r_{\omega(j)} \omega$ 
on $X$, and on $\Fh$ for $j \in I$. There exists a group automorphism, 
denoted also by $\omega$, of the Weyl group $W$ such that 
$\omega(r_{j})=r_{\omega(j)}$ for all $j \in I$; 
notice that $\ell(\omega(v))=\ell(v)$ for $v \in W$, and 
$\omega(r_{\alpha})=r_{\omega(\alpha)}$ for $\alpha \in \Phi^{+}$. 
%
%
\begin{lem} \label{lem:da} \mbox{}
\begin{enumerate}
\item If $v \in W^{J}$, then $\omega(v) \in W^{\omega(J)}$.
\item
Let $b$ be a rational number. For $x_{1},\,x_{2} \in W^{J}$ and 
$\beta \in \Phi^{+} \setminus \Phi^{+}_{J}$, 
\begin{align*}
& x_{1} \xrightarrow{\beta} x_{2} 
  \quad \text{\rm in $\bQB{b}$} 
  \quad \iff \quad 
\mcr{w_{\circ}x_{1}}^{J} \xleftarrow{u_{\circ}\beta} \mcr{w_{\circ}x_{2}}^{J}
  \quad \text{\rm in $\bQB{b}$} \\
& \hspace*{20mm} \iff
\omega(x_{1}) \xrightarrow{\omega(\beta)} \omega(x_{2})
\quad \text{\rm in $\QB_{b\omega(\lambda)}(W^{\omega(J)})$} \\
& \hspace*{20mm} \iff
\mcr{x_1w_{\circ}}^{\omega(J)} 
\xleftarrow{v_{\circ}\omega(\beta)} \mcr{x_2w_{\circ}}^{\omega(J)}
\quad \text{\rm in $\QB_{b\omega(\lambda)}(W^{\omega(J)})$},
\end{align*}
where $u_{\circ}$ and $v_{\circ}$ are the longest elements in $W_{J}$ and 
$W_{\omega(J)}$, respectively. In addition, the types (i.e., Bruhat or quantum) of 
these four edges coincide. 
\item For every $x_{1},\,x_{2} \in W^{J}$, 
\begin{equation*}
\begin{split}
\wts{x_{1}}{x_{2}} & = \wt_{\lambda}\bigl( \mcr{w_{\circ}x_{2}}^{J} \Rightarrow \mcr{w_{\circ}x_{1}}^{J}\bigr) \\
 & = \wt_{\omega(\lambda)}\bigl( \omega(x_{1}) \Rightarrow \omega(x_{2}) \bigr) \\
 & = \wt_{\omega(\lambda)}\bigl( \mcr{x_2w_{\circ}}^{\omega(J)} \Rightarrow \mcr{x_1w_{\circ}}^{\omega(J)} \bigr). 
\end{split}
\end{equation*}
\end{enumerate}
\end{lem}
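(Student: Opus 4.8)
The statement has three parts, and the natural strategy is to prove part (1) directly, then part (2) essentially as a dictionary between the four descriptions of an edge, and finally derive part (3) as a corollary of part (2). Throughout, the key technical input is the known behavior of the parabolic quantum Bruhat graph under the two operations (left and right multiplication by a longest element), which is exactly what \cite[Lemma~7.7]{LNSSS} (or the relevant statements in \cite{LNSSS}) records; one should also use the fact from \cite{LNSSS} that $\QB(W^J)$ under $w \mapsto \mcr{w_\circ w}^J$ reverses all edges and preserves edge types, and that $\QB(W^J)$ under the diagram automorphism $\omega$ is sent isomorphically to $\QB(W^{\omega(J)})$.

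\textbf{Part (1).} This is the statement that $\omega(v)\in W^{\omega(J)}$ whenever $v\in W^J$. Since $v\in W^J$ iff $\ell(vr_i)>\ell(v)$ for all $i\in J$, and since $\omega$ is a length-preserving group automorphism with $\omega(r_i)=r_{\omega(i)}$, we get $\ell(\omega(v)r_{\omega(i)})=\ell(\omega(vr_i))=\ell(vr_i)>\ell(v)=\ell(\omega(v))$ for all $i\in J$; as $i$ ranges over $J$, $\omega(i)$ ranges over $\omega(J)$, so $\omega(v)\in W^{\omega(J)}$. This is a one-line argument.

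\textbf{Part (2).} The plan is to verify the chain of equivalences one link at a time. For the first equivalence ($x_1\xrightarrow{\beta}x_2$ in $\QB(W^J)$ iff $\mcr{w_\circ x_1}^J \xleftarrow{u_\circ\beta} \mcr{w_\circ x_2}^J$ in $\QB(W^J)$), I would cite the ``left'' version of the anti-automorphism property of the parabolic quantum Bruhat graph from \cite{LNSSS}: conjugation/multiplication by $w_\circ$ on the left, composed with passing to minimal coset representatives, reverses edges, sends the label $\beta$ to $u_\circ\beta$ (note $u_\circ\beta\in\Phi^+\setminus\Phi_J^+$ because $u_\circ=w_{0,J}$ permutes $\Phi_J$ and sends $\Phi^+\setminus\Phi_J^+$ into $\Phi^+\setminus\Phi_J^+$ up to sign, and in fact into $\Phi^+\setminus\Phi_J^+$ here), and preserves the Bruhat/quantum type. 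The passage from the $b$-graph to the full graph and back is automatic from the defining condition \eqref{bbord}, since $\pair{(u_\circ\beta)^\vee}{b\lambda}$ and $\pair{\beta^\vee}{b\lambda}$ differ only by an element of $\pair{Q_J^{\vee}}{b\lambda}$, but in fact $\lambda$ is $W_J$-fixed so $\pair{(u_\circ\beta)^\vee}{\lambda}=\pair{\beta^\vee}{\lambda}$ and the integrality condition is literally preserved. For the second equivalence (applying $\omega$), use that $\omega$ is a diagram automorphism fixing $\theta$, hence induces a graph isomorphism $\QB(W^J)\xrightarrow{\sim}\QB(W^{\omega(J)})$ sending the edge labelled $\beta$ to the edge labelled $\omega(\beta)$, preserving type; and $\omega(\lambda)$ is again dominant with stabilizer $W_{\omega(J)}$, so the $b$-subgraph condition transports correctly. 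The third equivalence is then the ``right'' version of the anti-automorphism, now for $W^{\omega(J)}$ with longest element $v_\circ=w_{0,\omega(J)}$: right multiplication by $w_\circ$ followed by minimal-coset-representative reverses edges, sends $\omega(\beta)$ to $v_\circ\omega(\beta)$, preserves type. Here one must be a little careful that ``$\mcr{x w_\circ}^{\omega(J)}$'' is the correct normalization of $x w_\circ$ and that $w_\circ$ normalizes $W_{\omega(J)}$ appropriately; this bookkeeping is routine given the cited lemmas.

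\textbf{Part (3).} This follows from part (2) by summing over a shortest directed path. A shortest path $x_1\Rightarrow x_2$ of weight realizing $\wts{x_1}{x_2}$ is carried by each of the three bijections of part (2) to a directed path of the same length (hence still shortest, by invariance of path length under these graph isomorphisms/anti-isomorphisms) between the corresponding vertices, with quantum edges going to quantum edges; and for each quantum edge the coweight label transforms as $\beta^\vee\mapsto (u_\circ\beta)^\vee$, etc. The point is that for each relevant transformation $\sigma$ of roots one has $\pair{\sigma(\beta)^\vee}{\sigma\cdot\lambda}=\pair{\beta^\vee}{\lambda}$ (because $u_\circ,\,v_\circ$ fix $\lambda$ resp.\ $\omega(\lambda)$, and $\omega$ is an isometry carrying $\lambda$ to $\omega(\lambda)$), so the total weight pairing is preserved. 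Invoking Proposition~\ref{prop:weight} — that $\wts{\cdot}{\cdot}$ is independent of the choice of shortest path — then gives the three claimed equalities.

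\textbf{Main obstacle.} The conceptual content is light; the real work is the careful sign/normalization bookkeeping in part (2): making sure the edge labels are genuinely in $\Phi^+\setminus\Phi_J^+$ after each transformation (so the statements even typecheck), that ``$\mcr{w_\circ x}^J$'' vs.\ ``$\mcr{x w_\circ}^{\omega(J)}$'' are the right coset normalizations, and that $w_\circ$ interacts correctly with both $W_J$ and $W_{\omega(J)}$ (using $w_\circ W_J w_\circ = W_{\omega(J)}$). The single non-formal input is the anti-automorphism property of the parabolic quantum Bruhat graph under left/right multiplication by $w_\circ$; assuming the corresponding statements from \cite{LNSSS} (Lemma~7.7 and surrounding results), everything else is a direct check and a summation argument.
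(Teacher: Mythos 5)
Your proposal is correct and follows essentially the same route as the paper's own (very terse) proof: part (1) is a direct length-preservation check, part (2) is transported from the known automorphism/anti-automorphism behavior of the parabolic quantum Bruhat graph under $w_\circ$-multiplication and the diagram automorphism $\omega$ (recorded in \cite{LNSSS}; the paper cites \cite[Proposition~4.3]{LNSSS} together with $vw_\circ = w_\circ\omega(v)$, whereas you point to Lemma~7.7 and ``the relevant statements''), and part (3) is deduced from part (2) using that $u_\circ$ and $v_\circ$ stabilize $\lambda$ and $\omega(\lambda)$ respectively. Your write-up is more explicit about the shortest-path and integrality bookkeeping, but the logical skeleton is the same.
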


\begin{proof}
Part (1) is obvious. Part (2) follows from part (1) and \cite[Proposition~4.3]{LNSSS}, 
together with the fact that $vw_{\circ} = w_{\circ} \omega(v)$ for all $v \in W$. 
Part (3) follows from part (2) since $u_{\circ}\lambda=\lambda$ and 
$v_{\circ}\omega(\lambda) = \omega(\lambda)$. 
\end{proof}

Now, let 
\begin{equation} \label{eq:se1}
\eta=(x_{1},\,\dots,\,x_{s}\,;\,
b_{0},\,b_{1},\,\dots,\,b_{s}) \in 
\QLS(\lambda) = \BB(\lambda)_{\cl},
\end{equation}
with $x_{1},\,\dots,\,x_{s} \in W^{J}$ and rational numbers $0=b_{0} < \cdots < b_{s}=1$. 
Then we see from Lemma~\ref{lem:da} that 
\begin{equation} \label{eq:dual}
\eta^{\ast}:=
(\mcr{x_{s}w_\circ}^{\omega(J)},\,\dots,\,\mcr{x_{1}w_\circ}^{\omega(J)}\,;\,
 1-b_{s},\,1-b_{s-1},\,\dots,\,1-b_{0})
\end{equation}
is a QLS path of shape $\omega(\lambda)=-w_{\circ}\lambda$; 
note that the stabilizer of the dominant integral weight 
$\omega(\lambda)=-w_{\circ}\lambda$ in $W$ is identical to 
the parabolic subgroup $W_{\omega(J)}$. 
Because $\eta^{\ast}(t)=\eta(1-t)-\eta(1)$ for $t \in [0,1]$, 
it follows from \cite[Lemma 2.1\,e)]{Li} that 
\begin{equation} \label{dualpath2a}
\wt (\eta^{\ast})=-\wt (\eta), \qquad 
(e_{j}\eta)^{\ast}=f_{j}\eta^{\ast}, \qquad 
(f_{j}\eta)^{\ast}=e_{j}\eta^{\ast}
\end{equation}
for $\eta \in \BB(\lambda)_{\cl}$ and $j \in I_{\af}$.

Next, for $\eta \in \QLS(\lambda) = \BB(\lambda)_{\cl}$ of the form \eqref{eq:se1}, 
we define $\omega(\eta)$ by: 
%
%
\begin{equation} \label{eq:se2}
\omega(\eta)=
(\omega(x_{1}),\,\dots,\,\omega(x_{s})\,;\,
b_{0},\,b_{1},\,\dots,\,b_{s}). 
\end{equation}
We can easily check by using Lemma~\ref{lem:da} that
$\omega(\eta)$ is a QLS path of shape $\omega(\lambda)=-w_{\circ}\lambda$. 
Since $(\omega(\eta))(t)=\omega(\eta(t))$ for $t \in [0,1]$, 
we see that 
\begin{equation*}
\wt(\omega(\eta))=\omega(\wt(\eta)), \qquad 
\omega(e_{j}\eta)=e_{\omega(j)}\omega(\eta), \qquad 
\omega(f_{j}\eta)=f_{\omega(j)}\omega(\eta)
\end{equation*}
for $\eta \in \BB(\lambda)_{\cl}$ and $j \in I_{\af}$. 

Finally, we set $S(\eta):=\omega(\eta^{\ast})=(\omega(\eta))^{\ast}$ 
for each $\eta \in \QLS(\lambda)=\BB(\lambda)_{\cl}$; 
by the argument above, 
we see that $S(\eta) \in \BB(\lambda)_{\cl}$. Moreover, 
it is easily checked that $S$ is an involution on 
$\BB(\lambda)_{\cl}$, which we call the {\em Lusztig involution}
(see also~\cite{LeS} for the affine version of the Lusztig involution
in type $C$), such that 
%
%
\begin{equation} \label{eq:Lus1}
\begin{split}
& \wt(S(\eta))=-\omega(\wt(\eta)) = w_{\circ}(\wt(\eta)), \\
& S(e_{j}\eta)=f_{\omega(j)}S(\eta), \qquad 
  S(f_{j}\eta)=e_{\omega(j)}S(\eta)
\end{split}
\end{equation}
for $\eta \in \BB(\lambda)_{\cl}$ and $j \in I_{\af}$. 
We remark that if $\eta$ is of the form \eqref{eq:se1}, 
then 
%
%
\begin{equation}
\begin{split}
S(\eta) & =
(\omega\mcr{x_{s}w_{\circ}}^{\omega(J)},\,\dots,\,
 \omega\mcr{x_{1}w_{\circ}}^{\omega(J)}\,;\,
1-b_{s},\,1-b_{s-1},\,\dots,\,1-b_{0}) \\
& = 
(\mcr{w_{\circ}x_{s}}^{J},\,\dots,\,
 \mcr{w_{\circ}x_{1}}^{J}\,;\,
1-b_{s},\,1-b_{s-1},\,\dots,\,1-b_{0}). \label{eq:Se}
\end{split}
\end{equation}
%
%
\begin{cor} \label{cor:tail}
Let $\eta=(x_{1},\,x_{2},\,\dots,\,x_{s}\,;\,
b_{0},\,b_{1},\,\dots,\,b_{s}) \in \QLS(\lambda) = \BB(\lambda)_{\cl}$. 
Then, 
\begin{equation*}
\Deg_{\lambda}(S(\eta))=
-\sum_{k=1}^{s-1} b_{k}\wts{x_{k+1}}{x_{k}}.
\end{equation*}
\end{cor}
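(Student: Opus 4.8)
The plan is to apply Theorem~\ref{thm:deg-QLS} to the Lusztig dual $S(\eta)$ rather than to $\eta$ itself, and then translate the resulting formula back in terms of the data of $\eta$ using Lemma~\ref{lem:da}\,(3). Concretely, $S(\eta)$ is a QLS path of shape $\lambda$ (note $\omega(\lambda) = -w_\circ\lambda$, but by the symmetry of the construction we may work with $\BB(\lambda)_{\cl}$ throughout, or apply the degree formula on $\QLS(\omega(\lambda))$ and use that $\Deg_{\lambda}$ and $\Deg_{\omega(\lambda)}$ are intertwined by $S$), and by \eqref{eq:Se} it has the explicit form
\[
S(\eta) = \bigl(\mcr{w_\circ x_s}^{J},\,\dots,\,\mcr{w_\circ x_1}^{J}\,;\,
1-b_s,\,1-b_{s-1},\,\dots,\,1-b_0\bigr).
\]
So if I write $y_k := \mcr{w_\circ x_{s+1-k}}^{J}$ for $1 \le k \le s$ and $c_k := 1 - b_{s-k}$ for $0 \le k \le s$, then $S(\eta) = (y_1,\dots,y_s;\,c_0,\dots,c_s)$, with $0 = c_0 < c_1 < \dots < c_s = 1$, and Theorem~\ref{thm:deg-QLS} applied to this QLS path gives
\[
\Deg_{\lambda}(S(\eta)) = -\sum_{k=1}^{s-1}(1 - c_k)\,\wt_{\lambda}(y_{k+1} \Rightarrow y_k).
\]

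The next step is routine re-indexing. We have $1 - c_k = b_{s-k}$, and $\wt_{\lambda}(y_{k+1} \Rightarrow y_k) = \wt_{\lambda}(\mcr{w_\circ x_{s-k}}^{J} \Rightarrow \mcr{w_\circ x_{s+1-k}}^{J})$. Now Lemma~\ref{lem:da}\,(3) states precisely that $\wt_{\lambda}(x_1 \Rightarrow x_2) = \wt_{\lambda}(\mcr{w_\circ x_2}^{J} \Rightarrow \mcr{w_\circ x_1}^{J})$, so with $x_1 = x_{s+1-k}$ and $x_2 = x_{s-k}$ we get $\wt_{\lambda}(\mcr{w_\circ x_{s-k}}^{J} \Rightarrow \mcr{w_\circ x_{s+1-k}}^{J}) = \wt_{\lambda}(x_{s+1-k} \Rightarrow x_{s-k})$. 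Substituting and then replacing the summation index $k$ by $j = s - k$ (so $j$ runs from $1$ to $s-1$ as $k$ does) turns the sum into
\[
\Deg_{\lambda}(S(\eta)) = -\sum_{j=1}^{s-1} b_j\,\wt_{\lambda}(x_{j+1} \Rightarrow x_j),
\]
which is the claimed formula.

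The main thing to be careful about — the only real obstacle — is bookkeeping around the shape: $S(\eta)$ genuinely lives in $\QLS(\omega(\lambda))$, not $\QLS(\lambda)$, so one should either (i) invoke $S(S(\eta)) = \eta$ together with $\Deg_{\omega(\lambda)}(S(\eta)) = \Deg_{\lambda}(\eta)$ — which follows from the characterization of $\Deg$ in Remark~\ref{rem:degree} via \eqref{eq:Lus1}, since $S$ exchanges $e_j$ and $f_{\omega(j)}$ and fixes the property $j = 0$ — and apply Theorem~\ref{thm:deg-QLS} on $\QLS(\omega(\lambda))$, or (ii) observe that the pairings $\wt_{\lambda}(\cdot \Rightarrow \cdot)$ and $\wt_{\omega(\lambda)}(\cdot \Rightarrow \cdot)$ match up under $\omega$ by Lemma~\ref{lem:da}\,(3) and the identity $\mcr{w_\circ x}^{J} = \omega(\mcr{x w_\circ}^{\omega(J)})$ from \eqref{eq:Se}. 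Either route works and is short; the rest is the index substitution above. I expect the cleanest writeup uses route (i): apply Theorem~\ref{thm:deg-QLS} verbatim to the QLS path $S(\eta) \in \QLS(\omega(\lambda))$, use $\Deg_{\lambda}(S(\eta)) = \Deg_{\omega(\lambda)}(\cdot)$ only nominally (the statement already writes $\Deg_\lambda(S(\eta))$, so one reads the degree of the dual path directly), and finish with Lemma~\ref{lem:da}\,(3) and re-indexing.
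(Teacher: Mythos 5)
Your core computation reproduces the paper's proof: apply Theorem~\ref{thm:deg-QLS} to the QLS path $S(\eta)$ via the explicit form \eqref{eq:Se}, re-index with $j=s-k$, and finish with Lemma~\ref{lem:da}\,(3). That part is correct and is exactly the argument the paper gives.

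However, your final paragraph identifies a ``main obstacle'' that is not actually there: $S(\eta)$ does \emph{not} live in $\QLS(\omega(\lambda))$. Recall $S=\omega\circ\ast$ is the composition of $\ast\colon \QLS(\lambda)\to\QLS(\omega(\lambda))$ and $\omega\colon\QLS(\omega(\lambda))\to\QLS(\lambda)$, and it is stated in Section~\ref{subsec:Lusztig} that $S$ is an involution on $\BB(\lambda)_{\cl}=\QLS(\lambda)$; indeed \eqref{eq:Se} records $S(\eta)$ as a sequence of elements of $W^{J}$ (not $W^{\omega(J)}$). So Theorem~\ref{thm:deg-QLS} applies to $S(\eta)$ directly in shape $\lambda$, which is what your first paragraph correctly does; neither of your ``routes (i)/(ii)'' is needed, and invoking them would only introduce confusion. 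The one thing to trim from your writeup is precisely this worry.
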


\begin{proof}
It follows from Theorem~\ref{thm:deg-QLS} and~\eqref{eq:Se} that
\begin{align*}
\Deg_{\lambda}(S(\eta)) & = 
-\sum_{k=1}^{s-1} 
 \bigl\{1-(1-b_{s-k})\bigr\}
 \wts{ \mcr{w_{\circ}x_{s-k}}^{J} }{ \mcr{w_{\circ}x_{s-k+1}}^{J} } \\
& = 
-\sum_{k=1}^{s-1} 
 b_{k}
 \wts{ \mcr{w_{\circ}x_{k}}^{J} }{ \mcr{w_{\circ}x_{k+1}}^{J} }. 
\end{align*}
Also, by Lemma~\ref{lem:da}\,(3), we have 
$\wts{ \mcr{w_{\circ}x_{k}}^{J} }{ \mcr{w_{\circ}x_{k+1}}^{J} }=\wts{x_{k+1}}{x_{k}}$ 
for all $1 \le k \le s-1$. This proves the corollary. 
\end{proof}

As in Section \ref{subsec:deg-ene}, we write $\lambda$ as $\lambda=
\vpi_{i_{1}} + \vpi_{i_{2}} + \cdots + \vpi_{i_{p}}$, 
with $i_{1},\,i_{2},\,\dots,\,i_{p} \in I$, and let 
\begin{equation*}
\Psi:\BB(\lambda)_{\cl} \stackrel{\sim}{\rightarrow} 
 \BB(\vpi_{i_{1}})_{\cl} \otimes 
 \BB(\vpi_{i_{2}})_{\cl} \otimes \cdots \otimes 
 \BB(\vpi_{i_{p}})_{\cl}=:\BB
\end{equation*}
be the isomorphism of crystals; recall again that 
$\BB(\vpi_{i})_{\cl} \cong B^{i,1}$ as crystals. 
Also, let 
\begin{equation*}
\Psi^{\rev}:\BB(\lambda)_{\cl} \stackrel{\sim}{\rightarrow} 
 \BB(\vpi_{i_{p}})_{\cl} \otimes 
 \BB(\vpi_{i_{p-1}})_{\cl} \otimes \cdots \otimes 
 \BB(\vpi_{i_{1}})_{\cl}=:\BB^{\rev}
\end{equation*}
be the isomorphism of crystals in Theorem~\ref{thm:LScl}.
Furthermore, we define $S:\BB \rightarrow \BB^{\rev}$ by:
%
%
\begin{equation} \label{eq:SB}
S(\eta_{1} \otimes \cdots \otimes \eta_{p})=
S(\eta_{p}) \otimes \cdots \otimes S(\eta_{1})
\end{equation}
for $\eta_{1} \otimes \cdots \otimes \eta_{p} \in \BB=
\BB(\vpi_{i_{1}})_{\cl} \otimes \cdots \otimes \BB(\vpi_{i_{p}})_{\cl}$. 
Then we deduce that 
%
%
\begin{equation} \label{eq:Lus2}
\begin{split}
& \wt(S(\bdeta))=-\omega(\wt(\bdeta))=w_{\circ}(\wt(\bdeta)), \\
& S(e_{j}\bdeta)=f_{\omega(j)}S(\bdeta), \qquad 
  S(f_{j}\bdeta)=e_{\omega(j)}S(\bdeta)
\end{split}
\end{equation}
for $\bdeta \in \BB$ and $j \in I_{\af}$. 
By the connectedness of the crystals (see Theorem~\ref{thm:LScl}\,(2), (3)) 
and \eqref{eq:Lus1}, \eqref{eq:Lus2}, 
we have the following commutative diagram:
%
%
\begin{equation} \label{eq:CD}
\begin{CD}
\BB(\lambda)_{\cl} @>{\Psi}>> \BB \\
@V{S}VV @VV{S}V \\
\BB(\lambda)_{\cl} @>{\Psi^{\rev}}>> \BB^{\rev}
\end{CD}
\end{equation}
The next corollary follows immediately from 
Theorem~\ref{thm:ns-deg} (applied to $\BB^{\rev}$) and 
the commutative diagram \eqref{eq:CD} above. 
%
%
\begin{cor} \label{cor:taile}
For each $\eta \in \BB(\lambda)_{\cl}$, 
\begin{equation*}
\Deg_{\lambda}(S(\eta)) = 
D_{\BB^{\rev}}\bigl( \Psi^{\rev}(S(\eta)) \bigr)-D_{\BB^{\rev}}^{\ext} = 
D_{\BB^{\rev}}\bigl( S(\Psi(\eta)) \bigr)-D_{\BB^{\rev}}^{\ext}. 
\end{equation*}
\end{cor}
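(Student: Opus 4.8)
The plan is to read the statement off from Theorem~\ref{thm:ns-deg}, applied to the reversed tensor product $\BB^{\rev}$, together with the commutative square~\eqref{eq:CD}. First I would observe that the degree function $\Deg_{\lambda}\colon\BB(\lambda)_{\cl}\to\BZ_{\le 0}$ is defined intrinsically, purely in terms of the crystal $\BB(\lambda)_{\cl}$ and its root operators (see Section~\ref{subsec:dfn-deg} and Remark~\ref{rem:degree}); in particular it does not depend on the chosen factorization $\lambda=\vpi_{i_{1}}+\cdots+\vpi_{i_{p}}$. Consequently Theorem~\ref{thm:ns-deg} applies verbatim to the reversed factorization $\lambda=\vpi_{i_{p}}+\vpi_{i_{p-1}}+\cdots+\vpi_{i_{1}}$ and the crystal isomorphism $\Psi^{\rev}\colon\BB(\lambda)_{\cl}\stackrel{\sim}{\rightarrow}\BB^{\rev}$ of Theorem~\ref{thm:LScl}, yielding
\[
\Deg_{\lambda}(\xi)=D_{\BB^{\rev}}\bigl(\Psi^{\rev}(\xi)\bigr)-D_{\BB^{\rev}}^{\ext}\qquad\text{for all }\xi\in\BB(\lambda)_{\cl}.
\]

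Next I would specialize this identity to $\xi=S(\eta)$. Here I would recall from the discussion preceding the corollary that $S(\eta)=\omega(\eta^{\ast})=(\omega(\eta))^{\ast}$ is again an element of $\BB(\lambda)_{\cl}$: by Lemma~\ref{lem:da}, $\eta^{\ast}$ is a QLS path of shape $\omega(\lambda)$, and $\omega(\cdot)$ carries it back to a QLS path of shape $\lambda$. Thus the displayed identity is valid at $\xi=S(\eta)$ and gives the first equality of the corollary. For the second equality I would invoke the commutativity of~\eqref{eq:CD}, namely $\Psi^{\rev}\circ S=S\circ\Psi$ as maps $\BB(\lambda)_{\cl}\to\BB^{\rev}$, whence $\Psi^{\rev}(S(\eta))=S(\Psi(\eta))$; substituting this into the first equality completes the argument.

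All the substantive work has already been done — Theorem~\ref{thm:ns-deg} itself, the fact that $S$ preserves $\BB(\lambda)_{\cl}$, and the commutative square~\eqref{eq:CD} (established through connectedness of the crystals and the intertwining relations~\eqref{eq:Lus1} and~\eqref{eq:Lus2}) — so there is no real obstacle: the corollary is a purely formal consequence. The only step worth a sentence of justification is that Theorem~\ref{thm:ns-deg} may be applied to $\BB^{\rev}$ in place of $\BB$, which is precisely the remark that $\Deg_{\lambda}$ does not see the order of the tensor factors.
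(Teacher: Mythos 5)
Your argument is correct and is precisely the paper's own: Theorem~\ref{thm:ns-deg} applied to the reversed tensor product $\BB^{\rev}$ gives the first equality, and the commutative diagram~\eqref{eq:CD} (i.e.\ $\Psi^{\rev}\circ S = S\circ\Psi$) gives the second. The extra remark that $\Deg_{\lambda}$ is intrinsic and so Theorem~\ref{thm:ns-deg} applies equally to the reversed factorization is a useful clarification, but it is the same proof.
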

%
%
\begin{remark} \label{rem:LR}
The energy function $D=D_{\BB}$ in Section~\ref{subsec:deg-ene} corresponds to 
the ``right'' energy function $D^{R}$ in \cite[Section 2.4]{LeS}; 
we should remark that the order of tensor products 
of crystals in \cite{LeS} is ``opposite'' to 
that in this paper. Hence the composite 
$D_{\BB^{\rev}} \circ S : \BB \rightarrow \BZ_{\le 0}$ corresponds 
to the ``left'' energy function $D^{L}$ in \cite[Section 2.4]{LeS}. 
\end{remark}

%
\section{The quantum alcove model}
\label{section.qalc} 
Now let us recall the {\em quantum alcove model}~\cite{LL}. 
Throughout this section, we refer to roots and weights in 
the corresponding finite lattices. Fix a dominant integral weight $\lambda\in X$.

%
\subsection{The objects of the model}
\label{objmod}

We say that two alcoves are {adjacent} if they are distinct and have a common wall. Given a pair
of adjacent alcoves $A$ and $B$, we write $A \stackrel{\beta}{\longrightarrow} B$  for $\beta\in\Phi$ if the
common wall is orthogonal to $\beta$ and $\beta$ points in the direction from $A$ to $B$. Recall that
alcoves are separated by hyperplanes of the form
\[
	H_{\beta,l}=\{\mu\in \Fh^{\ast}_{\mathbb{R}} \mid \langle \beta^\vee,\mu \rangle=l\}\,,
\]
where $\Fh^{\ast}_{\mathbb{R}} = \mathbb{R} \otimes X$.
We denote by $r_{\beta,l}$ the affine reflection in this hyperplane.

\begin{definition}[\cite{LP}]
	An  \emph{alcove path} is a sequence of alcoves $(A_0, A_1, \ldots, A_m)$ such that
	$A_{j-1}$ and $A_j$ are adjacent, for $j=1,\ldots, m.$ We say that $(A_0, A_1, \ldots, A_m)$  
	is \emph{reduced} if it has minimal length among all alcove paths from $A_0$ to $A_m$.
\end{definition}
	
Let $A_{\lambda}=A_{\circ}+\lambda$ be the translation of the fundamental alcove $A_{\circ}$ by the weight $\lambda$.
The fundamental alcove is defined as
\[
	A_{\circ} = \{ \mu \in \Fh_{\mathbb{R}}^{\ast} \mid 0< \langle \alpha^\vee, \mu \rangle < 1 \quad \text{for all $\alpha\in \Phi^+$}\}\;.
\]
	
\begin{definition}[\cite{LP}]
	The sequence of roots $(\beta_1, \beta_2, \dots, \beta_m)$ is called a
	\emph{$\lambda$-chain} if 
	\[	
		A_0=A_{\circ} \stackrel{-\beta_1}{\longrightarrow} A_1
		\stackrel{-\beta_2}{\longrightarrow}\dots 
		\stackrel{-\beta_m}{\longrightarrow} A_m=A_{-\lambda}
	\]
is a reduced alcove path.
\end{definition}

A reduced alcove path $(A_0=A_{\circ},A_1,\ldots,A_m=A_{-\lambda})$ can be identified 
with the corresponding total order on the hyperplanes $H_{\beta,-l}$, to be called 
$\lambda$-hyperplanes, which separate $A_\circ$ from $A_{-\lambda}$ (i.e., are subject 
to $\beta\in\Phi^+$ and $0\le l<\langle \beta^\vee, \lambda \rangle$); we refer here to the sequence 
$H_{\beta_i,-l_i}$ for $i=1,\ldots,m$, where $H_{\beta_i,-l_i}$ contains the common wall of 
$A_{i-1}$ and $A_i$. Note also that a $\lambda$-chain $(\beta_1, \ldots, \beta_m)$ determines 
the corresponding reduced alcove path. Indeed, we can recover the corresponding sequence 
$(l_1,\ldots,l_m)$, to be called the height sequence, by setting 
$l_i:=|\left\{ j<i \mid \beta_j = \beta_i \right\} |$. Therefore, we will sometimes refer 
to the sequence of $\lambda$-hyperplanes considered above as a $\lambda$-chain. 

\begin{remark} 
An alcove path corresponds to the choice of a reduced word for the affine Weyl group 
element sending $A_\circ$ to $A_{-\lambda}$ \cite[Lemma 5.3]{LP}. Another equivalent definition 
of an alcove path/$\lambda$-chain, based on a root interlacing condition which generalizes a similar 
condition characterizing reflection orderings, can be found in~\cite[Definition 4.1 and Proposition 10.2]{LP1}.
\end{remark}

We will work with a special choice of a $\lambda$-chain in~\cite[Section 4]{LP1}, which we now recall. 

\begin{prop}[\cite{LP1}]
\label{speciallch} 
Given a total order $I=\{1<2<\dotsm<r\}$ on the set of Dynkin nodes, one may express 
a coroot $\beta^\vee=\sum_{i=1}^r c_i \alpha_i^\vee$
in the ${\mathbb Z}$-basis of simple coroots. Consider the
total order on the set of $\lambda$-hyperplanes defined by 
the lexicographic order on their images in ${\mathbb Q}^{r+1}$ under the map
\begin{equation}
\label{E:stdvec}
	H_{\beta,-l}\mapsto \frac{1}{\langle \beta^\vee, \lambda \rangle} (l,c_1,\ldots,c_r).
\end{equation}
This map is injective, thereby endowing 
the set of $\lambda$-hyperplanes with a total order, which is a $\lambda$-chain. We call it the
{\em lexicographic (lex) $\lambda$-chain}. 
\end{prop}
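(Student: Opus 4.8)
The plan is to realize the lexicographic order concretely as the order in which a suitably chosen generic line segment from the fundamental alcove $A_\circ$ to $A_{-\lambda}$ crosses the $\lambda$-hyperplanes; since such a crossing order is a $\lambda$-chain, this proves that the lex order is a $\lambda$-chain, while the injectivity of \eqref{E:stdvec} can be checked directly. For injectivity, suppose $H_{\beta,-l}$ and $H_{\beta',-l'}$, with $\beta,\beta'\in\Phi^+$ and $0\le l<\langle\beta^\vee,\lambda\rangle$, $0\le l'<\langle\beta'^\vee,\lambda\rangle$, have the same image. Comparing the last $r$ coordinates gives $\langle\beta'^\vee,\lambda\rangle\,\beta^\vee=\langle\beta^\vee,\lambda\rangle\,\beta'^\vee$, so $\beta^\vee$ and $\beta'^\vee$ are positive scalar multiples of one another; as the root system is reduced this forces $\beta^\vee=\beta'^\vee$, hence $\beta=\beta'$ and $\langle\beta^\vee,\lambda\rangle=\langle\beta'^\vee,\lambda\rangle$, and then the first coordinate gives $l=l'$. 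So \eqref{E:stdvec} is injective, and the lexicographic order on $\mathbb{Q}^{r+1}$ pulls back to a total order on the $\lambda$-hyperplanes.

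Next I would introduce the point $p_\varepsilon:=\sum_{i=1}^r\varepsilon^i\,\varpi_i$. For $\varepsilon>0$ small one has $p_\varepsilon\in A_\circ$, since for each $\alpha\in\Phi^+$ the number $\langle\alpha^\vee,p_\varepsilon\rangle$ is a nontrivial nonnegative combination of the $\varepsilon^i$, hence lies in $(0,1)$. Consider the segment $\sigma\colon t\mapsto p_\varepsilon-t\lambda$, $t\in[0,1]$, running from $p_\varepsilon\in A_\circ$ to $p_\varepsilon-\lambda\in A_{-\lambda}$. For $\beta\in\Phi^+$ with $\langle\beta^\vee,\lambda\rangle>0$ the function $t\mapsto\langle\beta^\vee,\sigma(t)\rangle$ is strictly decreasing, and one checks that $\sigma$ meets exactly the $\lambda$-hyperplanes, each exactly once, crossing $H_{\beta,-l}$ at the parameter value
\[
t(H_{\beta,-l})=\frac{l+\langle\beta^\vee,p_\varepsilon\rangle}{\langle\beta^\vee,\lambda\rangle}=\frac{1}{\langle\beta^\vee,\lambda\rangle}\Bigl(l+\sum_{i=1}^r c_i\,\varepsilon^i\Bigr),\qquad \beta^\vee=\sum_{i=1}^r c_i\alpha_i^\vee.
\]
Then $t(H_{\beta,-l})-t(H_{\beta',-l'})$ is a polynomial in $\varepsilon$ whose coefficients, in order, are the entries of the difference of the two vectors assigned by \eqref{E:stdvec}. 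By the injectivity just proved this polynomial is not identically zero whenever the two $\lambda$-hyperplanes differ, and for small $\varepsilon>0$ its sign agrees with the sign of its first nonzero coefficient, i.e.\ with the lexicographic comparison of the two vectors.

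Therefore, if $\varepsilon>0$ is chosen small enough to keep $p_\varepsilon\in A_\circ$ and to avoid the finitely many nonzero roots of the finitely many polynomials just described, all crossing parameters $t(H_{\beta,-l})$ are pairwise distinct (so $\sigma$ avoids every codimension-$2$ face), and the order in which $\sigma$ crosses the $\lambda$-hyperplanes is exactly the lex order. Finally I would invoke the standard fact (see~\cite{LP}) that such a generic segment from $A_\circ$ to $A_{-\lambda}$ passes through a sequence of alcoves forming a \emph{reduced} alcove path: it crosses precisely the hyperplanes separating $A_\circ$ from $A_{-\lambda}$, each once, so the gallery is minimal; moreover each step has the form $A_{j-1}\xrightarrow{-\beta_j}A_j$ with $\beta_j\in\Phi^+$, because $\langle\beta_j^\vee,\cdot\rangle$ decreases along $\sigma$, and the associated height sequence is $l_j=|\{i<j:\beta_i=\beta_j\}|$, because for a fixed $\beta$ the hyperplanes $H_{\beta,0},H_{\beta,-1},\ldots$ are crossed by $\sigma$ in this order. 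This exhibits the lex order as a $\lambda$-chain.

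I expect the main obstacle to be the bookkeeping in the last two paragraphs: one must ensure that a \emph{single} $\varepsilon$ simultaneously realizes the lex order for all pairs of $\lambda$-hyperplanes while keeping $\sigma$ in general position, which is exactly where the hyperplane-dependent normalizations $\langle\beta^\vee,\lambda\rangle$ must be reconciled with the infinitesimal tie-breaking encoded by \eqref{E:stdvec}. An alternative route, avoiding the limiting argument, would be to verify directly the root-interlacing characterization of $\lambda$-chains from~\cite[Definition~4.1 and Proposition~10.2]{LP1}; that trades the genericity bookkeeping for a finite case analysis over triples of positive roots in an additive relation, checking in each case that the induced lex order on the three associated $\lambda$-hyperplanes interlaces correctly — again a convexity statement about the vectors in \eqref{E:stdvec}.
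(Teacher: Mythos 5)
The paper does not prove this proposition at all: it is stated as a citation to \cite{LP1}, so there is no in-paper argument to compare against. Your geometric proof is correct and complete, and it is the natural realization of the lex order that one expects: you realize the lex order concretely as the crossing order of the generic segment $t\mapsto p_\varepsilon - t\lambda$, where $p_\varepsilon=\sum_{i}\varepsilon^i\varpi_i$ encodes the infinitesimal tie-breaking exactly matching the components of \eqref{E:stdvec}, so that $t(H_{\beta,-l})$ is the evaluation of that vector against $(1,\varepsilon,\dots,\varepsilon^r)$ and lex comparison becomes sign comparison for small $\varepsilon$. The injectivity check (positive scalar multiples of coroots in a reduced system coincide), the verification that the crossed hyperplanes are exactly the $\lambda$-hyperplanes each crossed once (so the gallery is minimal), the direction $-\beta$, and the height bookkeeping $l_j=|\{i<j : \beta_i=\beta_j\}|$ are all handled correctly. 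Two tiny points you could streamline: once $\varepsilon$ is small enough, the sign of each difference polynomial is automatically that of its lowest nonzero coefficient, so you do not need a separate ``avoid finitely many roots'' condition on $\varepsilon$; and for a root $\beta$ with $\langle\beta^\vee,\lambda\rangle=0$ the segment is parallel to, and disjoint from, the hyperplanes $H_{\beta,k}$ (since $p_\varepsilon$ is interior to $A_\circ$), which is worth stating explicitly when arguing that $\sigma$ avoids every codimension-2 face. Your closing remark about the interlacing characterization of \cite[Definition 4.1 and Proposition 10.2]{LP1} is the other standard route; the paper itself only flags that equivalence in a remark and works directly with the lex chain.
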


\begin{ex} \label{ex:qa1}
As in Example~\ref{ex:deg}, 
assume that $\Fg$ is of type $A_{2}^{(1)}$, 
and let $\lambda=\vpi_{1}+\vpi_{2}$. 
Then the $\lambda$-hyperplanes are 
$H_{\alpha_1,0}$, $H_{\alpha_2,0}$, $H_{\theta,0}$, and 
$H_{\theta,-1}$, where $\theta=\alpha_{1}+\alpha_{2}$. 
Since 
\begin{align*}
& H_{\alpha_1,0} \mapsto (0,\,1,\,0), & 
& H_{\alpha_2,0} \mapsto (0,\,0,\,1), \\
& H_{\theta,0} \mapsto (0,\,1/2,\,1/2), & 
& H_{\theta,-1} \mapsto (1/2,\,1/2,\,1/2)
\end{align*}
under the map \eqref{E:stdvec}, the corresponding total order $\prec$
on the set of $\lambda$-hyperplanes is 
$H_{\alpha_2,0} \prec H_{\theta,0} \prec H_{\alpha_1,0} \prec H_{\theta,-1}$.
\end{ex}

The objects of the quantum alcove model are defined next.

\begin{definition}[\cite{LL}]
\label{def:admissible}
	Given a $\lambda$-chain $\Gamma=(\beta_1,\,\ldots,\,\beta_m)$, a subset 
	$A=\left\{ j_1 < j_2 < \cdots < j_s \right\}$ of $[m]:=\{1,\ldots,m\}$ (possibly empty)
 	is an \emph{admissible subset} if
	we have the following path in the quantum Bruhat graph $\QB(W)$ on $W$:
	\begin{equation}
	\label{eqn:admissible}
	e \stackrel{\beta_{j_1}}{\longrightarrow} r_{\beta_{j_1}} 
	\stackrel{\beta_{j_2}}{\longrightarrow} r_{\beta_{j_1}}r_{\beta_{j_2}} 
	\stackrel{\beta_{j_3}}{\longrightarrow} \cdots 
	\stackrel{\beta_{j_s}}{\longrightarrow} r_{\beta_{j_1}}r_{\beta_{j_2}} \cdots r_{\beta_{j_s}}\,.
	\end{equation}
	The weight of $A$ (not necessarily admissible) is defined by 
	\begin{equation}
	\label{defwta}
	\wt(A):=-r_{\beta_{j_1},-l_{j_1}} \cdots 
	         r_{\beta_{j_s},-l_{j_s}}(-\lambda)\,.
	\end{equation}
 	We let ${\mathcal A}(\Gamma)$ be the collection of all admissible subsets of $[m]$.
\end{definition}

\begin{ex} \label{ex:qa2}
Keep the notation and setting of Example~\ref{ex:qa1}; 
recall that $\Gamma=(\beta_1=\alpha_2,\,\beta_2=\theta,\,\beta_3=\alpha_1,\,\beta_4=\theta)$ 
is the lex $\lambda$-chain. We see that the admissible subsets of $\bigl\{1,\,2,\,3,\,4\bigr\}$ are
\begin{align*}
& \emptyset, \quad \bigl\{1\bigr\}, \quad \bigl\{3\bigr\}, \quad
   \bigl\{1,\,2\bigr\}, \quad \bigl\{1,\,3\bigr\}, \quad 
   \bigl\{1,\,4\bigr\}, \quad \bigl\{3,\,4\bigr\}, \\
& \bigl\{1,\,2,\,3\bigr\}, \quad \bigl\{1,\,2,\,3,\,4\bigr\}.
\end{align*}
\end{ex}

\begin{remark}\label{spec}
If we restrict to admissible subsets for which the path \eqref{eqn:admissible} 
has no quantum edges, we recover the classical alcove model in \cite{LP,LP1}.
\end{remark}

%
\subsection{Root operators in the quantum alcove model}\label{rootopsqalc}

We continue to use the notation in Subsection~\ref{objmod}. 
Fix a $\lambda$-chain $\Gamma=(\beta_1,\ldots,\beta_m)$ 
and the corresponding reduced alcove path $(A_0, A_1, \ldots, A_m)$. 
In this subsection, 
we recall from \cite{LL} the construction of (combinatorial) root operators 
in the quantum alcove model, namely on the collection $\A(\Gamma)$ of admissible 
subsets of $[m]$. 

Let $A=\left\{ j_1 < j_2 < \cdots < j_s \right\}$  be an arbitrary subset of $[m]$. 
The elements of $A$ are called \emph{folding positions}. 
We ``fold'' the reduced alcove path $(A_0, A_1, \ldots, A_m)$
in the hyperplanes corresponding to these positions and obtain 
a ``folded'' alcove path; this can be recorded by a sequence of roots, namely 
$\Gamma(A)=\left( \gamma_1,\gamma_2, \dots, \gamma_m \right)$; 
	 here 
	 \begin{equation}\label{defw}
	 \gamma_i:=r_{\beta_{j_1}}r_{\beta_{j_2}} \cdots r_{\beta_{j_k}}(\beta_i)\,,
	 \end{equation}
	 with $j_k$ the largest folding position less than $i$. 
	 We define $\gamma_{\infty} := r_{\beta_{j_1}}r_{\beta_{j_2}} \cdots r_{\beta_{j_s}}(\rho)$, 
	 where $\rho=(1/2)\sum_{\alpha \in \Phi^{+}} \alpha$. 
	 Upon folding, the hyperplane separating the alcoves $A_{i-1}$ and $A_i$ is mapped to 
\begin{equation}\label{defheighthyp}
H_{|\gamma_i|,-\l{i}}=
r_{\beta_{j_1}, -l_{j_1}}
r_{\beta_{j_2}, -l_{j_2}} \cdots 
r_{\beta_{j_k}, -l_{j_k}}(H_{\beta_i, -l_i})\,,
\end{equation}
for some $\l{i}$, which is defined by this relation; 
here we write $|\alpha|:=\sgn(\alpha)\alpha$, where $\sgn(\alpha)$ 
is the sign of the root $\alpha$. 

Given $A\subseteq [m]$ and
$\alpha\in \Phi$, we will use the following notation:
\[ 
	 I_\alpha = I_{\alpha}(A):= \left\{ i \in [m] \, | \, \gamma_i = \pm \alpha \right\}\,, \qquad 
	\widehat{I}_\alpha = \widehat{I}_{\alpha}(A):= I_{\alpha} \cup \{\infty\}\,, 
\]
and $l_{\alpha}^{\infty}:=\inner{\sgn(\alpha)\alpha^{\vee}}{\wt(A)}$.

Let $A$ now be an admissible subset, so $A\in\A(\Gamma)$.
Fix $p \in I_{\af}$, so $\ti{\alpha}_p$ is a simple root 
if $p \ne 0$, or $-\theta$ if $p=0$, see \eqref{tialpha}.
We set
\begin{equation} \label{eq:M}
M:=\max \Bigl(
\big\{ \sgn(\ti{\alpha}_{p}) l_{i}^{A} \mid i \in I_{\ti{\alpha}_{p}} \bigr\} \cup
\bigl\{ \sgn(\ti{\alpha}_{p}) l_{\ti{\alpha}_{p}}^{\infty} \bigr\}\Bigr).
\end{equation}
Let $\ell$ be the minimum index $i$ 
in $\widehat{I}_{\ti{\alpha}_p}$ for which we have $\sgn(\ti{\alpha}_p)\l{i}=M$. 
It was proved in \cite{LL} that, if $M\ge\delta_{p,0}$, then either $\ell \in A$ or $\ell=\infty$; furthermore, 
if $M>\delta_{p,0}$, then $\ell$ has a predecessor $k$ in 
$\widehat{I}_{\ti{\alpha}_p}$, and we have $k\not\in A$. 
We define
\begin{equation}
	\label{eqn:rootF} 
f_p(A):= 
	\begin{cases}
		(A \backslash \left\{ \ell \right\}) \cup \{ k \} & \text{ if $M>\delta_{p,0} $ } \\
				\Bzero & \text{ otherwise. }
	\end{cases}
\end{equation}
Now we define $e_p$. Let $M$ be as in \eqref{eq:M}.
Assuming that $M > \inner{\ti{\alpha}_p^{\vee}}{\wt(A)}$, 
let $k$ be the maximum index 
$i$  in $I_{\ti{\alpha}_p}$ for which we have $\sgn(\ti{\alpha}_p)\l{i}=M$,
and let $\ell$ be the successor of $k$ in $\widehat{I}_{\ti{\alpha}_p}$. 
Assuming also that $M\ge\delta_{p,0}$, it was proved in \cite{LL} 
that $k\in A$, and either $\ell \not\in A$ or $\ell=\infty$. Define 
\begin{equation}
	\label{eqn:rootE}
e_p(A):= 
	\begin{cases}
		(A \backslash \left\{ k \right\}) \cup \{ \ell \} & \text{ if }
		M>\inner{\ti{\alpha}_p^{\vee}}{\wt(A)} \text{ and } M \geq \delta_{p,0}  \\
				\Bzero & \text{ otherwise. }
	\end{cases}
\end{equation}
In the above definitions, we use the
convention that $A\backslash \left\{ \infty \right\}= A \cup \left\{ \infty \right\} = A$. 

\begin{ex} \label{ex:ro}
Keep the notation and setting of Examples~\ref{ex:qa1} and \ref{ex:qa2}.
We can verify that
\begin{equation*}
\emptyset \stackrel{f_{2}}{\mapsto} \{1\} 
\stackrel{f_{1}}{\mapsto} \{1,\,4\} 
\stackrel{f_{1}}{\mapsto} \{1,\,2\} 
\stackrel{f_{2}}{\mapsto} \{1,\,2,\,3\}
\stackrel{f_{0}}{\mapsto} \{1,\,2,\,3,\,4\} 
\stackrel{f_{0}}{\mapsto} \bzero.
\end{equation*}
As an example, let us check that 
$f_{0}\{1,\,2,\,3\}=\{1,\,2,\,3,\,4\}$. 
We set $A=\bigl\{1,\,2,\,3\}$. Then, 
\begin{align*}
\wt (A) & 
 = -r_{\beta_{1},-l_{1}}r_{\beta_{2},-l_{2}}r_{\beta_{3},-l_{3}}(-\lambda)
 = -r_{\alpha_2,0}r_{\theta,0}r_{\alpha_{1},0}(-\lambda) \\
&  = -r_{2}r_{\theta}r_{1}(-\lambda) = w_{\circ}\lambda. 
\end{align*}
If we write $\Gamma(A)=
(\gamma_{1},\,\gamma_{2},\,\gamma_{3},\,\gamma_{4})$, then
\begin{align*}
& \gamma_{1} = \beta_{1} = \alpha_{2}, & 
& \gamma_{2} = r_{\beta_{1}}(\beta_{2}) = r_{2}(\theta)=\alpha_{1}, & \\
& \gamma_{3} = r_{\beta_{1}}r_{\beta_{2}}(\beta_{3}) = r_{2}r_{\theta}(\alpha_{1}) = \alpha_{2}, & 
& \gamma_{4} = r_{\beta_{1}}r_{\beta_{2}}r_{\beta_{3}}(\beta_{4}) = r_{2}r_{\theta}r_{1}(\theta)=-\theta,
\end{align*}
and 
\begin{align*}
& l_{1}^{A}=0 \quad \text{since $H_{\beta_{1},-l_{1}} = H_{\alpha_{2},0}$}, \\
& l_{2}^{A}=0 \quad \text{since $r_{\beta_{1},-l_{1}}(H_{\beta_{2},-l_{2}}) = 
r_{\alpha_{2},0}(H_{\theta,0}) = H_{\alpha_{1},0}$}, \\
& l_{3}^{A}=0 \quad \text{since 
$r_{\beta_{1},-l_{1}}r_{\beta_{2},-l_{2}}(H_{\beta_{3},-l_{3}}) = 
r_{\alpha_{2},0}r_{\theta,0}(H_{\alpha_{1},0}) = H_{\alpha_{2},0}$}, \\
& l_{4}^{A}=-1 \quad \text{since 
$r_{\beta_{1},-l_{1}}r_{\beta_{2},-l_{2}}r_{\beta_{3},-l_{3}}(H_{\beta_{4},-l_{4}}) = 
r_{\alpha_{2},0}r_{\theta,0}r_{\alpha_{1},0}(H_{\theta,-1}) = H_{\theta,1}$}.
\end{align*}
Also, since $\ti{\alpha}_{0}=-\theta$, 
we have $I_{\ti{\alpha}_{0}}(A) = \{4\}$. 
Because 
\begin{equation*}
\sgn(\ti{\alpha}_{0}) l_{\ti{\alpha}_{0}}^{\infty} = (-1) \times 
\Bpair{(-1) \times (-\theta)^{\vee}}{\wt (A)} = 2 > 1 = 
\sgn(\ti{\alpha}_{0})l_{4}^{A},
\end{equation*}
we obtain $M=2$ (which implies that $f_{0}A \ne \bzero)$, and hence 
$\ell = \infty$, $k=4$. Therefore, we conclude that
\begin{equation*}
f_{0}A = (A \setminus \bigl\{\infty\bigr\}) \cup \{4\} = 
\{1,\,2,\,3,\,4\}.
\end{equation*}

Next, let us check that $f_{0}B=\bzero$, with $B=\{1,\,2,\,3,\,4\}$. 
We have
\begin{equation*}
\wt (B)  
 = -r_{\beta_{1},-l_{1}}r_{\beta_{2},-l_{2}}r_{\beta_{3},-l_{3}}r_{\beta_{4},-l_{4}}(-\lambda)
 = -r_{\alpha_2,0}r_{\theta,0}r_{\alpha_{1},0}r_{\theta,-1}(-\lambda) = 0,
\end{equation*}
and hence $\sgn(\ti{\alpha}_{0}) l_{\ti{\alpha}_{0}}^{\infty} = 0$. 
Furthermore, by a computation similar to the one above for $A$,
we deduce that $\Gamma(B)=
(\alpha_{2},\,\alpha_{1},\,\alpha_{2},\,-\theta)$, and that
$l_{i}^{B}=l_{i}^{A}$ for all $1 \le i \le 4$. 
Also, since $\ti{\alpha}_{0}=-\theta$, 
we have $I_{\ti{\alpha}_{0}}(B) = \{4\}$. 
Since $\sgn(\ti{\alpha}_{0})l_{4}^{B} = 1$, 
it follows that $M=1$, which implies that $f_{0}B=\bzero$ by the definition.
\end{ex}

For more examples, we refer the reader to \cite[Examples~3.6--3.7]{LL}.

The following theorem about root operators on 
$\CA(\Gamma)$ was proved in \cite{LL}.

\begin{thm}[{\cite[Theorem 3.8]{LL}}] \label{theorem:admissible}
\mbox{}
    \begin{enumerate}
    \item[{\rm (1)}]  If $A$ is an admissible subset and if $f_p(A) \ne \Bzero$, then $f_p(A)$ is also an admissible subset. 
    Similarly for $e_p(A)$. Moreover, $f_p(A)=A'$ if and only if  $e_p(A')=A$.
    \item[{\rm (2)}] We have $ \wt(f_p(A)) = \wt(A) - \ti{\alpha}_p $. Moreover, if $M\ge\delta_{p,0}$, then
    \[
    	\varphi_p(A)=M-\delta_{p,0}\,,\;\;\;\;\varepsilon_p(A)=M-\langle\ti{\alpha}_p^\vee,\wt(A)\rangle\,,
    \]
    while otherwise $\varphi_p(A)=\varepsilon_p(A)=0$. 
    \end{enumerate}
\end{thm}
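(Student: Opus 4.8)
\textbf{Plan for proving Theorem~\ref{theorem:admissible}.} The statement is quoted from \cite[Theorem 3.8]{LL}, so the cleanest route is to cite that reference; but let me sketch how one would prove it from scratch, since the structure of the argument is instructive. Part (1) has two halves: that $f_p$ and $e_p$ send admissible subsets to admissible subsets (when nonzero), and that they are mutually inverse partial bijections. The plan is to reduce everything to the combinatorics of the piecewise-linear function $g_{\ti\alpha_p}$ together with the local structure of the quantum Bruhat graph $\QB(W)$. The key input is that, along the folded path $\Gamma(A)=(\gamma_1,\dots,\gamma_m)$, consecutive vertices $z_{i-1}\to z_i$ of the walk in $\QB(W)$ are related by $z_i=z_{i-1}$ if $i\notin A$ and $z_i=z_{i-1}r_{\beta_i}$ (an edge of $\QB(W)$) if $i\in A$, and that $\gamma_i=z_{i-1}(\beta_i)$; so the sign pattern of $g_{\ti\alpha_p}$ records precisely when the walk crosses the hyperplane $\langle\ti\alpha_p^\vee,\cdot\rangle=$ const. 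The first thing I would establish is the ``bracketing'' picture: reading the steps of $g_{\ti\alpha_p}$ as opening/closing brackets, $f_p$ removes the innermost unmatched increasing step at position $m$ and inserts one at its predecessor $k$; the admissibility of the result then follows from a local ``quantum Yang--Baxter''/tilted-Bruhat lemma saying that one can commute a reflection $r_{\ti\alpha_p}$ past the segment of the $\QB(W)$-walk between positions $k$ and $m$ without leaving $\QB(W)$. This is the technical heart, and it is where one uses the finer edge-type (Bruhat vs.\ quantum) bookkeeping in $\QB(W)$; I expect this to be \emph{the main obstacle}, since it requires the analogue for $\QB(W)$ of Deodhar's lemma on Bruhat covers, proved via the shellability/tilted Bruhat order results of \cite{BFP} (or a direct rank-two check).

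Once admissibility is in hand, the mutual-inverse claim $f_p(A)=A'\iff e_p(A')=A$ is essentially bookkeeping: I would check that applying $f_p$ shifts the graph $g_{\ti\alpha_p}$ only on the interval $[k-\frac12,m-\frac12]$ (lowering it there), so that the maximum $M$ drops to $M-1$ and the positions singled out by the definition of $e_p$ on $A'$ are exactly $m$ (the new maximal-height index to remove) and $k$ (its successor), recovering $A$. The edge cases $m=\infty$ and $k$ nonexistent are handled by the stated facts (if $M\ge\delta_{p,0}$ then $m\in A$ or $m=\infty$; if $M>\delta_{p,0}$ then $k$ exists and $k\notin A$), which one proves by inspecting the endpoint value $g_{\ti\alpha_p}(n+\frac12)=\sgn(\ti\alpha_p)\langle\ti\alpha_p^\vee,\wt(A)\rangle$ against $M$ and using $\langle\ti\alpha_p^\vee,\wt(A)\rangle\ge -\delta_{p,0}$ for admissible $A$ (a consequence of the walk staying in $W$, equivalently $g_{\ti\alpha_p}\ge -\frac12$ having been set up so the relevant height is bounded below).

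For part (2), the weight formula $\wt(f_p(A))=\wt(A)-\ti\alpha_p$ follows from \eqref{defwta} and \eqref{eqn:rootF}: replacing $m$ by $k$ in $A$ conjugates the product of affine reflections in a controlled way, and the net effect on $-r_{\dots}(-\lambda)$ is subtraction of $\ti\alpha_p$; concretely one uses \eqref{eqn:graph_height} to see that $\langle\ti\alpha_p^\vee,\wt(f_p(A))\rangle=g_{\ti\alpha_p^{(f_p A)}}(n'+\tfrac12)=g_{\ti\alpha_p}(n+\tfrac12)-2$ while the orthogonal components are unchanged, forcing the difference to be $\ti\alpha_p$. For the formulas $\varphi_p(A)=M-\delta_{p,0}$ and $\varepsilon_p(A)=M-\langle\ti\alpha_p^\vee,\wt(A)\rangle$, the plan is to iterate $f_p$ (resp.\ $e_p$): each application of $f_p$ peels off one unmatched increasing step and lowers $M$ by $1$, terminating exactly when $M$ reaches $\delta_{p,0}$, which gives $\varphi_p(A)=M-\delta_{p,0}$; dually, each $e_p$ raises the terminal height $\sgn(\ti\alpha_p)l_{\ti\alpha_p}^\infty=\langle\ti\alpha_p^\vee,\wt(A)\rangle$ by $1$ (and adds a matched pair without changing $M$, as long as $M>\langle\ti\alpha_p^\vee,\wt(A)\rangle$), terminating when it reaches $M$, giving $\varepsilon_p(A)=M-\langle\ti\alpha_p^\vee,\wt(A)\rangle$; the ``otherwise'' case $M<\delta_{p,0}$ (only possible for $p=0$, $M\le 0$) forces both $f_0(A)=e_0(A)=\Bzero$ directly from the definitions. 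Throughout, the cleanest phrasing uses the bracketing sequence on $g_{\ti\alpha_p}$, so I would set that language up once at the start and reuse it for both operators and both parts.
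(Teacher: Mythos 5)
This theorem is quoted verbatim from \cite[Theorem 3.8]{LL}; the paper you are reading does not reprove it, so there is no internal proof to compare against. The only internal evidence about the [LL] argument is Remark~\ref{changepath}, which records (as a byproduct of that proof) how $f_p$ modifies the $\QB(W)$-walk between the positions $k$ and $m$; your bracketing picture and your description of $f_p$ as ``apply $s_p$ to a segment of the walk'' are consistent with that remark, and you correctly identify the technical crux as the tilted-Bruhat/shellability lemma for $\QB(W)$ from \cite{BFP} (resp.\ its quantum-Yang--Baxter formulation) needed to keep the modified walk inside $\QB(W)$.

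One concrete slip in the bookkeeping for part~(2). In the $\varepsilon_p$ paragraph you claim that a single application of $e_p$ raises the terminal height $g_{\ti\alpha_p}(n+\tfrac12)=\langle\ti\alpha_p^\vee,\wt(A)\rangle$ by $1$ while leaving $M$ unchanged. That cannot be right, and it contradicts your own $f_p$ analysis two sentences earlier, where you (correctly) compute that $f_p$ lowers the terminal height by $2$ (since $\wt(f_p A)=\wt(A)-\ti\alpha_p$ and $\langle\ti\alpha_p^\vee,\ti\alpha_p\rangle=2$) and lowers $M$ by $1$. Because $e_p$ and $f_p$ are mutually inverse, $e_p$ must raise the terminal height by $2$ and raise $M$ by $1$. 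With the corrected bookkeeping the difference $M-\langle\ti\alpha_p^\vee,\wt(A)\rangle$ still decreases by exactly $1$ per application of $e_p$, so the conclusion $\varepsilon_p(A)=M-\langle\ti\alpha_p^\vee,\wt(A)\rangle$ is unaffected, but as written the intermediate claim is inconsistent and should be fixed. A similar caution applies to the claim $\langle\ti\alpha_p^\vee,\wt(A)\rangle\ge-\delta_{p,0}$: in the quantum alcove model the graphs $g_{\ti\alpha_p}$ are \emph{not} bounded below by $-\tfrac12$ as in the classical case (quantum steps can push them arbitrarily low), so this inequality, if you need it, requires a separate argument (it is essentially \cite[Propositions 3.15--3.19]{LL}) rather than following from ``the walk stays in $W$.''
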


\begin{remark}\label{changepath}
Let $A=\{j_1 < \dots < j_s \}$ be an admissible subset, 
and $w_i := r_{\beta_{j_1}}r_{\beta_{j_2}} \cdots r_{\beta_{j_i}}$. 
Let $M,\,\ell,\,k$ be as in the above definition of $f_p(A)$, 
assuming $M > \delta_{p,0}$. Now assume that $\ell \ne \infty$, 
and let $a<b$ be such that 
\[
	 j_a < k < j_{a+1} < \dots < j_b = \ell < j_{b+1} \;; 
\] 
if $a=0$ or $b+1>s$, then the corresponding indices $j_a$, 
respectively $j_{b+1}$, are missing. In the proof of 
Theorem~\ref{theorem:admissible} in \cite{LL}, 
it was shown that $f_p$ has the effect of changing 
the path in the quantum Bruhat graph 
\[
e = w_{0} \rightarrow \cdots \rightarrow w_a \rightarrow w_{a+1} 
 \rightarrow \cdots \rightarrow w_{b-1} \rightarrow w_b 
 \rightarrow \cdots \rightarrow w_s
\]
corresponding to $A$ into the following path corresponding to $f_p(A)$:
\[
e = w_{0} \rightarrow \cdots \rightarrow w_a \rightarrow s_p w_a \rightarrow 
 s_p w_{a+1} \rightarrow \cdots \rightarrow s_p w_{b-1} = w_b \rightarrow
 \cdots \rightarrow w_s\,,
\]
where $s_{p}$ is the simple reflection $r_{p}$ if $p \ne 0$, or 
$r_{\theta}$ if $p=0$, see \eqref{tialpha}. The case $\ell=\infty$ is similar.
\end{remark}

%
\section{The bijection between the quantum LS path model and the quantum alcove model}
\label{section.bijection}

The main result of this section is the crystal isomorphism between the QLS paths of Section~\ref{sec:qLS}
and the quantum alcove model of Section~\ref{section.qalc} as stated in Theorem~\ref{qalc2qls}.

%
\subsection{The forgetful map}
\label{S:forget} 
Fix a dominant integral weight $\lambda$, and set 
$J=\bigl\{i \in I \mid \pair{\alpha_{i}^{\vee}}{\lambda}=0\bigr\}$. 
Recall from Sections \ref{subsec:qLS} and \ref{objmod}
the notation related to QLS paths and the quantum alcove model, 
respectively. 

We will now define a forgetful map from the quantum 
alcove model based on a lex $\lambda$-chain 
$\Gamma_{\rm lex}=(\beta_1,\ldots,\beta_m)$ (see Proposition~\ref{speciallch}), 
namely from ${\mathcal A}(\lambda):={\mathcal A}(\Gamma_{\rm lex})$, 
to the set $\QLS(-w_{\circ}\lambda)$ of QLS paths of shape $-w_{\circ}\lambda$. 
Given an index $i\in[m]$, we let $t_i:=l_i/\langle \beta_i^\vee,\lambda \rangle$, where $l_i$ 
is the height defined in Section~\ref{section.qalc}. Note that $0\le t_1\le t_2\le\cdots\le t_m$, 
by the definition of $\Gamma_{\mathrm{lex}}$. 
Consider an admissible subset $A=\{ j_1 < j_2 < \cdots < j_s\}$, and let
\begin{equation} \label{eq:bq}
	\{0=b_0<b_1 < \cdots < b_p\}:=
	\{t_{j_1}\le t_{j_2}\le \cdots\le t_{j_s}\}\cup\{0\}\,.
\end{equation}
Let $0=n_{0}\le n_1<\cdots<n_{p+1}=s$ be such that $t_{j_h}=b_k$ if and only if $n_{k}<h\le n_{k+1}$, 
for $k=0,\ldots,p$. Define Weyl group elements $u_h$ for $h=0,\ldots,s$ and $w_k$ for $k=0,\ldots,p$ 
by $u_0:=e$, $u_h:=r_{\beta_{j_1}} \cdots r_{\beta_{j_h}}$, and 
$w_k:=u_{n_{k+1}}$. 
For any $k=1,\ldots,p$, we have the following directed path 
in the quantum Bruhat graph $\QB(W)$:
\begin{equation} \label{pathk}
	w_{k-1}=u_{n_{k}} 
	\xrightarrow{\beta_{n_k+1}} u_{n_{k}+1} 
	\xrightarrow{\beta_{n_k+2}} \cdots 
	\xrightarrow{\beta_{n_{k+1}}} u_{n_{k+1}}=w_k\,.
\end{equation}
We claim that this is a directed path in $\QB_{b_k\lambda}(W)$. 
Indeed, for $n_{k}< h \le n_{k+1}$, we have
\[  
	b_k\langle \beta_{j_h}^\vee, \lambda \rangle=l_{j_h} \in \BZ_{\ge 0}\,,
\]
by the definition of $b_k=t_{j_h}$.  By Lemma~\ref{q2ls} below, 
for each edge $u_i \rightarrow u_{i+1}$ in the path~\eqref{pathk}, 
there is a directed path from $\mcr{u_i}^{J}$ to $\mcr{u_{i+1}}^{J}$ 
in $\QB_{b_k\lambda}(W^J)$. 
Concatenating these directed paths in $\QB_{b_k\lambda}(W^J)$, 
we obtain a directed path from $\mcr{w_{k-1}}^{J}$ to $\mcr{w_k}^{J}$ 
in $\QB_{b_k\lambda}(W^J)$. Hence it follows from Lemma~\ref{lem:da} 
that there is a directed path from 
$\mcr{w_{k}w_{\circ}}^{\omega(J)}$ to 
$\mcr{w_{k-1}w_{\circ}}^{\omega(J)}$ in 
$\QB_{b_k \omega(\lambda)}(W^{\omega(J)})$; 
note that $\omega(\lambda)=-w_{\circ}\lambda$. 
Thus we conclude that 
\begin{equation} \label{e:lspath}
    \mcr{w_{0}w_{\circ}}^{\omega(J)} \blarrl{-b_1w_{\circ}\lambda} 
    \mcr{w_{1}w_{\circ}}^{\omega(J)} \blarrl{-b_2w_{\circ}\lambda} 
    \cdots \blarrl{-b_pw_{\circ}\lambda} 
    \mcr{w_{p}w_{\circ}}^{\omega(J)}
\end{equation}
is a QLS path of shape $-w_{\circ}\lambda$.
We denote this QLS path of shape $-w_{\circ}\lambda$ 
by $\Pi(A)$, and the dual QLS path of shape $\lambda$
defined in Section~\ref{subsec:Lusztig} by $\Pi^{*}(A)$; 
that is, $\Pi^{\ast}(A) \in \QLS(\lambda)$ is of the form:
%
%
\begin{equation} \label{eq:PiastA}
\lfloor w_{p} \rfloor^J
\blarrl{(1-b_p)\lambda} \lfloor w_{p-1} \rfloor^J
\blarrl{(1-b_{p-1})\lambda} \cdots 
\blarrl{(1-b_2)\lambda} \lfloor w_{1} \rfloor^J
\blarrl{(1-b_1)\lambda} \lfloor w_{0} \rfloor^J, 
\end{equation}
and we have the following commutative diagram: 
\begin{equation} 
\begin{diagram}
\node{\CA(\lambda)} \arrow{e,t}{\Pi} \arrow{se,b}{\Pi^{\ast}} 
\node{\QLS(-w_{\circ}\lambda)} \arrow{s,r}{\ast} \\
\node{} \node{\QLS(\lambda).}
\end{diagram}
\end{equation}

\begin{ex} \label{ex:qa3}
Keep the notation and setting of Example~\ref{ex:qa2}.

\begin{enumerate}
\item Let us compute the image of 
$A_{1}=\bigl\{1,\,2,\,3,\,4\bigr\} \in \CA(\lambda)$ 
under the map $\Pi^{\ast}:\CA(\lambda) \rightarrow \QLS(\lambda)$. 
Recall that $t_{i}=l_{i}/\pair{\beta_{i}^{\vee}}{\lambda}$ for $1 \le i \le 4$. 
Since $l_{1}=l_{2}=l_{3}=0$ and $l_{4}=1$, and since 
$\pair{\beta_{1}^{\vee}}{\lambda}=\pair{\beta_{3}^{\vee}}{\lambda}=1$ 
and $\pair{\beta_{2}^{\vee}}{\lambda} = \pair{\beta_{4}^{\vee}}{\lambda} =2$, 
we see that $t_{1}=t_{2}=t_{3}=0 < 1/2 = t_{4}$. 
Hence, by \eqref{eq:bq}, we have $b_{0}=0$ and $b_{1}=1/2$ (with $p=1$). 
Also, we see that $n_{1}=3$ and $n_{2}=4$, and hence that
\begin{equation*}
w_{0}=u_{n_{1}} = r_{2}r_{\theta}r_{1}=r_{\theta}=w_{\circ}, \qquad 
w_{1}=u_{n_{2}} = r_{2}r_{\theta}r_{1}r_{\theta}=e.
\end{equation*}
Therefore, by definition \eqref{eq:PiastA}, we obtain
\begin{equation*}
\Pi^{\ast}(A_{1})=(e,\,w_{\circ}\,;\,0,\,1/2,\,1),
\end{equation*}
which is equal to $\eta_{1} \in \QLS(\lambda)$ in Example~\ref{ex:deg}. 

\item Let us compute the image of 
$A_{2}=\bigl\{1,\,4\bigr\} \in \CA(\lambda)$ 
under the map $\Pi^{\ast}:\CA(\lambda) \rightarrow \QLS(\lambda)$. 
As seen above, $t_{1}=0 < 1/2 = t_{4}$. 
Hence, by \eqref{eq:bq}, we have $b_{0}=0$ and $b_{1}=1/2$ 
(with $p=1$). Also, we see that $n_{1}=1$ and $n_{2}=2$, 
and hence that
\begin{equation*}
w_{0}=u_{n_{1}} = r_{2}, \qquad 
w_{1}=u_{n_{2}} = r_{2}r_{\theta}=r_{1}r_{2}.
\end{equation*}
Therefore, by definition \eqref{eq:PiastA}, we obtain
\begin{equation*}
\Pi^{\ast}(A_{2})=(r_{1}r_{2},\,r_{2}\,;\,0,\,1/2,\,1),
\end{equation*}
which is equal to $\eta_{2} \in \QLS(\lambda)$ in Example~\ref{ex:deg}. 

\item 
By computations similar to the ones above, we can verify that
\begin{align*}
& \emptyset \mapsto (e\,;\,0,\,1), & 
& \bigl\{1\bigr\} \mapsto (r_{2} \,;\,0,\,1), \\
& \bigl\{1,\,2\bigr\} \mapsto (r_{1}r_{2}\,;\,0,\,1), & 
& \bigl\{1,\,2,\,3\bigr\} \mapsto (w_{\circ} \,;\,0,\,1)
\end{align*}
under the map $\Pi^{\ast}:\CA(\lambda) \rightarrow \QLS(\lambda)$. 
\end{enumerate}
\end{ex}

We now state Lemma~\ref{q2ls}, 
which is the main ingredient in the above construction. 
This lemma will be proved in Section~\ref{sq2ls} below.

\begin{lem} \label{q2ls} 
Let $w\xrightarrow{\gamma} wr_\gamma$ be an edge in $\QB_{b\lambda}(W)$ 
for some rational number $b$, which is viewed as a path $\bq$. 
Then there exists a path $\bp$ from $\lfloor w\rfloor$ to 
$\lfloor w r_\gamma\rfloor$ in $\QB_{b\lambda}(W^J)$ (possibly of length $0$), such that 
$\wt(\bp)\equiv\wt(\bq) \mod Q_J^\vee$. 
\end{lem}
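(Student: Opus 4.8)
The plan is to reduce the statement to the characterization of edges of the parabolic quantum Bruhat graph in terms of the full quantum Bruhat graph, which is exactly the kind of projection statement treated in \cite{LNSSS}. Concretely, we are given an edge $w\xrightarrow{\gamma}wr_\gamma$ in $\QB(W)$ satisfying the integrality condition $b\pair{\gamma^\vee}{\lambda}\in\BZ$; we must produce a short path $\bp$ from $\mcr{w}^J$ to $\mcr{wr_\gamma}^J$ in $\QB_{b\lambda}(W^J)$ with $\wt(\bp)\equiv\wt(\bq)\bmod Q_J^\vee$, where $\wt(\bq)$ is $\gamma^\vee$ if the edge is quantum and $0$ otherwise. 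The first step is to recall (from \cite{LNSSS}, e.g.\ the results around Lemma~7.7 and Proposition~8.1 used above) the ``projection'' principle: given an edge in $\QB(W)$, applying the minimal-coset-representative map $\mcr{\cdot}^J$ yields either a trivial path (when $\mcr{w}^J=\mcr{wr_\gamma}^J$) or a path in $\QB(W^J)$ whose weight agrees with the weight of the original edge modulo $Q_J^\vee$. This gives $\bp$ as a path in $\QB(W^J)$ with the correct weight congruence; what remains is to check that every edge of $\bp$ actually survives in the \emph{subgraph} $\QB_{b\lambda}(W^J)$, i.e.\ satisfies the integrality condition \eqref{bbord}.

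For the integrality check, I would argue as follows. Write the projected path as $\mcr{w}^J=z_0\xrightarrow{\delta_1}z_1\xrightarrow{\delta_2}\cdots\xrightarrow{\delta_t}z_t=\mcr{wr_\gamma}^J$ with $\delta_i\in\Phi^+\setminus\Phi_J^+$. The key point is that the roots $\delta_i$ arising in the projection are controlled by $\gamma$: each $\delta_i$ is $W_J$-conjugate to $\gamma$ (this is how the projection lemma in \cite{LNSSS} is set up — the reflections in the projected path, conjugated appropriately, recover $r_\gamma$), so $\pair{\delta_i^\vee}{\lambda}=\pair{\gamma^\vee}{\lambda}$ because $\lambda$ is $W_J$-invariant (as $W_J$ is the stabilizer of $\lambda$). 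Hence $b\pair{\delta_i^\vee}{\lambda}=b\pair{\gamma^\vee}{\lambda}\in\BZ$ for every $i$, so each edge of $\bp$ indeed lies in $\QB_{b\lambda}(W^J)$, and $\bp$ is a path in the required subgraph. The case $t=0$ (path of length zero) is exactly the convention already flagged in the statement, and then $\wt(\bp)=0$; one checks this forces the original edge to be a Bruhat edge so $\wt(\bq)=0$ too, or at worst $\gamma^\vee\in Q_J^\vee$, so the congruence holds trivially.

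The main obstacle I anticipate is making precise, and correctly citing, the exact form of the projection lemma from \cite{LNSSS} that simultaneously (a) produces the path $\bp$ in $\QB(W^J)$, (b) gives the weight congruence $\wt(\bp)\equiv\wt(\bq)\bmod Q_J^\vee$, and (c) identifies the roots of $\bp$ as $W_J$-translates of $\gamma$ so that the integrality transfers. In \cite{LNSSS} these facts are packaged somewhat differently (Lemma~7.7 handles single reflections $s_j$ acting on $W^J$, and Proposition~8.1 handles weights of shortest paths), so the bookkeeping in assembling exactly the needed statement — in particular tracking that the integrality condition is preserved edge-by-edge and not merely for the endpoints — is the delicate part. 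A secondary subtlety is that $\gamma$ here is a root of $W$ but need not lie in $\Phi^+\setminus\Phi_J^+$; if $\gamma\in\Phi_J$ then $r_\gamma\in W_J$, so $\mcr{w}^J=\mcr{wr_\gamma}^J$, $\bp$ has length $0$, and (since a quantum edge along $\gamma\in\Phi_J$ would contribute $\gamma^\vee\in Q_J^\vee$) the congruence is again immediate; this degenerate case should be disposed of first.
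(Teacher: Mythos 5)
Your two-step plan --- first project the edge $w\xrightarrow{\gamma}wr_\gamma$ to a path $\bp$ in $\QB(W^J)$ with the right weight class, then verify that each edge of $\bp$ survives in $\QB_{b\lambda}(W^J)$ --- is sound in outline, but the crux of your integrality check rests on an unproved claim that is in fact false in general. You assert that each label $\delta_i$ of $\bp$ is a $W_J$-conjugate of $\gamma$, so that $\pair{\delta_i^\vee}{\lambda}=\pair{\gamma^\vee}{\lambda}$ by $W_J$-invariance of $\lambda$. That does hold for a Bruhat edge: if $\lfloor w\rfloor\neq\lfloor wr_\gamma\rfloor$ then $\lfloor wr_\gamma\rfloor=\lfloor w\rfloor r_{u\gamma}$ with $w=\lfloor w\rfloor u$, $u\in W_J$, and the single projected label is $|u\gamma|$; this is essentially [LeSh, Lemma 4.16]. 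But a quantum edge can project to a path of length greater than one, and the extra labels are not in the $W_J$-orbit of $\gamma$. The paper's proof is by induction on the height of the affine lift $\beta$ of $\gamma$ from \eqref{ahbh}, and in its Case~3 (when $r_\gamma w^{-1}\ti{\alpha}\in\Phi_J$) the inductive step lengthens $\bp$ by an extra edge $\lfloor w\rfloor\to\lfloor r_{\ti{\alpha}}w\rfloor$ labelled by $|\lfloor w\rfloor^{-1}\ti{\alpha}|$, with $\ti{\alpha}$ a simple root or $\theta$. Writing $\mu:=r_\gamma w^{-1}\ti{\alpha}\in\Phi_J$, one finds
\[
\pair{(w^{-1}\ti{\alpha})^\vee}{\lambda}=\pair{r_\gamma \mu^\vee}{\lambda}=\pair{\mu^\vee}{\lambda}-\pair{\mu^\vee}{\gamma}\pair{\gamma^\vee}{\lambda}=-\pair{\mu^\vee}{\gamma}\pair{\gamma^\vee}{\lambda},
\]
an integer multiple of $\pair{\gamma^\vee}{\lambda}$ by the factor $-\pair{\mu^\vee}{\gamma}$, which need not be $\pm 1$ outside the simply-laced types; that is incompatible with $W_J$-conjugacy, so the invariance argument collapses. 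The $b$-integrality does transfer, but for a weaker reason: each diamond move changes the pairing $\pair{\delta^\vee}{\lambda}$ by an \emph{integer} multiple of $\pair{\gamma^\vee}{\lambda}$, which is precisely the content of Lemma~\ref{bbchains}. This must be carried along edge-by-edge inside the induction; it cannot be deduced after the fact from a supposed orbit constraint on the labels.

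A secondary problem is circularity: the ``projection principle'' you invoke to produce $\bp$ with $\wt(\bp)\equiv\wt(\bq)\bmod Q_J^\vee$ is not contained in [Lemma~7.7] or [Proposition~8.1] of \cite{LNSSS}, which concern shortest paths \emph{within} $\QB(W^J)$, not the descent of a single edge of $\QB(W)$ to $\QB(W^J)$. That descent statement with the weight congruence is exactly the $b\in\BZ$ case of Lemma~\ref{q2ls}, so it cannot be assumed; the paper's induction proves it from scratch (with the $b$-integrality threaded through) starting from the base case Lemma~\ref{bc16} and advancing via the diamond lemma. Your treatment of the degenerate case $\gamma\in\Phi_J$ is correct and matches the opening line of the paper's proof.
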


\begin{remarks} \mbox{}
\begin{enumerate}
\item The special case of the lemma corresponding to the $b$-Bruhat order on $W$ and $W^J$
(i.e., the subgraphs of $\QB_{b\lambda}(W)$ and $\QB_{b\lambda}(W^J)$ with no quantum edges) 
was proved in \cite[Lemma 4.16]{LeSh}. 
For $b=0$, i.e., the usual Bruhat order, the latter result is well-known; 
see, e.g., \cite[Proposition 2.5.1]{BB}. 
\item Based on the strong connectivity of the quantum Bruhat graph (cf. Theorem \ref{thm:shell} below), 
the lemma implies the same property for the parabolic quantum Bruhat graph. Note that this was proved 
by different methods (and, in fact, in a slightly stronger form) in~\cite[Lemma 6.12]{LNSSS}.
\end{enumerate}
\end{remarks}

%
\subsection{The inverse map}\label{S:invmap} 
Next we prove that the forgetful map in Section \ref{S:forget}, from the quantum alcove model to quantum 
LS paths, is a bijection, by exhibiting the inverse map. 
We will use the {\em shellability} of the quantum Bruhat graph $\QB(W)$ 
with respect to a reflection ordering on the positive 
roots~\cite{Dyer}, which we now recall. 

\begin{thm}[\cite{BFP}] \label{thm:shell} 
Fix a reflection ordering on $\Phi^+$.
\begin{enumerate}
\item For any pair of elements $v,w\in W$, there is a unique path from $v$ to $w$ in the quantum 
Bruhat graph $\QB(W)$ such that its sequence of edge labels is strictly increasing (resp., decreasing) 
with respect to the reflection ordering.
\item The path in {\rm (1)} has the smallest possible length $\ell(v\to w)$ and is lexicographically minimal 
(resp., maximal) among all shortest paths from $v$ to $w$.
\end{enumerate}
\end{thm}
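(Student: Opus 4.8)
The plan is to prove the ``increasing'' and the ``decreasing'' assertions simultaneously, after observing that the reverse of a reflection ordering of $\Phi^+$ is again a reflection ordering, and that reversal interchanges increasing and decreasing label sequences; so it suffices to fix one reflection ordering $\prec$ and prove the increasing statements. A useful preliminary is the dictionary between reflection orderings and reduced words for the longest element $w_\circ$: if $w_\circ=r_{i_1}\cdots r_{i_N}$, then $\beta_k:=r_{i_1}\cdots r_{i_{k-1}}(\alpha_{i_k})$ for $k=1,\dots,N$ enumerates $\Phi^+$ in a reflection ordering, every reflection ordering arises this way, and the restriction of $\prec$ to the positive roots of any rank-$2$ reflection subgroup is a reflection ordering of that subsystem. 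I would also dispose at the outset of the finiteness of the quantum Bruhat graph distance $\ell(v\to w)$ (equivalently, strong connectivity of $\QB(W)$): from $e$ every element is reached by Bruhat edges since $e$ is the bottom of Bruhat order, while every $v=r_{i_1}\cdots r_{i_k}$ reaches $e$ along the quantum edges $r_{i_1}\cdots r_{i_j}\xrightarrow{\alpha_{i_j}}r_{i_1}\cdots r_{i_{j-1}}$ (these are quantum since $\langle\alpha_i^\vee,2\rho\rangle=2$), whence $\ell(v\to w)\le\ell(v\to e)+\ell(e\to w)<\infty$.

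The theorem then reduces to the following \emph{greedy step}: for $v\ne w$ there is a unique $\prec$-minimal root $\alpha$ such that $v\xrightarrow{\alpha}vr_\alpha$ is an edge of $\QB(W)$ with $\ell(vr_\alpha\to w)=\ell(v\to w)-1$, and moreover every path from $v$ to $w$ with strictly $\prec$-increasing labels begins with exactly this edge. Granting this, part (1) follows by induction on $\ell(v\to w)$: the first edge of any increasing path is forced, and the remaining path is the unique increasing path from $vr_\alpha$ to $w$; since the forced first edge is distance-decreasing, the length is $1+\ell(vr_\alpha\to w)=\ell(v\to w)$, which gives the ``shortest length'' half of part (2). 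For lex-minimality: if $Q$ is any shortest path from $v$ to $w$ distinct from the increasing path $P$, look at the first place they differ; every first edge of a shortest path is distance-decreasing, and $P$ used the $\prec$-minimal such label, so $Q$'s label there is $\succ$ the one in $P$; hence $P$ is lexicographically first among shortest paths (and, via reversal, the decreasing path is lexicographically last).

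For the existence half of the greedy step I would localize to rank $2$. Let $\alpha$ be the $\prec$-minimal label of a distance-decreasing edge out of $v$ (such edges exist, since a shortest path $v\to\cdots\to w$ has distance-decreasing first edge), and let $\beta$ be the first label of the unique increasing path from $vr_\alpha$ to $w$ supplied by induction; I must show $\beta\succ\alpha$. If not, I would pass to the rank-$2$ reflection subgroup $W'$ generated by $r_\alpha$ and $r_\beta$ and use a \emph{localization property of $\QB(W)$}: a short increasing path of $\QB(W)$ whose labels all lie among the positive roots of a rank-$2$ reflection subgroup is governed by the quantum Bruhat graph of that rank-$2$ system with the restricted reflection ordering. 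With this in hand one checks the greedy step directly in each rank-$2$ type $A_1\times A_1$, $A_2$, $B_2$, $G_2$ — finitely many small graphs — and transports the conclusion back. Uniqueness is handled in the same spirit: given two increasing paths from $v$ to $w$ with first labels $\alpha\preceq\alpha'$, if $\alpha\prec\alpha'$ then the second path never uses the label $\alpha$, and the rank-$2$ localization applied to the subsystem through $\alpha$ contradicts the $\prec$-minimality just established; hence $\alpha=\alpha'$, and one deletes the common first edge and inducts.

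The main obstacle is precisely this rank-$2$ localization together with its rank-$2$ base cases: one must show that, relative to a reflection ordering, edges and shortest paths of $\QB(W)$ are controlled by their rank-$2$ traces. This is the combinatorial heart of the argument; an alternative route for it — the one taken in \cite{BFP} — is to encode paths by the mixed Bruhat operators $\mathbf{T}_\alpha$ acting on $\mathbb{Z}[q][W]$, form the ordered product $\mathbf{T}_{\beta_N}\cdots\mathbf{T}_{\beta_1}$ along the reflection ordering, and deduce uniqueness of the increasing path from the Yang--Baxter (braid) relations satisfied by the $\mathbf{T}_\alpha$, these relations again being verified type by type in rank $2$. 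Everything else — the reversal symmetry, finiteness of distances, the inductive assembly of parts (1) and (2), and the lex-minimality bookkeeping — is routine once the rank-$2$ input is in place.
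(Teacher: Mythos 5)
The paper itself cites Theorem \ref{thm:shell} from~\cite{BFP} without reproving it, so there is no internal argument to compare against; I can only assess your proposal on its own terms. Your outer architecture is sound: reduce everything to the ``greedy step'' that the $\prec$-minimal distance-decreasing label out of $v$ is the forced first step of any increasing path to $w$, obtain uniqueness and the equality with $\ell(v\to w)$ by induction, handle lex-minimality by comparing first points of disagreement among shortest paths, and dispatch the decreasing/lex-maximal variants by reversing the reflection ordering. The finiteness of $\ell(v\to w)$ via Bruhat edges from $e$ upward and quantum edges along a reduced word from $v$ down to $e$ is also correct.

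However, the one step that actually carries the theorem --- your ``rank-2 localization'' --- is asserted but not proven, and it is far from a formal restriction. Edges of $\QB(W)$ are defined by length conditions in $W$ (in particular the quantum condition $\ell(wr_\alpha)=\ell(w)+1-\langle\alpha^\vee,2\rho\rangle$ involves the $2\rho$ of all of $\Phi^+$, not of a subsystem), and the dihedral reflection subgroup $W'$ generated by $r_\alpha,r_\beta$ is in general not parabolic. That a pair of distance-decreasing edges with labels in $\Phi'\cap\Phi^+$ can be replaced by an increasing pair, exactly as one would compute inside $\QB(W')$ with the restricted ordering, is precisely the Yang--Baxter/coherence relation for the mixed Bruhat operators that~\cite{BFP} establish, and it is the whole content of the theorem; everything you build around it is comparatively elementary bookkeeping. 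You name this as the ``combinatorial heart'' and correctly identify~\cite{BFP}'s operator route, but as written your proposal proves the theorem only modulo it. The uniqueness half of the greedy step is also too vague to check as given (``the rank-2 localization applied to the subsystem through $\alpha$ contradicts the $\prec$-minimality'' --- it is not specified which dihedral subsystem contains both competing first labels $\alpha\prec\alpha'$, nor how the contradiction is extracted once you are there). In short, this is a correct reduction to the key lemma, not a proof of the theorem.
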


In \cite[Section 4.3]{LeSh}, we constructed a reflection ordering $<_\lambda$ on $\Phi^+$ which depends on $\lambda$. 
The bottom of the order $<_\lambda$ consists of the roots in $\Phi^+\setminus\Phi_J^+$. For two such roots 
$\alpha$ and $\beta$, define $\alpha<\beta$ whenever 
the hyperplane $H_{(\alpha,0)}$ precedes $H_{(\beta,0)}$ in the lex $\lambda$-chain (see Proposition~\ref{speciallch}).
This forms an \emph{initial section} \cite{Dyer} of $<_\lambda$.
The top of the order $<_\lambda$ consists of the positive roots for the Weyl group $W_J$,
and we fix any reflection ordering for them. We refer to the reflection ordering $<_\lambda$ throughout this section.

\begin{remark}
Given a $\lambda$-hyperplane $H_{\beta,-l}$, we call the first component of the vector associated with it 
in~\eqref{E:stdvec}, namely $l/\langle\beta^\vee,\lambda\rangle$, the {\em relative height} of $H_{\beta,-l}$. It is 
not hard to see that, in the lex $\lambda$-chain, the order on the $\lambda$-hyperplanes $H_{\beta,-l}$ with the 
same relative height is given by the order $<_\lambda$ on the corresponding roots $\beta$. We will use this fact implicitly below. 
\end{remark}

Recall from~\cite[Proposition 7.2]{LNSSS} that
there exists a unique element $x \in wW_{J}$ such that 
$\ell(v \to x)$ attains its minimum value as a function 
of $x\in wW_J$, for fixed $v,w\in W$.
We refer also to~\cite[Theorem 7.1]{LNSSS}, stating 
that the mentioned minimum is, in fact, attained by
the minimum of the coset $wW_J$ with respect to 
the {\em $v$-tilted Bruhat order} $\preceq_v$ on $W$~\cite{BFP}; 
therefore, it makes sense to denote it by $\min(wW_J,\preceq_v)$,
 although we will not use this stronger result. 

\begin{lem} \label{L:mainb} 
Consider $\sigma,\tau\in W^J$ and $w_J\in W_J$. 
Write $\min(\tau W_J,\preceq_{\sigma w_J}) \in \tau W_J$ as{\rm:}
$\tau w_J'=\min(\tau W_J,\preceq_{\sigma w_J})$, with $w_J' \in W_{J}$.
\begin{enumerate}
\item There is a unique path in $\QB(W)$ from $\sigma w_J$ to some $x\in\tau W_J$ whose edge labels 
are increasing and lie in $\Phi^+\setminus\Phi_J^+$. This path ends at $\tau w_J'$.
\item Assume that there is a path from $\sigma$ to $\tau$ in $\QB_{b\lambda}(W^J)$ for some $b\in{\mathbb Q}$. 
Then the path in {\rm (1)} from $\sigma w_J$ to $\tau w_J'$ is in $\QB_{b\lambda}(W)$.
\end{enumerate}
\end{lem}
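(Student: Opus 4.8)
The plan is to treat the two parts in sequence, with part (1) being essentially a packaging of the shellability machinery from Theorem~\ref{thm:shell} together with the parabolic results of \cite{LNSSS} (Proposition~7.2 and Theorem~7.1 cited above), and part (2) being the new content that requires a careful tracking of the divisibility condition defining $\QB_{b\lambda}$. For part (1), first I would invoke Theorem~\ref{thm:shell}(1) applied to the reflection ordering $<_\lambda$: for each $x \in \tau W_J$ there is a unique increasing path from $\sigma w_J$ to $x$, and I want to understand when such a path has all labels in the initial section $\Phi^+\setminus\Phi_J^+$. The key observation is that a path with all labels in an initial section of a reflection ordering stays ``as low as possible'' — equivalently, by the interpretation of $<_\lambda$ and the fact that $\Phi^+\setminus\Phi_J^+$ is an initial section, such a path cannot be extended within the coset $\tau W_J$ by an edge whose label is in $\Phi_J^+$. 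I would then argue that the unique increasing path from $\sigma w_J$ that uses only labels in $\Phi^+\setminus\Phi_J^+$ must terminate exactly at the element of $\tau W_J$ which is minimal for $\preceq_{\sigma w_J}$, i.e.\ at $\tau w_J' = \min(\tau W_J, \preceq_{\sigma w_J})$. Here I would cite \cite[Proposition~7.2, Theorem~7.1]{LNSSS} for the existence and characterization of this minimum, and use Theorem~\ref{thm:shell}(2) to identify the increasing path with a shortest path, so that $\ell(\sigma w_J \to \tau w_J')$ is the coset minimum of $\ell(\sigma w_J \to x)$ over $x \in \tau W_J$; uniqueness of the path is then immediate from Theorem~\ref{thm:shell}(1).

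For part (2), I would start from the hypothesis that there is a path $\sigma \to \tau$ in $\QB_{b\lambda}(W^J)$, and I must show every edge $x \xrightarrow{\gamma} xr_\gamma$ occurring in the part-(1) path from $\sigma w_J$ to $\tau w_J'$ satisfies $b\langle \gamma^\vee, \lambda\rangle \in \BZ$. The natural approach is to relate the edge labels $\gamma$ appearing in the lifted path to the edge labels appearing in a path in $W^J$: by the same mechanism as in Lemma~\ref{q2ls} (the ``$\QB(W)$ to $\QB(W^J)$'' passage), projecting the part-(1) path to $W^J$ via $x \mapsto \mcr{x}^J$ yields a path in $\QB(W^J)$ whose edge labels are related to the $\gamma$'s by left multiplication by appropriate $W_J$-elements — and crucially, left multiplication by $W_J$ does not change the pairing $\langle\cdot^\vee,\lambda\rangle$ since $\lambda$ is $W_J$-fixed. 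So it suffices to show the projected path has all labels satisfying the $b\lambda$-divisibility condition. For this I would use the uniqueness and minimality from part (1) together with Theorem~\ref{thm:shell} applied now in $W^J$ (with the initial section of $<_\lambda$): the hypothesized path $\sigma \to \tau$ in $\QB_{b\lambda}(W^J)$, once replaced by the increasing/shortest path between the same endpoints (which stays in $\QB_{b\lambda}(W^J)$ by an argument on shortest paths and the divisibility condition being closed under passing to the canonical shortest path — this is where I would lean on the shellability structure), must coincide with the projection of the part-(1) path, because both are the unique increasing path from $\sigma$ (resp.\ its image) to $\tau$ in $W^J$ using only labels in the initial section. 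Matching these two paths edge-by-edge transfers the divisibility from the hypothesis to every edge of the lifted path.

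The main obstacle I anticipate is the bookkeeping in part (2) connecting the edge labels of the lifted path in $W$ with those of a path in $W^J$, and showing the ``increasing path in an initial section'' in $W$ projects precisely to the ``increasing path in an initial section'' in $W^J$ between the projected endpoints. The subtlety is that the projection $x \mapsto \mcr{x}^J$ may collapse consecutive vertices (an edge in $W$ can project to a length-$0$ path in $W^J$), so I must check that such collapsing edges are exactly the ones whose labels would lie in $\Phi_J^+$ — but by construction the part-(1) path has no such labels, so in fact no collapsing occurs and the projection is a genuine path of the same length. Verifying that the minimality characterizations (coset minimum for $\preceq_{\sigma w_J}$ downstairs versus the corresponding statement upstairs in $W^J$) are compatible under this projection — i.e.\ that $\mcr{\tau w_J'}^J = \tau$ and no shorter route exists in $W^J$ — is the technical heart, and I would reduce it cleanly to \cite[Proposition~7.2]{LNSSS} and the strong connectivity/shellability results rather than re-deriving anything. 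Once the two increasing paths are identified, part (2) follows because the divisibility condition $b\langle\gamma^\vee,\lambda\rangle\in\BZ$ depends only on $\langle\gamma^\vee,\lambda\rangle$, which is preserved under the $W_J$-twisting relating the two label sequences.
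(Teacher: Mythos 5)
Your plan for part (1) is essentially correct: it is a repackaging of shellability (Theorem~\ref{thm:shell}) combined with the tilted-Bruhat-order results from \cite{LNSSS}, and the paper indeed just cites \cite[Lemmas 7.4 and 7.5]{LNSSS} for this.

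For part (2), your route diverges from the paper's and contains a genuine gap. You project the part-(1) path from $\QB(W)$ down to $\QB(W^J)$ and then try to compare the projection with the hypothesized path $\sigma\to\tau$ in $\QB_{b\lambda}(W^J)$, ``replacing'' the latter by a canonical increasing/shortest path while asserting that the $b\lambda$-integrality condition survives this replacement. That assertion is precisely the hard content here, and you attribute it to ``the shellability structure,'' but shellability by itself does not give it: in the paper this is the separate Lemma~\ref{bqbpaths}, whose proof requires lifting both paths to downward saturated chains in $W_\af^-$ starting at a superantidominant translate (Lemma~\ref{L:affgrass}, Proposition~\ref{P:affinelift}), invoking \cite[Lemma 1]{Po} to compare the translation parts, and then Lemma~\ref{twoch} / \cite[Lemma 4.15]{LeSh} on $b$-Bruhat chains. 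Moreover, you implicitly use a shellability statement for $\QB(W^J)$ itself (unique increasing path in $W^J$ with labels restricted to the initial section), which the paper never establishes; Theorem~\ref{thm:shell} is stated only for $\QB(W)$, and the paper deliberately avoids needing a parabolic version.

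The paper's actual argument for part (2) is shorter and stays entirely in $\QB(W)$: it constructs a \emph{second} path from $\sigma w_J$ to $\tau w_J'$ by concatenating (a) a quantum path $\sigma w_J\to\sigma$ with all labels in $\Phi_J^+$, (b) a path $\sigma\to\tau$ built from the hypothesized path in $\QB(W^J)$ by inserting $\Phi_J^+$-labeled corrections, and (c) a Bruhat path $\tau\to\tau w_J'$ with labels in $\Phi_J^+$. The crucial, simple observation is that any edge labeled by a root in $\Phi_J^+$ is automatically in $\QB_{b\lambda}(W)$ because $\langle\gamma^\vee,\lambda\rangle=0$ for $\gamma\in\Phi_J$; so the second path is in $\QB_{b\lambda}(W)$ by hypothesis. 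Since the part-(1) path is a shortest path (Theorem~\ref{thm:shell}(2)), Lemma~\ref{bqbpaths} transfers the $b\lambda$-condition from the second path to the first. Your plan would need the analogue of Lemma~\ref{bqbpaths}, and it cannot be replaced by appeal to shellability; you would essentially have to reproduce that lemma's affine-lift argument anyway, at which point the paper's ``build an explicit comparison path in $\QB(W)$'' approach is both necessary and simpler than the projection you propose.
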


\begin{proof} 
The first part is just the content of~\cite[Lemmas 7.4 and 7.5]{LNSSS}, based on the results 
recalled just above the lemma. For the second part, we start by considering a second path in $\QB(W)$ from $\sigma w_J$ 
to $\tau w_J'$, beside the one given by (1). This path is formed by concatenating the following:
\begin{itemize}
\item 
a path from $\sigma w_J$ to $\sigma$ with only quantum edges and all edge labels in $\Phi_J^+$ 
(for instance simple roots in $\Phi_J$);
\item 
a path from $\sigma$ to $\tau$ constructed from the path in $\QB(W^J)$ between the same two elements 
by replacing each edge $u\overset{\alpha}\longrightarrow \lfloor ur_\alpha\rfloor$ with 
$u\overset{\alpha}\longrightarrow ur_\alpha$ (cf.~\cite[Condition ($2'$) in Section 4.2]{LNSSS}) 
followed by a path from $ur_\alpha$ to $\lfloor ur_\alpha\rfloor$ with only quantum edges and all edge labels 
in $\Phi_J^+$ (for instance simple roots in $\Phi_J$);
\item 
a path from $\tau$ to $\tau w_J'$ with only Bruhat edges and all edge labels in $\Phi_J^+$.
\end{itemize}

By Theorem \ref{thm:shell}~(2), we know that the first of the two paths above is a shortest one 
(from $\sigma w_J$ to $\tau w_J'$). Furthermore, by the hypothesis, the second path is in 
$\QB_{b\lambda}(W)$ (any edge in $\QB(W)$ labeled by a root in $\Phi_J^+$ is by default in $\QB_{b\lambda}(W)$). 
So we can apply Lemma \ref{bqbpaths} below and deduce that the first path is also in $\QB_{b\lambda}(W)$.
\end{proof}

Let us now state Lemma \ref{bqbpaths}, which will be proved in Section \ref{sbqbpaths} below.

\begin{lem}
\label{bqbpaths} 
Consider two paths in $\QB(W)$ between some $v$ and $w$. Assume that the first one is a shortest 
path, while the second one is in $\QB_{b\lambda}(W)$, for some rational number $b$. 
Then the first path is in $\QB_{b\lambda}(W)$ as well.
\end{lem}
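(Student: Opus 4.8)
The statement to prove is Lemma~\ref{bqbpaths}: if $\mathbf p$ is a shortest path from $v$ to $w$ in $\QB(W)$ and $\mathbf q$ is an arbitrary path from $v$ to $w$ lying in $\QB_{b\lambda}(W)$, then $\mathbf p$ also lies in $\QB_{b\lambda}(W)$. Recall that membership of an edge $u\xrightarrow{\alpha}ur_\alpha$ in $\QB_{b\lambda}(W)$ is precisely the arithmetic condition $b\,\pair{\alpha^\vee}{\lambda}\in\BZ$, exactly as in~\eqref{bbord}. So the content is: the set $S$ of those $\beta\in\Phi^+$ with $b\,\pair{\beta^\vee}{\lambda}\in\BZ$ has the property that any shortest path between two vertices can be taken (in fact: is forced) to use only labels in $S$, provided \emph{some} path uses only labels in $S$.

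The natural strategy is to use the weight function on $\QB(W)$ together with Proposition~\ref{prop:weight} (for $J=\emptyset$, where it literally says that any two shortest paths between the same endpoints have equal weight $\wt(\cdot)\in Q^{\vee+}$, and that an arbitrary path has weight congruent to $\wt(\text{shortest})$ plus an element of $Q^{\vee+}$). First I would recall that, for any edge $u\xrightarrow{\alpha}ur_\alpha$ of $\QB(W)$, one has $ur_\alpha = u - \pair{\alpha^\vee}{\,\cdot\,}\cdots$ — more precisely, acting on a weight $\mu$, reflection changes it by an integer multiple of $\alpha$, and the \emph{level-type} invariant to track is $b\pair{\beta^\vee}{\lambda}\bmod\BZ$ along the path. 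The key observation is that $\QB_{b\lambda}(W)$-membership of an \emph{edge} depends only on its label $\beta$, not on where it sits; and the endpoints $v,w$ are fixed. I would like to argue that the ``defect'' $b\pair{\beta^\vee}{\lambda}\bmod 1$ summed appropriately along a path is determined by the endpoints modulo the contributions one already understands.

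Concretely, here is the mechanism I would try. Along any path $\mathbf p: v = u_0\xrightarrow{\beta_1}u_1\xrightarrow{\beta_2}\cdots\xrightarrow{\beta_r}u_r = w$, consider the weight $\wt(\mathbf p)=\sum_{k:\,\text{quantum edge}}\beta_k^\vee\in Q^{\vee+}$. Pairing with $b\lambda$: $b\pair{\wt(\mathbf p)}{\lambda}=\sum_{k\text{ quantum}}b\pair{\beta_k^\vee}{\lambda}$. For the path $\mathbf q$ lying in $\QB_{b\lambda}(W)$, every summand is an integer, so $b\pair{\wt(\mathbf q)}{\lambda}\in\BZ$. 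By Proposition~\ref{prop:weight} (case $J=\emptyset$), $\wt(\mathbf q)\equiv\wt(\mathbf p)\bmod 0$ is not quite what we get — rather $\wt(\mathbf q)-\wt(\mathbf p)\in Q^{\vee+}$, hence $b\pair{\wt(\mathbf p)}{\lambda}\le b\pair{\wt(\mathbf q)}{\lambda}$ when $b>0$, which is not yet integrality. So a cruder weight argument is insufficient; the real input must be the \emph{shellability} Theorem~\ref{thm:shell}. Instead, I would fix the reflection ordering $<_\lambda$ of Section~\ref{S:invmap}, whose initial section consists of the roots $\beta$ with $b\pair{\beta^\vee}{\lambda}$ of smallest relative heights — more to the point, I would pick a reflection ordering in which all roots of $S=\{\beta\in\Phi^+\mid b\pair{\beta^\vee}{\lambda}\in\BZ\}$ come first (this is legitimate since $S$ need not be an initial section in general, but we only need it for the particular $b,\lambda$ at hand, and the construction of $<_\lambda$ in~\cite[Section 4.3]{LeSh} can be adapted, or one invokes that $S$ does form an initial section for this reflection order by the relative-height description). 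Then by Theorem~\ref{thm:shell}(1)--(2), the unique label-increasing path $\mathbf p_0$ from $v$ to $w$ is a shortest path; since $\mathbf q$ uses only labels in $S$, its ``straightening'' to the increasing shortest path stays within $S$, so $\mathbf p_0$ lies in $\QB_{b\lambda}(W)$. Finally, \emph{any} shortest path $\mathbf p$ has the same multiset of edge labels as $\mathbf p_0$ up to the combinatorics controlled by Proposition~\ref{prop:weight} — more carefully, I would show every shortest path from $v$ to $w$ uses only labels from $S$ by induction on a lex-type statistic, reducing to the increasing one via the tilted-Bruhat/shelling machinery already cited.

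\textbf{Main obstacle.} The delicate point is the last step: passing from ``the \emph{increasing} shortest path lies in $\QB_{b\lambda}(W)$'' to ``\emph{every} shortest path does.'' Two shortest paths in $\QB(W)$ need not have the same multiset of edge labels in general (they do in the symmetric-group/classical Bruhat case but quantum edges complicate this), so I cannot simply quote label-invariance. The honest route is to observe that the property ``all edge labels lie in $S$'' is preserved under the elementary ``square moves'' / diamond transformations relating shortest paths — which is exactly the combinatorial content underlying the proof of Theorem~\ref{thm:shell} in~\cite{BFP} — together with the fact that $S$ is an initial section of a suitable reflection ordering. So the proof will hinge on extracting, from the $\QB$-shellability apparatus, the statement that the set of shortest $v\to w$ paths is connected under moves that do not introduce new labels outside any fixed initial section containing all the labels used. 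Establishing this carefully (rather than hand-waving ``same labels'') is where the real work lies; everything else is bookkeeping with the arithmetic condition $b\pair{\beta^\vee}{\lambda}\in\BZ$ and Proposition~\ref{prop:weight}.
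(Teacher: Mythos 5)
Your proposed route — fix a reflection ordering in which the set $S=\{\beta\in\Phi^+ \mid b\pair{\beta^\vee}{\lambda}\in\BZ\}$ is an initial section, run the shellability machinery, and then connect shortest paths by diamond moves — has a gap at the very first step, before the obstacle you flag. The set $S$ is in general \emph{not} an initial section of any reflection ordering. Concretely, in type $A_2$ with $\lambda$ regular dominant one has $\pair{\alpha_1^\vee}{\lambda}=\pair{\alpha_2^\vee}{\lambda}=1$ and $\pair{(\alpha_1+\alpha_2)^\vee}{\lambda}=2$; taking $b=\tfrac12$ gives $S=\{\alpha_1+\alpha_2\}$, which is not an initial section of either of the two reflection orderings on $\Phi^+$ (in both, $\alpha_1+\alpha_2$ sits in the middle). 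The required closure property fails because membership in $S$ is governed by coroot pairings, and these are not additive on roots in the way initial sections demand. So one cannot ``pick a reflection ordering in which all roots of $S$ come first,'' and the hedge that this holds ``by the relative-height description'' does not rescue it. You also do not supply an argument why the unique $<$-increasing shortest path $\mathbf p_0$ from $v$ to $w$ would use only labels from $S$, even granting an ordering with $S$ initial.

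The paper takes an entirely different, and more robust, route: using the Lam--Shimozono lifting of $\QB(W)$ to the affine Grassmannian Bruhat order (Proposition~\ref{P:affinelift}), both paths are lifted to downward saturated chains in $\Waf^-$ starting at a common $M$-superantidominant translate $vt_\xi$, ending at $wt_{\xi+h}$ and $wt_{\xi+h'}$. Since the first path is shortest, $h'-h\in Q^{\vee+}$ (Postnikov), and the auxiliary Lemma~\ref{twoch} — whose proof reduces to the analogous and already-known statement for the genuine Bruhat order on $\Waf$, \cite[Lemma 4.15]{LeSh} — gives that any saturated chain from $vt_\xi$ down to $wt_{\xi+h}$ is in $b$-Bruhat order. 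Projecting back down, the shortest path is in $\QB_{b\lambda}(W)$. This avoids reflection orderings and shellability altogether, replacing quantum edges by ordinary covers in the affine setting where the relevant comparison of chains is a classical Bruhat-order fact rather than a new combinatorial lemma about $\QB(W)$.
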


We now construct the inverse of the forgetful map in Section \ref{S:forget}. 
We begin with a QLS path in $\QLS(-w_{\circ}\lambda)$, 
which is written in the form 
\begin{equation} \label{qls2map1}
\sigma_0' 
\blarrl{-b_1w_{\circ}\lambda} \sigma_1' 
\blarrl{-b_2w_{\circ}\lambda} \cdots 
\blarrl{-b_pw_{\circ}\lambda} \sigma_p'\,,
\end{equation}
where $\sigma_i' \in W^{\omega(J)}$, and 
$0=b_{0} < b_{1} < \cdots < b_{p} < 1$. 
Set $\sigma_{i}:=\mcr{\sigma_{i}'w_{\circ}}^{J} \in W^{J}$; 
we see from Lemma~\ref{lem:da} that
\begin{equation} \label{qls2map}
\sigma_0 \barrl{b_1\lambda} \sigma_1 \barrl{b_2\lambda} \cdots \barrl{b_p\lambda}\sigma_p\,.
\end{equation}
We will now associate with it an admissible subset (see Definition \ref{def:admissible}), i.e., a lex increasing 
sequence of $\lambda$-hyperplanes, and the corresponding path in $\QB(W)$ defined in \eqref{eqn:admissible}. 

We start by defining the sequence $w_{-1},w_{0},\ldots,w_p$ in $W$ recursively by $w_{-1}=e$, and by 
$w_i=\min(\sigma_i W_J,\preceq_{w_{i-1}})$ for $i=0,\ldots,p$. Note that $w_{0}=\sigma_0$. For each 
$i=0,\ldots,p$, consider the unique path in $\QB(W)$ with increasing edge labels (with respect to 
$<_\lambda$, cf. Theorem \ref{thm:shell}) from $w_{i-1}$ to $w_i$. Note that the path corresponding to $i=0$ 
is, in fact, a saturated chain in the Bruhat order on $W^J$. By \eqref{qls2map} and Lemma \ref{L:mainb}, for 
any $i$, the edges of the corresponding path are in $\QB_{b_i\lambda}(W)$ and no edge label is in $\Phi_J^+$. We 
define the path in the quantum alcove model corresponding to the QLS path \eqref{qls2map} by 
concatenating the paths constructed above, for $i=0,\ldots,p$. The corresponding sequence of 
$\lambda$-hyperplanes is defined by associating with a label $\beta$ in the path from $w_{i-1}$ to $w_i$ 
the $\lambda$-hyperplane $(\beta,-b_i\langle\beta^\vee,\lambda\rangle)$; this is indeed a $\lambda$-hyperplane:  
$b_i\langle\beta^\vee,\lambda\rangle$ is an integer, and we have $0\le b_i\langle \beta^\vee,\lambda\rangle<
\langle\beta^\vee,\lambda\rangle$, as $0\le b_i<1$ and $\beta\in\Phi^+\setminus\Phi_J^+$, so 
$\langle\beta^\vee,\lambda\rangle>0$. The constructed sequence of $\lambda$-hyperplanes is lex increasing 
because the sequence $(b_i)$ is increasing and the edge labels in the path from $w_{i-1}$ to $w_i$ increase 
(with respect to $<_\lambda$). So we constructed an admissible sequence, which is now associated 
with the QLS path in~\eqref{qls2map}. 

\begin{prop}
\label{proposition.bijqls}
The forgetful map $A\mapsto \Pi^*(A)$ is a weight-preserving bijection from ${\mathcal A}(\lambda)$ to $\bcllt{\lambda}$.
\end{prop}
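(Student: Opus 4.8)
The plan is to assemble the proof from the two constructions already given in Section~\ref{S:forget} and Section~\ref{S:invmap}. First I would verify that the forgetful map $A \mapsto \Pi^*(A)$ is \emph{weight-preserving}. Recall from \eqref{defwta} that $\wt(A) = -r_{\beta_{j_1},-l_{j_1}}\cdots r_{\beta_{j_s},-l_{j_s}}(-\lambda)$, and from \eqref{eq:PiastA} that $\Pi^*(A)$ is the QLS path $w_p \blarrl{(1-b_p)\lambda} w_{p-1} \blarrl{} \cdots \blarrl{} w_0$. Using \eqref{eq:eta}, $\wt(\Pi^*(A)) = \Pi^*(A)(1) = \sum_{k=1}^{p}(b_k - b_{k-1})\,w_{p-k}\lambda$ (with $b_0 = 0$, $b_{p+1}$ absorbed so the endpoint is $w_0\lambda$ at $t=1$), and a direct computation — expanding the affine reflections $r_{\beta_{j_h},-l_{j_h}}$ as $r_{\beta_{j_h}} + l_{j_h}\beta_{j_h}^\vee$-type translations and telescoping the height data $t_{j_h} = l_{j_h}/\langle\beta_{j_h}^\vee,\lambda\rangle$ — should show these two expressions for the weight agree. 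This is essentially the same computation that appears in the classical alcove model and is routine; the folding relation \eqref{defheighthyp} and the grouping of indices by relative height $t_{j_h} = b_k$ are exactly what make it telescope.

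Next I would establish that the two maps are mutually inverse. The forgetful map of Section~\ref{S:forget} takes $A = \{j_1 < \cdots < j_s\}$, groups its elements by the common values $b_k$ of the relative heights $t_{j_h}$, sets $w_k := u_{n_{k+1}}$ (partial products of the reflections $r_{\beta_{j_h}}$), and produces \eqref{eq:PiastA}. The inverse map of Section~\ref{S:invmap} starts from a QLS path \eqref{qls2map1}, passes to \eqref{qls2map} via Lemma~\ref{lem:da}, then defines $w_i := \min(\sigma_i W_J, \preceq_{w_{i-1}})$ and builds, for each $i$, the \emph{unique} $<_\lambda$-increasing path in $\QB(W)$ from $w_{i-1}$ to $w_i$, reading off the $\lambda$-hyperplanes $(\beta, -b_i\langle\beta^\vee,\lambda\rangle)$. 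The key point for ``inverse map $\circ$ forgetful map $= \mathrm{id}$'' is that the $w_k$ produced by the forgetful construction already satisfy $w_k = \min(\sigma_k W_J, \preceq_{w_{k-1}})$ — this needs the $v$-tilted Bruhat order characterization and the fact that, within each relative-height block, the lex $\lambda$-chain orders the hyperplanes by $<_\lambda$ on the roots (the Remark after Lemma~\ref{bqbpaths}), so the subpath of $A$ at height $b_k$ is exactly the $<_\lambda$-increasing path between consecutive $w$'s, hence unique. For the other composition, starting from a QLS path one reconstructs the same $\sigma_i = \lfloor w_i \rfloor^J$ because the increasing path from $w_{i-1}$ to $w_i$ projects (via Lemma~\ref{q2ls} / Lemma~\ref{L:mainb}(2), and no edge label lands in $\Phi_J^+$) to the given directed path in $\QB_{b_i\lambda}(W^J)$ from $\sigma_{i-1}$ to $\sigma_i$.

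Concretely, the steps I would carry out, in order: (i) prove weight-preservation by the telescoping calculation above; (ii) observe that the sequence of $\lambda$-hyperplanes produced by the inverse construction is admissible (this is already argued at the end of Section~\ref{S:invmap}: lex-increasing because the $b_i$ increase and within a block the edge labels $<_\lambda$-increase, and the quantum Bruhat path condition holds by construction) and conversely that the forgetful construction of Section~\ref{S:forget} does produce a genuine QLS path (already shown there, via Lemma~\ref{q2ls} and Lemma~\ref{lem:da}); (iii) show inverse$\,\circ\,$forgetful $= \mathrm{id}$ by checking the $w_k$ from a given admissible $A$ are forced to be the tilted-order minima $\min(\sigma_k W_J, \preceq_{w_{k-1}})$ and that the $<_\lambda$-increasing path between them reconstructs exactly the height-$b_k$ sublist of $A$; (iv) show forgetful$\,\circ\,$inverse $= \mathrm{id}$ by checking the path in $\QB(W^J)$ recovered by projecting the $w_i$-path is the original $\sigma_{i-1}\to\sigma_i$, which follows because the increasing lift is unique (Theorem~\ref{thm:shell}) and projects correctly (Lemma~\ref{L:mainb}(2)).

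The main obstacle I anticipate is step (iii), specifically pinning down that the partial products $w_k = u_{n_{k+1}} = r_{\beta_{j_1}}\cdots r_{\beta_{j_{n_{k+1}}}}$ arising from an arbitrary admissible subset coincide with the tilted-Bruhat minima $\min(\sigma_k W_J, \preceq_{w_{k-1}})$ used in the inverse construction. One inclusion — that the $<_\lambda$-increasing path from $w_{k-1}$ to $\min(\sigma_k W_J,\preceq_{w_{k-1}})$ is contained in the height-$b_k$ block of $A$ — requires knowing the block is itself $<_\lambda$-increasing (true by the lex-chain Remark) and is a path in $\QB_{b_k\lambda}(W)$ projecting onto $\sigma_{k-1}\to\sigma_k$; the reverse inclusion needs that this block \emph{is} the increasing path, i.e. that no shorter/different increasing path exists, which is exactly Theorem~\ref{thm:shell}(1) together with the uniqueness of $\min(wW_J,\preceq_v)$ from \cite[Proposition 7.2]{LNSSS}. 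Once the dictionary ``height-$b_k$ block of $A$'' $\leftrightarrow$ ``$<_\lambda$-increasing path $w_{k-1}\to w_k$'' is established, both composites collapse to identities and the bijection — together with weight-preservation from step (i) — is complete.
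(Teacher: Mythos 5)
Your proposal is essentially the paper's argument. The paper also reduces the bijectivity claim to mutual inverseness and invokes the uniqueness in Lemma~\ref{L:mainb}(1) (which is exactly the ``$<_\lambda$-increasing paths are unique'' fact you are leaning on), and the weight-preservation is proved by the same computation you sketch; the paper just organizes it as an induction on $|A|$, peeling off the last element $j_s$ and comparing $\wt(A)$ with $\wt(A')$ on one hand and $\wt(\Pi^*(A))$ with $\wt(\Pi^*(A'))$ on the other, rather than telescoping all at once (your formula $\sum_k (b_k - b_{k-1})w_{p-k}\lambda$ is slightly off --- it drops the $(1-b_p)w_p\lambda$ term --- but the inductive version sidesteps that bookkeeping). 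One point where you are actually more careful than the paper: you address both composites $I\circ F$ and $F\circ I$, whereas the paper's proof only explicitly calls the direction $I\circ F = \mathrm{id}$ ``the crucial fact to check,'' leaving $F\circ I = \mathrm{id}$ as implicitly routine (it reduces, as you say, to $\lfloor w_i\rfloor = \sigma_i$ and the relative heights matching by construction). Your step~(iii) analysis --- that the height-$b_k$ block of an admissible $A$ is automatically $<_\lambda$-increasing with labels in $\Phi^+\setminus\Phi^+_J$, hence is forced by Lemma~\ref{L:mainb}(1) to be the unique increasing path ending at the tilted-order minimum --- is exactly what the paper's one-line appeal to that uniqueness is encoding.
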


\begin{proof} 
We need to show that the maps in Sections~\ref{S:forget} and~\ref{S:invmap} are mutually inverse. The 
crucial fact to check is that the map $\Pi$ followed by the backward one is the identity. This follows 
from the uniqueness part in Lemma \ref{L:mainb}~(1). 

It remains to prove that the map $\Pi^{\ast}$ preserves weights. 
We proceed by induction on the cardinality of $A \in \CA(\lambda)$. 
Note that the assertion is obvious if $A=\emptyset$. 
Now, let us take $A=\bigl\{j_{1} < \cdots < j_{s-1} < j_{s} \bigr\} \in \CA(\lambda)$, 
and set $A':=\bigl\{j_{1} < \cdots < j_{s-1} \bigr\}$. 
Then, by \eqref{eq:PiastA}, the QLS path $\Pi^{\ast}(A') \in \QLS(\lambda)$ is of the form (where we dropped the superscript $J$):
\begin{equation*}
\lfloor w_{p} \rfloor 
\blarrl{(1-b_p)\lambda} \lfloor w_{p-1} \rfloor 
\blarrl{(1-b_{p-1})\lambda} \cdots 
\blarrl{(1-b_2)\lambda} \lfloor w_{1} \rfloor
\blarrl{(1-b_1)\lambda} \lfloor w_{0} \rfloor 
\end{equation*}
in the notation of Section~\ref{S:forget}, with $s-1$ instead of $s$; 
note that $w_{p}=r_{\beta_{j_1}} \cdots r_{\beta_{j_{s-1}}}$ and $b_{p}=t_{j_{s-1}}$.
Moreover, noting that $A=A' \cup \bigl\{j_{s}\bigr\}$, we deduce that the QLS path
$\Pi^{\ast}(A) \in \QLS(\lambda)$ is of the form:
\begin{equation*}
\begin{cases}
\lfloor w_{p}r_{\beta_{j_s}} \rfloor 
\blarrl{(1-b_p)\lambda} \lfloor w_{p-1} \rfloor 
\blarrl{(1-b_{p-1})\lambda} \cdots 
\blarrl{(1-b_1)\lambda} \lfloor w_{0} \rfloor
 & \text{if $t_{j_s} = b_{p}$}, \\[3mm]
\lfloor w_{p}r_{\beta_{j_s}} \rfloor  
\blarrl{(1-t_{j_s})\lambda} \lfloor w_{p} \rfloor 
\blarrl{(1-b_p)\lambda} \lfloor w_{p-1} \rfloor  
\blarrl{(1-b_{p-1})\lambda} \cdots 
\blarrl{(1-b_1)\lambda} \lfloor w_{0} \rfloor
 & \text{if $t_{j_s} > b_{p}$}. 
\end{cases}
\end{equation*}
From this, by direct calculation using \eqref{eq:eta}, 
we can show that in both cases above, 
\begin{align*}
\wt \bigl( \Pi^{\ast}(A) \bigr) 
  & = \bigl(\Pi^{\ast}(A)\bigr)(1) 
    = \bigl(\Pi^{\ast}(A')\bigr)(1)-
      \pair{\beta_{j_s}^{\vee}}{\lambda}(1-t_{j_s})w_{p}\beta_{j_s} \\
  & = \wt \bigl( \Pi^{\ast}(A') \bigr) -
      \pair{\beta_{j_s}^{\vee}}{\lambda}(1-t_{j_s})
      r_{\beta_{j_1}} \cdots r_{ \beta_{j_{s-1}} }\beta_{j_s}, 
\end{align*}
thus obtaining a relation 
between $\wt \bigl( \Pi^{\ast}(A) \bigr)$ and $\wt \bigl( \Pi^{\ast}(A') \bigr)$. 

If we set
\begin{equation*}
z_{A'}:=r_{ \beta_{j_1},-l_{j_1} } \cdots r_{ \beta_{j_{s-1}}, -l_{j_{s-1}} } \in W_{\af},
\end{equation*}
then we have $z_{A'}\mu=\gamma_{A'}+w_{A'}\mu$ for all $\mu \in X$, 
where $w_{A'}:=r_{\beta_{j_1}} \cdots r_{ \beta_{j_{s-1}} } \in W$ and 
$\gamma_{A'}$ is an element of $Q$. It follows that by \eqref{defwta}, 
\begin{align*}
\wt (A') & 
 = - r_{ \beta_{j_1},-l_{j_1} } \cdots r_{ \beta_{j_{s-1}}, -l_{j_{s-1}} } (-\lambda)
 = -z_{A'}(-\lambda) \\ 
& = -\gamma_{A'}-w_{A'}(-\lambda). 
\end{align*}
Also, we have
\begin{equation*}
r_{ \beta_{j_{s}}, -l_{j_{s}} }(-\lambda) = 
 -\lambda + \bigl( \pair{\beta_{j_{s}}^{\vee}}{\lambda} - l_{j_{s}} \bigr) \beta_{j_{s}}.
\end{equation*}
Therefore, again by \eqref{defwta}, we see that 
\begin{align*}
\wt(A) & = 
 - r_{ \beta_{j_1},-l_{j_1} } \cdots r_{ \beta_{j_{s-1}}, -l_{j_{s-1}} }
 r_{ \beta_{j_s}, -l_{j_s} } (-\lambda) \\
& = -z_{A'}r_{ \beta_{j_s}, -l_{j_s} } (-\lambda) 
  = - z_{A'}\bigl(-\lambda + 
    (\pair{\beta_{j_s}^{\vee}}{\lambda}-l_{j_s})\beta_{j_s}\bigr) \\
& = -\gamma_{A'}-w_{A'}(-\lambda) - 
    (\pair{\beta_{j_s}^{\vee}}{\lambda}-l_{j_s})w_{A'}\beta_{j_s} \\
& = \wt(A')-(\pair{\beta_{j_s}^{\vee}}{\lambda}-l_{j_s})
  r_{\beta_{j_1}} \cdots r_{ \beta_{j_{s-1}} }\beta_{j_s}.
\end{align*}
Since $l_{j_s}=\pair{\beta_{j_s}^{\vee}}{\lambda}t_{j_s}$ 
by the definition of $t_{j_s}$, we conclude that 
\begin{equation*}
\wt (A) = \wt (A') - 
      \pair{\beta_{j_s}^{\vee}}{\lambda}(1-t_{j_s})
      r_{\beta_{j_1}} \cdots r_{ \beta_{j_{s-1}} }\beta_{j_s}, 
\end{equation*}
thus obtaining the same relation between $\wt(A)$ and $\wt(A')$ 
as the one between $\wt (\Pi^{\ast}(A))$ and $\wt (\Pi^{\ast}(A'))$. 
This proves that $\wt (\Pi^{\ast}(A)) = \wt (A)$ by induction on $s$. 
\end{proof}

%
\subsection{The crystal isomorphism between
  \texorpdfstring{${\mathcal A}(\lambda)$}{A(lambda)} and
  \texorpdfstring{$\mathbb B$}{B}}
\label{section.isom}

We will now prove that, up to the $f_0$ arrows at the end of a string, 
we can view ${\mathcal A}(\lambda)$ as a model for the tensor product of 
KR crystals $\mathbb B$ via the bijection $\widetilde{\Psi}:=\Psi\circ\Pi^*$, see \eqref{defpsi};
\begin{equation}
\begin{diagram}
\node{\CA(\lambda)} \arrow{e,t}{\Pi} \arrow{se,b}{\Pi^{\ast}} 
\node{\QLS(-w_{\circ}\lambda)} \arrow{s,r}{\ast} \\
\node{} \node{\QLS(\lambda)} \arrow{e,t}{\Psi} \node{\BB.}
\end{diagram}
\end{equation}
\begin{definition}
Let $b\rightarrow f_i(b)$ be an arrow in $\mathbb B$. It is called a \emph{Demazure arrow} if $i \ne 0$, or $i=0$ and 
$\varepsilon_0(b)\ge 1$. It is called a \emph{dual Demazure arrow} if $i \ne 0$, or $i=0$ and 
$\varphi_0(b)\geq 2$. 
\end{definition}

\begin{remark}\label{perfect}
In the case when all of the tensor factors of $\mathbb B$ are perfect
crystals (see Definition \ref{def:perfect}), the subgraph of $\mathbb B$ 
consisting of the dual Demazure arrows is connected.
See the discussion in Section~\ref{section.perfectness} below 
about which column shape KR crystals are perfect. 
\end{remark}

We now state the main result of this section, 
relating the crystal structures in the QLS path model and 
the quantum alcove model.

\begin{thm}\label{qalc2qls} 
Consider the root operator $e_p$ 
(and the corresponding map $\varepsilon_p$) for QLS paths, 
as defined in Section~{\rm \ref{subsec:crystal}},
and the root operator $f_p$ in the quantum alcove model defined 
in Section~{\rm \ref{rootopsqalc}}. 
Given $A$ in ${\mathcal A}(\lambda)$, 
we have $f_p(A)\ne\Bzero$ if and only if 
$\varepsilon_p(\Pi(A))>\delta_{p,0}$; in this case, we have
\[
	e_p(\Pi(A))=\Pi(f_p(A))\,.
\]
\end{thm}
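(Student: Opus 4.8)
The plan is to match the combinatorial operator $f_p$ on $\CA(\lambda)$ with the operator $e_p$ on $\QLS(-w_\circ\lambda)$ under the map $\Pi$, by carefully tracking how each operator modifies the underlying path in $\QB(W)$ (respectively $\QB(W^{\omega(J)})$) and the associated sequence of rational numbers $0=b_0<b_1<\dots<b_p$. First I would unwind the definitions on both sides. On the quantum alcove side, by Remark~\ref{changepath}, $f_p$ replaces a folding position $m$ by its predecessor $k$ in $\widehat I_{\ti\alpha_p}$, with the effect of prepending a reflection $s_p$ to a segment $w_a\to w_{a+1}\to\dots\to w_b$ of the path, turning it into $w_a\to s_pw_a\to\dots\to s_pw_{b-1}=w_b$; the key numerical data are the heights $l_i=\langle\beta_i^\vee,\lambda\rangle t_i$ of the hyperplanes involved, which under $\Pi$ become the rational numbers $b_i$. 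On the QLS side, the operator $e_p$ acting on the path \eqref{e:lspath} is governed by the function $H^\eta_p(t)=\langle\ti\alpha_p^\vee,\eta(t)\rangle$ and the points $t_0<t_1$; by the formula \eqref{defeqls}, $e_p$ reflects the middle portion of the path by $s_p$ and translates the tail by $\ti\alpha_p$. The core of the argument is that the "string reflection" performed by $f_p$ on the $\QB(W)$-path, after passing to cosets via Lemma~\ref{q2ls} and dualizing via Lemma~\ref{lem:da}, is precisely the "string reflection" performed by $e_p$ on the QLS path $\Pi(A)$.

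The key steps, in order, would be: (1) Identify, for a given $A\in\CA(\lambda)$ and $p\in\Iaf$, the place in the QLS path $\Pi(A)$ where $e_p$ acts — i.e., compute $t_0,t_1$ and the indices $q,p'$ in the notation of the proof of Theorem~\ref{thm:deg-QLS} — in terms of the data $M=\max g_{\ti\alpha_p}$, the index $m$ and its predecessor $k$ on the alcove side. This is where one must use \eqref{eqn:graph_height}, which translates the graph $g_{\ti\alpha_p}$ into the heights $l_i^A$, and hence (after dividing by $\langle\beta_i^\vee,\lambda\rangle$) into the breakpoints of $H^{\Pi(A)}_p$. (2) Show the equivalence $f_p(A)\ne\Bzero \iff \ve_p(\Pi(A))>\delta_{p,0}$: by Theorem~\ref{theorem:admissible}(2) we have, when $M\ge\delta_{p,0}$, that $\ve_p$ (on the alcove model) equals $M-\langle\ti\alpha_p^\vee,\wt(A)\rangle$, and $f_p(A)\ne\Bzero$ exactly when $M>\delta_{p,0}$; since $\wt$ is preserved by $\Pi^*$ (Proposition~\ref{proposition.bijqls}) and the $\ast$-duality \eqref{dualpath2a} exchanges $e_p$ and $f_p$, one converts this into the statement about $\ve_p(\Pi(A))$. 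Here I would use that $\ve_p$ of a QLS path equals $-m^\eta_p$, so the condition $\ve_p(\Pi(A))>\delta_{p,0}$ translates to $m^{\Pi(A)}_p<-\delta_{p,0}$, matching $M>\delta_{p,0}$ under the height/graph dictionary. (3) Verify $e_p(\Pi(A))=\Pi(f_p(A))$: both sides are QLS paths of shape $-w_\circ\lambda$, and I would check that they agree as piecewise-linear maps $[0,1]\to\BR\otimes X$. The left side is given explicitly by \eqref{defeqls}; the right side is obtained by running the construction of Section~\ref{S:forget} on $f_p(A)$, using the modified $\QB(W)$-path from Remark~\ref{changepath} together with the modified height sequence. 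The reflections $s_p$ appearing in Remark~\ref{changepath} must be shown to push forward, via $\lfloor\cdot\rfloor$ and via $\omega$/$w_\circ$, to the reflections $s_j$ appearing in \eqref{defeqls}; this uses Corollary~\ref{cor:weight} and the coset compatibility in Lemma~\ref{lem:da}(2).

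The main obstacle I expect is Step~(1)/(3): establishing the precise dictionary between, on one hand, the graph $g_{\ti\alpha_p}$ with its maximum $M$ attained first at index $m$ (with predecessor $k$), and on the other hand, the function $H^{\Pi(A)}_p(t)$ with its minimum $m^{\Pi(A)}_p$ attained first at $t_1$ (with $t_0$ the last time it equals $m^{\Pi(A)}_p+1$). The subtlety is bookkeeping: the graph $g_{\ti\alpha_p}$ lives on $[0,n+\tfrac12]$ where $n=|I_{\ti\alpha_p}|$, with "half-integer" kinks recording both the roots $\gamma_i$ and the membership $\varepsilon_i$, whereas $H^{\Pi(A)}_p$ is genuinely the derivative-of-path function on $[0,1]$ scaled by the $t_i$; one must check that passing to cosets (Lemma~\ref{q2ls}, which may shorten paths) and then dualizing does not disturb the location of the extremum or the identity of the neighbouring breakpoints. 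A clean way to handle this is to first prove the statement at the non-parabolic level — i.e., relate $f_p$ on the alcove model with the lex $\lambda$-chain directly to the straightened $\QB(W)$-path and its "$H$-function", before projecting to $W^J$ — and then invoke Lemma~\ref{q2ls} and Lemma~\ref{lem:da} to descend. The edge cases ($m=\infty$, i.e., $t_1=1$; and $t_0=0$, i.e., the tail/initial-direction case $j=0$) will need separate short verifications, paralleling the case analysis in the proof of Theorem~\ref{thm:deg-QLS}.
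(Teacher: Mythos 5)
Your plan is in the same general direction as the paper's proof: both match the reflection-of-a-segment behaviour of $f_p$ on the $\QB(W)$-path (Remark~\ref{changepath}) with the reflection in \eqref{defeqls}, and both ultimately reduce the ``if and only if'' to showing $m^{\Pi(A)}_p=-M$. The substantive difference is the technical device used to make the comparison rigorous. The paper does \emph{not} try to compare $f_p$ on $A$ with $e_p$ on $\Pi(A)$ directly; instead it introduces a deformed path $\widehat{\Pi}_\varepsilon(A)$ which avoids intersections of two or more $\lambda$-hyperplanes and therefore encodes the full folded alcove path $\Gamma(A)$ (not just its projection to $W^{\omega(J)}$), then applies the argument of \cite[Theorem~9.4]{LP1} verbatim to $\widehat{\Pi}_\varepsilon(A)$, and finally lets $\varepsilon\to 0$. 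This outsources all the bookkeeping you (rightly) flag as the main obstacle.

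Your proposed workaround for that obstacle---``first prove the statement at the non-parabolic level \dots and then invoke Lemma~\ref{q2ls} and Lemma~\ref{lem:da} to descend''---is where the gap is. There is no QLS-path object living literally in $W$ to compare against; once you make such an object precise (a continuous path that records the $\QB(W)$-level folds rather than their $W^{\omega(J)}$-projections, so that its breakpoints are in bijection with $\Gamma(A)$ and Lemma~\ref{q2ls} can be applied fold-by-fold without length collapse), you have reconstructed $\widehat{\Pi}_\varepsilon(A)$. As stated, your plan does not address what happens when consecutive folds project to the same coset or when $\Pi(A)$ passes through a multi-hyperplane intersection, and without that the claimed dictionary ``$m$ and $k$ on the alcove side correspond to $t_1$ and $t_0$ on the QLS side'' is not established. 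Two smaller points: in Step~(2) the detour through $\varepsilon_p(A)=M-\langle\ti{\alpha}_p^\vee,\wt(A)\rangle$ plus weight-preservation plus $\ast$-duality is unnecessary and risks looking circular (Theorem~\ref{theorem:admissible}(2) concerns $\varepsilon_p$ in the alcove model, and relating it to $\varepsilon_p(\Pi(A))$ is essentially what is being proved); the clean route is exactly the last sentence of your Step~(2), namely $\varepsilon_p(\Pi(A))=-m^{\Pi(A)}_p$ and $f_p(A)\neq\Bzero\iff M>\delta_{p,0}$, so only $m^{\Pi(A)}_p=-M$ is needed. And the $p=0$ case deserves more than the parenthetical ``$t_0=0$'' you give it: the paper treats it separately because the threshold is shifted ($M\ge 1$ rather than $M>0$), yielding $\varepsilon_0\ge 2$ rather than $\varepsilon_0\ge 1$, which is the whole content of the $\delta_{p,0}$ in the statement.
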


\begin{proof}
The proof of the similar result for the classical alcove model, namely \cite[Theorem 9.4]{LP1}, carries 
through (cf. Remark \ref{spec}). The main fact underlying this proof is the similarity between the 
definition~\eqref{defeqls} of $e_p$ for QLS paths, and the change under $f_p$ of the relevant path in 
the quantum Bruhat graph, which is explained in Remark \ref{changepath}; note that in both cases the 
reflection $s_p$ is applied to a segment of the corresponding path. 

To be more precise, the proof is based on deforming the path $\Pi(A)$ to a path $\widehat{\Pi}_\varepsilon(A)$ 
between the same endpoints (where $\varepsilon$ is a sufficiently small positive real number), such that the 
latter does not pass through the intersection of two or more $\lambda$-hyperplanes (here we exclude the endpoints). 
The path $\widehat{\Pi}_\varepsilon(A)$ encodes the same information 
as the ``folded'' alcove path corresponding to $A$ or, equivalently, 
the sequence of roots $\Gamma(A)$; see Section \ref{rootopsqalc}. 
Therefore, the actions of $e_p$ on $\widehat{\Pi}_\varepsilon(A)$ 
and of $f_p$ on $A$ (where the latter is based on $\Gamma(A)$) are 
equivalent. The proof concludes by taking the limit $\varepsilon\to 0$, 
under which $\widehat{\Pi}_\varepsilon(A)$ goes to $\Pi(A)$. 

We will now point out the additional elements in the proof. 
First, some results invoked in the proof of \cite[Theorem 9.4]{LP1}
need to be replaced, as follows: \cite[Corollary 6.11]{LP1} 
with \cite[Propositions 3.15 and 3.18]{LL}, \cite[Corollary 6.12]{LP1}
with \cite[Propositions 3.16 and 3.19]{LL}, and 
\cite[Proposition 7.3]{LP1} with Remark \ref{changepath}. Other than this, 
there is just one notable addition to the proof, 
which has to do with the case $p=0$. 
Consider the number $M$ in the definition of $f_0(A)$, 
and assume for the moment that $M\ge 1$. By the same reasoning as in \cite{LP1}, 
we can see that the minimum of the function 
$t\mapsto\langle\ti{\alpha}_0^\vee,\widehat{\Pi}_\varepsilon(A)(t)\rangle$ is 
$-M$. Therefore, as discussed in \cite[Section 2.2]{LNSSS2}, cf. 
also \cite[Lemma 2.1 (c)]{Li}, the maximum number of times $e_0$ 
can be applied to $\widehat{\Pi}_\varepsilon(A)$ is $M$. 
Meanwhile, the maximum number of times $f_0$ can be applied to $A$ is $M-1$, 
by Theorem \ref{theorem:admissible} (2). In the remaining case, namely 
$M<1$, we have $f_0(A)=\Bzero$, and the minimum of 
the function mentioned above is $0$, so $e_0$ is not 
defined on  $\widehat{\Pi}_\varepsilon(A)$. 
We conclude that $f_0(A)\ne\Bzero$ if and only if 
$\varepsilon_0(\widehat{\Pi}_\varepsilon(A))\ge 2$. 
The rest of the argument is identical to the one in \cite{LP1}.
\end{proof}

\begin{remarks}\label{ndd} \mbox{}

\noindent
(1) The forgetful map $\Pi$ from the quantum alcove model to the QLS path model is 
a very natural map. Therefore, we think of the former model as a mirror image of 
the latter, via this bijection. If we use the mentioned identification 
to construct the non-dual Demazure arrows in the quantum alcove model, 
we quickly realize that, in general, the constructions are 
considerably more involved than \eqref{eqn:rootF} and \eqref{eqn:rootE},
see~\cite[Example 4.9]{LL}. 

\noindent
(2) Although the quantum alcove model so far misses 
the non-dual Demazure arrows, it has the advantage of being a discrete model. 
Therefore, combinatorial methods are applicable, for instance 
in proving the independence of the model from the choice of 
an initial alcove path (or $\lambda$-chain of roots), see below, 
including the application in Remark \ref{perfectcase}\,(2). 
This should be compared with the subtle continuous arguments used for the similar 
purpose in the Littelmann path model \cite{Li}. 
\end{remarks}

Based on \eqref{dualpath2a}, 
we immediately obtain the following corollary of 
Theorems \ref{thm:LScl}, \ref{thm:LS=QLS}, \ref{qalc2qls}, 
and Proposition \ref{proposition.bijqls}.

\begin{cor}\label{mainthm}
The bijection $\widetilde{\Psi}=\Psi\circ\Pi^*$ is 
a weight-preserving affine crystal isomorphism 
from ${\mathcal A}(\lambda)$ to 
the subgraph of $\mathbb B$ consisting 
of the dual Demazure arrows. 
\end{cor}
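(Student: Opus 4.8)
The plan is to assemble this corollary directly from the pieces that have already been established, so the ``proof'' is really a matter of chaining together the maps and tracking what each one does to weights and to crystal operators. First I would recall the composite $\widetilde{\Psi}=\Psi\circ\Pi^*$ and its factorization through the commutative triangle displayed just before the theorem, namely $\CA(\lambda)\xrightarrow{\Pi}\QLS(-w_\circ\lambda)\xrightarrow{\ast}\QLS(\lambda)\xrightarrow{\Psi}\BB$. By Proposition~\ref{proposition.bijqls}, the forgetful map $\Pi^\ast=\ast\circ\Pi$ is a weight-preserving bijection from $\CA(\lambda)$ onto $\QLS(\lambda)=\BB(\lambda)_\cl$ (using Theorem~\ref{thm:LS=QLS} to identify the two descriptions of the target). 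Then $\Psi$ is, by Theorem~\ref{thm:LScl}\,(3), a crystal isomorphism from $\BB(\lambda)_\cl$ onto $\BB$; in particular it is weight-preserving and bijective. Composing, $\widetilde{\Psi}$ is a weight-preserving bijection from $\CA(\lambda)$ onto $\BB$.

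Next I would check the crystal-operator compatibility, which is where Theorem~\ref{qalc2qls} enters. That theorem states that $f_p(A)\neq\Bzero$ iff $\varepsilon_p(\Pi(A))>\delta_{p,0}$, and in that case $e_p(\Pi(A))=\Pi(f_p(A))$; equivalently, $\Pi$ intertwines the $f_p$ on $\CA(\lambda)$ with the $e_p$ on $\QLS(-w_\circ\lambda)$, but only for those arrows, i.e.\ all $p\neq 0$ and those $p=0$ arrows with $\varepsilon_0>1$. I would then transport this across the Lusztig involution $\ast$ using \eqref{dualpath2a}: since $(e_j\eta)^\ast=f_j\eta^\ast$ and $(f_j\eta)^\ast=e_j\eta^\ast$ and $\ast$ is weight-negating in the appropriate sense, the relation $e_p(\Pi(A))=\Pi(f_p(A))$ becomes $f_p(\Pi^\ast(A))=\Pi^\ast(f_p(A))$ after applying $\ast$ (noting $\Pi^\ast=\ast\circ\Pi$ and that $\ast$ is an involution). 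The condition $\varepsilon_p(\Pi(A))>\delta_{p,0}$ on $\QLS(-w_\circ\lambda)$ translates, via $\varepsilon_j(\eta)=\varphi_j(\eta^\ast)$ (again \eqref{dualpath2a}), into $\varphi_p(\Pi^\ast(A))>\delta_{p,0}$ on $\QLS(\lambda)$, i.e.\ into exactly the condition that the arrow $f_p$ out of $\Pi^\ast(A)$ in $\BB$ — after transporting by the crystal isomorphism $\Psi$ — be a dual Demazure arrow (for $p\neq0$ this is automatic; for $p=0$ it is the condition $\varphi_0\geq 2$, i.e.\ $\varphi_0>\delta_{0,0}=1$). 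Finally, since $\Psi$ is an honest crystal isomorphism commuting with \emph{all} $f_p,e_p$, pushing forward along $\Psi$ shows that $\widetilde{\Psi}$ sends the $f_p$-arrows of $\CA(\lambda)$ bijectively onto the dual Demazure arrows of $\BB$, and respects $\varepsilon_p,\varphi_p$ along those arrows.

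To finish I would observe that weights are preserved at every stage ($\Pi^\ast$ by Proposition~\ref{proposition.bijqls}, $\Psi$ by Theorem~\ref{thm:LScl}), so $\widetilde{\Psi}$ is weight-preserving; and that ``affine crystal isomorphism onto the subgraph of $\BB$ consisting of the dual Demazure arrows'' is precisely the conjunction of: $\widetilde\Psi$ is a weight-preserving bijection of underlying sets, and for each $p\in I_\af$ and each $A$, the arrow $A\to f_p(A)$ exists in $\CA(\lambda)$ iff the arrow $\widetilde\Psi(A)\to f_p(\widetilde\Psi(A))$ exists in $\BB$ and is a dual Demazure arrow, compatibly with $e_p$, $\varepsilon_p$, $\varphi_p$. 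I expect the only real subtlety — the ``main obstacle'' — to be the bookkeeping around the $p=0$ case: one must be careful that the threshold $\delta_{p,0}$ in Theorem~\ref{qalc2qls}, the defining inequality $\varphi_0\geq 2$ for a dual Demazure arrow, and the behavior of $\varepsilon_0,\varphi_0$ under the Lusztig involution all line up, so that the arrows missing from the image of $\widetilde\Psi$ are exactly the non-dual Demazure $f_0$-arrows (those at the very end of an $f_0$-string, where $\varphi_0=1$). Everything else is a formal consequence of results already in hand.
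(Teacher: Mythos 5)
Your proposal is correct and takes exactly the same route as the paper: the paper proves this corollary by the one-line remark that it follows from \eqref{dualpath2a} together with Theorems~\ref{thm:LScl}, \ref{thm:LS=QLS}, \ref{qalc2qls} and Proposition~\ref{proposition.bijqls}, which is precisely the chain you unpack (Proposition~\ref{proposition.bijqls} for the weight-preserving bijection, Theorem~\ref{qalc2qls} for the $f_p$/$e_p$ intertwining through $\Pi$, \eqref{dualpath2a} to pass through the $\ast$-map and convert $\varepsilon_p>\delta_{p,0}$ into $\varphi_p>\delta_{p,0}$, and Theorem~\ref{thm:LScl}\,(3) to push forward along $\Psi$). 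Your bookkeeping of the $p=0$ threshold is also exactly what makes the image match the dual Demazure subgraph.
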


Recall that the set ${\mathcal A}(\lambda)={\mathcal A}(\Gamma_{\rm lex})$ in Corollary \ref{mainthm} is based 
on a lex $\lambda$-chain $\Gamma_{\rm lex}$. In fact, the following stronger version of 
Corollary \ref{mainthm} is proved in \cite{LL2}.

\begin{thm}\label{mainconj}\cite{LL2}
 Given any $\lambda$-chain $\Gamma$, there is a weight-preserving affine crystal isomorphism between ${\mathcal A}(\Gamma)$ 
 and the subgraph of $\mathbb B$ consisting of the dual Demazure arrows. 
\end{thm}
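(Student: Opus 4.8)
The plan is to reduce the general statement to the case of the lexicographic $\lambda$-chain $\Gamma_{\rm lex}$, which is handled by Corollary~\ref{mainthm}. For this it is enough to prove that the quantum alcove model does not depend on the choice of $\lambda$-chain: for any two $\lambda$-chains $\Gamma$ and $\Gamma'$ there should be a weight-preserving bijection $\CA(\Gamma)\to\CA(\Gamma')$ intertwining the root operators $e_p$ and $f_p$ in the same sense as in Theorem~\ref{qalc2qls} (that is, exactly, away from the $f_0$-arrows at the end of a string; equivalently, restricting to an isomorphism between the dual-Demazure subgraphs). I would invoke the theorem of Lenart and Postnikov \cite{LP,LP1} that any two $\lambda$-chains are connected by a finite sequence of \emph{elementary moves}, each of which is local: it affects only a contiguous segment $(\beta_i,\beta_{i+1},\ldots,\beta_{i+k})$ of the $\lambda$-chain whose roots span a sub-root-system of rank at most $2$, and replaces that segment by the ``opposite'' reflection ordering of the same system (the case $k=1$ being the transposition of two adjacent $\lambda$-hyperplanes lying in ``unrelated'' rank-one pieces). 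Thus it suffices to construct and analyze the bijection for one elementary move $\Gamma\rightsquigarrow\Gamma'$.

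For a single move I would construct an explicit bijection $Y\colon\CA(\Gamma)\to\CA(\Gamma')$ — a \emph{quantum Yang--Baxter move}, extending the Yang--Baxter moves of the classical alcove model of \cite{LP1} to admissible subsets whose associated paths in $\QB(W)$ may contain down (quantum) steps. The key point is that both the weight \eqref{defwta} and the admissibility condition \eqref{eqn:admissible} ``see'' the modified segment only through its two endpoints and through the Weyl group element obtained by multiplying the reflections $r_{\beta_j}$ at the chosen folding positions inside it; hence the construction of $Y$, and the check that $Y$ preserves $\wt$, reduce to a finite verification in the rank-$2$ situation, carried out using the explicit structure of the quantum Bruhat graph of a dihedral Weyl group (of type $A_1\times A_1$, $A_2$, $B_2$, or $G_2$) together with a matching between the possible paths in $\QB(W)$ running across such a segment in $\Gamma$ and those in $\Gamma'$.

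The compatibility of $Y$ with the root operators $e_p$ and $f_p$ is where most of the work lies, and I would follow the strategy already used in the proof of Theorem~\ref{qalc2qls}: deform $\Pi(A)$ to a path $\widehat{\Pi}_\varepsilon(A)$ through the $\lambda$-hyperplanes that avoids all codimension-$2$ intersections, recall that the root operators are computed from the piecewise-linear functions $g_{\ti{\alpha}_p}$ attached to the folded path, and observe that an elementary move changes neither the folded path in the limit $\varepsilon\to0$ nor, consequently, the functions $g_{\ti{\alpha}_p}$, their maximum $M$, and the relevant predecessor/successor data — except in the controlled way that $Y$ is designed to mirror. The case $p=0$ again demands extra care, with $\ti{\alpha}_0=-\theta$ and the ``$\varepsilon_0\ge2$ versus dual-Demazure'' bookkeeping, exactly as in the last paragraph of the proof of Theorem~\ref{qalc2qls}.

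Composing the bijections $Y$ along a sequence of moves connecting an arbitrary $\Gamma$ to $\Gamma_{\rm lex}$ yields a weight-preserving affine crystal isomorphism $\CA(\Gamma)\xrightarrow{\sim}\CA(\Gamma_{\rm lex})$ on the dual-Demazure subgraphs, and composing further with $\widetilde{\Psi}=\Psi\circ\Pi^{*}$ from Corollary~\ref{mainthm} gives the asserted isomorphism between $\CA(\Gamma)$ and the subgraph of $\mathbb B$ consisting of the dual Demazure arrows. I expect the main obstacle to be precisely the rank-$2$ dihedral analysis and its interaction with the $p=0$ root operator: although each ingredient is elementary, organizing the quantum Yang--Baxter moves so that they are simultaneously weight-preserving and crystal-theoretic, uniformly over all rank-$2$ segment types and in the presence of repeated roots and nontrivial height sequences, is the technical core of \cite{LL2}.
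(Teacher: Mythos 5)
Your outline matches the strategy the paper itself sketches after the theorem statement: start from Corollary~\ref{mainthm} for the lexicographic $\lambda$-chain, then establish independence of the choice of $\lambda$-chain by generalizing Yang--Baxter moves to the quantum alcove model (yielding bijections $\CA(\Gamma)\to\CA(\Gamma')$ that preserve weights, heights, and the dual-Demazure crystal structure), and compose. One small citation note: the classical Yang--Baxter moves you invoke are from \cite{Le1} rather than \cite{LP1} (which only defines the alcove model and its elementary moves on $\lambda$-chains); otherwise your reconstruction of the argument deferred to \cite{LL2} is the same as the paper's.
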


The proof uses Corollary \ref{mainthm} as the starting point. Then, given 
two $\lambda$-chains $\Gamma$ and $\Gamma'$, we construct a bijection between ${\mathcal A}(\Gamma)$ 
and ${\mathcal A}(\Gamma')$ preserving the dual Demazure arrows, as well as the weights and heights of the vertices 
(see Definition~\ref{defht} below); this means that the quantum alcove model 
does not depend on the choice of a $\lambda$-chain. The mentioned construction is based on generalizing to the quantum 
alcove model the so-called Yang-Baxter moves in \cite{Le1}. As a result, we obtain a collection of a priori different 
bijections between $\mathbb B$ and ${\mathcal A}(\Gamma)$.

\begin{remarks}\label{perfectcase} \mbox{}

\noindent
(1) We believe that the bijections mentioned above are identical. In fact, this is clearly the case if all 
the tensor factors of $\mathbb B$ are perfect crystals. Indeed, since the subgraph of $\mathbb B$ consisting of the 
dual Demazure arrows is connected, there is no more than one isomorphism between it and ${\mathcal A}(\Gamma)$.

\noindent
(2) In the case when all the tensor factors of $\mathbb B$ are perfect crystals, a corollary of the work in \cite{LL2} 
is the following application of the quantum alcove model, cf. Remark \ref{perfectcase} (1). By making specific 
choices for the $\lambda$-chains $\Gamma$ and $\Gamma'$, the bijection between $\A(\Gamma)$ and $\A(\Gamma')$ 
mentioned above gives a uniform realization of the combinatorial $R$-matrix (i.e., the unique affine crystal 
isomorphism commuting factors in a tensor product of KR crystals). In fact, we believe that this statement would hold 
in full generality, rather than just the perfect case.

\noindent
(3) In \cite{LL} we proved Theorem \ref{mainconj} in types $A$ and $C$, for certain $\lambda$-chains 
different from the lex ones, via certain bijections constructed in \cite{Le}. Here we used the realization of the 
corresponding crystal $\mathbb B$ in terms of Kashiwara-Nakashima (KN) columns \cite{KN}.
\end{remarks}

%
\section{The energy function in the quantum alcove model and \texorpdfstring{$P=X$}{P=X}}
\label{S:energy}

We use the notation in Section~\ref{section.qalc}. 
Given the lex $\lambda$-chain $\Gamma=(\beta_1,\ldots,\beta_m)$ 
with height sequence $(l_1,\ldots,l_m)$, we define 
the complementary height sequence $(\widetilde{l}_1,\ldots,\widetilde{l}_m)$ 
by $\widetilde{l}_i:=\langle\beta_i^\vee,\lambda\rangle-l_i$. 
In other words, $\widetilde{l}_i=|\{j\ge i \mid \beta_j=\beta_i\}|$.

\begin{definition} \label{defht}
Given $A=\{j_1<\cdots<j_s\}\in{\mathcal A}(\Gamma)$, we let 
\begin{equation*}
A^{-}:=\bigl\{j_i \in A \mid 
  r_{\beta_{j_1}} \cdots r_{\beta_{j_{i-1}}} > 
  r_{\beta_{j_1}} \cdots r_{\beta_{j_{i-1}}}r_{\beta_{j_{i}}}
\bigr\}
\end{equation*}
{\rm(}in other words, we record the quantum steps 
in the path \eqref{eqn:admissible}{\rm)}. 
We also define
\begin{equation}
\label{defheight}
	\Ht(A):=\sum_{j\in A^-}\widetilde{l}_j\,.\end{equation}
\end{definition}

For examples, we refer to \cite{Le}.

Our goal is to show that the bijection in Section~\ref{S:forget} translates 
the height statistic in the quantum alcove model to 
the energy statistic in the quantum LS path model given 
in Section~\ref{subsec:deg-ene}. For this, 
we need the following lemma, whose proof is 
given in Section~\ref{comppaths}.

\begin{lem} \label{relwt} 
Let $\sigma,\,\tau \in W^J$, and $v\in \sigma W_J$, $w\in\tau W_J$. 
Consider a shortest path $\bp$ from $\sigma$ to $\tau$ in 
$\QB(W^J)$, as well as a shortest path $\bq$ from $v$ to 
$w$ in $\QB(W)$. Then, $\pair{\wt(\bp)}{\lambda}=
\pair{\wt(\bq)}{\lambda}$. 
\end{lem}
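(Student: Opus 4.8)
The plan is to reduce Lemma~\ref{relwt} to the already-available comparison results between weights of shortest paths in $\QB(W)$ and $\QB(W^J)$, together with the structure theory of the $v$-tilted Bruhat order. The key observation is that $\pair{\wt(\bp)}{\lambda}$ depends only on $\sigma,\tau\in W^J$ (by Proposition~\ref{prop:weight}, since all shortest paths in $\QB(W^J)$ from $\sigma$ to $\tau$ have weights congruent mod $Q_J^\vee$, and $\pair{Q_J^\vee}{\lambda}=0$), so it equals $\wts{\sigma}{\tau}$; and I claim $\pair{\wt(\bq)}{\lambda}$ likewise depends only on $\sigma,\tau$, not on the particular coset representatives $v\in\sigma W_J$, $w\in\tau W_J$ nor on the choice of shortest path $\bq$. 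Granting this, it suffices to compute $\pair{\wt(\bq)}{\lambda}$ for one convenient choice of $v$, $w$, and $\bq$.

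First I would establish the claimed independence. For fixed $v,w$, Proposition~\ref{prop:weight}'s non-parabolic analogue (the statement that any two shortest paths in $\QB(W)$ between the same endpoints have equal weight, which is \cite[Lemma~1 / Section~6]{BFP} and is implicitly used throughout) shows $\wt(\bq)$ is well-defined up to nothing, so $\pair{\wt(\bq)}{\lambda}$ depends only on $v,w$. For the coset-representative independence, I would use Lemma~\ref{L:mainb}~(1) and the surrounding discussion: the minimal-length path from $v$ to any element of $\tau W_J$ lands at $\min(\tau W_J,\preceq_v)=\tau w_J'$, and \cite[Lemma~7.7]{LNSSS} (whose consequences are packaged in Corollary~\ref{cor:weight}) controls exactly how the weight of a shortest path changes when one precomposes or postcomposes with a simple reflection $r_i$, $i\in J$: since such $\alpha_i^\vee\in Q_J^\vee$, the weight changes by an element of $Q_J^\vee$, hence pairs to zero against $\lambda$. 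Walking from an arbitrary $w\in\tau W_J$ to $\tau$ (and from an arbitrary $v$ to $\sigma$) through a sequence of such simple reflections, and tracking the weight changes via Corollary~\ref{cor:weight}~(parts 1--3, with $j\ne 0$ so the $\delta_{j,0}$ terms vanish), shows $\pair{\wt(\bq)}{\lambda}$ is unchanged. Thus we may take $v=\sigma$, $w=\tau$.

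With $v=\sigma$ and $w=\tau$ both in $W^J$, I would then compare a shortest path $\bq$ from $\sigma$ to $\tau$ in $\QB(W)$ with a shortest path $\bp$ from $\sigma$ to $\tau$ in $\QB(W^J)$. This is precisely the content of Lemma~\ref{q2ls} applied edge-by-edge along $\bq$ — each edge $u\xrightarrow{\gamma}ur_\gamma$ of $\bq$ gets replaced by a path from $\mcr{u}^J$ to $\mcr{ur_\gamma}^J$ in $\QB(W^J)$ whose weight is congruent to $\gamma^\vee$ or $0$ mod $Q_J^\vee$ (matching the weight contribution of the original edge) — and concatenating yields a path $\bp'$ from $\sigma$ to $\tau$ in $\QB(W^J)$ with $\pair{\wt(\bp')}{\lambda}=\pair{\wt(\bq)}{\lambda}$. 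Since $\sigma,\tau\in W^J$, this $\bp'$ has the same endpoints as $\bp$, so by Proposition~\ref{prop:weight} (applied to $\bp$ shortest and $\bp'$ arbitrary, then noting we only need the pairing with $\lambda$, which kills the $Q_J^\vee$ ambiguity and also the $Q^{\vee+}\setminus\{0\}$ part once we know $\bq$ — hence $\bp'$ — has minimal length) we get $\pair{\wt(\bp')}{\lambda}=\pair{\wt(\bp)}{\lambda}=\wts{\sigma}{\tau}$.

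The main obstacle I anticipate is the length-minimality bookkeeping in the last step: Lemma~\ref{q2ls} only guarantees that the replacement path $\bp'$ has weight congruent to that of $\bq$ mod $Q_J^\vee$, not that $\bp'$ is itself a \emph{shortest} path in $\QB(W^J)$, so a priori Proposition~\ref{prop:weight} would only give $\pair{\wt(\bp')}{\lambda}\ge\pair{\wt(\bp)}{\lambda}$ (the extra $h\in Q^{\vee+}$ pairs non-negatively with the dominant $\lambda$). To close the gap I would run the symmetric argument: start from the shortest path $\bp$ in $\QB(W^J)$ and lift it to a path in $\QB(W)$ from $\sigma$ to $\tau$ — using the lifting in \cite[Theorem~6.5]{LNSSS} / the construction in the proof of Lemma~\ref{L:mainb}~(replacing each parabolic edge $u\xrightarrow{\alpha}\mcr{ur_\alpha}^J$ by $u\xrightarrow{\alpha}ur_\alpha$ followed by an intra-coset path with labels in $\Phi_J^+$) — obtaining a path $\bq'$ in $\QB(W)$ from $\sigma$ to $\tau$ with $\pair{\wt(\bq')}{\lambda}=\pair{\wt(\bp)}{\lambda}$, whence $\pair{\wt(\bq)}{\lambda}\le\pair{\wt(\bq')}{\lambda}=\pair{\wt(\bp)}{\lambda}$ because $\bq$ is shortest in $\QB(W)$ and its weight, paired with dominant $\lambda$, is minimal among all paths (shortest paths have the componentwise-smallest weight vectors by \cite[Lemma~1]{BFP}). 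Combining the two inequalities gives equality. Alternatively, and perhaps more cleanly, one can invoke Lemma~\ref{bqbpaths} together with Theorem~\ref{thm:shell} to argue directly that $\bp'$ can be taken shortest; I would present whichever route turns out to be shorter to write.
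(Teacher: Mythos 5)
Your core strategy coincides with the paper's: lift $\bq$ down to a path $\bp'$ in $\QB(W^J)$ using Lemma~\ref{q2ls} (weight preserved mod $Q_J^\vee$), apply Proposition~\ref{prop:weight} with the minimality of $\bp$ to get $\pair{\wt(\bp')}{\lambda}\ge\pair{\wt(\bp)}{\lambda}$; then lift $\bp$ up to a path $\bq'$ in $\QB(W)$ (via the Lemma~\ref{L:mainb} construction), apply \cite[Lemma~1(3)]{Po} with the minimality of $\bq$ to get $\pair{\wt(\bq')}{\lambda}\ge\pair{\wt(\bq)}{\lambda}$, and chain the two inequalities together with the equalities $\pair{\wt(\bp')}{\lambda}=\pair{\wt(\bq)}{\lambda}$ and $\pair{\wt(\bq')}{\lambda}=\pair{\wt(\bp)}{\lambda}$. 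That is exactly the paper's proof in Section~\ref{comppaths}, and you correctly anticipated that the symmetric argument is what closes the one-sided inequality coming from Proposition~\ref{prop:weight}.

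The one genuine issue is your preliminary reduction to $v=\sigma$, $w=\tau$. It is unnecessary: the paper handles arbitrary $v$, $w$ directly by prepending to the lift $\bq'$ an intra-coset segment from $v$ to $\sigma$ (quantum edges, labels in $\Phi_J^+$) and appending a segment from $\tau$ to $w$ (Bruhat edges, labels in $\Phi_J^+$); since $\Phi_J^+$ is orthogonal to $\lambda$, these contribute nothing to $\pair{\wt(\bq')}{\lambda}$. More importantly, your justification for the reduction has a gap: Corollary~\ref{cor:weight} and \cite[Lemma~7.7]{LNSSS} control the effect of \emph{left}-multiplying endpoints in $W^J$ by a reflection $s_j$ with $j\in I_{\af}$, whereas what you need for the reduction is the effect of \emph{right}-multiplying endpoints in $W$ by elements of $W_J$ — these results simply do not say that. (Your parenthetical "parts 1--3, with $j\ne 0$ so the $\delta_{j,0}$ terms vanish" would, if the cited tool applied, trivialize the claim, which is a further sign that the wrong result is being invoked.) The independence of $\pair{\wt(\bq)}{\lambda}$ from the coset representatives is of course a \emph{consequence} of the lemma, so assuming it up front is circular unless supported by an independent argument. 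Dropping the reduction entirely and running the two-inequality argument with the intra-coset detours built into $\bq'$, as the paper does, removes the gap.
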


We can now state one of our main results.

\begin{thm} \label{energy-transl} 
Consider an admissible subset $A$ in ${\mathcal A}(\lambda)$, and 
the corresponding QLS path $\Pi(A) \in \QLS(-w_{\circ}\lambda)$. 
Write $\Pi(A)$ as{\rm:}
\begin{equation*}
\sigma_0' 
\blarrl{-b_1w_{\circ}\lambda} \sigma_1' 
\blarrl{-b_2w_{\circ}\lambda} \cdots 
\blarrl{-b_pw_{\circ}\lambda} \sigma_p',
\end{equation*}
with $\sigma_i' \in W^{\omega(J)}$ and $0=b_{0} < b_{1} < \cdots < b_{p} < 1$.
Set $\sigma_{i}:=\mcr{\sigma_{i}'w_{\circ}}^{J} \in W^{J}${\rm;} note that 
$\sigma_0 \barrl{b_1\lambda} \sigma_1 \barrl{b_2\lambda} \cdots \barrl{b_p\lambda}\sigma_p$ 
{\rm(}cf. \eqref{qls2map1} and \eqref{qls2map}{\rm)}. Then, we have
\[
	{\rm height}(A)=\sum_{i=1}^p (1-b_i)\wt_\lambda(\sigma_{i-1}\Rightarrow\sigma_i)\,.
\]
\end{thm}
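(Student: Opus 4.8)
The plan is to proceed by induction on the cardinality $s$ of the admissible subset $A = \{j_1 < \cdots < j_s\}$, mirroring exactly the induction used in the proof of Proposition~\ref{proposition.bijqls} (the weight-preservation of $\Pi^\ast$). The base case $A = \emptyset$ is trivial: both sides are $0$, since $A^- = \emptyset$ and $\Pi(A)$ is the straight-line path, which has no quantum-Bruhat-graph steps contributing to the sum. For the inductive step, set $A' := \{j_1 < \cdots < j_{s-1}\}$, so $A = A' \cup \{j_s\}$, and compare $\mathrm{height}(A)$ with $\mathrm{height}(A')$ on the one hand, and the right-hand side for $\Pi(A)$ with that for $\Pi(A')$ on the other.

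First I would analyze the left-hand side. By Definition~\ref{defht}, $\mathrm{height}(A) - \mathrm{height}(A')$ is either $0$ (if the last step $w_{p}r_{\beta_{j_s}}$ is a Bruhat step, i.e. $j_s \notin A^-$) or $\widetilde{l}_{j_s} = \langle\beta_{j_s}^\vee,\lambda\rangle - l_{j_s} = \langle\beta_{j_s}^\vee,\lambda\rangle(1 - t_{j_s})$ (if it is a quantum step, using $l_{j_s} = \langle\beta_{j_s}^\vee,\lambda\rangle t_{j_s}$ from the definition of $t_{j_s}$). Next I would analyze the right-hand side using the explicit form of $\Pi^\ast(A)$ recorded in \eqref{eq:PiastA} and displayed in the proof of Proposition~\ref{proposition.bijqls}: when $t_{j_s} = b_p$, the path $\Pi^\ast(A)$ is obtained from $\Pi^\ast(A')$ by replacing the initial direction $w_p$ with $w_p r_{\beta_{j_s}}$, while keeping the break points $b_1 < \cdots < b_p$; when $t_{j_s} > b_p$, a new segment $w_p r_{\beta_{j_s}} \blarrl{(1-t_{j_s})\lambda} w_p$ is prepended, so a new break point $t_{j_s}$ is inserted. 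In either case, passing to $\Pi(A) = (\Pi^\ast(A))^\ast$ and expanding $\sum_{i=1}^p (1-b_i)\wt_\lambda(\sigma_{i-1}\Rightarrow\sigma_i)$ via Lemma~\ref{lem:da}\,(3) (which converts weights of shortest paths in $\QB_{b\omega(\lambda)}(W^{\omega(J)})$ to weights of shortest paths in $\QB_{b\lambda}(W^J)$), the only term that changes is the one corresponding to the edge involving $j_s$. Here is where Lemma~\ref{relwt} enters: the edge $w_{p-1} \xrightarrow{\beta_{n_p+1}} \cdots \xrightarrow{\beta_{j_s}} w_p r_{\beta_{j_s}}$ in $\QB(W)$ (a path with $v \in \sigma_{?} W_J$, $w \in \sigma_{?} W_J$) has the same $\lambda$-pairing of its weight as the corresponding shortest path $\sigma_{p-1} \Rightarrow \sigma_p$ in $\QB(W^J)$, so the newly contributed term equals $(1 - t_{j_s})\langle\beta_{j_s}^\vee,\lambda\rangle$ exactly when $j_s$ is a quantum step (the weight of the final quantum edge $w_p \xrightarrow{\beta_{j_s}} w_p r_{\beta_{j_s}}$ is $\beta_{j_s}^\vee$), and $0$ otherwise. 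This matches the change on the left-hand side.

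The main obstacle, and the place where care is needed, is the bookkeeping in the inductive step: one must show precisely that appending $j_s$ to $A'$ changes the QLS path $\Pi(A)$ only in its \emph{tail} segment (the one corresponding to the highest index $i = p$ or to a freshly created index), that no earlier segments $\sigma_{i-1} \Rightarrow \sigma_i$ are disturbed, and that the contribution of the modified/new segment is governed by whether $w_{p-1}\to\cdots\to w_p r_{\beta_{j_s}}$ in $\QB(W)$ picks up the quantum edge weight $\beta_{j_s}^\vee$ at its last step — which is exactly the condition $j_s \in A^-$. The subtlety is that the shortest path from $\sigma_{p-1}$ to $\sigma_p$ in $\QB(W^J)$ need not literally project the $\QB(W)$ path $w_{p-1}\to\cdots\to w_p r_{\beta_{j_s}}$ edge-by-edge, which is why one cannot just track quantum edges naively; instead one invokes Lemma~\ref{relwt} to equate the $\lambda$-pairings of the two path weights, together with Proposition~\ref{prop:weight} (independence of $\wt_\lambda(\sigma_{i-1}\Rightarrow\sigma_i)$ from the choice of shortest path). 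Once this comparison is set up cleanly, both sides satisfy the same recursion with the same base case, and the theorem follows by induction on $s$.
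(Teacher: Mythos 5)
Your inductive approach is a genuine alternative to the paper's argument, which is a direct (non-inductive) summation: the paper decomposes the admissible path \eqref{eqn:admissible} into the chunk paths $\bq_0,\ldots,\bq_p$ coming from the inverse map of Section~\ref{S:invmap}, notes that these are shortest paths in $\QB(W)$ by Theorem~\ref{thm:shell}\,(2), applies Lemma~\ref{relwt} to get $\wt_\lambda(\sigma_{i-1}\Rightarrow\sigma_i)=\pair{\wt(\bq_i)}{\lambda}$, expands $\wt(\bq_i)$ as the sum of $\beta_j^\vee$ over quantum edges $j\in A^-$ lying in that chunk, and finishes with the identity $(1-b_i)\pair{\beta_j^\vee}{\lambda}=\widetilde{l}_j$. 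Your induction on $|A|$ re-derives the same identity one edge at a time, and you correctly identify the two crucial ingredients (Lemma~\ref{relwt} and the height--degree identity).

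However, there is a genuine gap in the inductive step. You apply Lemma~\ref{relwt} to the path $w_{p-1}\to\cdots\to w_p\to w_p r_{\beta_{j_s}}$ in $\QB(W)$, asserting that its $\lambda$-pairing equals $\wt_\lambda(\sigma_{p-1}\Rightarrow\sigma_p^A)$. But Lemma~\ref{relwt} only equates $\lambda$-pairings of \emph{shortest} paths on both sides; for a non-shortest $\QB(W)$ path, Proposition~\ref{prop:weight} gives only an inequality. Moreover, even granting that, the step you really need in Case $t_{j_s}=b_p$ is that the \emph{difference} $\wt_\lambda(\sigma_{p-1}\Rightarrow\sigma_p^A)-\wt_\lambda(\sigma_{p-1}\Rightarrow\sigma_p^{A'})$ equals $\pair{\beta_{j_s}^\vee}{\lambda}[j_s\in A^-]$; this requires knowing that the shortest $\QB(W)$ path from $w_{p-1}$ to $w_p r_{\beta_{j_s}}$ actually decomposes through $w_p$. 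Neither of these is automatic, and you do not supply an argument. The missing facts are exactly what the paper's proof imports from Section~\ref{S:invmap}: within each chunk the $\lambda$-hyperplanes share a common relative height, so the edge labels are $<_\lambda$-increasing; by Theorem~\ref{thm:shell}\,(1)--(2), such a path is the unique increasing one and is shortest; and since the chunk for $A$ is the chunk for $A'$ with the $j_s$-edge appended (by Proposition~\ref{proposition.bijqls}), the decomposition through $w_p$ follows from that uniqueness. Once you add these observations, the induction closes; but at that point you have reassembled essentially the paper's direct computation, with extra bookkeeping.
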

 
\begin{proof}
Recall from Section \ref{S:invmap} that $A$ (in fact, the corresponding path \eqref{eqn:admissible} 
in $\QB(W)$) can be reconstructed from the quantum LS path by first defining recursively a 
sequence $w_i\in \sigma_i W_J$, $i=0,\ldots,p$ (and $w_{-1}=1$), and then by concatenating 
the unique paths $\bq_i$ with increasing edge labels (with respect to $<_\lambda$, cf. 
Section~\ref{S:invmap}) between $w_{i-1}$ and $w_i$, for $i=0,\ldots,p$. By 
Theorem~\ref{thm:shell} (2), the paths $\bq_i$ are shortest ones. 
Therefore, by Lemma~\ref{relwt}, we have 
\begin{equation} \label{zeq1}
	\wt_\lambda(\sigma_{i-1} \Rightarrow \sigma_i)=
	\pair{\wt(\bq_i)}{\lambda} \qquad \text{for $i=1,\ldots,p$}\,;
\end{equation}
we also have $\wt(\bq_0)=0$. 
 
Consider a quantum edge in some path $\bq_i$, and let $\beta_j$ be the root in the lex 
$\lambda$-chain labeling it (so $j\in A^-$). As discussed in Section \ref{S:invmap}, we have 
\begin{equation} \label{zeq2}
	b_i \pair{\beta_j^\vee}{\lambda} =l_j\,,
	\qquad \text{so $(1-b_i) \pair{\beta_j^\vee}{\lambda}=\widetilde{l}_j$}\,.
\end{equation}
By noting that $\wt(\bq_i)$ is the sum of $\beta_j^\vee$ for all such $\beta_j$, and by 
combining \eqref{defheight}, \eqref{zeq1}, and \eqref{zeq2}, the statement of the theorem follows.
 \end{proof}

\begin{cor} \label{dllev} 
Keep the notation of Sections~{\rm \ref{subsec:Lusztig}} 
and {\rm \ref{section.isom}}. 
For each $A \in {\mathcal A}(\lambda)$, we have
\begin{align*}
-\mathrm{height}(A) & 
  = \Deg_{-w_{\circ}\lambda}\bigl( \Pi(A) \bigr) 
  = \Deg_{\lambda} \bigl( \omega(\Pi(A)) \bigr) \\
& = \Deg_{\lambda}\bigl( S (\Pi^{\ast}(A)) \bigr) 
  = D_{\BB^{\rev}}\bigl(S(\ti{\Psi}(A))\bigr) - D^{\ext}_{\BB^{\rev}}. 
\end{align*}
Namely, the following diagram commutes:
\begin{equation}
\begin{diagram}
\node{\BZ_{\le 0}} \arrow[3]{s,l}{\mathrm{id}}
\node{\CA(\lambda)} 
  \arrow{w,t}{-\mathrm{height} \, \eqref{defheight}}
  \arrow{e,t}{\Pi \, \text{\rm (\S\ref{S:forget})} } 
  \arrow{se,t}{\Pi^{\ast}}
  \arrow{s,r}{\ti{\Psi}}
\node{\QLS(-w_{\circ}\lambda)} 
  \arrow{s,r}{\ast \, \text{\rm (\S\ref{subsec:Lusztig})} } 
  \arrow{e,t}{\mathrm{id}} 
\node{\QLS(-w_{\circ}\lambda)} 
  \arrow[2]{s,r}{\omega \, \text{\rm (\S\ref{subsec:Lusztig})}} \\
\node{}
\node{\BB} \arrow{s,r}{S \, \text{\rm (\S\ref{subsec:Lusztig})}} 
\node{\QLS(\lambda)} \arrow{w,b}{\Psi \, \eqref{defpsi}} 
\arrow{s,r}{S \, \text{\rm (\S\ref{subsec:Lusztig})}} \\
\node{}
\node{\BB^{\rev}}
\arrow{sw,b}{
   D_{\BB^{\rev}}(\,\cdot\,)-D_{\BB^{\rev}}^{\ext} \, 
   \text{\rm (\S\ref{subsec:deg-ene})} }
\node{\QLS(\lambda)} 
\arrow{s,r}{\Deg_{\lambda} \, \text{\rm (\S\ref{subsec:dfn-deg})} }
\arrow{w,b}{\Psi^{\rev} \, \text{\rm (\S\ref{subsec:Lusztig})}}
\node{\QLS(\lambda)} 
\arrow{w,b}{\mathrm{id}} \\
\node{\BZ_{\le 0}} \arrow[2]{e,b}{\mathrm{id}} \node{} \node{\BZ_{\le 0}}
\end{diagram}
\end{equation}
\end{cor}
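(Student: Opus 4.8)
The plan is to assemble Corollary~\ref{dllev} from results already established, rather than prove anything new: the corollary is essentially a ``glueing'' statement, and the work is to chase the commutative diagram one face at a time. First I would establish the chain of equalities in the displayed formula, reading left to right.

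The first equality, $-\mathrm{height}(A)=\Deg_{-w_\circ\lambda}(\Pi(A))$, follows by combining Theorem~\ref{energy-transl} with Theorem~\ref{thm:deg-QLS}. Indeed, Theorem~\ref{energy-transl} gives $\mathrm{height}(A)=\sum_{i=1}^p(1-b_i)\wt_\lambda(\sigma_{i-1}\Rightarrow\sigma_i)$, where $\sigma_i=\mcr{\sigma_i'w_\circ}^J$; meanwhile $\Pi(A)\in\QLS(-w_\circ\lambda)$ is written as in \eqref{qls2map1} with directions $\sigma_0',\ldots,\sigma_p'$, so applying Theorem~\ref{thm:deg-QLS} directly to $\Pi(A)$ (with shape $-w_\circ\lambda=\omega(\lambda)$) yields $\Deg_{-w_\circ\lambda}(\Pi(A))=-\sum_{i=1}^p(1-b_i)\wt_{\omega(\lambda)}(\sigma_i'\Rightarrow\sigma_{i-1}')$. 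The two sums agree by Lemma~\ref{lem:da}\,(3), which gives $\wt_{\omega(\lambda)}(\sigma_i'\Rightarrow\sigma_{i-1}')=\wt_\lambda(\mcr{w_\circ\sigma_{i-1}'^{-1}\cdots}\ldots)$; more precisely, the third line of Lemma~\ref{lem:da}\,(3) asserts $\wt_\lambda(x_1\Rightarrow x_2)=\wt_{\omega(\lambda)}(\mcr{x_2w_\circ}^{\omega(J)}\Rightarrow\mcr{x_1w_\circ}^{\omega(J)})$, so with $x_i=\sigma_{i-1}$, $x_2=\sigma_i$ one reads off $\wt_\lambda(\sigma_{i-1}\Rightarrow\sigma_i)=\wt_{\omega(\lambda)}(\mcr{\sigma_iw_\circ}^{\omega(J)}\Rightarrow\mcr{\sigma_{i-1}w_\circ}^{\omega(J)})=\wt_{\omega(\lambda)}(\sigma_i'\Rightarrow\sigma_{i-1}')$, using $\mcr{\sigma_iw_\circ}^{\omega(J)}=\sigma_i'$ (since $\sigma_i=\mcr{\sigma_i'w_\circ}^J$ and $w_\circ^2=e$). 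This settles the first equality.

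The second equality, $\Deg_{-w_\circ\lambda}(\Pi(A))=\Deg_\lambda(\omega(\Pi(A)))$, is the statement that $\Deg$ is intertwined by the diagram automorphism $\omega$: since $\omega$ sends $\QLS(\omega(\lambda))$ to $\QLS(\lambda)$, commutes with the crystal operators up to the relabeling $j\mapsto\omega(j)$ (which fixes $0$), sends $\eta_{\omega(\lambda)}$ to $\eta_\lambda$, and preserves $\iota$ up to $\omega$, it satisfies the defining recursion of $\Deg$ in Remark~\ref{rem:degree} after the relabeling; by uniqueness of $\Deg$ the two degrees agree. (Alternatively, quote the $\omega$-equivariance properties listed just after \eqref{eq:se2}.) The third equality, $\Deg_\lambda(\omega(\Pi(A)))=\Deg_\lambda(S(\Pi^*(A)))$, is immediate from the definition $S(\eta)=\omega(\eta^*)=(\omega(\eta))^*$ together with the commutative triangle displayed just before Lemma~\ref{q2ls} (namely $\Pi^*(A)=(\Pi(A))^*$), which gives $S(\Pi^*(A))=\omega((\Pi^*(A))^*)=\omega((\Pi(A))^{**})=\omega(\Pi(A))$ since $*$ is an involution. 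The fourth equality, $\Deg_\lambda(S(\Pi^*(A)))=D_{\BB^{\rev}}(S(\ti\Psi(A)))-D^{\ext}_{\BB^{\rev}}$, is exactly Corollary~\ref{cor:taile} applied to $\eta=\Pi^*(A)$, combined with $\ti\Psi=\Psi\circ\Pi^*$ so that $S(\Psi(\Pi^*(A)))=S(\ti\Psi(A))$.

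Finally, for the commutative diagram itself: each face is one of the equalities or definitions already in hand. The top-left square ($\ti\Psi=\Psi\circ\Pi^*$ and $\Pi^*=*\circ\Pi$) is the defining commutative triangle from Section~\ref{section.isom}; the square involving $\ast$ on $\QLS(-w_\circ\lambda)$ and $\QLS(\lambda)$ together with $S$ on $\BB$ and $\BB^{\rev}$ is \eqref{eq:CD}; the outer path $\mathcal A(\lambda)\to\BB\to\BB^{\rev}\to\BZ_{\le0}$ versus $\mathcal A(\lambda)\to\BZ_{\le0}$ is precisely the chain of equalities just proved. The only genuine bookkeeping issue — and the step I expect to need the most care — is keeping the various minimal-coset-representative conventions straight: $\sigma_i'\in W^{\omega(J)}$ versus $\sigma_i\in W^J$, the two forms of Lemma~\ref{lem:da}\,(3), and the fact that $\omega(\lambda)=-w_\circ\lambda$ with stabilizer $W_{\omega(J)}$. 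Once those identifications are pinned down, the proof is a short citation-and-substitution argument; I would write it as: ``The first equality combines Theorems~\ref{energy-transl} and~\ref{thm:deg-QLS} via Lemma~\ref{lem:da}\,(3); the second is the $\omega$-equivariance of $\Deg$; the third is the identity $S=\omega\circ\ast$ together with $\Pi^*=\ast\circ\Pi$; the fourth is Corollary~\ref{cor:taile}. The commutativity of the diagram then follows by assembling these with \eqref{eq:CD} and the definition $\ti\Psi=\Psi\circ\Pi^*$.''
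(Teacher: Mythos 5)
Your proof is correct and follows essentially the same strategy as the paper's: establish the four equalities in the displayed chain, in order, by citing Theorems~\ref{energy-transl}, \ref{thm:deg-QLS}, Lemma~\ref{lem:da}\,(3), and Corollary~\ref{cor:taile}, then read off commutativity of the diagram from \eqref{eq:CD} and the defining relations $\Pi^*=\ast\circ\Pi$, $\ti{\Psi}=\Psi\circ\Pi^*$. The one genuine divergence is the second equality $\Deg_{-w_\circ\lambda}(\Pi(A))=\Deg_\lambda(\omega(\Pi(A)))$: the paper proves it by a second application of Lemma~\ref{lem:da}\,(3) to rewrite $\wt_{-w_\circ\lambda}(\sigma_i'\Rightarrow\sigma_{i-1}')$ as $\wt_\lambda(\omega(\sigma_i')\Rightarrow\omega(\sigma_{i-1}'))$ and then compares with the formula from Theorem~\ref{thm:deg-QLS} applied to $\omega(\Pi(A))$ (using \eqref{eq:se2}), whereas you instead invoke the uniqueness characterization of $\Deg$ from Remark~\ref{rem:degree} together with the $\omega$-equivariance of the crystal operators, $\omega(\eta_{\omega(\lambda)})=\eta_\lambda$, $\omega(0)=0$, and $\omega(\theta)=\theta$. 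Both routes are valid; the paper's is a short explicit computation relying on the already-proved degree formula, yours is more structural and shows directly that $\omega$-twisting is compatible with the recursive definition of $\Deg$, at the cost of re-running the three-case analysis of \eqref{eq:deg-ro} (which you correctly note hinges on $\omega$ fixing $\alpha_0=\delta-\theta$). Note a small typo in your write-up: you wrote ``$x_i=\sigma_{i-1}$'' where you meant ``$x_1=\sigma_{i-1}$''.
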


\begin{proof} 
As in Theorem~\ref{energy-transl}, 
write $\Pi(A) \in \QLS(-w_{\circ}\lambda)$ as: 
\begin{equation*}
\sigma_0' 
\blarrl{-b_1w_{\circ}\lambda} \sigma_1' 
\blarrl{-b_2w_{\circ}\lambda} \cdots 
\blarrl{-b_pw_{\circ}\lambda} \sigma_p',
\end{equation*}
with $\sigma_i' \in W^{\omega(J)}$ and 
$0=b_{0} < b_{1} < \cdots < b_{p} < 1$, and set 
$\sigma_{i}:=\mcr{\sigma_{i}'w_{\circ}}^{J} \in W^{J}$. 
It follows from Theorem~\ref{thm:deg-QLS} that 
\begin{equation*}
\Deg_{-w_{\circ}\lambda}(\Pi(A)) = -
 \sum_{i=1}^{p} (1-b_{i}) 
   \wt_{-w_{\circ}\lambda}(\sigma_{i}' \Rightarrow \sigma_{i-1}').
\end{equation*}
Also, we see from Lemma~\ref{lem:da}\,(3) that 
\begin{equation*}
\wt_{-w_{\circ}\lambda}(\sigma_{i}' \Rightarrow \sigma_{i-1}') = 
\wt_{\lambda}(\sigma_{i-1} \Rightarrow \sigma_{i}) \quad 
 \text{for all $1 \le i \le p$}. 
\end{equation*}
Therefore, we obtain
\begin{equation*}
\Deg_{-w_{\circ}\lambda}(\Pi(A)) = -
 \sum_{i=1}^{p} (1-b_{i}) 
   \wt_{\lambda}(\sigma_{i-1} \Rightarrow \sigma_{i}) = 
 -\mathrm{height}(A) \quad \text{by Theorem~\ref{energy-transl}}, 
\end{equation*}
which proves the first equality. For the second equality, observe
\begin{equation*}
\wt_{-w_{\circ}\lambda}(\sigma_{i}' \Rightarrow \sigma_{i-1}') = 
\wt_{\lambda}\bigl(\omega(\sigma_{i}') \Rightarrow \omega(\sigma_{i-1}')\bigr) \quad 
 \text{for all $1 \le i \le p$ by Lemma~\ref{lem:da}\,(3)}. 
\end{equation*}
Using this, we deduce that
\begin{align*}
\Deg_{-w_{\circ}\lambda}(\Pi(A)) 
& = - \sum_{i=1}^{p} (1-b_{i}) 
   \wt_{\lambda}\bigl(\omega(\sigma_{i}') \Rightarrow \omega(\sigma_{i-1}')\bigr) \\
& = \Deg_{\lambda}(\omega(\Pi(A))) \quad 
 \text{by Theorem~\ref{thm:deg-QLS} and \eqref{eq:se2}},
\end{align*}
as desired. Since $S \circ \Pi^{\ast} = \omega \circ \Pi$ by the definitions of these maps, 
the third equality follows. The last equality follows from Corollary~\ref{cor:taile} 
since $\ti{\Psi}=\Psi \circ \Pi^{\ast}$. 
\end{proof}

Based on Theorem \ref{mainconj}, cf. also the discussion following it, we have the strengthening of Corollary \ref{dllev} stated below. 

\begin{thm}\label{conjen}
Corollary {\rm \ref{dllev}} holds for ${\mathcal A}(\Gamma)$, where $\Gamma$ is an arbitrary $\lambda$-chain, 
with $\widetilde{\Psi}$ replaced with one of the isomorphisms in Theorem {\rm \ref{mainconj}}. 
\end{thm}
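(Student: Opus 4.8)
The plan is to reduce Theorem~\ref{conjen} to the already-established Corollary~\ref{dllev} (which concerns the lex $\lambda$-chain $\Gamma_{\rm lex}$) by transporting everything along the crystal isomorphism furnished by Theorem~\ref{mainconj}. First I would fix an arbitrary $\lambda$-chain $\Gamma$ and denote by $\Phi_{\Gamma,\Gamma_{\rm lex}} : {\mathcal A}(\Gamma) \to {\mathcal A}(\Gamma_{\rm lex})$ the bijection underlying Theorem~\ref{mainconj}; recall from the discussion following that theorem that it is weight-preserving, preserves the dual Demazure arrows, and — crucially — preserves the height statistic $\Ht$ of Definition~\ref{defht}. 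Let $\ti\Psi_\Gamma$ be the composite isomorphism from ${\mathcal A}(\Gamma)$ to the subgraph of $\BB$ consisting of dual Demazure arrows given by Theorem~\ref{mainconj} (so $\ti\Psi_{\Gamma_{\rm lex}} = \ti\Psi = \Psi \circ \Pi^\ast$). The point is that, modulo the identification $\Phi_{\Gamma,\Gamma_{\rm lex}}$, the quantum alcove model on $\Gamma$ and on $\Gamma_{\rm lex}$ carry literally the same data: vertices with the same weights and heights, and the same dual Demazure crystal structure.

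The key steps, in order, are as follows. (1)~Observe that for $A \in {\mathcal A}(\Gamma)$ one has $\Ht(A) = \Ht(\Phi_{\Gamma,\Gamma_{\rm lex}}(A))$, by the height-preserving property of the bijection in Theorem~\ref{mainconj}. (2)~Observe that $\ti\Psi_\Gamma(A)$ and $\ti\Psi(\Phi_{\Gamma,\Gamma_{\rm lex}}(A))$ agree as elements of $\BB$: indeed both isomorphisms in Theorem~\ref{mainconj} are constructed so that the triangle ${\mathcal A}(\Gamma) \to {\mathcal A}(\Gamma_{\rm lex}) \to \BB$ commutes, which is exactly how $\ti\Psi_\Gamma$ is defined (as $\ti\Psi \circ \Phi_{\Gamma,\Gamma_{\rm lex}}$). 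Alternatively, if one does not wish to assume this compatibility, one can invoke it in the perfect case from the connectedness of the dual Demazure subgraph (Remark~\ref{perfect}), but the cleaner route is simply that Theorem~\ref{mainconj}'s isomorphism for $\Gamma$ factors through the one for $\Gamma_{\rm lex}$. (3)~Now apply Corollary~\ref{dllev} to $\Phi_{\Gamma,\Gamma_{\rm lex}}(A) \in {\mathcal A}(\lambda) = {\mathcal A}(\Gamma_{\rm lex})$: this gives
\[
-\Ht(\Phi_{\Gamma,\Gamma_{\rm lex}}(A)) = D_{\BB^{\rev}}\bigl(S(\ti\Psi(\Phi_{\Gamma,\Gamma_{\rm lex}}(A)))\bigr) - D^{\ext}_{\BB^{\rev}},
\]
together with the intermediate equalities through $\Deg_{-w_\circ\lambda}(\Pi(\cdot))$, $\Deg_\lambda(\omega(\Pi(\cdot)))$, and $\Deg_\lambda(S(\Pi^\ast(\cdot)))$. (4)~Substitute the identities from steps (1) and (2) to rewrite the left-hand side as $-\Ht(A)$ and the argument on the right-hand side as $S(\ti\Psi_\Gamma(A))$, and similarly rewrite the intermediate terms by replacing $\Pi$, $\Pi^\ast$ with the corresponding forgetful maps attached to $\Gamma$ (which are $\Pi \circ \Phi_{\Gamma,\Gamma_{\rm lex}}$ and $\Pi^\ast \circ \Phi_{\Gamma,\Gamma_{\rm lex}}$ by construction). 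This yields precisely the statement of Corollary~\ref{dllev} with ${\mathcal A}(\lambda)$ replaced by ${\mathcal A}(\Gamma)$ and $\ti\Psi$ by $\ti\Psi_\Gamma$, i.e. Theorem~\ref{conjen}, and the commutative diagram of Corollary~\ref{dllev} extends by precomposition with $\Phi_{\Gamma,\Gamma_{\rm lex}}$.

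The main obstacle I anticipate is purely bookkeeping rather than substance: one must be careful that the bijection from Theorem~\ref{mainconj} (which is imported from \cite{LL2}) genuinely preserves the height statistic and not merely the weight and the dual Demazure crystal structure. The discussion following Theorem~\ref{mainconj} does assert this — the Yang–Baxter moves generalizing those of \cite{Le1} preserve ``the weights and heights of the vertices'' — so the argument goes through cleanly once that input is invoked. The only other point requiring care is that the forgetful map $\Pi$ and its variant $\Pi^\ast$ are defined in Section~\ref{S:forget} specifically for the lex chain; for a general $\Gamma$ one should take the composite $\Pi \circ \Phi_{\Gamma,\Gamma_{\rm lex}}$ as the definition of the corresponding forgetful map, and then the theorem is a formal consequence. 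No new continuous or root-theoretic arguments are needed.
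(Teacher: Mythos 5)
Your proposal is correct and is essentially the argument the paper has in mind: the paper gives no explicit proof of Theorem~\ref{conjen} beyond citing Theorem~\ref{mainconj} and the discussion around it, and that discussion supplies precisely the two ingredients you isolate — a weight- and height-preserving bijection ${\mathcal A}(\Gamma)\to{\mathcal A}(\Gamma_{\mathrm{lex}})$ and the fact that the isomorphism $\widetilde{\Psi}_\Gamma$ is defined by composing $\widetilde{\Psi}$ with it. Your formal transport argument then reduces everything to Corollary~\ref{dllev}, exactly as intended.
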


\begin{remark}
In \cite{LeS} the energy function in types $A$ and $C$ was realized in terms of a statistic in the model based on 
KN columns, which is known as charge. Furthermore, in \cite{Le} it was shown that this statistic is the translation of 
the height statistic via the bijections constructed there (also mentioned in Remark~\ref{perfectcase}~(3)), between 
the corresponding quantum alcove model and models based on KN columns. This should be compared with 
Corollary~\ref{dllev} and Theorem~\ref{conjen}, 
where the constant $D^{\ext}$ is $0$ in these cases.
\end{remark}

The following is due to Ion \cite[Theorem~4.2]{Ion} 
for the dual of an untwisted affine root system.

\begin{lem} \label{L:NSMac=Mac} 
For $\la$ dominant,
\begin{align}\label{E:E=Patt=0}
  P_\la(x;q,0) = E_{w_{\circ}\la}(x;q,0)
\end{align}
where $E_\mu$ is the nonsymmetric Macdonald polynomial \cite{machecke}.
\end{lem}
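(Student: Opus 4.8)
The plan is to deduce \eqref{E:E=Patt=0} from Ion's Theorem~4.2 in \cite{Ion}, using the standard Hecke-algebraic link between symmetric and nonsymmetric Macdonald polynomials to make the role of the $t=0$ specialization transparent. First I would recall that, for generic $q$ and $t$, the symmetric Macdonald polynomial is obtained from the nonsymmetric one at the antidominant weight $w_{\circ}\lambda$ by Hecke symmetrization: the Laurent polynomial $\sum_{w\in W}T_w\,E_{w_{\circ}\lambda}(x;q,t)$ is $W$-invariant and a nonzero scalar multiple of $P_\lambda(x;q,t)$, where the $T_w$ are the Demazure--Lusztig operators on $\BC[X]$ normalized by $(T_i-t)(T_i+1)=0$, and where $w_{\circ}\lambda$ is the correct choice of normalization because $\pair{\alpha_i^\vee}{w_{\circ}\lambda}\le 0$ for all $i\in I$. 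Equivalently, $P_\lambda(x;q,t)$ is the unique scalar multiple of $\sum_{w\in W}T_w\,E_{w_{\circ}\lambda}(x;q,t)$ whose coefficient of $x^\lambda$ equals $1$; this is classical, see e.g.~\cite{machecke}, and the expression involved has coefficients that are regular at $t=0$.

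Next I would specialize $t\to 0$. Ion's Theorem~4.2 applies precisely in the setting of this paper --- untwisted affine type, equivalently $\alpha_0$ short for the dual root system, which is exactly the regime in which $P_\lambda(x;q,0)$ is the $q$-Whittaker specialization, cf.\ the Introduction --- and its content is that $E_{w_{\circ}\lambda}(x;q,0)$ is a Demazure character which, for the antidominant weight $w_{\circ}\lambda$, is invariant under the finite Weyl group $W$. Since each Demazure--Lusztig operator $T_i$ acts on any $s_i$-invariant, hence on any $W$-invariant, Laurent polynomial $f$ by $T_i f = t f$, we get $\sum_{w\in W}T_w\,f = \bigl(\sum_{w\in W}t^{\ell(w)}\bigr) f$, whose $t=0$ specialization is $f$ itself. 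Applying this to $f=E_{w_{\circ}\lambda}(x;q,0)$, and using that the coefficient of $x^\lambda$ in this $W$-invariant polynomial is $1$ (the coefficient of $x^{w_{\circ}\lambda}$ being $1$ by definition of $E$, and $\lambda\in W(w_{\circ}\lambda)$), the $t=0$ specialization of the proportionality from the previous paragraph reads exactly $P_\lambda(x;q,0)=E_{w_{\circ}\lambda}(x;q,0)$.

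The step I expect to demand the most care is not this computation but the translation of conventions between \cite{Ion} and the present paper: Ion works with the dual of an untwisted affine root system, so his parameters $(q,t)$ and his variables $x$ may be related to ours by inversions, and his statement is framed in terms of Demazure modules rather than directly asserting the $W$-invariance of $E_{w_{\circ}\lambda}(x;q,0)$ that is used above; one has to check that the Demazure module attached to $w_{\circ}\lambda$ is $W$-stable and that its character is normalized so that the coefficient of $x^\lambda$ is $1$, matching $P_\lambda(x;q,0)$. This is the very framework underlying Ion's observation recalled in the Introduction, so beyond this bookkeeping no new ingredient is required.
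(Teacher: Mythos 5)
Your skeleton — Hecke-symmetrize $E_{w_\circ\lambda}$, use $T_i f = t f$ on $W$-invariant $f$ so that $\sum_{w\in W} T_w f \to f$ at $t=0$, then match the coefficient of $x^\lambda$ — is sound, but the pivotal input, namely that $E_{w_\circ\lambda}(x;q,0)$ is $W$-invariant, you obtain by citing Ion's Theorem~4.2, and this creates two problems. First, the paper states that Ion proves this precisely ``for the dual of an untwisted affine root system'' ($\alpha_0$ short), whereas the present paper works with the untwisted types themselves; your argument therefore does not cover the cases for which Lemma~\ref{L:NSMac=Mac} is actually used here, which is the very reason the paper supplies its own proof. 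Second, within its range of validity Ion's Theorem~4.2 is essentially the statement of Lemma~\ref{L:NSMac=Mac}, so invoking it is closer to a citation than to a proof. What you would need, to be both non-circular and type-uniform, is an independent reason that $E_{w_\circ\lambda}(x;q,0)$ is $W$-invariant and that the symmetrization scalar $c(q,t)$ is regular and equal to $1$ at $t=0$; you defer both of these to ``bookkeeping,'' but they are in fact the entire mathematical content.

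The paper's proof resolves exactly this by avoiding Ion altogether: it starts from Macdonald's expansion $P_\lambda=\sum_{\mu\in W\lambda}\xi_\mu E_\mu$ from \cite[(5.7.8)]{machecke}, writes $\xi_\mu$ as a product over the inversion set $\{\beta_k\}$ of the minimal-length $v(\mu)\in W$ with $v(\mu)\mu$ antidominant, and observes that for $\mu\ne w_\circ\lambda$ every factor tends to $0$ at $t=0$, since the numerator $t\,q^{-\langle\beta_k^\vee,\mu\rangle}-t^{-\langle v(\mu)\beta_k^\vee,\rho\rangle}$ carries strictly positive powers of $t$ (using $\langle\beta_k^\vee,\mu\rangle>0$ and $\langle v(\mu)\beta_k^\vee,\rho\rangle<0$), while the denominator has a nonzero limit; the empty product $\xi_{w_\circ\lambda}=1$ alone survives. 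This short, self-contained computation is uniform across affine root systems and, as a byproduct, delivers the $W$-invariance of $E_{w_\circ\lambda}(x;q,0)$ rather than assuming it.
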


\begin{proof} 
Applying~\cite[(5.7.8)]{machecke} and its notation, at $t = 0$ we have $\xi_{\mu} \rightarrow 0$ if $\mu$ is not the unique antidominant 
element $w_{\circ} \lambda$ in the finite Weyl group orbit of $\lambda$. Indeed, letting $v(\mu) = r_{i_1} r_{i_2} \cdots r_{i_p}$ 
be a reduced expression of the shortest element $v(\mu)$ in the finite Weyl group such that $v(\mu) \mu$ is antidominant, we obtain
\begin{equation*}
	\xi_{\mu} = \prod_{k=1}^{p} \frac{t q^{-\langle \beta_{k}^{\vee}, \mu \rangle} - t^{-\langle v(\mu) \beta_{k}^{\vee}, \rho\rangle}}
	{q^{-\langle \beta_{k}^{\vee}, \mu \rangle} - t^{-\langle v(\mu) \beta_{k}^{\vee}, \rho\rangle}},
\end{equation*}
where $\beta_{k} := r_{i_p} \cdots r_{i_{k+1}} \alpha_{i_k}$ for $1 \leq k \leq p$; here we note that $\langle \beta_{k}^{\vee}, \mu \rangle > 0$ 
and $\langle v(\mu) \beta_{k}^{\vee}, \rho \rangle < 0$ for all $1 \leq k \leq p$ since the elements $\beta_{k}$, $1 \leq k \leq p$, comprise 
the inversion set for $v(\mu)$. 
\end{proof}

We now recall the specialization of the Ram-Yip formula \cite{RY} 
for the nonsymmetric Macdonald polynomial $E_{w_{\circ}\lambda}(x;q,t)$ at $t=0$, 
which was worked out by Orr-Shimozono \cite{OS}. 
Let us consider a reduced alcove path 
\begin{equation*}
\Gamma:=\bigl(
A_{\circ}=A_{0} \edge{-\beta_1} A_{1} \edge{-\beta_2} \cdots 
 \edge{-\beta_m} A_{m} = A_{\circ}+w_{\circ}\lambda \bigr), 
\end{equation*}
where $(\beta_1,\,\beta_2,\,\dots,\,\beta_m)$ is the corresponding 
$(-w_{\circ}\lambda)$-chain of roots, and let $H_{\beta_i,-l_{i}}$ denote 
the hyperplane separating $A_{i-1}$ and $A_{i}$ for $1 \le i \le m$. 
Then, an admissible subset $A=\bigl\{j_{1} < \cdots < j_{s}\bigr\} \in \CA(\Gamma)$
can be interpreted as a ``folding'' of the alcove path $\Gamma$ along the hyperplanes 
$H_{\beta_i,-l_{i}}$, where $i$ ranges over $A$. 
With this notation, the Orr-Shimozono formula for the specialization 
$E_{w_{\circ}\lambda}(x;q,0)$ can be stated as follows. 
%
%
\begin{prop}[{\cite[Corollary~4.4]{OS}}] \label{prop:OS44}
For $\la$ dominant,
\begin{equation} \label{ryt0}
  E_{w_{\circ}\lambda}(x;q,0)=
  \sum_{A \in \CA(\Gamma)} q^{\Ht(A)}x^{-\wt(A)};
\end{equation}
here, for an admissible subset
$A=\bigl\{j_1 < \cdots < j_s \bigr\} \in \CA(\Gamma)$, 
\begin{align*}
\Ht(A) & = \sum_{j \in A^{-}} 
  \bigl( -l_{j} + \pair{\beta_{j}^{\vee}}{-w_{\circ}\lambda} \bigr), \\[1.5mm]
-\wt(A) & = 
 r_{\beta_{j_1},-l_{j_1}} \cdots r_{\beta_{j_s},-l_{j_s}}(w_{\circ}\lambda).
\end{align*}
\end{prop}

\begin{proof}[Sketch of proof]
Let $t_{w_{\circ}\lambda}=r_{i_{1}} \cdots r_{i_{m}}\pi$ be the reduced expression 
(in the extended affine Weyl group) corresponding to $\Gamma$, where $\pi$ is an 
element of length zero. Rewriting $t_{w_{\circ}\lambda}$ as 
$\pi r_{i_{1}'} \cdots r_{i_{m}'}$, we set
\begin{equation*}
\ha{\beta}_{j}:= r_{i_{m}'} \cdots r_{i_{j+1}'} \alpha_{i_j'}, \quad 
1 \le j \le m,
\end{equation*}
and define a sequence of alcoves
\begin{equation*}
\Gamma':=(A_{0}' \rightarrow A_{1}' \rightarrow \cdots \rightarrow A_{m}'=A_{\circ})
\end{equation*}
in such a way that the $\ha{\beta}_{j}$ corresponds to the hyperplane
separating $A_{j-1}'$ and $A_{j}'$ for $1 \le j \le m$. Then, 
by applying the identity
\begin{equation*}
r_{i_1} \cdots r_{i_j} 
 = t_{w_{\circ}\lambda}\pi^{-1}r_{i_m} \cdots r_{i_{j+1}}
 = t_{w_{\circ}\lambda} r_{i_m'} \cdots r_{i_{j+1}'} \pi^{-1}
\end{equation*}
to $A_{\circ}$, we obtain 
\begin{equation*}
A_{j} = A_{j}' + w_{\circ}\lambda \quad 
 \text{for all $1 \le j \le m$}; 
\end{equation*}
this explains why the exponent of $q$ in 
$E_{w_{\circ}\lambda}(x;q,0)$, as given in \cite[Corollary~4.4]{OS}, 
can be written as desired, Also, it is not hard to 
see that the exponent of $x$ in $E_{w_{\circ}\lambda}(x;q,0)$ 
can be written as $r_{\beta_{j_1},-l_{j_1}} \cdots r_{\beta_{j_s},-l_{j_s}}(w_{\circ}\lambda)$.
\end{proof}
%
%
\begin{thm} \label{P:Macalcove}
For $\la$ dominant,
\begin{equation} \label{ryt0}
  P_\lambda(x;q,0)=
  \sum_{\eta \in \QLS(\lambda)} q^{-\Deg(\eta)} x^{\wt(\eta)} = 
  \sum_{A\in{\mathcal A}(\lambda)} q^{\Ht(A)}x^{\wt(A)}\,.
\end{equation}
\end{thm}

\begin{proof} 
For simplicity of notation, we set $\mu:=-w_{\circ}\lambda=\omega(\lambda)$, 
where $\omega$ is the Dynkin diagram automorphism given by 
$-w_{\circ}\alpha_{j} = \alpha_{\omega(j)}$ for $j \in I$.
By Lemma \ref{L:NSMac=Mac} and Proposition~\ref{prop:OS44}, 
we obtain
\begin{equation*}
P_\la(x;q,0) = 
 \sum_{A \in \A(\mu)} 
 q^{\Ht(A)}x^{-\wt(A)}.
\end{equation*}
Moreover, by Proposition~\ref{proposition.bijqls} applied to $\mu=-w_{\circ}\lambda$ 
and the first equality of Corollary~\ref{dllev}, we deduce that 
\begin{equation*}
\sum_{A \in \A(\mu)} 
 q^{\Ht(A)}x^{-\wt(A)} = 
\sum_{\eta \in \QLS(\lambda)} 
 q^{-\Deg_{\lambda}(\eta)}x^{\wt(\eta)},
\end{equation*}
which proves the first equality of \eqref{ryt0}. 

Now, we have
\begin{align*}
P_\la(x;q,0) & = 
\sum_{\eta \in \QLS(\lambda)} 
 q^{-\Deg_{\lambda}(\eta)}x^{\wt(\eta)} =
\sum_{\eta \in \QLS(\lambda)} 
 q^{-\Deg_{\lambda}(S(\eta))}x^{\wt(S(\eta))} \\
& = \sum_{\eta \in \QLS(\lambda)} 
 q^{-\Deg_{\lambda}(S(\eta))}x^{w_{0}\wt(\eta)} 
  = \sum_{\eta \in \QLS(\lambda)} 
 q^{-\Deg_{\lambda}(S(\eta))}x^{\wt(\eta)};
\end{align*}
the last equality follows from the fact 
that $P_\la(x;q,t)$ is symmetric, i.e., 
invariant under the action of the Weyl group $W$, in the variable $x$. 
Because $\Pi^{\ast}:\A(\lambda) \rightarrow \QLS(\lambda)$ 
is a weight-preserving bijection, it follows that 
\begin{equation*}
\sum_{\eta \in \QLS(\lambda)} 
 q^{-\Deg_{\lambda}(S(\eta))}x^{\wt(\eta)} = 
\sum_{A \in \CA(\lambda)} 
 q^{-\Deg_{\lambda}(S(\Pi^{\ast}(A)))}x^{\wt(A)}.
\end{equation*}
Also, by Corollary~\ref{dllev}, we get
\begin{equation*}
\sum_{A \in \CA(\lambda)} 
 q^{-\Deg_{\lambda}(S(\Pi^{\ast}(A)))}x^{\wt(A)} = 
\sum_{A \in \CA(\lambda)} 
 q^{\Ht(A)}x^{\wt(A)},
\end{equation*}
which proves the second equality of \eqref{ryt0}. 
\end{proof}

\begin{remark} \label{R:anychain}\mbox{}
The formula \eqref{ryt0} holds for any $\lambda$-chain.
\end{remark}

Now define the graded character corresponding to 
the KR crystal $\BB$ (see for example~\cite{HKOTT,HKOTY}) by
\begin{equation} \label{defx}
	X_\lambda(x;q):=\sum_{b \in \BB}q^{D_{\BB}(b)-D^{\ext}_{\BB}}x^{\wt(b)},
\end{equation}
where $\wt(b)$ is the weight of the crystal element $b$.
From Theorems~\ref{thm:ns-deg} and \ref{P:Macalcove}, 
we immediately derive our main result.

\begin{cor} \label{peqx} 
We have
\[
	P_\lambda(x;q^{-1},0)=X_\lambda(x;q)\,.
\]
\end{cor}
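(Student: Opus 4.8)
The plan is to assemble the final corollary $P_\lambda(x;q^{-1},0) = X_\lambda(x;q)$ from two results already in hand: the combinatorial formula for the Macdonald polynomial specialization in Proposition~\ref{P:Macalcove}, and the Naito--Sagaki identification of the degree function with the energy function in Theorem~\ref{thm:ns-deg}. Essentially everything has been done; what remains is to chain the equalities together and keep track of the reversal $q \leftrightarrow q^{-1}$.

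\begin{proof}[Proof of Corollary~{\rm\ref{peqx}}]
By Proposition~\ref{P:Macalcove}, we have
\begin{equation*}
P_\lambda(x;q,0) = \sum_{\eta \in \QLS(\lambda)} q^{-\Deg(\eta)} x^{\wt(\eta)},
\end{equation*}
and hence, replacing $q$ by $q^{-1}$,
\begin{equation*}
P_\lambda(x;q^{-1},0) = \sum_{\eta \in \QLS(\lambda)} q^{\Deg(\eta)} x^{\wt(\eta)}.
\end{equation*}
Now recall from Section~\ref{subsec:deg-ene} the crystal isomorphism $\Psi : \BB(\lambda)_{\cl} \stackrel{\sim}{\rightarrow} \BB$, and recall from Theorem~\ref{thm:LS=QLS} that $\QLS(\lambda) = \BB(\lambda)_{\cl}$ as sets, so that $\Psi$ may be regarded as a weight-preserving bijection $\QLS(\lambda) \stackrel{\sim}{\rightarrow} \BB$. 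By Theorem~\ref{thm:ns-deg},
\begin{equation*}
\Deg(\eta) = D_{\BB}(\Psi(\eta)) - D^{\ext}_{\BB} \qquad \text{for every } \eta \in \QLS(\lambda).
\end{equation*}
Therefore, substituting $b = \Psi(\eta)$ and using that $\Psi$ preserves weights,
\begin{equation*}
P_\lambda(x;q^{-1},0) = \sum_{\eta \in \QLS(\lambda)} q^{\Deg(\eta)} x^{\wt(\eta)}
 = \sum_{b \in \BB} q^{D_{\BB}(b) - D^{\ext}_{\BB}} x^{\wt(b)} = X_\lambda(x;q),
\end{equation*}
where the last equality is the definition~\eqref{defx} of the graded character $X_\lambda(x;q)$. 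This completes the proof.
\end{proof}

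\noindent\textbf{The main obstacle} is really just bookkeeping: one must be careful about the direction of the exponent of $q$. The formula~\eqref{ryt0} is written with $q^{-\Deg(\eta)}$ (with $\Deg(\eta) \le 0$, so the exponent is non-negative), whereas $X_\lambda$ is defined with $q^{D_{\BB}(b) - D^{\ext}_{\BB}}$; since $\Deg(\eta) = D_{\BB}(\Psi(\eta)) - D^{\ext}_{\BB}$ by Theorem~\ref{thm:ns-deg}, matching the two forces the substitution $q \mapsto q^{-1}$ on the Macdonald side, which is exactly why the statement reads $P_\lambda(x;q^{-1},0)$ rather than $P_\lambda(x;q,0)$. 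Everything else --- that $\Psi$ is a weight-preserving bijection, that $\QLS(\lambda) = \BB(\lambda)_{\cl}$, and that the Ram--Yip specialization yields the stated sum over $\QLS(\lambda)$ --- has already been established in Theorems~\ref{thm:LScl}, \ref{thm:LS=QLS}, \ref{thm:ns-deg} and Proposition~\ref{P:Macalcove}, so no further genuine work is needed.
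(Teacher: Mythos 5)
Your proof is correct and takes essentially the same route as the paper, which derives the corollary directly from Theorem~\ref{thm:ns-deg} and Proposition~\ref{P:Macalcove}; you simply write out the chain of equalities, including the weight-preservation of the crystal isomorphism $\Psi$ and the identification $\QLS(\lambda)=\BB(\lambda)_{\cl}$, which the paper leaves implicit.
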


\begin{remark} 
In type $A$, the Macdonald polynomial at $t=0$ can be expanded in terms of Schur functions
with Kostka-Foulkes polynomials as the transition matrix~\cite[Chapter III.6]{macsfh}. These in turn can be expressed
as one-dimensional configuration sums $X$~\cite{NY}, which implies the $P=X$ result in type $A$. In all simply-laced types it was 
known by combining the results in \cite{Ion} and \cite{FL1}, which equate a certain affine Demazure 
character with $P$ and $X$, respectively. It was also known in type $C$ by~\cite{Le,LeS}.
\end{remark}

%
\section{Proofs of the lemmas in Sections \ref{S:forget}, \ref{S:invmap}, and \ref{S:energy}}
\label{sec:proofs of lemmas}

%
\subsection{Proof of Lemma {\rm \ref{q2ls}}}\label{sq2ls}

In the proof of this lemma, a dotted (resp., plain) edge represents a quantum (resp., Bruhat) edge
in $\QB(W)$ or $\QB(W^J)$, while a dashed edge can be of both types. Define $\beta\in\Phiafp$ by
\begin{equation}\label{ahbh}
	\beta:=\casetwo{w\gamma}{w\gamma\in\Phi^+}
	{\delta+w\gamma}{w\gamma\in\Phi^-}
\end{equation}
As in the proof of one of the main results in \cite{LNSSS}, namely Theorem 6.5 (more precisely, the 
converse statement), we proceed by induction on the height of $\beta$ (i.e., the sum of the coefficients 
in its expansion in the basis of affine simple roots). The base case, when $\beta$ is an affine simple 
root, is treated in the following lemma.

\begin{lem}
\label{bc16} 
In $\QB_{b\lambda}(W)$ we have an edge $w\xrightarrow{w^{-1}\alpha}r_\alpha w$ for a finite simple root 
$\alpha$ with $w^{-1}\alpha\not\in \Phi_J$ 
(resp. $\begin{diagram}\dgARROWLENGTH=3em\node{w} \arrow{e,t,..}{-w^{-1} \theta}\node{r_\theta w,}\end{diagram}$
where $w^{-1}\theta\not\in \Phi_J$) if and only if in $\QB_{b\lambda}(W^J)$ we have 
$\lfloor w\rfloor\xrightarrow{\lfloor w\rfloor^{-1}\alpha}r_\alpha \lfloor w\rfloor$ 
(resp. 
$\begin{diagram}\dgARROWLENGTH=3em\node{\lfloor w\rfloor} \arrow{e,t,..}{-\lfloor w\rfloor^{-1}\theta}
\node{\lfloor r_\theta w\rfloor}\end{diagram}$). 
\end{lem}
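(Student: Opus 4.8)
The plan is to separate the $b$-decoration from the underlying edge statement at the outset. Write $w=\lfloor w\rfloor u$ with $u\in W_J$. In the Bruhat case the edge label on the left is $\gamma:=w^{-1}\alpha\in\Phi^+$ and on the right it is $\gamma':=\lfloor w\rfloor^{-1}\alpha$, so $\gamma=u^{-1}\gamma'$; in the quantum case $\gamma:=-w^{-1}\theta=u^{-1}(-\lfloor w\rfloor^{-1}\theta)=u^{-1}\gamma'$. The hypothesis $w^{-1}\alpha\notin\Phi_J$ (resp.\ $w^{-1}\theta\notin\Phi_J$) is equivalent to $\gamma'\notin\Phi_J$, and since $W_J$ permutes $\Phi^+\setminus\Phi_J^+$ we get $|\gamma|=u^{-1}|\gamma'|$, whence $\langle|\gamma|^\vee,\lambda\rangle=\langle|\gamma'|^\vee,u\lambda\rangle=\langle|\gamma'|^\vee,\lambda\rangle$ because $W_J$ is the stabilizer of $\lambda$. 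So $b\langle|\gamma|^\vee,\lambda\rangle\in\BZ$ if and only if $b\langle|\gamma'|^\vee,\lambda\rangle\in\BZ$, i.e.\ the $b$-condition of \eqref{bbord} transfers verbatim between the two sides; it therefore suffices to prove the lemma with $\QB(W)$ and $\QB(W^J)$ in place of their $b$-decorated subgraphs.

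For this undecorated statement I will use that the target vertex is $wr_\gamma=s_j w$, where $s_j=r_\alpha$ in the Bruhat case ($\beta=\alpha$ a finite simple root) and $s_j=r_\theta$ in the quantum case ($\beta=\alpha_0$, so $s_0=r_\theta$ as in \eqref{tialpha}), and that $\lfloor wr_\gamma\rfloor=\lfloor s_j\lfloor w\rfloor\rfloor$ since $w$ and $\lfloor w\rfloor$ lie in the same right $W_J$-coset. In the Bruhat case, $\lfloor w\rfloor^{-1}\alpha\notin\Phi_J$ forces $r_\alpha\lfloor w\rfloor\in W^J$ (a standard fact), so the right-hand edge is $\lfloor w\rfloor\to r_\alpha\lfloor w\rfloor$ with defining condition $\ell(r_\alpha\lfloor w\rfloor)=\ell(\lfloor w\rfloor)+1$; combining $r_\alpha w=(r_\alpha\lfloor w\rfloor)u$ with the length-additivity $\ell(xv)=\ell(x)+\ell(v)$ for $x\in W^J$, $v\in W_J$, gives $\ell(r_\alpha w)-\ell(w)=\ell(r_\alpha\lfloor w\rfloor)-\ell(\lfloor w\rfloor)$, so the two Bruhat-edge conditions are equivalent. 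In the quantum case $r_\theta\lfloor w\rfloor$ need not be in $W^J$, so the right-hand target is genuinely $\lfloor r_\theta\lfloor w\rfloor\rfloor$; here I will invoke the length/coroot computation from the converse direction of the proof of \cite[Theorem 6.5]{LNSSS}, specialized to $\beta=\alpha_0$, which relates $\ell(r_\theta w)-\ell(w)$ and $\langle|\gamma|^\vee,2\rho\rangle$ (the data of a quantum edge of $\QB(W)$) to $\ell(\lfloor r_\theta\lfloor w\rfloor\rfloor)-\ell(\lfloor w\rfloor)$ and $\langle|\gamma'|^\vee,2\rho-2\rho_J\rangle$ (the data entering \eqref{E:projqarrow} for a quantum edge of $\QB(W^J)$). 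Putting the two cases together yields the equivalence.

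The main obstacle is this quantum case: reconciling the plain length condition of a quantum edge of $\QB(W)$ with the $\rho_J$-shifted condition \eqref{E:projqarrow} across the nontrivial projection $r_\theta\lfloor w\rfloor\mapsto\lfloor r_\theta\lfloor w\rfloor\rfloor$. I expect the cleanest route is to reuse the combinatorics of \cite[Theorem 6.5]{LNSSS} rather than redo it, checking only that the hypothesis $w^{-1}\theta\notin\Phi_J$ is exactly what that argument requires (in particular it guarantees $\gamma'\in\Phi^+\setminus\Phi_J^+$ whenever $\gamma\in\Phi^+$, again via $W_J$ permuting $\Phi^+\setminus\Phi_J^+$). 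One minor point to record along the way: since $\gamma\notin\Phi_J$ we have $r_\gamma\notin W_J$, hence $\lfloor wr_\gamma\rfloor\ne\lfloor w\rfloor$, so both sides are genuine edges and no degenerate (length-$0$) behaviour occurs in this base case, in contrast with the inductive step of Lemma~\ref{q2ls}.
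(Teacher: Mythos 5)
Your separation of the $b$-decoration from the underlying edge statement is correct and matches the paper's own reduction: since $u\in W_J$ stabilizes $\lambda$, the integrality condition transfers verbatim via $b\pair{w^{-1}\ti\alpha^\vee}{\lambda}=b\pair{\lfloor w\rfloor^{-1}\ti\alpha^\vee}{\lambda}$. Your treatment of the Bruhat case is also correct: the claim that $\lfloor w\rfloor^{-1}\alpha\notin\Phi_J$ forces $r_\alpha\lfloor w\rfloor\in W^J$ holds (if $\alpha$ is simple and $x\in W^J$, then $r_\alpha x\notin W^J$ iff $x^{-1}\alpha=\alpha_j$ for some $j\in J$, and $x^{-1}\alpha\notin\Phi_J$ rules that out), and the length-additivity argument then transfers the cover condition cleanly.

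However, the quantum case is a genuine gap, and you have effectively conceded this yourself. You write that you will ``invoke the length/coroot computation from the converse direction of the proof of [Theorem 6.5, LNSSS],'' but you never verify that such a computation is actually available there in a form that does what you need, and you close with ``I expect the cleanest route is to reuse\ldots'' rather than an argument. Worse, Theorem 6.5 of [LNSSS] is not the tool the paper uses here — that theorem concerns the correspondence between edges of $\QB(W^J)$ and covers in the level-zero weight poset (an affine Bruhat-order statement), not the comparison between $\QB(W)$ and $\QB(W^J)$. What the paper actually cites is the ``trichotomy of cosets'' in [LNSSS, Propositions 5.10 and 5.11], which gives, in one stroke for both the Bruhat and the quantum case, the clean criterion: the edge in $\QB(W)$ (resp.\ $\QB(W^J)$) exists iff $w^{-1}\ti\alpha\in\Phi^{\pm}\setminus\Phi_J^{\pm}$ (resp.\ $\lfloor w\rfloor^{-1}\ti\alpha\in\Phi^{\pm}\setminus\Phi_J^{\pm}$), with sign matched to edge type. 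Since $\lfloor w\rfloor^{-1}=w_Jw^{-1}$ and $W_J$ permutes $\Phi^\pm\setminus\Phi_J^\pm$, the two criteria coincide, and the lemma follows immediately — no separate length/$\rho_J$ bookkeeping is needed. The difficulty you identify, reconciling the plain $\pair{\gamma^\vee}{2\rho}$ condition in $W$ with the $\pair{\gamma'^\vee}{2\rho-2\rho_J}$ condition in $W^J$ across a nontrivial $\lfloor\cdot\rfloor$, is precisely what the trichotomy result packages away; your plan attempts to reopen that computation without supplying it, which is where the proof breaks off.
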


\begin{proof}
Let us first ignore the parameter $b$ (or just assume $b=0$). By the trichotomy of cosets 
in~\cite[Propositions 5.10 and 5.11]{LNSSS}, there is a simple way to test whether we have the 
mentioned edges in $\QB(W)$, namely $w^{-1} \ti{\alpha} \in \Phi^\pm \setminus \Phi_J^\pm$, where 
$\ti{\alpha}$ is the simple root $\alpha$ or $-\theta$, respectively; similarly for the mentioned edges 
in $\QB(W^J)$, with $w$ replaced by $\lfloor w \rfloor$. The proof is completed by noting that
\[
	w^{-1} \ti{\alpha}\in\Phi^\pm\setminus\Phi_J^\pm\;\;\;\Leftrightarrow\;\;\;\lfloor w\rfloor^{-1} \ti{\alpha}\in\Phi^\pm\setminus\Phi_J^\pm\,,
\]
where $\ti{\alpha}$ can be any root, in fact; indeed, writing $w=\lfloor w\rfloor w_J$, we have 
$\lfloor w\rfloor^{-1}=w_J w^{-1}$, and we know that the elements of $W_J$ permute 
$\Phi^\pm\setminus\Phi_J^\pm$. For an arbitrary $b$ (and $\ti{\alpha}$), we observe that 
\[
	b\langle w^{-1}\ti{\alpha}^\vee, \lambda \rangle=b\langle \ti{\alpha}^\vee, w\lambda \rangle
	=b\langle \ti{\alpha}^\vee, \lfloor w\rfloor \lambda \rangle=b\langle \lfloor w\rfloor^{-1}\ti{\alpha}^\vee, \lambda\rangle\,.
\]
\end{proof}

We need the following result from \cite{LNSSS}, which we recall. 

\begin{lem}[{\cite[Lemma 6.10]{LNSSS}}]
\label{lem.covers.PQBG}
Let $w\in W$, and let $\gamma\in\Phi^+\setminus\Phi^+_J$. Define $\beta\in\Phiafp$ as in \eqref{ahbh}.
There exists an affine simple root $\alpha$ (in fact, $\alpha\ne\alpha_0$ if $w\gamma\in\Phi^+$)  
such that $\langle \alpha^\vee, \beta \rangle>0$, 
and we have the edge in $\QB(W^J)$ indicated either in case {\rm (1)} or {\rm (2)} below, where 
$z$ is defined by 
$r_\theta \lfloor w r_\gamma\rfloor 
	= \lfloor r_\theta \lfloor w r_\gamma \rfloor \rfloor z
	= \lfloor r_\theta w r_\gamma \rfloor z$:
\[
 	\text{\rm{(1)}} \;\; \casetwoc{\begin{diagram}\node{\lfloor w\rfloor} \arrow{e,t}{\lfloor w\rfloor^{-1}\alpha}\node{r_\alpha \lfloor w\rfloor}\end{diagram}}{\alpha\ne\alpha_0}
	{\begin{diagram}\dgARROWLENGTH=3em\node{\lfloor w\rfloor} \arrow{e,t,..}{-\lfloor w\rfloor^{-1}\theta}\node{\lfloor r_\theta w \rfloor}\end{diagram}}{\alpha=\alpha_0} \;\;\; 
 	\text{\rm{(2)}} \;\; \casetwo{\begin{diagram}\dgARROWLENGTH=4.5em\node{r_\alpha \lfloor w r_\gamma\rfloor} \arrow{e,t}{-\lfloor w r_\gamma\rfloor^{-1}\alpha}
	\node{\lfloor w r_\gamma\rfloor}\end{diagram}}{\alpha\ne\alpha_0}{\begin{diagram}\dgARROWLENGTH=5em\node{\lfloor r_\theta w r_\gamma\rfloor} \arrow{e,t,..}
	{z\lfloor w r_\gamma\rfloor^{-1}\theta}\node{\lfloor w r_\gamma\rfloor}\end{diagram}}{\alpha=\alpha_0}
\]
\end{lem}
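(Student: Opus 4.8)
This statement is \cite[Lemma~6.10]{LNSSS}, recalled here; the plan I would follow is a direct case analysis realizing the lemma as a ``peel off one affine simple reflection from $\beta$'' statement, with Lemma~\ref{bc16} (the case where the root being peeled is itself simple) and the coset trichotomy of \cite[Propositions~5.10 and 5.11]{LNSSS} as the workhorses. No induction on the height of $\beta$ is needed here; this lemma is rather the engine that makes such inductions run (e.g.\ in the proof of Lemma~\ref{q2ls}).

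First I would produce $\alpha$. Since $\beta\in\Phiafp$ is a positive real root, the standard invariant form gives $(\beta,\beta)>0$; writing $\beta=\sum_{j\in I_\af}n_j\alpha_j$ with all $n_j\ge 0$ and not all zero, $0<(\beta,\beta)=\sum_j n_j(\alpha_j,\beta)$ forces $\langle\alpha_j^\vee,\beta\rangle>0$ for some $j$. When $w\gamma\in\Phi^+$, so that $\beta=w\gamma$ is supported on the finite simple roots, one has $\langle\alpha_0^\vee,\beta\rangle=\sum_{i\in I}n_i a_{0i}\le 0$ because $a_{0i}\le 0$ for $i\in I$; hence $\alpha$ can be chosen finite. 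Fix such an $\alpha$, write $\tilde\alpha=\alpha$ if $\alpha\ne\alpha_0$ and $\tilde\alpha=-\theta$ if $\alpha=\alpha_0$ (so that $r_\alpha$ reflects in $\tilde\alpha$, and $r_0=t_{\theta^\vee}r_\theta$ on $W_\af=W\ltimes Q^\vee$), and record the uniform consequence $\langle(w^{-1}\tilde\alpha)^\vee,\gamma\rangle=\langle\tilde\alpha^\vee,w\gamma\rangle>0$ (for $\alpha=\alpha_0$ this reads $-\langle\theta^\vee,w\gamma\rangle>0$, which is exactly $\langle\alpha_0^\vee,\delta+w\gamma\rangle>0$).

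Next I would set up the dichotomy between cases (1) and (2). The reflecting hyperplane of $r_\alpha$ is $H_{\tilde\alpha,0}$ when $\alpha\ne\alpha_0$ and $H_{\theta,1}$ when $\alpha=\alpha_0$; heuristically, $\langle\alpha^\vee,\beta\rangle>0$ says this hyperplane cuts across the segment whose direction records $\beta$, and I would split on which end of that segment lies on its positive side. In one alternative $w^{-1}\tilde\alpha\in\Phi^+\setminus\Phi_J^+$; then Lemma~\ref{bc16} applied at $w$ (with $b=0$) delivers exactly the edge of case~(1) out of $\lfloor w\rfloor$ in $\QB(W^J)$ --- a Bruhat (plain) edge when $\alpha\ne\alpha_0$ and a quantum (dotted) edge when $\alpha=\alpha_0$, the latter because of the translation part of $r_0$. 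In the other alternative $(wr_\gamma)^{-1}\tilde\alpha=r_\gamma(w^{-1}\tilde\alpha)\in\Phi^-\setminus\Phi_J^-$; one checks that then $r_\alpha\lfloor wr_\gamma\rfloor\in W^J$, and Lemma~\ref{bc16} applied at $r_\alpha wr_\gamma$ (using $r_\alpha(r_\alpha wr_\gamma)=wr_\gamma$) delivers the edge of case~(2) into $\lfloor wr_\gamma\rfloor$. The edge types and the precise $\Phi_J^\pm$-non-membership of the labels are then read off by locating the relevant root in the appropriate one of the three alternatives of the trichotomy \cite[Propositions~5.10, 5.11]{LNSSS}.

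The two points where I expect the real work to lie are the following. First, showing the dichotomy is exhaustive --- i.e.\ that $\langle\alpha^\vee,\beta\rangle>0$ rules out the configuration in which neither $w^{-1}\tilde\alpha$ nor $(wr_\gamma)^{-1}\tilde\alpha$ has the required sign-and-non-parabolic property --- which requires a careful comparison of $w^{-1}\tilde\alpha$ with $r_\gamma(w^{-1}\tilde\alpha)$, these differing by a multiple of $\gamma$, together with $\langle\gamma^\vee,w^{-1}\tilde\alpha\rangle>0$ and $\gamma\in\Phi^+\setminus\Phi_J^+$. Second, and most delicately, the $\alpha=\alpha_0$ sub-case: since $r_0$ does not fix the origin, $r_\theta\lfloor wr_\gamma\rfloor$ need not lie in $W^J$, so one re-projects, writing $r_\theta\lfloor wr_\gamma\rfloor=\lfloor r_\theta wr_\gamma\rfloor\,z$ with $z\in W_J$ --- precisely the element $z$ in the statement --- and the label acquires the twist $z\lfloor wr_\gamma\rfloor^{-1}\theta$ of case~(2). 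I would handle this by carrying $r_0=t_{\theta^\vee}r_\theta$ through the computation, using that $\cl$ annihilates $Q^\vee$-translations so that the $W^J$-picture only sees $r_\theta$, and using $\langle\theta^\vee,w\gamma\rangle<0$ (forced above) to pin the sign needed to invoke Lemma~\ref{bc16}. Once the finite case is clean, the $\alpha=\alpha_0$ case runs on the same template with these substitutions, completing the reconstruction of \cite[Lemma~6.10]{LNSSS}.
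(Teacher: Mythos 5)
This lemma is not proved in the present paper: it is recalled verbatim from the companion paper \cite[Lemma 6.10]{LNSSS}, with no argument supplied here, so there is no paper-internal proof to compare your reconstruction against. With that caveat, I will assess the proposal on its own terms.

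Your skeleton is reasonable: pick an affine simple root $\alpha$ with $\langle\alpha^\vee,\beta\rangle>0$ (positivity of the invariant form, and the finite-support observation when $\beta=w\gamma\in\Phi^+$); set $\mu:=w^{-1}\tilde\alpha$ and note $\langle\gamma^\vee,\mu\rangle>0$; then split on whether $\mu\in\Phi^+\setminus\Phi^+_J$ (giving case (1) at the top) or $r_\gamma\mu\in\Phi^-\setminus\Phi^-_J$ (giving case (2) at the bottom), and translate via the coset trichotomy and Lemma~\ref{bc16}. However, the ``two points of real work'' that you flag are genuine gaps, and they remain open in the proposal. The first --- exhaustiveness of the dichotomy --- you state but do not prove. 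It can be settled cleanly: since $r_\gamma\mu = \mu - \langle\gamma^\vee,\mu\rangle\gamma$ with $\langle\gamma^\vee,\mu\rangle>0$ and $\gamma\in\Phi^+\setminus\Phi^+_J$, project to $Q/Q_J$; the image $[\gamma]$ is strictly positive, so $[\mu]$ and $[r_\gamma\mu]$ differ by a strictly negative quantity, and one checks that this kills every pairing of trichotomy classes except ``$\mu\in\Phi^+\setminus\Phi^+_J$'' and ``$r_\gamma\mu\in\Phi^-\setminus\Phi^-_J$''. The second gap --- the $\alpha=\alpha_0$ sub-case with the twist $z\in W_J$ and, more importantly, the verification that the resulting edge satisfies the quantum length condition \eqref{E:projqarrow} rather than merely being a putative arrow --- is the heart of the lemma and is only sketched. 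Until both are filled in, the proposal is a correct blueprint rather than a complete proof of \cite[Lemma 6.10]{LNSSS}.
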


We also need the following lemma.

\begin{lem}
\label{bbchains} 
Consider any one of the diamonds in the parabolic quantum Bruhat graph $\QB(W^J)$ listed in 
{\rm \cite[Lemma 5.14]{LNSSS}}. If one of the two paths 
{\rm(}of length $2${\rm)} is in $\QB_{b\lambda}(W^J)$, for some 
fixed $b$, then the other one is too. 
\end{lem}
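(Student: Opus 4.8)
The plan is to reduce the statement to a purely lattice-theoretic fact about the two length-two paths of each diamond, and then to verify that fact case by case against the list in \cite[Lemma 5.14]{LNSSS}. First I would record the basic reformulation: an edge $x\xrightarrow{\gamma}y$ of $\QB(W^J)$ belongs to $\QB_{b\la}(W^J)$ precisely when $b\pair{\gamma^\vee}{\la}\in\BZ$. Since $\xi\mapsto\pair{\xi}{\la}$ is a group homomorphism $Q^{\vee}\to\BZ$, a path $\mathbf{p}$ in $\QB(W^J)$ with edge labels $\gamma_1,\ldots,\gamma_r$ lies in $\QB_{b\la}(W^J)$ if and only if $b\pair{\xi}{\la}\in\BZ$ for every $\xi$ in the coroot sublattice $L(\mathbf{p}):=\BZ\gamma_1^\vee+\cdots+\BZ\gamma_r^\vee\subseteq Q^{\vee}$. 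Hence, if $\mathbf{p}$ and $\mathbf{p}'$ are the two paths of a diamond in \cite[Lemma 5.14]{LNSSS} and one can show $L(\mathbf{p})=L(\mathbf{p}')$, then $\mathbf{p}\subset\QB_{b\la}(W^J)$ holds if and only if $\mathbf{p}'\subset\QB_{b\la}(W^J)$, which is precisely the claim. So it suffices to prove: for each diamond in \cite[Lemma 5.14]{LNSSS}, the two paths determine the same coroot sublattice of $Q^{\vee}$.

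Next I would verify this by inspecting the diamonds in \cite[Lemma 5.14]{LNSSS} one at a time. In each of them the four edge labels are genuine roots in $\Phi^+\setminus\Phi_J^+$ lying in a common root subsystem of rank at most two. For the ``parallelogram'' diamonds the two paths carry the same multiset of labels, so there is nothing to prove. In each of the remaining (rank-two, possibly non-simply-laced) cases the two paths share an edge label; writing $\{\gamma,\delta\}$ and $\{\gamma,\delta'\}$ for the two label sets, I would read off from the explicit description in \cite[Lemma 5.14]{LNSSS} that the change of basis between $(\gamma^\vee,\delta^\vee)$ and $(\gamma^\vee,(\delta')^\vee)$ is a unimodular integer matrix, i.e. $(\delta')^\vee\equiv\pm\delta^\vee\pmod{\BZ\gamma^\vee}$, whence $L(\mathbf{p})=\BZ\gamma^\vee+\BZ\delta^\vee=\BZ\gamma^\vee+\BZ(\delta')^\vee=L(\mathbf{p}')$. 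The relevant coroot identities (for instance $(\alpha_1+2\alpha_2)^\vee=\alpha_1^\vee+\alpha_2^\vee$ in type $B_2$, and their analogues) are immediate from the standard fact that every positive coroot of a rank-two root system is a $\BZ_{\ge 0}$-linear combination of the two simple coroots.

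For the diamonds whose description in \cite[Lemma 5.14]{LNSSS} involves the coset projections $\mcr{\,\cdot\,}^{J}$, where the relation among the edge labels is less transparent, I would first reduce to a computation in $\QB(W)$, using that the labels are honest roots in $\Phi^+\setminus\Phi_J^+$ and that $W_J$ permutes $\Phi^{\pm}\setminus\Phi^{\pm}_J$ (as in the proof of Lemma~\ref{bc16}); the coroot-lattice equality can then be checked there by the same elementary rank-two identities.

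The main obstacle is bookkeeping rather than anything conceptual: one has to make sure that all of the diamond types in \cite[Lemma 5.14]{LNSSS} are accounted for, and to keep straight, in the non-simply-laced rank-two cases, the normalizations under which the various coroot identities hold. Once the reduction above is in place, no deeper input is required — the equality $L(\mathbf{p})=L(\mathbf{p}')$ in each diamond is a finite, explicit verification.
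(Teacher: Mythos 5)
Your proposal is correct and rests on exactly the same core reduction as the paper's proof: an edge is in $\QB_{b\lambda}(W^J)$ iff $b\pair{\gamma^\vee}{\lambda}\in\BZ$, so a path is in $\QB_{b\lambda}(W^J)$ iff the coroot sublattice generated by its edge labels pairs integrally (after scaling by $b$) with $\lambda$; hence it suffices to show the two length-two paths of a diamond generate the same coroot sublattice. Where you diverge is the final verification. You propose a rank-two, case-by-case check ($(\delta')^\vee\equiv\pm\delta^\vee\bmod\BZ\gamma^\vee$, one diamond type at a time). The paper instead exploits the uniform structure of the diamonds in \cite[Lemma 5.14]{LNSSS}: up to sign and left $W_J$-translation, the two label pairs are $\{w^{-1}\ti\alpha,\gamma\}$ and $\{\gamma,\,r_\gamma w^{-1}\ti\alpha\}$, and $r_\gamma(w^{-1}\ti\alpha^\vee)=w^{-1}\ti\alpha^\vee+l\gamma^\vee$ with $l\in\BZ$ is a single reflection identity that immediately gives the lattice equality in all cases at once, with $W_J$-translations handled by the observation that they fix $\lambda$ and hence preserve the integrality condition. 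So your argument is sound, but the ``bookkeeping'' you flag as the main obstacle can be avoided entirely: the relation $\delta'=r_\gamma\delta$ (up to sign/$W_J$) does the work uniformly, and you should derive $(\delta')^\vee\equiv\delta^\vee\pmod{\BZ\gamma^\vee}$ directly from the reflection formula rather than by inspecting rank-two subsystems.
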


\begin{proof}
By \cite[Lemma 5.14]{LNSSS}, we know that, up to sign and left multiplication by elements 
of $W_J$, the pairs of labels on the two paths are $\{w^{-1}\ti{\alpha},\gamma\}$ and 
$\{\gamma,r_\gamma w^{-1}\ti{\alpha}\}$, for some $\gamma \in \Phi^{+} \setminus \Phi_J^+$ and 
$w\in W^J$, while $\ti{\alpha}$ is a finite simple root or $-\theta$. The equivalence of the 
integrality conditions with respect to $b$ for the two pairs follows from the simple 
calculation
\[
	b\langle r_\gamma w^{-1}\ti{\alpha}^\vee, \lambda \rangle
	=b \pair{w^{-1}\ti{\alpha}^\vee+l\gamma^\vee}{\lambda}=
	 b \pair{w^{-1}\ti{\alpha}^\vee}{\lambda} + 
	 l \left (b\langle \gamma^\vee, \lambda \rangle\right)\,,
\]
where $l$ is an integer. On another hand, 
it is clear that mapping roots via elements of 
$W_J$ preserves the integrality condition. 
\end{proof}

\begin{proof}[Proof of Lemma {\rm \ref{q2ls}}] 
We can assume that $\gamma\not\in\Phi_J$, as otherwise the statement is obvious. 
As stated above, we proceed by induction on the height of the affine root $\beta$. 
If $\beta$ is an affine simple root, the conclusion follows directly from Lemma~\ref{bc16}. 
Otherwise, we apply Lemma~\ref{lem.covers.PQBG} for $\QB(W^J)$; 
this gives an affine simple root $\alpha$ satisfying
\begin{equation}\label{conda}
	\alpha\ne\beta\,, \qquad  \langle \alpha^\vee, \beta \rangle>0\,,
\end{equation}
and either condition (1) or (2) in the mentioned 
lemma. Assume that condition (1) holds, as the reasoning is completely similar if condition (2) holds. 
By Lemma~\ref{bc16}, we have 
\begin{align}
\label{rte}
&w\xrightarrow{w^{-1}\alpha}r_\alpha w \;\;\;\;\;\;\mbox{if $\alpha\ne\alpha_0$, where $w^{-1}\alpha\not\in \Phi_J$, and}\\ 
&\!\!\!\!\begin{diagram}\dgARROWLENGTH=3em\node{w} \arrow{e,t,..}{-w^{-1} \theta}\node{r_\theta w}\end{diagram} 
\;\;\;\;\mbox{if $\alpha=\alpha_0$, where $w^{-1}\theta\not\in \Phi_J$}\,.\nonumber
\end{align}

By Lemma \ref{lem.covers.PQBG}, we have one of the following three cases:
%
%
\begin{equation} \label{c31h}
	(\beta\in\Phi^+,\alpha\ne\alpha_0)\,,\qquad (\beta\in\delta-\Phi^+,\alpha\ne\alpha_0)\,,\qquad 
	(\beta\in\delta-\Phi^+,\alpha=\alpha_0)\,.
\end{equation}
By \cite[Lemma 5.14]{LNSSS}, known as the ``diamond lemma'', we have the left diamonds in 
\cite[Eqs.\,(5.3), (5.4), and (5.7)]{LNSSS}, respectively.
Note that all the necessary conditions for applying the diamond lemma are checked 
as in the proof of the converse statement of~\cite[Theorem 6.5]{LNSSS}. 
We can represent the diamonds in the three cases \eqref{c31h} using the single diagram below, 
where $\ti{\alpha}:=\alpha$ if $\alpha\ne\alpha_0$, and $\ti{\alpha}:=-\theta$, otherwise. 
\begin{equation}\label{qd1}
 \begin{diagram}
  \node{r_{\ti{\alpha}} w} \arrow{e,t,--}{\gamma} \node{r_{\ti{\alpha}} w r_\gamma} \\
  \node{w} \arrow{e,t,--}{\gamma} \arrow{n,l,--}{|w^{-1}\ti{\alpha}|} 
  \node{w r_\gamma,} \arrow{n,r,--}{|r_\gamma w^{-1}\ti{\alpha}|}
 \end{diagram}
\end{equation}
where $|\xi|:=\xi$ (resp., $|\xi|:=-\xi$) for $\xi \in \Phi^{+}$ 
(resp., $\xi \in \Phi^{-}$).
Recall that the bottom edge is viewed as a path $\bq$; similarly, we view the top edge as a path 
$\bq'$, and we clearly have $\wt(\bq)=\wt(\bq')$ if $\ti{\alpha} \ne -\theta$. 

Define $\beta'$ for the top edge of the diamond \eqref{qd1} in the same way as $\beta$ was 
defined for the bottom one in \eqref{ahbh}. As in the proof of the converse statement 
of~\cite[Theorem 6.5]{LNSSS}, we can check in all three cases \eqref{c31h} that 
$\beta'=r_\alpha\beta$. Since $\langle \alpha^\vee, \beta \rangle>0$, this implies that the 
height of $\beta'$ is strictly smaller than that of $\beta$. Therefore, by applying the induction 
hypothesis to the top edge of the diamond \eqref{qd1}, which is clearly in $\QB_{b\lambda}(W)$, 
we obtain a path in $\QB_{b\lambda}(W^J)$:
\begin{equation}
\label{qd2}
	\bp'\,:\;\;\;\begin{diagram}\dgARROWLENGTH=2em\node{\lfloor r_{\ti{\alpha}} w\rfloor=\!\!\!\!\!\!\!\!\!\!\!\!\!\!\!\!}\node{y_0} 
	\arrow{r,--} \node{y_1}\arrow{r,--}\node{\cdots}\arrow{r,--}\node{y_n}\node{\!\!\!\!\!\!\!\!\!=\lfloor r_{\ti{\alpha}} w r_\gamma\rfloor\,.}\end{diagram}
\end{equation}
By induction, we have $\wt(\bp')\equiv\wt(\bq') \mod Q_J^\vee$.

{\bf Case 1.} Assume for the moment that $r_\gamma w^{-1}\ti{\alpha}\not\in\Phi_J$. 
By Lemma \ref{bc16}, the right edge in \eqref{qd1} implies that in $\QB(W^J)$ we have 
an edge 
\begin{equation}
\label{qd3}
\begin{diagram}\dgARROWLENGTH=2em\node{\lfloor w r_\gamma\rfloor}\arrow{e,--}\node{\lfloor r_{\ti{\alpha}} w r_\gamma\rfloor}\end{diagram}.
\end{equation}
Assuming that the diamond lemma can be successively applied based on \eqref{qd2} and \eqref{qd3}, we 
exhibit the diamonds as in~\eqref{alld} below in $\QB(W^J)$, from right to left, where $y_i=\lfloor r_{\ti{\alpha}} x_i\rfloor$.
\begin{equation}
\label{alld}
\begin{diagram}\dgARROWLENGTH=2em\node{\lfloor r_{\ti{\alpha}} w\rfloor=\!\!\!\!\!\!\!\!\!\!\!\!\!\!\!\!}\node{y_0} \arrow{r,--} 
\node{y_1}\arrow{r,--}\node{\cdots}\arrow{r,--}\node{y_n}\node{\!\!\!\!\!\!\!\!\!=\lfloor r_{\ti{\alpha}} w r_\gamma\rfloor}\\
\node{\lfloor w\rfloor=\!\!\!\!\!\!\!\!\!\!\!\!\!\!\!\!\!\!\!\!}\node{x_0} \arrow{r,--} \arrow{n,--}
\node{x_1}\arrow{r,--}\arrow{n,--}\node{\cdots}\arrow{r,--}\node{x_n}\arrow{n,--}\node{\!\!\!\!\!\!\!\!\!\!\!\!\!=\lfloor w r_\gamma\rfloor}
\end{diagram}
\end{equation}
Note that the labels on the top edges are the same as those on the corresponding edges on the bottom, 
or at most differ from those by elements of $W_J$; so all the bottom edges are in $\QB_{b\lambda}(W^J)$ too, and 
we can define $\bp$ to be the path formed by them.

Now let us prove that $\wt(\bp)\equiv\wt(\bq) \mod Q_J^\vee$. By \cite[Lemma 5.14]{LNSSS}, the 
weights of all paths from $\lfloor w\rfloor$ to $\lfloor r_{\ti{\alpha}} w r_\gamma\rfloor$ in \eqref{alld} are 
congruent mod $Q_J^\vee$. If $\ti{\alpha} \ne -\theta$, 
then all the vertical edges in \eqref{alld} are Bruhat edges, 
so $\wt(\bp)\equiv\wt(\bp') \mod Q_J^\vee$. Applying the induction hypothesis and the fact 
that $\wt(\bq)=\wt(\bq')$ concludes the induction step in this case. 
If $\ti{\alpha}=-\theta$, then we are in the third case in \eqref{c31h}, 
and so diagram \eqref{qd1} is the left one in \cite[Eq. (5.7)]{LNSSS}; 
thus its top edge is a Bruhat edge, which implies $\wt(\bp')=\wt(\bq')=0$, and its bottom edge 
is a quantum one, in particular $w\gamma\in\Phi^-$. Moreover, all the vertical edges 
in~\eqref{alld} are quantum ones; in particular, the leftmost and the rightmost ones have weights
\begin{equation*}
-\mcr{w}^{-1} \theta^\vee \equiv -w^{-1}\theta^\vee \mod Q_J^\vee, \quad \text{and} \quad
-\mcr{wr_{\gamma}}^{-1}\theta^\vee \equiv - r_\gamma w^{-1}\theta^\vee \mod Q_J^\vee, 
\end{equation*}
respectively. Then, by the above observation about the paths from $\lfloor w\rfloor$ to 
$\lfloor r_{\ti{\alpha}} w r_\gamma\rfloor$ in \eqref{alld}, we have mod $Q_J^\vee$:
\[
\wt(\bp)\equiv\wt(\bp')-w^{-1}\theta^\vee+r_\gamma w^{-1}\theta^\vee
=-\langle w^{-1}\theta^\vee, \gamma \rangle\gamma^\vee
=\langle \underbrace{\theta^\vee, -w\gamma \rangle}_{=1}\gamma^\vee=\gamma^\vee=\wt(\bq)\,.
\]
Here we see that $\langle \theta^\vee, -w\gamma \rangle=1$ by using the well-known fact that if 
$\phi\ne\theta$ is a positive root, then $\langle \theta^\vee, \phi\rangle$ is $0$ or $1$; indeed, in 
our case we saw that $-w\gamma\in\Phi^+$, while we have $-w\gamma\ne\theta$ and 
$\langle \theta^\vee, -w\gamma \rangle\ne 0$ by \eqref{conda}. 

{\bf Case 2.} The reasoning in Case 1 fails, i.e., we cannot apply the diamond lemma at some 
point, if we have the following situation for some $i\le n$.
\[
\begin{diagram}\dgARROWLENGTH=2em\node{y_{i-1}} \arrow{r,--}\arrow{se,=} 
\node{y_i}\arrow{r,--}\node{y_{i+1}}\arrow{r,--}\node{\cdots}\arrow{r,--}\node{y_n}
\node{\!\!\!\!\!\!\!\!\!\!\!\!\!=\lfloor r_{\ti{\alpha}} w r_\gamma\rfloor}\\
\node{\;\;}\node{x_i} \arrow{r,--} \arrow{n,--}
\node{x_{i+1}}\arrow{r,--}\arrow{n,--}\node{\cdots}\arrow{r,--}\node{x_n}\arrow{n,--}
\node{\!\!\!\!\!\!\!\!\!\!\!\!\!\!\!\!\!=\lfloor w r_\gamma\rfloor}
\end{diagram}
\]
In this case, the edge 
$\begin{diagram}
 \dgARROWLENGTH=2em \node{x_i} \arrow{e,--} \node{y_i} 
 \end{diagram}$
is in $\QB_{b\lambda}(W^J)$, since it coincides with the edge 
$\begin{diagram} 
 \dgARROWLENGTH=2em \node{y_{i-1}} \arrow{e,--} \node{y_i,} \end{diagram}$
which has this property by the induction hypothesis, cf.\,\eqref{qd2}. 
By Lemma \ref{bbchains}, all vertical edges are also in $\QB_{b\lambda}(W^J)$, 
in particular the rightmost one. By Lemma \ref{bc16}, the edge 
$\begin{diagram}
 \dgARROWLENGTH=2em
 \node{w r_\gamma}\arrow{e,--}\node{r_{\ti{\alpha}} w r_\gamma}
 \end{diagram}$
in \eqref{qd1} is in $\QB_{b\lambda}(W)$. 
By applying Lemma \ref{bbchains} to \eqref{qd1} this time, 
we conclude that the edge 
$\begin{diagram}
 \dgARROWLENGTH=2em 
 \node{w} \arrow{e,--} \node{r_{\ti{\alpha}} w} 
 \end{diagram}$
is in $\QB_{b\lambda}(W)$. But we showed in \eqref{rte} that 
$w^{-1}\ti{\alpha}\not\in \Phi_J$, so by Lemma \ref{bc16} the edge 
$\begin{diagram}
 \dgARROWLENGTH=2em
 \node{\mcr{w}} \arrow{e,--} 
 \node{\mcr{r_{\ti{\alpha}} w}}
 \end{diagram}$ 
is in $\QB_{b\lambda}(W^J)$. We now define $\bp$ to 
be the following path in $\QB_{b\lambda}(W^J)$:
\[
\bp:\!\!\!\!\!\!\!\!\!\!\!\!\!\!\!\begin{diagram}\dgARROWLENGTH=2em
\node{x_0=\!\!\!\!\!\!\!\!\!\!\!\!\!\!\!\!\!\!\!\!\!\!\!\!\!\!\!\!\!\!\!\!\!\!\!\!\!\!\!\!\!\!\!\!\!\!\!\!\!\!}\node{\lfloor w\rfloor} 
\arrow{r,--} \node{\lfloor r_{\ti{\alpha}} w\rfloor=y_0}\arrow{r,--}\node{\cdots}\arrow{r,--}\node{y_{i-1}=x_i}
\arrow{r,--}\node{\cdots}\arrow{r,--}\node{x_n=\lfloor w r_\gamma\rfloor}\end{diagram}.
\]
We then prove that $\wt(\bp)\equiv\wt(\bq) \mod Q_J^\vee$ in a way completely similar
to Case 1, which concludes the induction step.

{\bf Case 3.} The last case to consider is the one when $r_\gamma w^{-1}\ti{\alpha}\in\Phi_J$. We still have the edge 
\[
\begin{diagram}\dgARROWLENGTH=4.3em\node{w r_\gamma}\arrow{e,t,--}{|r_\gamma w^{-1}\ti{\alpha}|}
\node{r_{\ti{\alpha}} w r_\gamma}\end{diagram}
\]
in $\QB_{b\lambda}(W)$, because $\langle r_\gamma w^{-1}\ti{\alpha}, \lambda \rangle=0$. So we can reason 
as in the previous paragraph in order to prove that the edge 
$\begin{diagram}\dgARROWLENGTH=2em\node{\lfloor w\rfloor}\arrow{e,--}\node{\lfloor r_{\ti{\alpha}} w\rfloor}\end{diagram}$ 
is in $\QB_{b\lambda}(W^J)$. We now define $\bp$ to be the following path in $\QB_{b\lambda}(W^J)$:
\[\bp:\;\begin{diagram}\dgARROWLENGTH=1.9em\node{\lfloor w\rfloor} \arrow{r,--} 
\node{\lfloor r_{\ti{\alpha}} w\rfloor=\!\!\!\!\!\!\!\!\!\!\!\!\!\!\!\!}\node{y_0}\arrow{r,--}\node{y_1}\arrow{r,--}
\node{\cdots}\arrow{r,--}\node{y_n}\node{\!\!=x_n=\lfloor w r_\gamma\rfloor}\end{diagram}.\]
Note that this is the only case when the induction step produces a path of a different length 
(more precisely, longer by 1) based on the path in the induction hypothesis.

Now let us prove that $\wt(\bp)\equiv\wt(\bq) \mod Q_J^\vee$. 
If $\ti{\alpha} \ne -\theta$, then the first edge of 
$\bp$ is a Bruhat edge, so $\wt(\bp)=\wt(\bp')$. Applying the induction hypothesis 
and the fact that $\wt(\bq)=\wt(\bq')$ concludes the induction step in this case. 
If $\ti{\alpha} = -\theta$, then by the same reasoning as in Case 1, we deduce 
\begin{equation*}
\wt(\bp')=\wt(\bq')=0, \qquad 
\wt(\bq)=\gamma^\vee, \qquad 
w\gamma\in\Phi^-.
\end{equation*}
We conclude that $\wt(\bp)\equiv-w^{-1}\theta^\vee \mod Q_J^\vee$ (cf. Case 1), so we 
need to prove that $-w^{-1}\theta^\vee\equiv\gamma^\vee \mod Q_J^\vee$. This follows from
\[
\Phi_J^\vee\ni r_{\gamma}w^{-1}\theta^{\vee}=w^{-1}\theta^{\vee}-
 \underbrace{\langle w^{-1}\theta^{\vee}, \gamma \rangle}_{=-1}\gamma^{\vee} 
 = w^{-1}\theta^{\vee}+\gamma^{\vee}. 
\]
Here we have used the fact that $\langle w^{-1}\theta^{\vee}, \gamma \rangle=-1$, 
as in Case 1.
\end{proof}

%
\subsection{Proof of Lemma~{\rm \ref{bqbpaths}}}
\label{sbqbpaths}

We require some notation and results from \cite{LS}. 
Let $W_\af^-$ denote the set of minimum coset representatives in $W_\af/W$. 
Write $y \ulc{\af} x$ for the covering relation 
in the (strong) Bruhat order on $W_\af$. 
For $M \in\BZ_{>0}$, say that $\xi \in Q^\vee$ is $M$-superantidominant 
if $\pair{\xi}{\alpha} \le -M$ for every positive root $\alpha \in \Phi^{+}$. 
We fix once and for all a sufficiently large $M\in\BZ_{>0}$ ($M =2|W |+2 $ is sufficient).

\begin{lem}[{\cite[Lemma~3.3]{LS}}] \label{L:affgrass} 
Let $w\in W$ and $\xi \in Q^\vee$. Then
$wt_\xi \in W_\af^-$ if and only if $\xi$ is antidominant
(that is, $\pair{\xi}{\alpha_i} \le 0$ for all $i\in I$) and $w \in W^{L}$, 
where $L:=\bigl\{i \in I \mid \pair{\xi}{\alpha_{i}}=0\bigr\}$.
\end{lem}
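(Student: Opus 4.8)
The plan is to prove Lemma~\ref{L:affgrass} by a direct computation, testing the defining condition $wt_\xi \in W_\af^-$ against the characterization of minimal coset representatives: $v \in W_\af^-$ if and only if $\ell(vr_i) > \ell(v)$ for all $i \in I$, equivalently $v\alpha_i \in \Phiafp$ for all $i \in I$. So first I would recall the standard action of $wt_\xi$ on a finite simple root $\alpha_i$. Using $t_\xi \alpha_i = \alpha_i - \pair{\xi}{\alpha_i}\delta$ (the action of a translation on a level-zero root) and then applying $w \in W$ (which fixes $\delta$), one gets
\[
wt_\xi\, \alpha_i = w\alpha_i - \pair{\xi}{\alpha_i}\,\delta.
\]

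Next I would analyze when this lies in $\Phiafp = \Phi^+ \sqcup (\Phi + \BZ_{>0}\delta)$. There are two cases according to the sign of $\pair{\xi}{\alpha_i}$. If $\pair{\xi}{\alpha_i} < 0$, then $wt_\xi\alpha_i = w\alpha_i + (\text{positive integer})\delta$, which is automatically in $\Phiafp$ regardless of $w$. If $\pair{\xi}{\alpha_i} = 0$, then $wt_\xi\alpha_i = w\alpha_i$, which lies in $\Phiafp$ if and only if $w\alpha_i \in \Phi^+$. If $\pair{\xi}{\alpha_i} > 0$, then $wt_\xi\alpha_i = w\alpha_i - (\text{positive integer})\delta$, which can never be in $\Phiafp$ (its $\delta$-coefficient is strictly negative). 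Assembling these: $wt_\xi \in W_\af^-$ forces $\pair{\xi}{\alpha_i} \le 0$ for all $i$ (ruling out the third case), i.e.\ $\xi$ antidominant; and then the condition reduces to requiring $w\alpha_i \in \Phi^+$ for precisely those $i$ with $\pair{\xi}{\alpha_i} = 0$, i.e.\ $i \in L$. The latter is exactly the statement that $w \in W^L$ (minimal coset representative for $W_L$, since $w\alpha_i \in \Phi^+$ for all $i \in L$ characterizes $W^L$). Conversely, if $\xi$ is antidominant and $w \in W^L$, running the same case analysis shows $wt_\xi\alpha_i \in \Phiafp$ for every $i \in I$, hence $wt_\xi \in W_\af^-$.

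I do not expect a genuine obstacle here; the only care needed is getting the sign conventions right in the formula $t_\xi\alpha_i = \alpha_i - \pair{\xi}{\alpha_i}\delta$ (this follows from the action of $W_\af$ on $X_\af$ recalled in Section~\ref{subsec:notation}, writing $t_\xi$ in terms of affine reflections, or directly from the identification $W_\af \cong W \ltimes Q^\vee$ and the known formula for how translations act on affine roots), and being precise that $v\in W_\af^-$ iff $v\alpha_i \in \Phiafp$ for all $i\in I$ — this uses that $W_\af^- = W_\af / W$ representatives are characterized by $\ell(vr_i)>\ell(v)$ for $i\in I$, equivalently $v(\Phi^+)\subset\Phiafp$, which in turn is equivalent to $v\alpha_i\in\Phiafp$ for the finite simple roots. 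Since the argument is essentially a one-line computation plus bookkeeping on three sign cases, I would present it compactly rather than belaboring it.
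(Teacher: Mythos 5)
Your proof is correct. The paper cites Lam--Shimozono \cite[Lemma~3.3]{LS} and gives no proof of its own; your direct verification --- computing $wt_\xi\,\alpha_i = w\alpha_i - \pair{\xi}{\alpha_i}\delta$ for $i\in I$, testing membership in $\Phiafp$, and matching the resulting three sign cases against the definitions of antidominance and of $W^L$ --- is exactly the standard argument, and every step (the translation formula on level-zero weights, the characterization $v\in W_\af^-\iff v\alpha_i\in\Phiafp$ for all $i\in I$, and the characterization $w\in W^L\iff w\alpha_i\in\Phi^+$ for all $i\in L$) is used accurately.
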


\begin{prop}[{\cite[Proposition~4.4]{LS}}] \label{P:affinelift} 
Let $\xi \in Q^\vee$ be $M$-superantidominant
and let $x=w t_{v\xi}$ with $v,w\in W$. Then 
$y=xr_{v\alpha+n\delta} \ulc{\af} x$ 
if and only if one of the following conditions holds:
\begin{enumerate}
\item[(i)] $\ell(wv)=\ell(wvr_\alpha)-1$ and $n=\pair{\xi}{\alpha}$, giving $y=w r_{v\alpha}t_{v\xi}$;
\item[(ii)] $\ell(wv)=\ell(wvr_\alpha)+\pair{\alpha^\vee}{2\rho}-1$ and $n=\pair{\xi}{\alpha}+1$, giving 
$y=w r_{v\alpha}t_{v(\xi+\alpha^\vee)}$;
\item[(iii)] $\ell(v)=\ell(vr_\alpha)+1$ and $n=0$, giving $y=wr_{v\alpha} t_{v r_\alpha \xi}$;
\item[(iv)] $\ell(v)=\ell(vr_\alpha)-\pair{\alpha^\vee}{2\rho}+1$ and $n=-1$, 
giving $y=wr_{v\alpha} t_{v r_\alpha (\xi+\alpha^\vee)}$.
\end{enumerate}
\end{prop}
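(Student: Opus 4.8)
The plan is to turn the covering relation in $\Waf = W\ltimes Q^\vee$ into a length identity and then exploit that $\xi$ lies very deep in the antidominant chamber. First I would put $y = x\,r_{v\alpha+n\delta}$ into standard form: using the affine reflection identity $r_{v\alpha+n\delta}=r_{v\alpha}\,t_{n(v\alpha)^{\vee}}$ together with $t_\mu u = u\,t_{u^{-1}\mu}$ for $u\in W$, one computes
\begin{equation*}
 y \;=\; w\,r_{v\alpha}\,t_{v(\xi+(n-\pair{\xi}{\alpha})\alpha^{\vee})}\,.
\end{equation*}
Since $r_\alpha\xi=\xi-\pair{\xi}{\alpha}\alpha^{\vee}$, the four values $n\in\{\pair{\xi}{\alpha},\,\pair{\xi}{\alpha}+1,\,0,\,-1\}$ produce precisely the translation parts $v\xi$, $v(\xi+\alpha^{\vee})$, $vr_\alpha\xi$, $vr_\alpha(\xi+\alpha^{\vee})$ asserted in (i)--(iv), so those ``giving $y=\cdots$'' clauses are automatic. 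As $y$ and $x$ differ by a single reflection, and $\ell(gr)=\ell(g)-1$ forces $gr\lessdot g$ in any Coxeter group, it remains only to decide for which $n$ one has $\ell(y)=\ell(x)-1$.

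The engine is a closed formula for $\ell(w' t_{v'\zeta})$ valid when $\zeta$ is $M$-superantidominant (hence regular antidominant). Starting from the Iwahori--Matsumoto length formula and noting that every pairing $\pair{\zeta}{\beta}$ with $\beta\in\Phi^+$ is so large in absolute value that all the absolute values resolve with a fixed sign, a short rearrangement --- using the standard identity $\ell(ab)=\ell(a)+\ell(b)-2\,|\mathrm{Inv}(a)\cap\mathrm{Inv}(b^{-1})|$, where $\mathrm{Inv}(u):=\{\beta\in\Phi^+:u\beta\in\Phi^-\}$ --- yields
\begin{equation*}
 \ell(w' t_{v'\zeta}) \;=\; \pair{-\zeta}{2\rho}+\ell(v')-\ell(w'v')\,.
\end{equation*}
(Equivalently: since $\zeta$ is regular, Lemma~\ref{L:affgrass} gives $(w'v')t_\zeta\in\Waf^-$, so $\ell(w' t_{v'\zeta})=\ell((w'v')t_\zeta)+\ell(v')$, reducing to the same thing.) I would apply this to $x=w t_{v\xi}$ with $\zeta=\xi$, and, in each of the four cases, to $y$, rewritten so that its translation part appears as $v'\zeta$ with $\zeta$ equal to the superantidominant coweight $\xi$ (cases (i),(iii)) or $\xi+\alpha^{\vee}$ (cases (ii),(iv)); note $\xi+\alpha^{\vee}$ is still superantidominant once $M$ is large.

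Carrying out this computation: in cases (i),(iii) the translation orbit of $y$ is $W\xi$, the large term $\pair{-\xi}{2\rho}$ cancels from $\ell(y)-\ell(x)$, and $\ell(y)-\ell(x)=-1$ becomes $\ell(wv)=\ell(wvr_\alpha)-1$ (case (i), where the $\Waf^-$-part changes) or $\ell(v)=\ell(vr_\alpha)+1$ (case (iii), where the finite part changes). In cases (ii),(iv) the translation orbit is $W(\xi+\alpha^{\vee})$, the large term becomes $\pair{-\xi}{2\rho}-\pair{\alpha^{\vee}}{2\rho}$, and $\ell(y)-\ell(x)=-\pair{\alpha^{\vee}}{2\rho}+(\text{a difference of finite Weyl-group lengths})$; setting this to $-1$ gives the $\pair{\alpha^{\vee}}{2\rho}$-shifted conditions (ii) and (iv). The appearance of $\pair{\alpha^{\vee}}{2\rho}$ here is exactly the algebraic signature of a jump of the translation lattice, i.e.\ of a ``quantum'' step. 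Reading these four computations backwards yields the ``if'' direction.

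The one point I expect to require genuine care is completeness: that no other $n$ gives a cover. For arbitrary $k:=n-\pair{\xi}{\alpha}$ the translation part of $y$ is $v(\xi+k\alpha^{\vee})$, and one must first identify which of $\xi+k\alpha^{\vee}$ or $r_\alpha(\xi+k\alpha^{\vee})=\xi-(\pair{\xi}{\alpha}+k)\alpha^{\vee}$ is superantidominant: the former for $k$ near $0$, the latter for $k$ near $-\pair{\xi}{\alpha}$, and neither in the intermediate range, where $\ell(y)$ exceeds $\ell(x)$ by much more than $1$ (seen from $\pair{-\zeta'}{2\rho}=\sum_{\beta\in\Phi^+}|\pair{\xi+k\alpha^{\vee}}{\beta}|$ for the superantidominant representative $\zeta'$ of the orbit). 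In the two non-intermediate ranges the formula above applies and gives $\ell(y)-\ell(x)=-j\,\pair{\alpha^{\vee}}{2\rho}+(\text{difference of finite lengths})$ for the relevant integer shift $j$; bounding the finite part via $|\ell(g)-\ell(gr_\alpha)|\le\ell(r_\alpha)\le\pair{\alpha^{\vee}}{2\rho}-1$ --- the last inequality because $\sum_{\gamma\in\mathrm{Inv}(r_\alpha)}\gamma=\tfrac12\pair{\alpha^{\vee}}{2\rho}\,\alpha$ --- then forces $\ell(y)-\ell(x)\ne -1$ unless $j\in\{0,1\}$, i.e.\ unless $n$ is one of the four listed values. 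Throughout one may use $r_{v\alpha+n\delta}=r_{-v\alpha-n\delta}$ to normalize the sign of $v\alpha$. Assembling these steps gives the asserted equivalence.
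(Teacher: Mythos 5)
This proposition is cited from Lam--Shimozono \cite[Proposition~4.4]{LS} and is not reproved in the paper, so there is no ``paper's own proof'' to compare against; your argument is, however, in substance the proof that Lam--Shimozono give. The $M$-superantidominance of $\xi$ plus Lemma~\ref{L:affgrass} (so that $(w'v')t_\zeta\in W_\af^-$) yields the closed formula $\ell(w't_{v'\zeta})=\langle -\zeta,\,2\rho\rangle-\ell(w'v')+\ell(v')$ via the length-additive factorization $W_\af=W_\af^-\cdot W$; your normalization $y=w\,r_{v\alpha}\,t_{v(\xi+(n-\langle\xi,\alpha\rangle)\alpha^\vee)}$ is correct, the four ``giving $y=\cdots$'' clauses follow by substituting $k=n-\langle\xi,\alpha\rangle\in\{0,1,a,a-1\}$ (where $a=-\langle\xi,\alpha\rangle$), and the four resulting length conditions come out exactly as stated (using $r_{v\alpha}v=vr_\alpha$ and $r_{v\alpha}vr_\alpha=v$). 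The bound $|\ell(g)-\ell(gr_\alpha)|\le\ell(r_\alpha)\le\langle\alpha^\vee,2\rho\rangle-1$, together with oddness, correctly forces $k\in\{0,1\}$ (resp.\ $k-a\in\{-1,0\}$) in the two regimes where $\xi+k\alpha^\vee$ (resp.\ $r_\alpha(\xi+k\alpha^\vee)$) is the superantidominant representative.

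The one step you flag as needing care --- excluding the ``intermediate'' $n$ --- is genuinely the only soft spot. There neither $\xi+k\alpha^\vee$ nor $r_\alpha(\xi+k\alpha^\vee)$ is antidominant, so the antidominant representative $\zeta'$ is a farther $W$-translate, and you must actually estimate $\sum_{\beta\in\Phi^+}\bigl(|\langle\xi+k\alpha^\vee,\beta\rangle|+\langle\xi,\beta\rangle\bigr)$ and show it dominates the finite-Weyl contribution, which is bounded by $2\ell(w_\circ)$. This is where the specific size hypothesis $M\ge 2|W|+2$ enters, and a complete writeup needs to track which pairings $\langle\xi+k\alpha^\vee,\beta\rangle$ have flipped sign. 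Your sketch is pointing in the right direction, but as written it is an outline rather than a proof of that exclusion; the remaining parts are complete and correct.
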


We start with the following lemma.
We need the $b$-Bruhat order on $\Waf$, denoted $\ul{\af,b}$, 
which is defined by a condition completely similar to \eqref{bbord} 
applied to the covers in $\Waf$.

\begin{lem} \label{twoch} 
Assume that in $\Waf^{-}$ we have
\begin{equation*}
vt_\xi \ug{\af} w t_h, \qquad 
vt_\xi \ug{\af,b} w t_{h'}, \qquad
\text{where} \quad h'-h \in Q^{\vee +},
\end{equation*}
and $\xi,h,h'\in Q^\vee$ are $M$-superantidominant. 
Then $vt_\xi \ug{\af,b} w t_h$, and in 
fact any saturated chain between these elements is a chain in $b$-Bruhat order.
\end{lem}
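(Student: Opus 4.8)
\textbf{Plan of proof for Lemma~\ref{twoch}.}

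The plan is to exploit the ``lifting'' description of covers in the affine Bruhat order provided by Proposition~\ref{P:affinelift}, together with an induction on the length difference $\ell(vt_\xi)-\ell(wt_h)$. The key point is that the quantity $h'-h\in Q^{\vee+}$ measures, in a precise sense, how many quantum edges (equivalently, covers of type (ii) or (iv) in Proposition~\ref{P:affinelift}) occur along a saturated chain from $wt_h$ up to $vt_\xi$, and that the $b$-integrality condition defining $<_b$ is insensitive to which representative $t_h$ versus $t_{h'}$ one uses, \emph{provided} one controls the translation parts. First I would reduce to the case of a single cover: since every saturated chain from $wt_h$ to $vt_\xi$ decomposes into covers, and since the hypotheses are inherited by any intermediate element of a chain (here I would need a short argument that an intermediate element of a saturated chain from $wt_h$ to $vt_\xi$, when compared to $wt_{h'}$, still satisfies an analogous pair of relations — this uses the $M$-superantidominance and Lemma~\ref{L:affgrass} to stay inside $\Waf^-$), it suffices to show that each individual cover $y\lessdot x$ appearing in a saturated chain from $wt_h$ to $vt_\xi$ is automatically a $b$-cover.

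Next I would analyze a single cover $x r_{v\alpha+n\delta}\lessdot x$ via the four cases of Proposition~\ref{P:affinelift}. In each case the reflecting affine root is $\beta = v\alpha + n\delta$ with $n$ explicitly $\pair{\xi}{\alpha}$, $\pair{\xi}{\alpha}+1$, $0$, or $-1$ depending on the case. The $b$-Bruhat condition on this cover is $b\pair{\beta^\vee}{\mu}\in\BZ$ evaluated at the relevant weight $\mu$, which unwinds to a condition of the form $b\bigl(\pair{(v\alpha)^\vee}{\lambda} + n\cdot(\text{level})\bigr)\in\BZ$; for level-zero $\lambda$ this is simply $b\pair{\alpha^\vee}{(v^{-1}\cdot)\lambda}\in\BZ$, independent of $n$. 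The hypothesis $vt_\xi >_b wt_{h'}$ gives us that \emph{some} saturated chain from $wt_{h'}$ to $vt_\xi$ consists of $b$-covers; the plan is to match up the reflections occurring in a chain through $wt_h$ with those in a chain through $wt_{h'}$ (both chains climb to the same top element $vt_\xi$), using that $h'-h\in Q^{\vee+}$ forces the ``extra'' steps to be precisely quantum steps whose reflection-root finite parts coincide with those already accounted for. Here I would invoke results of~\cite{LS} on the parabolic quantum Bruhat graph as the lift of the affine Bruhat order, to get a clean correspondence of edge labels.

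The main obstacle I anticipate is the combinatorial bookkeeping needed to pass from ``some saturated chain from $wt_{h'}$ is a $b$-chain'' to ``\emph{every} saturated chain from $wt_h$ to $vt_\xi$ is a $b$-chain.'' This is where the full strength of the interval structure in $\Waf^-$ enters: one needs that the set of edge labels (as finite roots, modulo the irrelevant $\delta$-shifts) appearing along \emph{any} saturated chain in a given interval of $\Waf^-$ depends only on the endpoints, at least as far as the $b$-integrality condition $b\pair{\alpha^\vee}{\lambda}\in\BZ$ is concerned. I expect this to follow from the shellability/diamond-lemma machinery already used for $\QB(W)$ (Theorem~\ref{thm:shell}) transported to $\Waf$ via Proposition~\ref{P:affinelift}, combined with an argument in the spirit of Lemma~\ref{bbchains}: in any ``diamond'' in the affine Bruhat order the two length-two paths have edge-label sets that agree up to sign and up to reflection by a root orthogonal to $\lambda$, so the $b$-condition propagates across diamonds; since any two saturated chains between the same endpoints are connected by a sequence of such diamond moves, the $b$-property is chain-independent. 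Once that propagation statement is in place, the lemma follows by taking a saturated chain through $wt_h$, comparing it diamond-by-diamond to one through $wt_{h'}$, and concluding.
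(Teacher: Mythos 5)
Your sketch identifies the key mechanism — that the $b$-integrality condition is insensitive to the $\delta$-part of the reflection root, so what is really needed is a chain-independence statement — but the way you propose to deploy it has a gap, and you miss a step that the actual argument needs.

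The cleanest route, and the one the paper takes, first establishes that $w t_h > w t_{h'}$ in $\Waf^-$ (this is where $h' - h \in Q^{\vee+}$ is used: reduce to $h' - h = \alpha_i^\vee$ and, splitting on $wr_i < w$ versus $wr_i > w$, produce a length-two downward chain $w t_h \gtrdot \cdot \gtrdot w t_{h'}$ via the four cases of Proposition~\ref{P:affinelift}). Once this is known, a saturated chain from $v t_\xi$ down to $w t_h$ can be extended further down to $w t_{h'}$, giving two saturated chains from $v t_\xi$ to the \emph{same} bottom element $w t_{h'}$: the extended one passing through $w t_h$, and the $b$-Bruhat chain supplied by the hypothesis $v t_\xi >_b w t_{h'}$. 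Now chain-independence of the $b$-Bruhat property on a fixed interval (this is \cite[Lemma 4.15]{LeSh}) forces the extended chain to be a $b$-chain, so in particular its initial segment ending at $w t_h$ is a $b$-chain, which is what we want.

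Your proposal never establishes $w t_h > w t_{h'}$, and as a result your comparison is set up between a chain from $w t_h$ to $v t_\xi$ and a chain from $w t_{h'}$ to $v t_\xi$ — chains with \emph{different} bottom endpoints. Any diamond-style or shellability-style propagation argument for the $b$-condition operates within a fixed interval $[y, x]$; it cannot directly relate chains whose lower endpoints differ. The "matching up reflections" you hope for is exactly what you cannot do without first aligning the endpoints by extending through $w t_h$ down to $w t_{h'}$. This is a genuine missing idea, not a bookkeeping detail: it is the step that turns the hypothesis into something to which chain-independence applies. Secondly, the chain-independence itself should be invoked as a citable prior result rather than re-derived in vague terms; re-proving it from shellability in this setting would be a substantial detour.
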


\begin{proof}
We claim that $wt_h \ug{\af} wt_{h'}$ using a downward chain in $W_\af^-$. 
It suffices to prove this when $h'-h=\alpha_i^\vee$ for some $i\in I$. 
Suppose this is the case. Suppose first that $wr_i \ul{\af} w$. 
By Proposition \ref{P:affinelift} we have 
$w t_h \ugc{\af} 
  w r_i t_{h+\alpha_i^\vee} 
  \ugc{\af} w t_{h+\alpha_i^\vee}$ as required. 
Otherwise we have $wr_i \ug{\af} w$. 
Then by Proposition \ref{P:affinelift} we have 
$wt_h \ugc{\af} 
  wr_i t_h \ugc{\af} 
  w t_{h+\alpha_i^\vee}$ as required.

Knowing this, using Proposition \ref{P:affinelift} we pick a downward saturated
 chain from $vt_\xi$ to $w t_h$, followed by one from $w t_h$ to $w t_{h'}$, all in $W_\af^-$.
 By the hypothesis, there is a downward saturated chain in $b$-Bruhat order 
 from $vt_\xi$ to $wt_{h'}$. By \cite[Lemma 4.15]{LeSh}, we know that the 
 first chain is in $b$-Bruhat order too, which concludes the proof.
\end{proof}

\begin{proof}[Proof of Lemma {\rm \ref{bqbpaths}}] 
By Proposition \ref{P:affinelift} we can lift both paths to downward saturated chains 
in $\Waf^-$ starting at $v t_\xi$, where $\xi$ is a fixed $M$-superantidominant element in $Q^{\vee}$. 
Denote the endpoints of the two chains by $w t_{\xi+h}$ and $w t_{\xi+h'}$, 
respectively. Recall that $h$ and $h'$ are the sums of the coroots corresponding 
to (the labels of) the quantum edges in the paths in $\QB(W)$ which are lifted. Since the first 
path in $\QB(W)$ is a shortest one, by \cite[Lemma 1]{Po}, we have $h'-h\in Q^{\vee +}$. 
Furthermore, by the hypothesis, the second chain in $\Waf^-$ is in $b$-Bruhat order. Thus the 
hypotheses of Lemma \ref{twoch} are all satisfied, so we conclude that the first chain in 
$\Waf^-$ is also in $b$-Bruhat order, and therefore the first path in $\QB(W)$ is in $\QB_{b\lambda}(W)$.
\end{proof}

%
\subsection{Proof of Lemma {\rm \ref{relwt}}}
\label{comppaths} 

Let us first recall Proposition~\ref{prop:weight},
which is the parabolic generalization of a lemma due to Postnikov~\cite{Po}.

\begin{proof}[Proof of Lemma {\rm \ref{relwt}}] 
By Lemma \ref{q2ls}, we can construct a path from $\bp'$ from $\sigma$ to $\tau$ in $\QB(W^J)$ with 
\begin{equation}\label{xeq0}\wt(\bp')\equiv\wt(\bq) \mod Q_J^\vee\,;\end{equation}
namely, we simply concatenate the paths in $\QB(W^J)$ that correspond, by the 
mentioned lemma, to each edge of $\bq$, cf. the construction of the forgetful map in 
Section \ref{S:forget}. By Proposition~\ref{prop:weight}, we have 
\begin{equation}
\label{xeq1}
	\langle \wt(\bp'), \lambda \rangle\ge \langle \wt(\bp), \lambda \rangle\,.
\end{equation}

We then exhibit a path $\bq'$ from $v$ to $w$ as in the proof of Lemma \ref{L:mainb} 
(on which the construction of the inverse map in Section \ref{S:invmap} is based); we 
refer to this proof for the details. Namely, we concatenate the following:
\begin{itemize}
\item a path from $v$ to $\sigma$ with only quantum edges and all edge labels in $\Phi_J^+$;
\item a path from $\sigma$ to $\tau$ constructed based on $\bp$;
\item a path from $\tau$ to $w$ with only Bruhat edges and all edge labels in $\Phi_J^+$.
\end{itemize}
Note that 
\begin{equation}
\label{xeq2}
\langle \wt(\bq'),\lambda \rangle= \langle \wt(\bp), \lambda \rangle\,,
\end{equation}
since all the edges in the first segment of $\bq'$, as well as the extra edges 
introduced in the second segment, have labels orthogonal to $\lambda$. On 
another hand, by Proposition~\ref{prop:weight} (in fact, we only need here the original 
version \cite[Lemma 1\,(3)]{Po}), we have 
\begin{equation}
\label{xeq3}
	\langle \wt(\bq'), \lambda \rangle\ge \langle \wt(\bq), \lambda \rangle\,.
\end{equation}
The proof is concluded by combining \eqref{xeq0}, \eqref{xeq1}, \eqref{xeq2}, and \eqref{xeq3}.
\end{proof}

\appendix

%
\section{Perfectness and classical decomposition}
\label{section.perfectness} 

The notion of perfectness plays an important role for level-zero crystals. It ensures
for example that the Kyoto path model is applicable, which gives a model for highest weight
affine crystals as a semi-infinite tensor product of Kirillov--Reshetikhin crystals.
Let us define perfect crystals, see for example~\cite{hkqgcb}. Given a crystal $\mathcal{B}$ and $b\in \mathcal{B}$,
we need the definition
\[
	\varepsilon(b) = \sum_{i\in I_{\af}} \varepsilon_i(b) \Lambda_i \quad \text{and} \quad
	\varphi(b) = \sum_{i\in I_{\af}} \varphi_i(b) \Lambda_i
\]
with $\varepsilon_i(b)$ and $\varphi_i(b)$ as defined in~\eqref{eq:phi eps}. Furthermore, denote by
$\overline{X}_{\af}^{+\ell} = \{\lambda \in \overline{X}_\af^+ \mid \lev(\lambda) = \ell\}$ the set of dominant weights of level $\ell$, 
where $\overline{X}_\af^+ :=\bigoplus_{i\in I_{\af}} \mathbb{Z}_{\ge 0} \Lambda_i$.

\begin{definition} \label{def:perfect}
For a positive integer $\ell > 0$, a crystal $\mathcal{B}$ is called a perfect crystal of level $\ell$, 
if the following conditions are satisfied:
\begin{enumerate}
\item $\mathcal{B}$ is isomorphic to the crystal graph of a finite-dimensional $U_q^\prime(\mathfrak{g}_{\af})$-module.
\item $\mathcal{B}\otimes \mathcal{B}$ is connected.
\item There exists a $\lambda\in \bigoplus_{i\in I_{\af}} \mathbb{Z} \Lambda_i $, such that 
$\wt(\mathcal{B}) \subset \lambda + \sum_{i\in I} \mathbb{Z}_{\le 0} \alpha_i$ 
and there is a unique element in $\mathcal{B}$ of classical weight $\la$.
\item $\forall \; b \in \mathcal{B}, \;\; \lev(\varepsilon (b)) \geq \ell$.
\item $\forall \; \Lambda \in \overline{X}_{\af}^{+\ell}$, there exist unique elements $b_{\Lambda}, b^{\Lambda} \in \mathcal{B}$, 
such that
$$ \ve ( b_{\Lambda}) = \Lambda = \vp( b^{\Lambda}). $$
\end{enumerate}
\end{definition}

We denote by $\mathcal{B}_{\min}$ the set of minimal elements in $\mathcal{B}$, namely
\begin{equation*}	
	\mathcal{B}_{\min} = \{ b\in \mathcal{B} \mid \lev(\ve(b))=\ell \}.
\end{equation*}
Note that condition (5) of Definition~\ref{def:perfect} ensures that $\ve,\vp:\mathcal{B}_{\min}\to \overline{X}_{\af}^{+\ell}$ are bijections. 

Recall from Section~\ref{subsec:notation} that $\delta=\sum_{j \in I_{\af}} a_{j}\alpha_{j} \in \Fh_\af^{\ast}$ and 
$c=\sum_{j \in I_{\af}} a^{\vee}_{j} \alpha_{j}^{\vee} \in \Fh_\af$. Define 
$c_r = \max\{ \frac{a_r}{a_r^\vee}, a_0^\vee\}$.

\begin{conj} \cite[Conjecture 2.1]{HKOTT} \label{conj:perfectness}
The Kirillov-Reshetikhin crystal $B^{r,s}$ is perfect if and only if $\frac{s}{c_r}$ is an integer. 
If $B^{r,s}$ is perfect, its level is $\frac{s}{c_r}$.\end{conj}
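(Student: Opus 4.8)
The statement to be proved is Conjecture~\ref{conj:perfectness}, asserting that $B^{r,s}$ is perfect of level $s/c_r$ precisely when $s/c_r \in \BZ$. Since the main body of the paper provides uniform combinatorial models (the quantum alcove model and the quantum LS path model) only for the single-column KR crystals $B^{r,1}$, and since the general $B^{r,s}$ in the exceptional types are not yet available in a uniform combinatorial guise, the plan is \emph{not} to attempt a type-free proof. Instead, I would verify the conjecture case by case in the exceptional types $E_6^{(1)}, E_7^{(1)}, E_8^{(1)}, F_4^{(1)}, G_2^{(1)}$, using the {\sc Sage} and {\sc Sage-combinat} implementations of the models; in the classical types the conjecture is already a theorem of~\cite{FOS-perfect}, so nothing new is needed there. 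The first step is therefore to reduce, for each exceptional type and each relevant node $r$, the perfectness question to a finite check: by Conjecture~\ref{conj:perfectness} the only candidate levels are $\ell = s/c_r$ for $s$ a multiple of $c_r$, and for each such pair $(r,s)$ one must (a) construct $B^{r,s}$ explicitly, (b) check connectivity of $B^{r,s}\otimes B^{r,s}$, (c) check the level condition $\lev(\ve(b))\ge \ell$ for all $b$, (d) exhibit the unique classical-highest-weight element, and (e) verify that $\ve,\vp\colon \mathcal B_{\min}\to \overline X_\af^{+\ell}$ are bijections. When $s/c_r\notin\BZ$, one must instead exhibit a single $b$ with $\lev(\ve(b))$ not an integer multiple of the putative level, or otherwise show condition (5) fails, to confirm non-perfectness.

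\textbf{Key steps, in order.} First I would realize $B^{r,s}$ in {\sc Sage}: for $s=1$ this is exactly $\BB(\vpi_r)_\cl \cong \QLS(\vpi_r)$, which by the main results of the paper (Theorems~\ref{thm:LScl}, \ref{thm:LS=QLS}, \ref{qalc2qls} and Proposition~\ref{proposition.bijqls}) has an explicit model; for $s>1$ one uses the existing KR crystal constructions in {\sc Sage} (Kirillov--Reshetikhin crystals via virtual crystals or the Kashiwara--Nakashima-type combinatorics where available). Second, I would compute, for each element $b$, the tuples $(\ve_i(b))_{i\in I_\af}$ and $(\vp_i(b))_{i\in I_\af}$ using~\eqref{eq:phi eps}, hence $\ve(b)$ and $\vp(b)$, and in particular $\lev(\ve(b)) = \pair{c}{\ve(b)} = \sum_i a_i^\vee \ve_i(b)$. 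Third, I would determine $\mathcal B_{\min}$ and tabulate $\{\ve(b) : b\in\mathcal B_{\min}\}$ and $\{\vp(b) : b\in\mathcal B_{\min}\}$, comparing them with the finite set $\overline X_\af^{+\ell}$ of level-$\ell$ dominant weights (which is itself finite and easily enumerated). Fourth, connectivity of $B^{r,s}\otimes B^{r,s}$ is a direct graph computation. The classical decomposition check of~\cite{HKOTY} (the other conjecture the paper verifies) runs in parallel: one compares the decomposition of $B^{r,s}$ as a $\Fg$-crystal against the conjectured branching coefficients, again a finite computation given the explicit model. I would record the outcome node-by-node, flagging the two Dynkin nodes of $E_8^{(1)}$ for which, as noted in the introduction, the computation is infeasible and the conjecture remains open.

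\textbf{Main obstacle.} The chief difficulty is computational scale rather than conceptual: the crystals $B^{r,s}$ in type $E_8^{(1)}$ for $r$ a ``large'' node grow extremely fast in cardinality, so that enumerating elements, computing all $\ve_i,\vp_i$, and checking connectivity of the tensor square becomes prohibitive --- this is exactly why two nodes of $E_8^{(1)}$ must be excluded from the verification. A secondary subtlety is ensuring that the {\sc Sage} model of $B^{r,s}$ for $s>1$ in the exceptional types is provably the correct KR crystal (correct Drinfeld polynomials / correct classical decomposition at $s=1$, plus the correct affine structure); for $s=1$ this is guaranteed by Remark~\ref{rem:LScl} and the theorems of the paper, but for $s>1$ one leans on independent constructions, and I would take care to cross-check the $s=1$ specialization against $\BB(\vpi_r)_\cl$ to validate the implementation. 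Finally, a small bookkeeping point: one must handle the normalization constant $c_r = \max\{a_r/a_r^\vee,\ a_0^\vee\}$ correctly for each exceptional type and each $r$, since the predicted level depends on it; I would simply tabulate $c_r$ from the Cartan data before running the checks.
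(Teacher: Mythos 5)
Your plan matches the paper's strategy: reduce the conjecture to the already-settled classical types via~\cite{FOS-perfect}, then verify the remaining exceptional cases computationally in {\sc Sage} using the level-zero LS path / QLS model developed in the paper. Two minor discrepancies are worth flagging: the paper restricts its computer verification to $s=1$ (the only case for which its uniform models apply), and disposes of conditions (1) and (2) of Definition~\ref{def:perfect} theoretically via Remark~\ref{rem:LScl} and Kashiwara's results~\cite{Kas-OnL} rather than by machine check, reserving the computation for conditions (3)--(5) alone; your sketch proposes to also handle $s>1$ and to check connectivity of $\mathcal B\otimes\mathcal B$ by brute force, which is heavier than necessary.
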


For all nonexceptional types this conjecture was proven in~\cite{FOS-perfect}. Given the explicit models for
$B^{r,1}$ for all untwisted types in this paper and their implementation into {\sc Sage}~\cite{sage,sagecombinat},
we have verified Conjecture~\ref{conj:perfectness} also for untwisted exceptional types when $s=1$.
For type $G_2^{(1)}$, perfectness was also treated in~\cite{Y}.

\begin{thm}
Conjecture{\rm ~\ref{conj:perfectness}} holds for $B^{r,1}$ for types $G_2^{(1)}, F_4^{(1)}, E_6^{(1)}, E_7^{(1)}$
for all Dynkin nodes, and type $E_8^{(1)}$ for all nodes (except possibly $5,8$ in the labeling of~\cite{HKOTT}).
In addition, the graded classical decompositions of~\cite[Appendix A]{HKOTY} were verified (except for type $E_8^{(1)}$).
\end{thm}

For the other nodes in type $E_8^{(1)}$ the program is currently too slow to test it.

\begin{proof}
Point (1) of Definition~\ref{def:perfect} 
follows from Remark~\ref{rem:LScl}. 
Point (2) can be deduced from~\cite{Kas-OnL}. 
Points (3)-(5) were checked explicitly on the computer 
using the implementation of level-zero LS paths 
in {\sc Sage}~\cite{sage,sagecombinat} 
(version sage-7.1 or higher), see for example
\begin{verbatim}
	sage: C = CartanType(['E',6,1])
	sage: R = RootSystem(C)
	sage: La = R.weight_space().basis()
	sage: LS = crystals.ProjectedLevelZeroLSPaths(La[1])
	sage: LS.is_perfect()
	True
\end{verbatim}
This showed that $B^{r,1}$ is perfect
\begin{itemize}
\item for all nodes of type $E_{6,7}^{(1)}$ and the nodes specified in the theorem for type $E_8^{(1)}$;
\item the first 2 nodes of $F_4^{(1)}$ (long roots);
\item the second node of $G_2^{(1)}$ (long root).
\end{itemize}
This confirms the perfectness claim of the theorem. The graded classical decompositions of~\cite[Appendix A]{HKOTY} 
were also confirmed by computer.
\end{proof}



\begin{thebibliography}{Sage-comb}

\bibitem[BB]{BB}
\newblock A. Bj\"{o}rner and F. Brenti.
\newblock {\em Combinatorics of {C}oxeter groups}.
\newblock Graduate Texts in Mathematics Vol.~231, Springer, New York, 2005.

\bibitem[BF1]{BF1} 
\newblock A. Braverman and M. Finkelberg.
\newblock Semi-infinite Schubert varieties and quantum $K$-theory of flag manifolds.
\newblock {\em J. Amer. Math. Soc.}, 27:1147-1168, 2014.

\bibitem[BF2]{BF2} 
\newblock A. Braverman and M. Finkelberg.
\newblock Weyl modules and $q$-Whittaker functions.
\newblock {\em Math. Ann.}, 359:45-59, 2014. 

\bibitem[BFP]{BFP}
\newblock F.~Brenti, S.~Fomin, and A.~Postnikov.
\newblock Mixed Bruhat operators and Yang-Baxter equations for Weyl groups.
\newblock {\em Int. Math. Res. Notices}, no. 8, 419--441, 1999.

\bibitem[CI]{CI}
\newblock V.~Chari and B.~Ion.
\newblock BGG reciprocity for current algebras.
\newblock  {\em Compos. Math.}, 151:1265--1287, 2015.

\bibitem[CSSW]{CSSW}
\newblock V.~Chari, L.~Schneider, P.~Shereen, and J.~Wand.
\newblock Modules with Demazure flags and character formulae.
\newblock {\em SIGMA} 10, 032, 16pp, 2014.

\bibitem[Dy]{Dyer}
\newblock M. J. Dyer.
\newblock Hecke algebras and shellings of {B}ruhat intervals.
\newblock {\em Compositio Math.}, 89:91--115, 1993.

\bibitem[E]{E} 
\newblock P. Etingof.
\newblock Whittaker functions on quantum groups and $q$-deformed Toda operators. 
\newblock Differential topology, infinite-dimensional Lie algebras, and applications, pp. 9--25, 
Amer. Math. Soc. Transl. Ser. 2, 194, Amer. Math. Soc., Providence, RI, 1999. 

\bibitem[FGP]{FGP}
\newblock S.~Fomin, S.~Gelfand, and A.~Postnikov.
\newblock Quantum Schubert polynomials.
\newblock {\em J. Amer. Math. Soc.}, 10:565--596, 1997. 

\bibitem[FL]{FL1}
\newblock G.~Fourier and P.~Littelmann.
\newblock Tensor product structure of affine Demazure modules and limit constructions.
\newblock {\em Nagoya Math. J.}, 182:171--198, 2006.

\bibitem[FOS]{FOS-combinatorics}
\newblock G.~Fourier, M.~Okado, and A.~Schilling.
\newblock Kirillov-Reshetikhin crystals for nonexceptional types.
\newblock {\em Adv. Math.}, 222:1080--1116, 2009.

\bibitem[FOS1]{FOS-perfect}
\newblock G.~Fourier, M.~Okado, and A.~Schilling.
\newblock Perfectness of Kirillov-Reshetikhin crystals for nonexceptional types.
\newblock {\em Contemp. Math.}, 506:127--143, 2010.

\bibitem[FW]{FW}
\newblock W. Fulton and C.~Woodward.
\newblock On the quantum product of Schubert classes.
\newblock \textit{J. Algebraic Geom.}, 13:641--661, 2004.

\bibitem[HKOTT]{HKOTT}
\newblock G. Hatayama, A. Kuniba, M. Okado, T. Takagi, and Z. Tsuboi.
\newblock Paths, crystals and fermionic formulae.
\newblock In {\em MathPhys Odyssey 2001. Integrable Models and Beyond}
(M. Kashiwara and T. Miwa, Eds.),
Prog. Math. Phys. Vol.~23, pp. 205--272,
Birkh\"{a}user, Boston, 2002.

\bibitem[HKOTY]{HKOTY}
\newblock G. Hatayama, A. Kuniba, M. Okado, T. Takagi, and Y. Yamada.
\newblock Remarks on fermionic formula.
\newblock In {\em Recent Developments in Quantum Affine Algebras and Related Topics}
(N. Jing and K.C. Misra, Eds.), Contemp. Math. Vol.~248,
pp. 243--291, Amer. Math. Soc., Providence, RI, 1999.

\bibitem[HK]{hkqgcb}
\newblock J.~Hong and S.J. Kang.
\newblock {\em Introduction to {Q}uantum {G}roups and {C}rystal {B}ases}, volume~42 of
  {\em Graduate Studies in Mathematics}.
\newblock Amer. Math. Soc., 2000.

\bibitem[Ion]{Ion}
\newblock B.~Ion.
\newblock Nonsymmetric {M}acdonald polynomials and {D}emazure characters.
\newblock {\em Duke Math. J.}, 116:299--318, 2003.

\bibitem[INS]{ns1}
\newblock M. Ishii, S. Naito, and D. Sagaki.
\newblock Semi-infinite {L}akshmibai-{S}eshadri path model for level-zero extremal weight modules over quantum affine algebras.
\newblock {\em Adv. Math.} 290:967--1009, 2016.

\bibitem[Kac]{Kac}
\newblock V. G. Kac. 
\newblock {\em Infinite Dimensional Lie Algebras}, 3rd Edition, 
\newblock Cambridge University Press, Cambridge, UK, 1990.

\bibitem[Kas]{Kas-OnL}
\newblock M. Kashiwara.
\newblock On level-zero representations of quantized affine algebras.
\newblock {\it Duke Math. J.}, 112:117--175, 2002.

\bibitem[KN]{KN}
\newblock M.~Kashiwara and T.~Nakashima.
\newblock Crystal graphs for representations of the {$q$}-analogue of classical
  {L}ie algebras.
\newblock {\em J. Algebra}, 165:295--345, 1994.

\bibitem[Kho]{Kho}
\newblock A.~Khoroshkin.
\newblock Highest weight categories and Macdonald polynomials.
\newblock Preprint {\tt arXiv:1312.7053}.

\bibitem[KR]{karrym}
\newblock A.~Kirillov and N.~Reshetikhin.
\newblock {R}epresentations of {Y}angians and multiplicities of the inclusion of
  the irreducible components of the tensor product of representations of simple
  {L}ie algebras.
\newblock {\em J. Sov. Math.}, 52:3156--3164, 1990.

\bibitem[LS]{LS} 
\newblock T.~Lam and M.~Shimozono.
\newblock Quantum cohomology of $G/P$ and homology of affine Grassmannian.
\newblock {\em Acta Math.}, 204:49--90, 2010.

\bibitem[LOS]{LOS}
\newblock C. Lecouvey, M. Okado, and M. Shimozono.
\newblock Affine crystals, one-dimensional sums and parabolic Lusztig $q$-analogues. 
\newblock {\em Math. Z.}, 271:819--865, 2012. 

\bibitem[Le1]{Le1}
\newblock C.~Lenart.
\newblock On the combinatorics of crystal graphs, {I}. {L}usztig's involution.
\newblock {\em Adv. Math.}, 211:324--340, 2007.

\bibitem[Le2]{Le}
\newblock C.~Lenart.
\newblock From Macdonald polynomials to a charge statistic beyond type $A$.
\newblock {\em J. Combin. Theory Ser. A}, 119:683--712, 2012.

\bibitem[LL1]{LL}
\newblock C.~Lenart and A.~Lubovsky.
\newblock A generalization of the alcove model and its applications.
\newblock {\em J. Algebraic Combin.}, 41(3):751--783, 2015.

\bibitem[LL2]{LL2}
\newblock  C.~Lenart and A. Lubovsky.
\newblock A uniform realization of the combinatorial $R$-matrix.
\newblock Preprint arXiv:1503.01765.

\bibitem[LNSSS1]{LNSSS}
\newblock C.~Lenart, S.~Naito, D.~Sagaki, A.~Schilling, and M.~Shimozono.
\newblock A uniform model for Kirillov-Reshetikhin crystals I. Lifting the parabolic quantum Bruhat graph. 
\newblock {\em Int. Math. Res. Not.},  2015, no. 7, 1848--1901.

\bibitem[LNSSS2]{LNSSS2}
\newblock C.~Lenart, S.~Naito, D.~Sagaki, A.~Schilling, and M.~Shimozono.
\newblock Quantum {L}akshmibai-{S}eshadri paths and root operators.
\newblock Preprint {\tt arXiv:1308.3529}.
\newblock To appear in {\em Proceedings of the 5th Mathematical Society of
  Japan Seasonal Institute: Schubert Calculus}, Osaka, Japan, 2014.

\bibitem[LP]{LP}
C.~Lenart and A.~Postnikov.
\newblock Affine {W}eyl groups in {$K$}-theory and representation theory.
\newblock {\em Int. Math. Res. Not.}, 1--65, 2007.
\newblock Art. ID rnm038.

\bibitem[LP1]{LP1}
C.~Lenart and A.~Postnikov.
\newblock A combinatorial model for crystals of {K}ac-{M}oody algebras.
\newblock {\em Trans. Amer. Math. Soc.}, 360:4349--4381, 2008.

\bibitem[LeS]{LeS}
\newblock C.~Lenart and A.~Schilling.
\newblock Crystal energy via the charge in types $A$ and $C$.
\newblock  {\em Math. Zeitschrift}, 273:401--426, 2013.

\bibitem[LeSh]{LeSh} 
\newblock C.~Lenart and M.~Shimozono.
\newblock Equivariant $K$-Chevalley rules for Kac-Moody flag manifolds.
\newblock {\em Amer. J. Math.}, 136:1--39, 2014.

\bibitem[Li]{Li}
\newblock P. Littelmann.
\newblock Paths and root operators in representation theory.
\newblock {\it Ann. of Math.} (2), 142:499--525, 1995.

\bibitem[Ma]{macsfh}
\newblock I.~G. Macdonald.
\newblock {\em Symmetric Functions and \mbox{H}all Polynomials}.
\newblock Oxford Mathematical Monographs. Oxford University Press, Oxford,
  second edition, 1995.

\bibitem[Ma2]{machecke}
\newblock I.~G. Macdonald.
\newblock {\em Affine Hecke algebras and orthogonal polynomials}.
\newblock Cambridge Tracts in Mathematics, 157. Cambridge University Press, Cambridge, 2003.

\bibitem[NS1]{NS03}
\newblock S. Naito and D. Sagaki. 
\newblock Path model for a level-zero extremal weight module over a quantum affine algebra.
\newblock {\it Int. Math. Res. Not.}, no.~32, 1731--1754, 2003.

\bibitem[NS2]{NS05}
\newblock S. Naito and D. Sagaki. 
\newblock Crystal of Lakshmibai-Seshadri paths associated to an integral weight of level zero
for an affine Lie algebra.
\newblock {\it Int. Math. Res. Not.}, no.~14, 815--840, 2005.

\bibitem[NS3]{NS06}
\newblock S. Naito and D. Sagaki.
\newblock Path model for a level-zero extremal weight module over a quantum 
affine algebra. I\hspace{-1pt}I. 
\newblock {\it Adv. Math.}, 200:102--124, 2006. 

\bibitem[NS4]{NS-CMP}
\newblock S. Naito and D. Sagaki. 
\newblock Construction of perfect crystals conjecturally 
corresponding to Kirillov-Reshetikhin modules 
over twisted quantum affine algebras.
\newblock {\it Commun. Math. Phys.}, 263:749--787, 2006. 

\bibitem[NS5]{NS-London}
\newblock S. Naito and D. Sagaki. 
\newblock Crystal structure on the set of Lakshmibai-Seshadri paths 
of an arbitrary level-zero shape. 
\newblock {\it Proc. London Math. Soc.}, 96:582--622, 2008. 

\bibitem[NS6]{NS08}
\newblock S. Naito and D. Sagaki. 
\newblock Lakshmibai-{S}eshadri paths of level-zero weight shape and 
one-dimensional sums associated to level-zero fundamental representations.
\newblock {\it Compos. Math.}, 144:1525--1556, 2008.

\bibitem[NS7]{ns2}
\newblock S. Naito and D. Sagaki.
\newblock Demazure submodules of level-zero extremal weight modules 
and specializations of Macdonald polynomials.
\newblock {\em Math. Zeit.}, to appear ({\tt arXiv:1404.2436}).

\bibitem[N]{N}
\newblock H. Nakajima. 
\newblock Extremal weight modules of quantum affine algebras.
\newblock In {\em Representation Theory of Algebraic Groups and Quantum Groups} (T. Shoji et al., Eds.), 
Adv. Stud. Pure Math. Vol.~40, pp. 343--369, Math. Soc. Japan, 2004. 

\bibitem[NY]{NY}
\newblock A.~Nakayashiki and Y.~Yamada.
\newblock Kostka polynomials and energy functions in solvable lattice models.
\newblock{\it Selecta Math. (N.S.)}, 3:547--599, 1997.

\bibitem[Na]{Na}
\newblock K. Naoi.
\newblock Weyl modules, Demazure modules and finite crystals for non-simply laced type.
\newblock {\em Adv. Math.} 229:875--934, 2012.

\bibitem[OS]{OS}
\newblock D. Orr and M. Shimozono.
\newblock Specializations of nonsymmetric Macdonald-Koornwinder polynomials. 
\newblock Preprint {\tt arXiv:1310.0279}

\bibitem[Po]{Po}
\newblock A. Postnikov.
\newblock Quantum {B}ruhat graph and {S}chubert polynomials.
\newblock {\em Proc. Amer. Math. Soc.}, 133:699--709, 2004.

\bibitem[RY]{RY}
A.~Ram and M.~Yip.
\newblock A combinatorial formula for {M}acdonald polynomials.
\newblock  {\em Adv. Math.},  226:309--331, 2011.

\bibitem[Sage]{sage}
\newblock Stein, W. A., and others.
\newblock \textit{Sage Mathematics Software ({V}ersion 5.4).}
\newblock The Sage~Development Team, 2012.
\newblock {\tt http://www.sagemath.org}.

\bibitem[Sage-comb]{sagecombinat}
\newblock The Sage-Combinat community.
\newblock \textit{Sage-Combinat: enhancing Sage as a toolbox for computer exploration in algebraic combinatorics, 2008-2012.}
\newblock {\tt http://combinat.sagemath.org}.

\bibitem[Sa]{Sa} 
\newblock Y. Sanderson.
\newblock On the connection between Macdonald polynomials and Demazure characters. 
\newblock {\em J. Algebraic Combin.}, 11:269--275, 2000.

\bibitem[Se]{Se} 
\newblock A. Sevostyanov.
\newblock Regular nilpotent elements and quantum groups. 
\newblock {\em Comm. Math. Phys.}, 204:1--16, 1999.

\bibitem[SS]{SS}
\newblock A.~Schilling and M.~Shimozono.
\newblock $X=M$ for symmetric powers.
\newblock {\em J. Algebra}, 295:562--610, 2006. 

\bibitem[ST]{ST}
\newblock A.~Schilling and P.~Tingley.
\newblock Demazure crystals, {K}irillov-{R}eshetikhin crystals, and the energy function.
\newblock {\em Electronic J. Combin.}, 19(2) P4, 2012.

\bibitem[Y]{Y}
\newblock S.~Yamane.
\newblock Perfect crystals of $U_q(G^{(1)}_2)$.
\newblock {\em J. Algebra} 210:440--486, 1998. 

\end{thebibliography}
\end{document}